\numberwithin{equation}{section}
      \def\@setcopyright{}
      \def\serieslogo@{}
\newcommand{\Complex}{\mathbb C}
\newcommand{\Real}{\mathbb R}
\newcommand{\N}{\mathbb N}
\newcommand{\ddbar}{\overline\partial}
\newcommand{\pr}{\partial}
\newcommand{\ol}{\overline}
\newcommand{\Td}{\widetilde}
\newcommand{\norm}[1]{\left\Vert#1\right\Vert}
\newcommand{\abs}[1]{\left\vert#1\right\vert}
\newcommand{\set}[1]{\left\{#1\right\}}
\newcommand{\To}{\rightarrow}
\DeclareMathOperator{\Ker}{Ker}
\newcommand{\pit}{\mathit\Pi}
\newcommand{\cali}[1]{\mathscr{#1}}
\newcommand{\cH}{\cali{H}}
\newcommand{\mU}{\mathcal{U}}
\newcommand{\mV}{\mathcal{V}}
\theoremstyle{plain}
\newtheorem{thm}{Theorem}[section]
\newtheorem{cor}[thm]{Corollary}
\newtheorem{lem}[thm]{Lemma}
\newtheorem{prop}[thm]{Proposition}
\theoremstyle{definition}
\newtheorem{defn}[thm]{Definition}
\theoremstyle{remark}
\newtheorem{rem}[thm]{Remark}
\newtheorem{ex}[thm]{Example}
\numberwithin{equation}{section}
\begin{document}
\title[]{Szeg\"{o} kernel asymptotics and Morse inequalities on CR manifolds}
\author[]{Chin-Yu Hsiao}
\address{Department of Mathematics, Chalmers University of Technology\\ \& University of G\"{o}teborg, SE-412 96 G\"{o}teborg, Sweden}
\curraddr{Universit{\"a}t zu K{\"o}ln,  Mathematisches Institut,
    Weyertal 86-90,   50931 K{\"o}ln, Germany}
\thanks{First-named author was supported by a postdoctoral grant of the Swedish research council and partially by the DFG funded project MA 2469/2-1}
\email{chsiao@math.uni-koeln.de}
\author[]{George Marinescu}
\address{Universit{\"a}t zu K{\"o}ln,  Mathematisches Institut,
    Weyertal 86-90,   50931 K{\"o}ln, Germany\\
    \& Institute of Mathematics `Simion Stoilow', Romanian Academy,
Bucharest, Romania}
\thanks{Second-named author was partially supported by the SFB/TR 12}
\email{gmarines@math.uni-koeln.de}
%\date{\today}
\begin{abstract}
Let $X$ be an abstract compact orientable CR manifold of dimension $2n-1$, $n\geqslant2$, and let
$L^k$ be the $k$-th tensor power of a CR complex line bundle $L$ over $X$. We assume that condition $Y(q)$ holds at each point of $X$.
In this paper we obtain a scaling upper-bound for the Szeg\"o kernel on $(0, q)$-forms with values in $L^k$, for large $k$.
After integration, this gives
weak Morse inequalities, analogues of the holomorphic Morse inequalities of Demailly.
By a refined spectral analysis we obtain also strong Morse inequalities.
We apply the strong Morse inequalities to the embedding of some convex-concave manifolds.
\end{abstract}
\maketitle
\tableofcontents

\section{Introduction and statement of the main results} \label{morse-sec:cr}

The purpose of this paper is to establish analogues of the holomorphic
Morse inequalities of Demailly for CR manifolds.
Demailly  \cite{De:85} proved remarkable asymptotic Morse inequalities
for the ${\overline\partial}$ complex constructed over the line bundle
$L^k$ as $k\to\infty$, where $L$ is a holomorphic
hermitian line bundle. Shortly after, Bismut \cite{B87c} gave a heat
equation proof of Demailly's inequalities which involves probability
theory. Later Demailly \cite{De:91} and Bouche \cite{Bou:96} replaced
the probability technique by a classical heat kernel argument. The book \cite{MM07} introduced an argument
based on the asymptotic of the heat kernel
of the Kodaira Laplacian by using rescaling of the coordinates and functional analytic
techniques inspired by Bismut-Lebeau \cite[\S 11]{BL91} (see also Bismut-Vasserot \cite{BVa89}). A different approach was introduced by Berndtsson \cite{Bern:02} and developed by Berman
\cite{Be04,Be05}; they work with the Bergman kernel and use the mean value estimate for eigensections
of the Kodaira Laplacian.
The idea of all these proofs is localization of the analytic objects (eigenfunctions, kernels) and scaling techniques.
See also Fu-Jacobowitz \cite{FJ07} for related results on domains of finite type.

Inspired by Bismut's paper, Getzler~\cite{Ge89} gave an expression involving local data for the large $k$ limit  of the trace of heat kernel of the $\overline\partial_b$-Laplacian on $L^k$, where $L$ is a CR line bundle
over a CR strongly pseudoconvex manifold.
But Getzler didn't infer Morse inequalities for the $\overline\partial_b$-complex from these asymptotics.

In this paper we introduce a method that
produces Morse inequalities with computable bounds for the growth of the
$\overline\partial_b$ coholmology and also allows more general CR manifolds to be considered.
Our approach is related to the techniques of Berman~\cite{Be04} and Shaw-Wang~\cite{SW05}.

In a project developed jointly with R. Ponge \cite{P05}, we use the heat kernel asymptotics and Heisenberg calculus to prove holomorphic Morse inequalities for a line bundle endowed with the CR Chern connection.
This method predicts similar results and applications as of the present paper.

For a complex manifold with boundary, the
$\overline\partial_b$-cohomology of the boundary is linked to the
$\overline\partial$-cohomology of the interior, cf. Kohn-Rossi
\cite{KR65}, Andreotti-Hill \cite{AH72a,AH72b}.
Stephen S.T. Yau \cite{YauSST81} exhibited the relation between the
$\overline\partial_b$-cohomology of the boundary of a strictly
pseudoconvex Stein analytic space with isolated singularities and
invariants of the singular points.
Holomorphic Morse inequalites for manifolds with boundary were obtained
by Berman \cite{Be05} and in \cite{Ma04,MarTC} (cf.\,also \cite[Ch.\,3]{MM07}). The bounds in the Morse
inequalities appearing in this paper are similar to the boundary terms
in Berman's result \cite{Be05}. For the relation between the boundary
and interior cohomology of high tensor powers $L^k$ see also \cite{Ma96}.

On the other hand, the study of the $\overline\partial_b$-complex on an abstract CR
manifold has important consequences for the embedability and deformation
of the CR-structure, see the embedding theorem of Boutet de Monvel
\cite{BdM1:74b} for strictly pseudoconvex CR manifolds and the paper of
Epstein-Henkin \cite{EH00}.

In this paper we will study the large $k$ behavior of the Szeg\"{o}
kernel function $\pit^{(q)}_k(x)$, which is the restriction to the
diagonal of the integral kernel of the projection $\pit^{(q)}_k$ on the
harmonic $(0,q)$-forms with values in $L^k$. The Szeg\"{o} kernel for
functions on a strictly pseudoconvex CR manifold was studied by
Boutet de Monvel \cite{BdM1:74a} and Boutet de Monvel-Sj\"ostrand
\cite{BouSj76} and has important applications in complex analysis and
geometry.

%Note that there is another approach to Morse inequalities on CR
%manifolds using Heisenberg calculus, due to Ponge~\cite{Pon04}.

%We now start to formulate the main results. We need some preparations and notations in order to state the results
%precisely.
\subsection{Terminology and Notations}
Let $(X,T^{1,0}X)$ be a CR manifold of dimension $2n-1$, $n\geqslant2$, i.e.\ $T^{1,0}X$ is a subbundle of rank $n-1$ of the complexified tangent bundle $\Complex TX$ satisfying $T^{1,0}X\cap \ol{T^{1,0}X}=\{0\}$ and the integrability condition (see e.g.\  \cite[Def.\,7.1.1]{CS01}).
We shall always assume that $X$ is compact, connected and orientable.

Fix a smooth Hermitian metric $\langle\,\cdot\,,\cdot\,\rangle$ on $\Complex TX$ so that $T^{1,0}X$
is orthogonal to $T^{0,1}X:=\ol{T^{1,0}X}$ and $\langle u,v\rangle$ is real if $u$, $v$ are real tangent vectors.
Then there is a real non-vanishing vector field $T$ on $X$ which is pointwise orthogonal to
$T^{1, 0}X\oplus T^{0, 1}X$.

Denote by $T^{*1,0}X$ and $T^{*0,1}X$ the dual bundles of $T^{1,0}X$ and $T^{0,1}X$, respectively.
They can be identified with subbundles of the complexified cotangent bundle $\Complex T^*X$.
Define the vector bundle of $(0, q)$ forms by $\Lambda^{0, q}T^*X:=\Lambda^{q}T^{*0,1}X$.
The Hermitian metric $\langle\,\cdot\,,\cdot\,\rangle$ on $\Complex TX$ induces,
by duality, a Hermitian metric on $\Complex T^*X$ and also on the bundle of $(0, q)$ forms $\Lambda^{0, q}T^*X$. We shall also denote all these induced metrics by $\langle\,\cdot\,,\cdot\,\rangle$.
%Let $\Lambda^{0, q}T^*X$ be the bundle of $(0, q)$ forms of $X$.
%The Hermitian metric $\langle\,\cdot\,,\cdot\,\rangle$ on $\Complex T^*X$ induces a Hermitian metric on
%$\Lambda^{0, q}T^*X$ also denoted by $\langle\,\cdot\,,\cdot\,\rangle$.

Let $D\subset X$ be an open set. Let $\Omega^{0,q}(D)$ denote the space of smooth sections of $\Lambda^{0,q}T^*X$ over $D$. Similarly, if $E$ is a vector bundle over $D$, then we let $\Omega^{0,q}(D, E)$
denote the space of smooth sections of $\Lambda^{0,q}T^*X\otimes E$ over $D$. Let $\Omega^{0,q}_c(D, E)$ be the subspace of
$\Omega^{0,q}(D, E)$ whose elements have compact support in $D$.

If $w\in T^{*0,1}_zX$, let
$(w\wedge)^*: \Lambda^{0,q+1}T^*_zX\To \Lambda^{0,q}T^*_zX,\ q\geqslant0$,
be the adjoint of the left exterior multiplication
$w\wedge: \Lambda^{0,q}T^*_zX\To \Lambda^{0,q+1}T^*_zX$, $u\mapsto w\wedge u$\,:
\begin{equation} \label{s1-e1}
\langle w\wedge u, v\rangle=\langle u,(w\wedge)^*v\rangle\,,
\end{equation}
for all $u\in\Lambda^{0,q}T^*_zX$, $v\in\Lambda^{0,q+1}T^*_zX$.
Notice that $(w\wedge)^*$ depends $\mathbb{C}$-anti-linearly on $w$.

In the sequel we will denote by $\langle\,\cdot\,,\cdot\,\rangle$ both scalar products as well as the duality bracket between vector fields and forms.

%Let $\Lambda^{1, 0}T^*X$ denote the bundle with fiber $\Lambda^{1, 0}T^*_zX:=\ol{T^{0, 1}^*_zX}$
%at $z\in X$.
Locally we can choose an orthonormal frame $\omega_1,\ldots,\omega_{n-1}$
of the bundle $T^{*1,0}X$. Then $\ol\omega_1,\ldots,\ol\omega_{n-1}$
is an orthonormal frame of the bundle $T^{*0,1}X$. The real $(2n-2)$ form
$\omega=i^{n-1}\omega_1\wedge\ol\omega_1\wedge\cdots\wedge\omega_{n-1}\wedge\ol\omega_{n-1}$
is independent of the choice of the orthonormal frame. Thus $\omega$ is globally
defined. Locally there is a real $1$-form $\omega_0$ of length one which is orthogonal to
$T^{*1,0}X\oplus T^{*0,1}X$. The form $\omega_0$ is unique up to the choice of sign.
Since $X$ is orientable, there is a nowhere vanishing $(2n-1)$ form $Q$ on $X$.
Thus, $\omega_0$ can be specified uniquely by requiring that $\omega\wedge\omega_0=fQ$,
where $f$ is a positive function. Therefore $\omega_0$, so chosen, is globally defined.
We call $\omega_0$
the uniquely determined global real $1$-form.
We choose a vector field $T$ so that
\begin{equation}\label{s1-d11}
\norm{T}=1\,,\quad \langle T, \omega_0\rangle=-1\,.
\end{equation}
Therefore $T$ is uniquely determined. We call $T$ the uniquely determined global real vector field. We have the
pointwise orthogonal decompositions:
\begin{equation} \label{s1-e3}\begin{split}
\Complex T^*X&=T^{*1,0}X\oplus T^{*0,1}X\oplus\set{\lambda\omega_0;\,
\lambda\in\Complex},  \\
\Complex TX&=T^{1,0}X\oplus T^{0,1}X\oplus\set{\lambda T;\,\lambda\in\Complex}.
\end{split}\end{equation}
%--------------------------------
\begin{defn} \label{s1-d2}
For $p\in X$, the \emph{Levi form} $\mathcal{L}_p$ is the Hermitian quadratic form on $T^{1,0}_pX$ defined as follows. For any $U,\ V\in T^{1,0}_pX$, pick $\mathcal{U},\mathcal{V}\in
C^\infty(X;\, T^{1,0}X)$ such that
$\mathcal{U}(p)=U$, $\mathcal{V}(p)=V$. Set
\begin{equation} \label{s1-e4}
\mathcal{L}_p(U,\ol V)=\frac{1}{2i}\big\langle\big[\mathcal{U}\ ,\ol{\mathcal{V}}\,\big](p)\ ,\omega_0(p)\big\rangle\,,
\end{equation}
where $\big[\mathcal{U}\ ,\ol{\mathcal{V}}\,\big]=\mathcal{U}\ \ol{\mathcal{V}}-\ol{\mathcal{V}}\ \mathcal{U}$ denotes the commutator of $\mathcal{U}$ and $\ol{\mathcal{V}}$.
Note that $\mathcal{L}_p$ does not depend of the choices of $\mathcal{U}$ and $\mathcal{V}$.

Consider an arbitrary Hermitian metric $\langle\,\cdot\,,\cdot\,\rangle$ on $T^{1,0}X$. Since $\mathcal{L}_p$ is a Hermitian form there exists a local orthonormal basis $\{\mathcal{U}_1,\ldots,\mathcal{U}_{n-1}\}$ of
$(T^{1,0}X,\langle\,\cdot\,,\cdot\,\rangle)$ such that $\mathcal{L}_p$ is diagonal in this basis,
$\mathcal{L}_p(\mathcal{U}_i,\ol{\mathcal{U}}_j)=\delta_{ij}\lambda_i(p)$.
The diagonal entries $\{\lambda_1(p),\ldots,\lambda_{n-1}(p)\}$ are
called the \emph{eigenvalues} of the Levi form
at $p\in X$ with respect to $\langle\,\cdot\,,\cdot\,\rangle$.

Given $q\in\{0,\ldots,n-1\}$, the Levi form is said to satisfy \emph{condition $Y(q)$} at $p\in X$, if $\mathcal{L}_p$ has at least either $\max{(q+1, n-q)}$ eigenvalues of the same sign or $\min{(q+1,n-q)}$ pairs of eigenvalues with opposite signs. Note that the sign of the eigenvalues does not depend on the choice of the metric $\langle\,\cdot\,,\cdot\,\rangle$.
\end{defn}

For example, if the Levi form is non-degenerate of constant signature $(n_-,n_+)$, where $n_-$ is the number of negative eigenvalues
and $n_-+n_+=n-1$, then $Y(q)$ holds if and only if $q\neq n_-,n_+$.
%From now on, we assume that
%\begin{ass} \label{s1-a1}
%Throughout the paper we assume that the Levi form is non-degene-rate and condition $Y(q)$ holds at each point of $X$.
%\end{ass}
%-----------------------------------------------
\subsection{CR complex line bundles, Semi-classical $\ddbar_b\text{-Complex}$ and $\Box_b$}
Let
\begin{equation} \label{s1-e5}
\ddbar_b:\Omega^{0,q}(X)\To\Omega^{0,q+1}(X)
\end{equation}
be the tangential Cauchy-Riemann operator. We say that a function $u\in C^\infty (X)$ is Cauchy-Riemann (CR for short) if
$\ddbar_b u=0$.

\begin{defn} \label{s1-d1}
Let $L$ be a complex line bundle over $X$. We say that $L$ is a Cauchy-Riemann (CR) complex line bundle over $X$
if its transition functions are CR.
\end{defn}

From now on, we let $(L,h^L)$ be a CR Hermitian line bundle over $X$, where
the Hermitian fiber metric on $L$ is denoted by $h^L$. We will denote by
$\phi$ the local weights of the Hermitian metric. More precisely, if
$s$ is a local trivializing
section of $L$ on an open subset $D\subset X$, then the local weight of $h^L$ with respect to $s$ is the function $\phi\in C^\infty(D; \Real)$ for which
\begin{equation} \label{s1-e6}
\abs{s(x)}^2_{h^L}=e^{-\phi(x)}\,,\quad x\in D.
\end{equation}

Let $L^k$, $k>0$, be the $k$-th tensor power of the line bundle $L$. The Hermitian fiber metric on $L$ induces a Hermitian
fiber metric on $L^k$ that we shall denote by $h^{L^k}$. If $s$ is a local trivializing section
of $L$ then $s^k$ is a local trivializing section of $L^k$. For $f\in\Omega^{0,q}(X, L^k)$, we denote the poinwise norm $\abs{f(x)}^2:=\abs{f(x)}^2_{h^{L^k}}$. We write $\ddbar_{b,k}$ to denote the tangential
Cauchy-Riemann operator acting on forms with values in $L^k$, defined locally by:
\begin{equation} \label{s1-e7}
\ddbar_{b,k}:\Omega^{0,q}(X, L^k)\To\Omega^{0,q+1}(X, L^k)\,,\quad \ddbar_{b,k}(s^ku):=s^k\ddbar_bu,
\end{equation}
where $s$ is a local trivialization of $L$ on an open subset $D\subset X$
and $u\in\Omega^{0,q}(D)$.
We obtain a $\ddbar_{b,k}$-complex $(\Omega^{0,\bullet}(X, L^k),\ddbar_{b,k})$ with cohomology
\begin{equation}\label{db-cohom}
H^{\bullet}_b(X,L^k):=\ker\ddbar_{b,k}/\operatorname{Im} \ddbar_{b,k}.
\end{equation}
We denote by $dv_X=dv_X(x)$ the volume form on $X$ induced by the fixed Hermitian metric $\langle\,\cdot\,,\cdot\,\rangle$ on $\Complex TX$. Then we get natural global $L^2$ inner products $(\ |\ )_k$, $(\ |\ )$
on $\Omega^{0,q}(X, L^k)$ and $\Omega^{0,q}(X)$, respectively. We denote by $L^2_{(0,q)}(X, L^k)$ the completion of $\Omega^{0,q}(X, L^k)$ with respect to $(\ |\ )_k$. Let
\begin{equation} \label{s1-e8}
\ol{\pr}^{\,*}_{b,k}:\Omega^{0,q+1}(X, L^k)\To\Omega^{0,q}(X, L^k)
\end{equation}
be the formal adjoint of $\ddbar_{b,k}$ with respect to $(\ |\ )_k$\,. The \emph{Kohn-Laplacian} with values in $L^k$ is given by
\begin{equation} \label{s1-e9}
\Box_{b,k}^{(q)}=\ol{\pr}^{\,*}_{b,k}\ddbar_{b,k}+\ddbar_{b,k}\ol{\pr}^{\,*}_{b,k}:
\Omega^{0,q}(X, L^k)\To\Omega^{0,q}(X, L^k).
\end{equation}
We extend $\Box_{b,k}^{(q)}$ to the $L^2$ space by
$\Box_{b,k}^{(q)}:{\rm Dom\,}\Box_{b,k}^{(q)}\subset L^2_{(0,q)}(X, L^k)\To L^2_{(0,q)}(X, L^k)$,
where
${\rm Dom\,}\Box^{(q)}_{b,k}:=\{u\in L^2_{(0,q)}(X, L^k);
\Box^{(q)}_{b,k}u\in L^2_{(0,q)}(X, L^k)\}$.
Consider the space of harmonic forms
\begin{equation} \label{s1-e12}
\cH_b^q(X, L^k):=\Ker\Box^{(q)}_{b,k}\,.
\end{equation}
Now, we assume that $Y(q)$ holds. By \cite[7.6-7.8]{Ko65}, \cite[5.4.11-12]{FK72}, \cite[Props.\,8.4.8-9]{CS01}, condition $Y(q)$ implies that
$\Box^{(q)}_{b,k}$ is hypoelliptic, has compact resolvent and the strong Hodge decomposition holds. Hence
\begin{equation} \label{s2-e11}
\dim\cH_b^q(X, L^k)<\infty\,,\quad \cH_b^q(X, L^k)\subset\Omega^{0,q}(X, L^k)\,,\quad \cH_b^q(X, L^k)\cong H_b^q(X, L^k) \,.
\end{equation}
%i.e.\! and the canonical morphism $\cH_b^q(X, L^k)\to H_b^q(X, L^k)$, $s\mapsto [s]$ is an isomorphism
%(Hodge theorem):

Let $f_j\in\Omega^{0,q}(X, L^k)$, $j=1,\ldots,N$, be an orthonormal frame for the space
$\cH_b^q(X, L^k)$. The \emph{Szeg\"{o} kernel function} is defined by
\begin{equation} \label{s2-e1}
\pit^{(q)}_k(x)=\sum^N_{j=1}\abs{f_j(x)}_{h^{L^k}}^2=:\sum^N_{j=1}\abs{f_j(x)}^2\,.
\end{equation}
It is easy to see that $\pit^{(q)}_k(x)$ is independent of the choice of orthonormal frame and
\begin{equation} \label{s1-e13}
\dim \cH_b^q(X, L^k)=\int_X\!\pit^{(q)}_k(x)dv_X(x).
\end{equation}
%----------------------------------------
\subsection{The main results}
We will express the bound of the Szeg\"o kernel with the help of the following Hermitian form.

\begin{defn} \label{s1-d3}
Let $s$ be a local trivializing section of $L$ and $\phi$ the corresponding local weight as in \eqref{s1-e6}.
For $p\in D$ we define the Hermitian quadratic form $M^\phi_p$ on $T^{1,0}_pX$ by
\begin{equation} \label{s1-e14}
M^\phi_p(U, \ol V)=\frac{1}{2}\Big\langle U\wedge\ol V, d\big(\ddbar_b\phi-\pr_b\phi\big)(p)\Big\rangle,\ \ U, V\in T^{1,0}_pX,
\end{equation}
where $d$ is the usual exterior derivative and $\ol{\pr_b\phi}=\ddbar_b\ol\phi$.
\end{defn}

In Proposition \ref{s7-p1} we show that in the embedded case $M^\phi_p$ is the restriction of the Chern curvature of the holomorphic extension of $L$.
But in the abstract case the definition of $M^\phi_p$ depends on the choice of local trivializations.
However, set
\begin{equation}\label{s1-e15}
\begin{split}
\Real_{\phi(p),\,q}=\big\{s\in\Real;\, \text{$M^\phi_p+s\mathcal{L}_p$ has exactly $q$ negative eigenvalues} \\
\text{and $n-1-q$ positive eigenvalues}\big\}\,.
\end{split}
\end{equation}
Note that, although the eigenvalues of the Hermitian quadratic form $M^\phi_p+s\mathcal{L}_p$, $s\in\Real$,
are calculated with respect to some Hermitian metric $\langle\,\cdot\,,\cdot\,\rangle$, their sign does not depend on the choice of $\langle\,\cdot\,,\cdot\,\rangle$, cf.\ also Definition \ref{s1-d2}.

It is not difficult to see that if $Y(q)$ holds at each point of $X$ then
\begin{equation} \label{s1-e16-11}
\text{$\Real_{\phi(x),\,q}$ is locally uniformly bounded at each point $x\in X$, for all local weights $\phi$.}
\end{equation}
Note that \eqref{s5-e14} implies that if $\Real_{\phi(x),\,q}$ is bounded for one weight $\phi_0$ at $x$, then it is bounded
for all weights $\phi$ at $x$.

Denote by  $\det(M^\phi_x+s\mathcal{L}_x)$ the product of all the eigenvalues of $M^\phi_x+s\mathcal{L}_x$.
It turns out (see Proposition \ref{s5-p1}) that the integral
\begin{equation} \label{s1-e16-01}
\int_{\Real_{\phi(x),q}}\abs{\det(M^\phi_x+s\mathcal{L}_x)}ds\in\overline{\Real}
\end{equation}
does not depend on the choice of $\phi$.
 Assuming \eqref{s1-e16-11} holds, the function
\begin{equation} \label{s1-e16-1}
X\longrightarrow{\Real}\,,\quad x\longmapsto\int_{\Real_{\phi(x),q}}\abs{\det(M^\phi_x+s\mathcal{L}_x)}ds
\end{equation}
is well-defined.
Since $M^\phi_x$ and $\mathcal{L}_x$ are continuous functions of $x\in X$, we conclude that the function \eqref{s1-e16-1}
is continuous.
One of the main results of this work is the following.

\begin{thm} \label{t-main1}
Assume that %the Levi form is non-degenerate and
condition $Y(q)$ holds at each point of $X$.
%{\rm(}i.e.\ $q\neq n_-,n_+$, where $(n_-,n_+)$ is the signature of the Levi form{\rm)}.
Then %there is a constant $C_0>0$ independent of $k$, such that
\begin{equation} \label{s1-e18}
\sup\big\{k^{-n}\pit^{(q)}_k(x):k\in\N,\,x\in X\big\}<\infty.
%\leqslant  C_0\,,\quad\text{for all $x\in X$.}
\end{equation}
Furthermore, we have
\begin{equation} \label{s1-e19}
\limsup_{k\To\infty}k^{-n}\pit^{(q)}_k(x)\leqslant\frac{1}{2(2\pi)^{n}}\int_{\Real_{\phi(x),q}}\abs{\det(M^\phi_x+s\mathcal{L}_x)}ds\,,\quad\text{for all $x\in X$.}
\end{equation}
%for all $x\in X$.
\end{thm}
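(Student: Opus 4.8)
The plan is to localize the problem near a fixed point $x_0\in X$, rescale coordinates at scale $k^{-1/2}$, and compare $\Box^{(q)}_{b,k}$ with a model operator on the Heisenberg group whose Szeg\"o kernel can be computed explicitly via the partial Fourier transform in the $T$-direction.

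\textbf{Step 1: Reduction to a local estimate.} Fix $x_0\in X$ and a local trivializing section $s$ of $L$ near $x_0$ with weight $\phi$. Using BRT (Baouendi--Rothschild--Treves) coordinates centered at $x_0$, one writes $X$ locally as a graph, so that $\ddbar_b$, $\ol{\pr}^*_{b,k}$, and hence $\Box^{(q)}_{b,k}$ become explicit operators acting on $(0,q)$-forms over a neighborhood of $0$ in $\Real^{2n-1}$. The Szeg\"o kernel function $\pit^{(q)}_k(x_0)$ is, after trivializing, $\sum_j|f_j(x_0)|^2$ where $\{f_j\}$ is any orthonormal basis of $\cH^q_b(X,L^k)$; equivalently it is the value on the diagonal at $x_0$ of the integral kernel of the orthogonal projection onto $\Ker\Box^{(q)}_{b,k}$. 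The key point, which I would isolate as a lemma, is that because $\Box^{(q)}_{b,k}$ is hypoelliptic with compact resolvent under $Y(q)$, the on-diagonal value of the Szeg\"o kernel is controlled by the behavior of the resolvent (or the spectral projector for a small spectral window) near $x_0$, and this in turn is a local quantity up to errors that are $O(k^{-\infty})$ by finite-propagation-speed / pseudolocality type arguments for the heat or resolvent kernel of $\Box^{(q)}_{b,k}$.

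\textbf{Step 2: Scaling.} Introduce rescaled coordinates $z=k^{-1/2}w$ (with the $T$-variable scaled like $k^{-1}$, matching the Heisenberg homogeneity), and conjugate by $e^{k\phi/2}$ to absorb the metric on $L^k$. Under this rescaling $k^{-1}\Box^{(q)}_{b,k}$ converges, in a suitable $C^\infty$-sense on compact sets, to a constant-coefficient model operator $\Box^{(q)}_0$ on the Heisenberg group $H_n$, built from the Levi form $\mathcal{L}_{x_0}$ and the curvature-like form $M^\phi_{x_0}$. Precisely, after a linear change of frame diagonalizing $\mathcal{L}_{x_0}$, the operator $\Box^{(q)}_0$ splits after a partial Fourier transform in the central (i.e.\ $T$) direction into a family, parametrized by the dual variable $s\in\Real$, of (twisted) harmonic-oscillator-type operators with potential governed by $M^\phi_{x_0}+s\mathcal{L}_{x_0}$. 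The $(0,q)$-component of $\Ker\Box^{(q)}_0$ is nonzero exactly for $s\in\Real_{\phi(x_0),q}$, and the on-diagonal value of its Szeg\"o kernel is computed, via the standard Bargmann/Mehler formula for Gaussians, to equal $(2\pi)^{-n}\int_{\Real_{\phi(x_0),q}}|\det(M^\phi_{x_0}+s\mathcal{L}_{x_0})|\,ds$.

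\textbf{Step 3: Spectral-gap and semicontinuity argument.} To pass from the model computation to the asymptotics of $\pit^{(q)}_k(x_0)$, I would run the scheme: (i) a uniform (in $k$ and in $x_0$) a priori estimate, coming from the $Y(q)$ subelliptic estimate for $\Box^{(q)}_{b,k}$, giving a spectral gap $\Box^{(q)}_{b,k}\ge c_0 k$ on the orthogonal complement of $\cH^q_b(X,L^k)$ — this yields the uniform bound \eqref{s1-e18} directly, since it forces the Szeg\"o kernel on the diagonal to be dominated by the resolvent kernel at energy $\sim k$, whose on-diagonal norm is $O(k^n)$ by the scaling of Step 2 together with Sobolev embedding; (ii) for the sharp $\limsup$ \eqref{s1-e19}, use that the rescaled Szeg\"o kernels are a bounded family and any subsequential limit is annihilated by the model operator $\Box^{(q)}_0$ and is a reproducing kernel for a subspace of $\Ker\Box^{(q)}_0$, hence its value at $0$ is at most that of the full model Szeg\"o kernel; upper semicontinuity of the model quantity in $x_0$, which follows from Proposition~\ref{s5-p1} and continuity of $x\mapsto\mathcal{L}_x,M^\phi_x$, then gives the stated inequality at every point.

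\textbf{Main obstacle.} The delicate point is making Step 3(i)-(ii) rigorous: one needs a uniform-in-$k$ subelliptic estimate for $\Box^{(q)}_{b,k}$ with constants controlled independently of the point $x_0$, and one needs to know that a weak limit of the rescaled harmonic forms is still \emph{harmonic} for the model operator (no loss of mass to infinity) — this requires the $\Box^{(q)}_0\ge c_0>0$ gap on the orthogonal complement of its kernel, i.e.\ genuine injectivity/closed-range of the model operator, which is exactly where condition $Y(q)$ for $\mathcal{L}_{x_0}$ is used in a quantitative way. Controlling the error terms in the scaling limit uniformly on the shrinking-but-rescaled-to-fixed neighborhoods, and ruling out concentration at the boundary of the coordinate chart, is the technical heart of the argument.
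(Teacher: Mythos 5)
Your Step 2 (scaling to a Heisenberg model and computing its kernel by a partial Fourier transform in the $T$-direction) is exactly the paper's mechanism, but the way you propose to pass between the global object $\pit^{(q)}_k$ and the local model contains a genuine gap. Step 3(i) rests on a spectral gap $\Box^{(q)}_{b,k}\geqslant c_0k$ on the orthogonal complement of $\cH^q_b(X,L^k)$. No such gap is available for the Kohn Laplacian with values in $L^k$ on an abstract CR manifold, and condition $Y(q)$ does not give one: it gives subellipticity (hypoellipticity with loss of one derivative, discrete spectrum), not a lower bound growing like $k$. Indeed, the paper's Theorems \ref{t-main3} and \ref{t-main4} are formulated for the low-energy spectral spaces $\cH^q_{b,\,\leqslant k\nu_k}(X,L^k)$ precisely because eigenvalues of size $o(k)$ cannot be excluded; if your gap held, $\dim H^q_b(X,L^k)$ itself would obey a Weyl law, which is stronger than anything proved in the paper. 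Consequently both your derivation of the uniform bound \eqref{s1-e18} (domination by a resolvent kernel at energy $\sim k$) and the localization in Step 1 (pseudolocality/finite propagation for the resolvent or heat kernel with $O(k^{-\infty})$ errors) are unsupported. The paper needs neither: by Lemma \ref{s2-l1}, $\pit^{(q)}_k(x)$ is a sum of extremal functions $\sup\{\abs{\alpha_J(x)}^2;\ \alpha\in\cH^q_b(X,L^k),\ \norm{\alpha}=1\}$, so it suffices to estimate one global harmonic form of unit norm near $x$; its rescaled version is annihilated by the scaled Kohn Laplacian, and Kohn's subelliptic estimate with constants uniform in $k$ (Proposition \ref{s3-p2}, Remark \ref{s3-r1}) together with Sobolev embedding gives $k^{-n}\abs{u_k(x)}^2\leqslant C$ directly, with uniformity over $X$ by compactness. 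This extremal-function reduction is the missing idea that makes the localization cost-free.

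Two secondary points. First, the ``no loss of mass'' issue you flag as the technical heart is not an obstacle for the upper bound \eqref{s1-e19}: in the paper's argument (Theorem \ref{s3-t2}) the limit $\beta$ of the rescaled extremal forms satisfies $\Box^{(q)}_{b,H_n}\beta=0$ and $\norm{\beta}_{\psi_0}\leqslant 1$, and the chain $\abs{\beta_J(0)}^2\leqslant\abs{\beta_J(0)}^2/\norm{\beta}^2_{\psi_0}\leqslant S^{(q)}_{J,H_n}(0)$ only improves if mass escapes; no positivity or closed range of the model operator is needed. Escape of mass is relevant only for the lower bound, i.e.\ Theorem \ref{t-main3}, which the paper handles by an explicit construction of approximate harmonic forms rather than by compactness. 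Second, your claim that a subsequential limit of the rescaled Szeg\"o kernels is again a reproducing kernel of a subspace of the model kernel would itself require proof; working with extremal sections instead of kernels avoids this. With Step 1 and Step 3 rebuilt on the extremal-function and uniform-subelliptic-estimate foundation, your model computation (which matches Theorems \ref{s4-t2}--\ref{s4-t4} and yields $(2\pi)^{-n}\int_{\Real_{\phi(x),q}}\abs{\det(M^\phi_x+s\mathcal{L}_x)}ds$) would complete the proof.
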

\noindent
%Let us introduce two useful conditions on the Levi form:
%\medskip
%\begin{itemize}%{description}
%\item[$(\cL_1)$] The Levi form has $\min{(q+1,n-q)}$ pairs of eigenvalues of opposite sign at each point of $X$.
%\medskip
%\item[$(\cL_2)$] The Levi form is non-degenerate and
%condition $Y(q)$ holds at each point of $X$ {\rm(}i.e.\ $q\neq n_-,n_+$, where $(n_-,n_+)$ is the signature of the Levi form{\rm)}.
%\end{itemize}%{description}
%\medskip
%Each of $(\cL_1)$ and $(\cL_2)$ imply that %We will consider the following condition:
%It is not difficult to see that if $Y(q)$ holds at each point of $X$ then
From \eqref{s1-e13}, Theorem~\ref{t-main1} and Fatou's lemma, we get weak Morse inequalities on CR manifolds.
\begin{thm} \label{t-main2}
%Under the assumptions of Theorem \ref{t-main1} we have for $k\to\infty$
Assume that condition $Y(q)$ holds at each point of $X$. Then for $k\to\infty$
\begin{equation} \label{s1-e191}
\dim H^q_b(X, L^k)\leqslant  \frac{k^{n}}{2(2\pi)^{n}}\int_X\int_{\Real_{\phi(x),q}}\abs{\det(M^\phi_x+s\mathcal{L}_x)}ds\,dv_X(x)+o(k^n)\,.
\end{equation}
\end{thm}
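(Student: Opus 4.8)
The plan is to deduce the inequality \eqref{s1-e191} directly from Theorem~\ref{t-main1} by integrating over $X$ and passing to the limit, exactly as announced. First I would record, using \eqref{s2-e11} and the trace identity \eqref{s1-e13}, that
\begin{equation*}
k^{-n}\dim H^q_b(X, L^k)=k^{-n}\dim\cH^q_b(X, L^k)=\int_X k^{-n}\pit^{(q)}_k(x)\,dm(x)\,,
\end{equation*}
so that \eqref{s1-e191} is equivalent to the single estimate
\begin{equation*}
\limsup_{k\To\infty}\int_X k^{-n}\pit^{(q)}_k(x)\,dm(x)\leqslant(2\pi)^{-n}\int_X\int_{\Real_{\phi(x),q}}\abs{\det(M^\phi_x+s\mathcal{L}_x)}\,ds\,dm(x)\,.
\end{equation*}

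Next I would invoke the uniform bound \eqref{s1-e18}: the functions $x\mapsto k^{-n}\pit^{(q)}_k(x)$ are nonnegative and dominated by the constant $C_0$, which is integrable on $X$ because $X$ is compact and $dm$ is a finite measure. This is precisely the hypothesis needed to apply the reverse Fatou lemma, equivalently, Fatou's lemma to the nonnegative functions $C_0-k^{-n}\pit^{(q)}_k$, which yields
\begin{equation*}
\limsup_{k\To\infty}\int_X k^{-n}\pit^{(q)}_k(x)\,dm(x)\leqslant\int_X\Big(\limsup_{k\To\infty}k^{-n}\pit^{(q)}_k(x)\Big)\,dm(x)\,.
\end{equation*}
Finally I would combine this with the pointwise asymptotic estimate \eqref{s1-e19} and the monotonicity of the integral, together with the fact, established in the discussion following \eqref{s1-e16-1}, that $x\mapsto\int_{\Real_{\phi(x),q}}\abs{\det(M^\phi_x+s\mathcal{L}_x)}\,ds$ is a well-defined continuous, hence measurable and bounded, function on $X$. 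This gives \eqref{s1-e191}.

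The substance of the argument is entirely contained in Theorem~\ref{t-main1}; the only point requiring care is the direction of Fatou's lemma. Since we must bound a $\limsup$ of integrals from above rather than a $\liminf$ from below, we genuinely need the uniform domination \eqref{s1-e18}, and this is exactly why Theorem~\ref{t-main1} is formulated with both a scale-invariant uniform bound and a sharper pointwise $\limsup$ refinement. Once Theorem~\ref{t-main1} is available there is no further analytic obstacle, so I expect this derivation to occupy only a few lines.
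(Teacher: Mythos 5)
Your proposal is correct and coincides with the paper's own proof: the paper likewise writes $\dim H^q_b(X,L^k)=\int_X\pit^{(q)}_k\,dm$ via \eqref{s2-e11}--\eqref{s1-e13}, uses the uniform bound \eqref{s1-e18} to guarantee that $\sup_k k^{-n}\pit^{(q)}_k$ is integrable, and then applies Fatou's lemma in the reverse (dominated) form together with the pointwise estimate \eqref{s1-e19}. Your explicit remark on why the domination from \eqref{s1-e18} is indispensable for bounding a $\limsup$ of integrals is exactly the point the paper's formulation of Theorem~\ref{t-main1} is designed to supply.
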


By the classical work of Kohn \cite[Th.\,7.6]{Ko65}, \cite[Th.\,5.4.11--12]{FK72}, \cite[Cor.\,8.4.7--8]{CS01}, we know that if $Y(q)$ holds, then $\Box^{(q)}_{b,k}$ has a discrete
spectrum, each eigenvalues occurs with finite multiplicity and all eigenforms are smooth. For $\lambda\in\Real$, let
$\cH^q_{b,\,\leqslant \lambda}(X, L^k)$ denote the spectral space spanned by the eigenforms of $\Box^{(q)}_{b,k}$ whose eigenvalues are less than or equal to
$\lambda$. We denote by $\pit^{(q)}_{k,\,\leqslant \lambda}$ the restriction to the diagonal of the integral kernel of the orthogonal projector on $\cH^q_{b,\leqslant \lambda}(X,L^k)$ and call it the Szeg\"{o} kernel function of
the space $\cH^q_{b,\leqslant \lambda}(X,L^k)$. Then
$\pit^{(q)}_{k,\,\leqslant \lambda}(x)=\sum^M_{j=1}\abs{g_j(x)}^2$,
where $g_j(x)\in\Omega^{0,q}(X, L^k)$, $j=1,\ldots,M$, is any orthonormal frame for the space
$\cH_{b,\,\leqslant\lambda}^q(X, L^k)$. %The following is one of the main results.

\begin{thm} \label{t-main3}
%Under the assumptions of Theorem \ref{t-main1},
Assume that condition $Y(q)$ holds at each point of $X$. Then
for any sequence $\nu_k>0$ with $\nu_k\To0$ as $k\To\infty$, there is a constant $C'_0$ independent of $k$,
such that
\begin{equation} \label{s1-e18-1}
k^{-n}\pit^{(q)}_{k,\,\leqslant  k\nu_k}(x)\leqslant  C'_0
\end{equation}
for all $x\in X$. Moreover, there is a sequence $\mu_k>0$, $\mu_k\To0$, as $k\To\infty$, such that for any sequence
$\nu_k>0$ with $\lim_{k\To\infty}\frac{\mu_k}{\nu_k}=0$ and $\nu_k\To0$ as $k\To\infty$, we have
\begin{equation} \label{s1-e19-1}
\lim_{k\To\infty}k^{-n}\pit^{(q)}_{k,\,\leqslant  k\nu_k}(x)=\frac{1}{2(2\pi)^{n}}\int_{\Real_{\phi(x),q}}\abs{\det(M^\phi_x+s\mathcal{L}_x)}ds,
\end{equation}
for all $x\in X$.
\end{thm}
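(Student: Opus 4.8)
The plan is to localize the statement at a point $x_0\in X$, rescale the coordinates, and reduce both assertions to a spectral analysis of the resulting model operator: \eqref{s1-e18-1} will follow from a uniform subelliptic estimate, and \eqref{s1-e19-1} from a matching pair of upper and lower bounds. First I would fix local coordinates centred at $x_0$ and a local trivializing section of $L$ with weight $\phi$, normalised so that $\mathcal{L}_{x_0}$ and $d(\ddbar_b\phi-\pr_b\phi)(x_0)$ are simultaneously in normal form. Conjugating $\Box^{(q)}_{b,k}$ by the dilation $x\mapsto x/\sqrt k$ together with the chosen trivialization of $L^k$ yields operators $\Box^{(q)}_{(k)}$ that converge in $C^\infty$ on compact sets, as $k\To\infty$, to a translation-invariant model operator $\Box^{(q)}_0$ on $\Real^{2n-1}$ built only from $\mathcal{L}_{x_0}$ and $M^\phi_{x_0}$, with error $O(k^{-1/2})$ on compacts. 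Under this dilation the cut-off $\leqslant k\nu_k$ for $\Box^{(q)}_{b,k}$ becomes the cut-off $\leqslant\nu_k$ for $\Box^{(q)}_{(k)}$, and $k^{-n}\pit^{(q)}_{k,\leqslant k\nu_k}(x_0)$ becomes the value at the origin of the Schwartz kernel of the spectral projector $E^{(k)}_{[0,\nu_k]}$ of $\Box^{(q)}_{(k)}$. For the uniform bound \eqref{s1-e18-1} I would then argue exactly as in the proof of Theorem~\ref{t-main1}, with the harmonic projection replaced by $E^{(k)}_{[0,\nu_k]}$: if $\Box^{(q)}_{(k)}u=\mu u$ with $0\leqslant\mu\leqslant\nu_k\leqslant 1$ then $\norm{(\Box^{(q)}_{(k)})^N u}\leqslant\norm{u}$ in the relevant local norms, and feeding this into the a priori estimates for $\Box_b$ valid under condition $Y(q)$ (subelliptic gain $1/2$), iterating, and using Sobolev embedding gives $\abs{u(0)}^2\leqslant C\norm{u}^2$ on a fixed ball with $C$ independent of $k$; summing over an orthonormal basis of the range of $E^{(k)}_{[0,\nu_k]}$ gives \eqref{s1-e18-1}.

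Next I would identify the right-hand side of \eqref{s1-e19-1} as a diagonal value of a spectral projector of $\Box^{(q)}_0$. Since $\Box^{(q)}_0$ is invariant under translation in the $T$-direction, its partial Fourier transform in that variable is a family $\widehat\Box^{(q)}_0(s)$, $s\in\Real$, of harmonic-oscillator type operators on $L^2(\Complex^{n-1})\otimes\Lambda^{0,q}T^*_{x_0}X$ whose frequencies are the eigenvalues of $M^\phi_{x_0}+s\mathcal{L}_{x_0}$. One checks that $0$ lies in the spectrum of $\widehat\Box^{(q)}_0(s)$ exactly when $s\in\Real_{\phi(x_0),q}$, that the null space is then one-dimensional and spanned by a Gaussian, and that this Gaussian has value at the origin proportional to $\abs{\det(M^\phi_{x_0}+s\mathcal{L}_{x_0})}$, the normalisation being such that the Plancherel formula in the $T$-variable gives the Schwartz kernel of the projector $E^0$ onto $\Ker\Box^{(q)}_0$ the value $(2\pi)^{-n}\int_{\Real_{\phi(x_0),q}}\abs{\det(M^\phi_{x_0}+s\mathcal{L}_{x_0})}ds$ at the origin. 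Because $0$ is not isolated in $\mathrm{Spec}\,\Box^{(q)}_0$ but is an eigenvalue carrying this locally finite projector, $E^0_{[0,\delta]}(0,0)\downarrow E^0(0,0)$ as $\delta\downarrow 0$, where $E^0_{[0,\delta]}$ is the spectral projector of $\Box^{(q)}_0$ on $[0,\delta]$.

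The limit \eqref{s1-e19-1} is then squeezed between two bounds. For the upper bound, since $\nu_k\To 0$, for every $\delta>0$ we have $\nu_k\leqslant\delta$ for $k$ large, so the uniform estimates of the first paragraph together with the $C^\infty_{\mathrm{loc}}$-convergence $\Box^{(q)}_{(k)}\To\Box^{(q)}_0$ give $\limsup_k E^{(k)}_{[0,\nu_k]}(0,0)\leqslant E^0_{[0,\delta]}(0,0)$; letting $\delta\downarrow 0$ and using the monotone convergence above yields $\limsup_k k^{-n}\pit^{(q)}_{k,\leqslant k\nu_k}(x_0)\leqslant E^0(0,0)$. For the lower bound, out of the model null functions for various $s\in\Real_{\phi(x_0),q}$ and various base points I would build approximate eigenforms of $\Box^{(q)}_{b,k}$: truncate the Gaussians at radius $O(\log k)$ in the rescaled picture and transplant them back to $\Omega^{0,q}(X,L^k)$; the rescaling error $O(k^{-1/2})$ and the truncation error then combine to $\norm{\Box^{(q)}_{b,k}\psi}\leqslant k\mu_k\norm{\psi}$ for an explicit sequence $\mu_k\To 0$. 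Whenever $\mu_k/\nu_k\To 0$ this forces these forms to be at distance $o(\norm{\psi})$ from $\cH^q_{b,\leqslant k\nu_k}(X,L^k)$; a Bessel-type estimate on the resulting almost-orthonormal family, combined with the reconstruction of $E^0(0,0)$ by finite orthonormal subsets of $\mathrm{Range}(E^0)$, gives $\liminf_k k^{-n}\pit^{(q)}_{k,\leqslant k\nu_k}(x_0)\geqslant E^0(0,0)$, and \eqref{s1-e19-1} follows.

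I expect the main obstacle to be the \emph{absence of a spectral gap}. Because $M^\phi$ may have eigenvalues of both signs, $0$ sits at the bottom of a continuous band of $\mathrm{Spec}\,\Box^{(q)}_0$, so one cannot separate $\Ker\Box^{(q)}_{b,k}$ from the rest of the low-lying spectrum by a fixed gap as in the positive line bundle case; the threshold $k\nu_k$ and the auxiliary sequence $\mu_k$ are introduced precisely to capture the ``first band'' of eigenvalues without overshooting, which is why one keeps simultaneously $\nu_k\To 0$ (no overshoot) and $\mu_k/\nu_k\To 0$ (enough room for the approximate eigenforms). Technically, the delicate points are making the subelliptic a priori estimates uniform in $k$ after rescaling --- this is where the $Y(q)$ hypothesis and the sharp estimates for $\Box_b$, rather than ordinary elliptic regularity, are indispensable --- and controlling the several error terms (non-flatness of $X$ at $x_0$, the curvature approximation, the truncation) sharply enough to pin down a single sequence $\mu_k\To 0$ that serves all of them at once.
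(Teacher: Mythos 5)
Your overall strategy -- rescaling at $x_0$ to a model operator determined by $\mathcal{L}_{x_0}$ and $M^\phi_{x_0}$, uniform subelliptic estimates under $Y(q)$ for \eqref{s1-e18-1}, a partial Fourier transform in the $T$-direction to identify the limiting constant, and truncated model solutions giving approximate eigenforms with $\|\Box^{(q)}_{b,k}\psi\|\leqslant k\mu_k\|\psi\|$ compared against the threshold $k\nu_k$ -- is the same as the paper's (Theorem \ref{s3-t1}, Propositions \ref{s6-p1} and \ref{s6-p2}, Theorems \ref{s4-t2}--\ref{s4-t4}). However, two steps of your sketch have genuine gaps. For the upper bound you pass through $\limsup_k E^{(k)}_{[0,\nu_k]}(0,0)\leqslant E^0_{[0,\delta]}(0,0)$ and then let $E^0_{[0,\delta]}(0,0)\downarrow E^0(0,0)$ as $\delta\downarrow0$. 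Since, as you yourself note, $0$ sits at the bottom of a band of continuous spectrum of the model operator, $E^0_{[0,\delta]}$ has infinite-dimensional range for every $\delta>0$, and neither the convergence of diagonal spectral kernels of the varying operators to those of the model nor the monotone limit in $\delta$ is automatic; both would require uniform band-kernel estimates you do not supply. The paper bypasses this: a unit eigenform with eigenvalue $\leqslant k\nu_k$ rescales to $\beta_k$ with $\|(\Box^{(q)}_{s,(k)})^mF^*_k\beta_k\|\leqslant\nu_k^m\To0$, so every locally convergent subsequence has limit exactly annihilated by $\Box^{(q)}_{b,H_n}$, and the limsup is bounded by the model extremal function computed in Theorem \ref{s4-t4} (this is Proposition \ref{s6-p2}). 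Relatedly, your claim that the null space of $\widehat\Box^{(q)}_0(s)$ is one-dimensional and spanned by a Gaussian is false: it is an infinite-dimensional Bergman-type space; what is true (Theorem \ref{s4-t2}) is that the diagonal of its reproducing kernel at the origin equals $(2\pi)^{-n+1}\abs{\det(M^\phi_{x_0}+s\mathcal{L}_{x_0})}$, the Gaussian being only the extremal element, and it is this kernel value that produces the constant.

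For the lower bound your argument establishes only that the truncated, transplanted model forms are $L^2$-close to $\cH^q_{b,\leqslant k\nu_k}(X,L^k)$ when $\mu_k/\nu_k\To0$. That alone says nothing about the diagonal kernel at $x_0$: one must also show that the component $\alpha^2_k$ lying in $\ol{\cH^q_{b,>k\nu_k}(X,L^k)}$ is pointwise small at $x_0$ after the $k^{-n}$ normalization, i.e. \eqref{s6-e16}; in the paper this is done by applying the uniform estimate \eqref{s3-e12} to $\alpha^2_k$ together with $\|(\tfrac1k\Box^{(q)}_{b,k})^m\alpha_k\|\To0$ for all $m$, a step absent from your sketch. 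Moreover, no almost-orthonormal family or Bessel-type estimate is needed: since the kernel dominates $\abs{\alpha^1_k(x_0)}^2/\|\alpha^1_k\|^2$, a single approximate eigenform per point suffices, namely the explicit model solution \eqref{s4-e57}, which already attains the full constant at the origin. Finally, in the proof of \eqref{s1-e18-1} the phrase ``summing over an orthonormal basis'' should be replaced by the extremal-function characterization (Lemma \ref{s2-l1}): the pointwise bound per basis element cannot simply be summed, as the number of basis elements grows like $k^n$.
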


By integrating \eqref{s1-e19-1} we obtain the following semi-classical Weyl law: %version of Theorem~\ref{t-main3},

\begin{thm} \label{t-main4}
%Under the assumptions of Theorem \ref{t-main1},
Assume that condition $Y(q)$ holds at each point of $X$. Then
there is a sequence $\mu_k>0$, $\mu_k\To0$, as $k\To\infty$, such that for any sequence
$\nu_k>0$ with $\lim_{k\To\infty}\frac{\mu_k}{\nu_k}=0$ and $\nu_k\To0$ as $k\To\infty$, we have
\[{\rm dim\,}\cH^q_{b,\,\leqslant  k\nu_k}(X, L^k)=\frac{k^{n}}{2(2\pi)^{n}}\int_X\int_{\Real_{\phi(x),q}}\abs{\det(M^\phi_x+s\mathcal{L}_x)}ds\,dv_X(x)+o(k^n).\]
\end{thm}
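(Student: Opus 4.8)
The plan is to integrate the pointwise limit \eqref{s1-e19-1} of Theorem~\ref{t-main3} over $X$ against the volume form $dm$, using the uniform bound \eqref{s1-e18-1} to justify passing the limit through the integral. Concretely, fix the sequence $\mu_k>0$, $\mu_k\to0$, furnished by Theorem~\ref{t-main3}, and let $\nu_k>0$ be any sequence with $\lim_{k\to\infty}\mu_k/\nu_k=0$. First I would recall that $\cH^q_{b,\,\leqslant k\nu_k}(X,L^k)$ is finite-dimensional (since $\Box^{(q)}_{b,k}$ has discrete spectrum and finite multiplicities under condition $Y(q)$), so that, exactly as in \eqref{s1-e13}, one has the identity
\begin{equation} \label{t-main4-e1}
{\rm dim\,}\cH^q_{b,\,\leqslant k\nu_k}(X,L^k)=\int_X\pit^{(q)}_{k,\,\leqslant k\nu_k}(x)\,dm(x).
\end{equation}
This is proved by choosing an orthonormal frame $g_1,\ldots,g_M$ of $\cH^q_{b,\,\leqslant k\nu_k}(X,L^k)$ and integrating $\pit^{(q)}_{k,\,\leqslant k\nu_k}(x)=\sum_{j=1}^M\abs{g_j(x)}^2$.

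Next I would rescale: set $F_k(x)=k^{-n}\pit^{(q)}_{k,\,\leqslant k\nu_k}(x)$. By \eqref{s1-e18-1} there is a constant $C'_0$, independent of $k$, with $0\leqslant F_k(x)\leqslant C'_0$ for all $x\in X$ and all $k$. Since $X$ is compact, the constant function $C'_0$ is integrable against $dm$, so the dominated convergence theorem applies. By \eqref{s1-e19-1}, $F_k(x)\to(2\pi)^{-n}\int_{\Real_{\phi(x),q}}\abs{\det(M^\phi_x+s\mathcal{L}_x)}\,ds$ pointwise on $X$; moreover this limit function is continuous (as noted in the discussion following \eqref{s1-e16-1}), hence bounded and integrable. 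Therefore
\begin{equation} \label{t-main4-e2}
\lim_{k\to\infty}k^{-n}\int_X\pit^{(q)}_{k,\,\leqslant k\nu_k}(x)\,dm(x)=(2\pi)^{-n}\int_X\int_{\Real_{\phi(x),q}}\abs{\det(M^\phi_x+s\mathcal{L}_x)}\,ds\,dm(x).
\end{equation}
Combining \eqref{t-main4-e1} and \eqref{t-main4-e2} gives
\[{\rm dim\,}\cH^q_{b,\,\leqslant k\nu_k}(X,L^k)=\frac{k^{n}}{(2\pi)^{n}}\int_X\int_{\Real_{\phi(x),q}}\abs{\det(M^\phi_x+s\mathcal{L}_x)}\,ds\,dm(x)+o(k^n),\]
which is the assertion, with the very same sequence $\mu_k$ as in Theorem~\ref{t-main3}.

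There is essentially no hard analytic obstacle here, since all the substantive work is contained in Theorem~\ref{t-main3}: the scaling upper bound \eqref{s1-e18-1} and the precise pointwise asymptotics \eqref{s1-e19-1}. The only points that require a line of justification are (i) that $\pit^{(q)}_{k,\,\leqslant k\nu_k}(x)$ is well-defined and independent of the chosen orthonormal frame, and that the integral identity \eqref{t-main4-e1} holds — both of which follow verbatim as for the harmonic Szeg\H{o} kernel in \eqref{s2-e1}--\eqref{s1-e13}; and (ii) that the integrand in the dominated convergence step is dominated uniformly, which is exactly what the first part of Theorem~\ref{t-main3} provides. One should also remark that the local weight $\phi$ appearing on the right-hand side is immaterial, because the function \eqref{s1-e16-1} is independent of the choice of $\phi$ by Proposition~\ref{s5-p1}, so the global integral is intrinsically defined. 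Thus the proof is a routine integration argument built on Theorem~\ref{t-main3}.
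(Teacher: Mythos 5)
Your proposal is correct and coincides with the paper's own argument: the paper proves Theorem~\ref{t-main4} precisely by integrating Theorem~\ref{t-main3} over $X$, which is exactly your combination of the frame identity ${\rm dim\,}\cH^q_{b,\,\leqslant k\nu_k}(X,L^k)=\int_X\pit^{(q)}_{k,\,\leqslant k\nu_k}(x)\,dm(x)$ with dominated convergence justified by the uniform bound \eqref{s1-e18-1} and the pointwise limit \eqref{s1-e19-1}. Your added remarks (frame-independence of the spectral Szeg\"o kernel, independence of the limit function from the choice of $\phi$, and the fact that $\mu_k$ does not depend on the point) are the same ingredients the paper relies on, so no further comment is needed.
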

\noindent
From Theorem~\ref{t-main4} and the linear algebra argument from Demailly~\cite{De:85} and~ \cite{Ma96}, we obtain strong Morse inequalities on CR manifolds (see \S6):

\begin{thm} \label{t-main5}
Let $q\in\{0,\ldots,n-1\}$.
If $Y(j)$ holds for all $j=0,1,\ldots,q$, then as $k\to\infty$
\begin{equation}\label{s1-e19-11}
\begin{split}
\sum^q_{j=0}(-1)^{q-j}&{\rm dim\,}H^j_b(X, L^k)\\
&\leqslant  \frac{k^{n}}{2(2\pi)^{n}}\sum^q_{j=0}(-1)^{q-j}\int_X\int_{\Real_{\phi(x),j}}\abs{\det(M^\phi_x+s\mathcal{L}_x)}ds\,dv_X(x)+o(k^n).
\end{split}
\end{equation}
\noindent
If $Y(j)$ holds for all $j=q,q+1,\ldots,n-1$, then as $k\to\infty$
\begin{equation}\label{s1-e19-12}
\begin{split}
\sum^{n-1}_{j=q}(-1)^{q-j}&{\rm dim\,}H^j_b(X, L^k)\\
&\leqslant  \frac{k^{n}}{2(2\pi)^{n}}\sum^{n-1}_{j=q}(-1)^{q-j}\int_X\int_{\Real_{\phi(x),j}}\abs{\det(M^\phi_x+s\mathcal{L}_x)}ds\,dv_X(x)+o(k^n).
\end{split}
\end{equation}
\end{thm}

\begin{rem}
(i) Assume that the Levi form of $X$ has at least $q+1$ negative and $q+1$ positive eigenvalues, $q\in\{0,\ldots,n-2\}$.
Then $Y(j)$ hods for all $j=0,1,\ldots,q$ and $j=n-q-1,\ldots,n-1$.
%Similarly, if the Levi form of $X$ has at most $q-1$ negative and $q-1$ positive eigenvalues, $q\in\mathbb N$.
%Then $Y(j)$ hods for all $j=q,q+1,\ldots,n$.
\\[2pt]
\
(ii) Let $n_+, n_0, n_-\in\{0,1,\ldots,n-1\}$ with $n_++n_0+n_-=n-1$. Assume that
the Levi form of $X$ has $n_-$ everywhere negative eigenvalues, $n_+$ everywhere positive eigenvalues and $n_0$ eigenvalues
which vanish at some point on $X$. (The Levi form is non-degenerate if and only if $n_0=0$.)
Then $Y(j)$ holds for all $j\leqslant\min\{n_-,n_+\}-1$ and $j\geqslant\max\{n_-,n_+\}+n_0+1$. Thus
Theorem \ref{t-main5} shows that \eqref{s1-e19-11} holds for all $q\leqslant\min\{n_-,n_+\}-1$ and
\eqref{s1-e19-12} holds for all $q\geqslant\max\{n_-,n_+\}+n_0+1$.
\\[2pt]
\
(iii) Theorems~\ref{t-main1}\,--\ref{t-main5} have straightforward generalizations to the case when the forms take values
in $L^k\otimes E$, for a given CR vector bundle $E$ over $X$. In this case the right side gets multiplied by $\operatorname{rank}(E)$. For example, \eqref{s1-e19} becomes
\[
\limsup_{k\To\infty}k^{-n}\pit^{(q)}_k(x)\leqslant\frac{1}{2(2\pi)^{n}}\operatorname{rank}(E)\int_{\Real_{\phi(x),q}}\abs{\det(M^\phi_x+s\mathcal{L}_x)}ds\,,\quad\text{for all $x\in X$,}
\]
and similarly for other results.
\end{rem}

In section 6.1, we will state our main results in the embedded case, that is, when $X$ is a real hypersurface of a complex manifold $M$ and the bundle $L$ is the restriction of a holomorphic line bundle over $M$.
In this case the form $M^\phi_p$ is the restriction to $T^{1, 0}_pX$ of the curvature form $R^L$.
To wit, we deduce from the weak Morse inequalities (Theorem~\ref{t-main1}):

\begin{cor} \label{t-main6}
Let $M$ be a complex manifold of dimension $n$ and let $D=\{p\in
M:r(p)<0\}$ be a strongly pseudoconvex compact domain with smooth
definition function $r:M\to\Real$ which is strictly plurisubharmonic in
a neighbourhood of $X=\partial D$.  Let $(L,h^L)$ be a Hermitian
holomorphic line bundle whose curvature is proportional to the Levi form
of $D$ on $X$, i.e.\! there exists a smooth function $\lambda:X\to\Real$
such that $R^L=\lambda \mathcal{L}_r$ on the holomorphic tangent bundle of $X$. Then
$\dim H^q_b(X,L^k)=o(k^n)$ as $k\to\infty$ for all $1\leqslant  q\leqslant  n-2$.
\end{cor}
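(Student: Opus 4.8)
The plan is to read off the bound directly from the weak Morse inequalities of Theorem~\ref{t-main2}. Since dimensions are non-negative, it suffices to show that for every $x\in X$ and every $q$ with $1\leqslant q\leqslant n-2$ the integrand in \eqref{s1-e191} vanishes, i.e.
\[
\int_{\Real_{\phi(x),q}}\abs{\det(M^\phi_x+s\mathcal{L}_x)}\,ds=0 ;
\]
then \eqref{s1-e191} gives $\dim H^q_b(X,L^k)\leqslant o(k^n)$, hence $\dim H^q_b(X,L^k)=o(k^n)$.

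First I would check that Theorem~\ref{t-main2} applies in the stated range. Because $D$ is strongly pseudoconvex, the Levi form $\mathcal{L}_x$ of $X=\pr D$ is definite at every $x\in X$; in particular it is non-degenerate and all of its $n-1$ eigenvalues have the same sign. Hence condition $Y(q)$ of Definition~\ref{s1-d2} holds at each point of $X$ precisely when $\max(q+1,n-q)\leqslant n-1$, that is, for $1\leqslant q\leqslant n-2$, and for these $q$ Assumption~\ref{s1-a1} is satisfied.

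Next I would compute $M^\phi_x+s\mathcal{L}_x$. Since $X$ is embedded and $L$ is the restriction to $X$ of a holomorphic line bundle over $M$, Proposition~\ref{s7-p1} identifies $M^\phi_x$ with the restriction to $\Lambda^{1,0}T_x(X)$ of the Chern curvature $R^L$, so the hypothesis becomes $M^\phi_x=\lambda(x)\mathcal{L}_r$ on $\Lambda^{1,0}T_x(X)$. The extrinsic Levi form $\mathcal{L}_r$ — the complex Hessian of $r$ restricted to $\Lambda^{1,0}T_x(X)$ — and the intrinsic Levi form $\mathcal{L}_x$ of Definition~\ref{s1-d2} are both Hermitian forms on $\Lambda^{1,0}T_x(X)$ measuring the non-integrability of $\Lambda^{1,0}T(X)$ transversally; they differ only through the normalization of the transversal direction ($\omega_0$ of unit length, versus the one attached to $r$), so $\mathcal{L}_r=c(x)\mathcal{L}_x$ for a smooth, nowhere-vanishing function $c$ on $X$ (of constant sign, $\mathcal{L}_r$ being positive definite). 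Consequently
\[
M^\phi_x+s\mathcal{L}_x=\big(\lambda(x)c(x)+s\big)\mathcal{L}_x
\]
for all $s\in\Real$. As $\mathcal{L}_x$ is definite, this form is definite whenever $\lambda(x)c(x)+s\neq0$ — so it has $0$ or $n-1$ negative eigenvalues, and correspondingly $n-1$ or $0$ positive ones — and it vanishes identically when $\lambda(x)c(x)+s=0$; in no case can it have exactly $q$ negative and $n-1-q$ positive eigenvalues with $1\leqslant q\leqslant n-2$. Hence $\Real_{\phi(x),q}=\emptyset$ for all $x\in X$ and all such $q$, the integral above vanishes, and the corollary follows.

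The only step here that is not bookkeeping is the comparison $\mathcal{L}_r=c(x)\mathcal{L}_x$: one has to match the conventions of Definition~\ref{s1-d2} (the normalization and sign of $\omega_0$, the factor $\tfrac1{2i}$, the chosen orientation) with the usual description of the boundary Levi form in terms of the complex Hessian of $r$, and verify that the resulting proportionality factor is smooth, nowhere vanishing and of constant sign. This is classical CR geometry; everything else reduces to elementary linear algebra together with Theorem~\ref{t-main2}.
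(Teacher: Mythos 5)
Your proposal is correct and follows exactly the route the paper intends: the corollary is deduced from the weak Morse inequalities (Theorem~\ref{t-main2}) by observing, via Proposition~\ref{s7-p1} and the proportionality $R^L=\lambda\mathcal{L}_r$ with $\mathcal{L}_r$ a nonvanishing multiple of the intrinsic Levi form, that $M^\phi_x+s\mathcal{L}_x$ is always a scalar multiple of a definite form, so $\Real_{\phi(x),q}=\emptyset$ and the integrand vanishes for $1\leqslant q\leqslant n-2$ (the same mechanism used for Theorems~\ref{s7-t3} and~\ref{s7-t5}). Your check that strong pseudoconvexity gives $Y(q)$ exactly in this range, and your care with the normalization relating $\mathcal{L}_r$ to $\mathcal{L}_x$, are precisely the points that need verifying, and they are handled correctly.
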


\begin{ex}\label{t-main7}
Let $N$ be a compact complex manifold of dimension $n$ and $(E,h^E)$ be
a positive line bundle on $N$.
Let $D=\{v\in E^*;|v|_{h^{E^*}}<1\}$ be the Grauert tube, set $X=\partial D$ and let
$\pi:X\to N$ be the canonical projection. Then we can apply Corollary \ref{t-main6} and we obtain that the $\ddbar_b$-cohomology of the CR line bundle $L:=\pi^*E$ satisfies $\dim H^q_b(X,L^k)=o(k^{n+1})$ as $k\to\infty$ for all
$1\leqslant  q\leqslant  n-1$.
\end{ex}
%\begin{cor} \label{t-main7}
%Let $N$ be a compact complex manifold of dimension $n$ and $(E,h^E)$ be
%a positive line bundle on $N$.
%Let $D=\{v\in E^*;|v|_{h^{E^*}}<1\}$ be the Grauert tube, set $X=\partial D$ and let
%$\pi:X\to N$ be the canonical projection. Then the $\ddbar_b$-cohomology of the CR line bundle $L:=\pi^*E$ dasisfies $\dim H^q_b(X,L^k)=o(k^{n+1})$ as $k\to\infty$ for all
%$1\leqslant  q\leqslant  n-1$.
%\end{cor}

%\subsection{Application}

To exemplify the use of the strong Morse inequalities on CR manifolds, we formulate a condition to guarantee that high tensor powers of a CR line bundle have many CR sections in the embedded
case.

\begin{thm} \label{th-app}
Let $M'$ be a complex manifold and let $X\subset M'$ be a compact real hypersurface, $X=\rho^{-1}(0)$ for some $\rho\in C^\infty(M')$, $d\rho|_X\neq0$.
We assume that the Levi form $\mathcal{L}_x$ of $X$ 
has at least
two negative and two positive eigenvalues everywhere.
Furthermore, let $L$ be a Hermitian holomorphic
line bundle over $M'$ with curvature $R^L$. We denote by $R^L_X$ the restriction of $R^L$ to $T^{1,0}X$.
Assume that 
\begin{equation}\label{**}
\int_X\int_{\Real_{\phi(x),0}}\abs{\det(R^L_X+s\mathcal{L}_x)}dsdv_X(x)>
\int_X\int_{\Real_{\phi(x),1}}\abs{\det(R^L_X+s\mathcal{L}_x)}dsdv_X(x).
\end{equation} %$\lambda_1\leqslant\ldots\leqslant\lambda_{n-1}$ be the eigenvalues of the
%Levi form with respect to $R^L$. Assume that $\lambda_{n_--1}=\lambda_{n_-}<0<\lambda_{n_-+1}=\lambda_{n_-+2}$ on $X$.
%$j=1,\ldots,n_-$, $\lambda_j>0$, $j=n_-+1,\ldots,n-1$, $\abs{\lambda_1}\leqslant \abs{\lambda_2}\leqslant \cdots\leqslant %\abs{\lambda_{n-}}$, $\abs{\lambda_{n_-+1}}\leqslant \cdots\leqslant \abs{\lambda_{n-1}}$.
%If $\lambda_1(x)=\lambda_2(x)$, $\lambda_{n_-+1}(x)=\lambda_{n_-+2}(x)$, at each point of $X$,
Then there is a positive constant $c$ independent of $k$, such that $\dim H^0_b(X, L^k)\geqslant ck^n$.
\end{thm} 

When $R^L$ is positive we formulate a condition to guarantee that \eqref{**} is satisfied: 

\begin{thm}\label{th-app1}
With the same notations as in Theorem~\ref{th-app}. We assume that the Levi form of $X$ 
has at least two negative and two positive eigenvalues everywhere and 
$R^L>0$. Let $\lambda_1\leq\ldots\leq\lambda_{n-1}$ be the eigenvalues of the Levi form with respect to $R^L_X$. Assume that $\lambda_{n_--1}=\lambda_{n_-}<0<\lambda_{n_-+1}=\lambda_{n_-+2}$ on $X$. 
Then there is a positive constant $c$ independent of $k$, such that ${\rm dim\,}H^0_b(X,L^k)\geq ck^n$.
\end{thm}

In \S6, we will give examples which satisfy the assumptions of Theorem~\ref{th-app1}. Now we wish to give an application in the context of pseudoconvex-pseudoconcave manifolds.
Keeping in mind the notion of $q$-pseudoconvexity and $q$-pseudoconcavity of
Andreotti-Grauert \cite{AG:62} we introduce the following.
\begin{defn}
A complex manifold $M$ with $\dim_\Complex M=n$ is called a
\emph{$(n-2)$-convex-concave strip} if there exists a smooth proper map
$\rho:M\to\Real$ whose Levi form $\partial\ddbar\rho$ has at least three
negative and three positive eigenvalues on $M$. The function $\rho$ is
called an exhaustion function.
\end{defn}
In particular an $(n-2)$-convex-concave strip is $(n-2)$-concave in the sense of Andreotti-Grauert, thus Andreotti-pseudoconcave (see \cite[Def.\,3.4.3]{MM07}). For such manifolds one can extend the concept of big line bundle, well-known in the case of compact manifolds (e.\ g.\ \cite[Def.\,2.2.5]{MM07}).
Let $L$ be a holomorphic line bundle over an Andreotti-pseudoconcave manifold. By \cite[Th.\,3.4.5]{MM07}
there exists $C>0$ such that
\begin{equation} \label{gm3.6}
\dim H^0(M,L^k)\leqslant Ck^{\,\varrho_{k}},\,\quad \text{ for $k\geqslant 1$},
\end{equation}
where $\varrho_{k}=\max_{M\setminus B_k}\operatorname{rank}\Phi_k$ is the maximum rank of the Kodaira map
\begin{equation}
\Phi_k:M\setminus B_k\to \mathbb{P}(H^0(M,L^k)^*)\,,\quad
\Phi_k(p)=\{s\in H^0(M,L^k): s(p)=0\}\,,
\end{equation}
and $B_k$ is the base locus of $H^0(M,L^k)$. We can thus define the \emph{Kodaira-Iitaka dimension} of $L$ by
$\kappa(L):=\max\{\varrho_k:k\in\N\}$. The line bundle is called \emph{big} if $\kappa(L)=\dim M$.

If $M$ is connected we can consider the field of meromorphic functions $\mathcal{M}_M$ on $M$.
Also by \cite[Th.\,3.4.5]{MM07} this
is an algebraic field of transcendence degree $a(M)$ over $\Complex$ and $\kappa(L)\leqslant a(M)\leqslant \dim M$.

\begin{thm} \label{th-strip}
Let $M$ be a connected $(n-2)$-convex-concave strip with exhaustion function
$\rho$. Let $a\in\Real$ be a regular value of $\rho$ and set
$X:=\{\rho=a\}$. Assume that there exists a holomorphic line bundle
$L\to M$ whose curvature form
$R^L$ satisfies \eqref{**}. Then the line bundle
$L$ is big. Therefore, the transcendence degree of the meromorphic
function field $\mathcal{M}_M$ equals $n=\dim_\Complex M$ and the
Kodaira map $\Phi_k:M\cdots\longrightarrow\mathbb{P}(H^0(M,L^k)^*)$ is
an immersion
outside a proper analytic set.
\end{thm}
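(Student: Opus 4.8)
The plan is to reduce the bigness of $L$ over $M$ to the lower bound for $\dim H^0_b(X,L^k)$ provided by Theorem~\ref{th-app}, by matching — via the convex--concave geometry — the space of global holomorphic sections of $L^k$ over $M$ with the $\ddbar_b$-cohomology in degree $0$ of the separating level set $X$; the statements about $\mathcal{K}_M$ and the Kodaira maps then follow from the general theory of big line bundles.

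\emph{Step 1 (lower bound on $X$).} Since $a$ is a regular value of the proper function $\rho$, the set $X=\{\rho=a\}$ is a smooth compact real hypersurface of $M$, and its Levi form is the restriction of the $n$-dimensional Hermitian form $\pr\ddbar\rho$ to the hyperplane $\Lambda^{1,0}T(X)\subset\Lambda^{1,0}T(M)|_X$. Passing from a Hermitian form to a hyperplane changes the number of positive, and of negative, eigenvalues by at most one, so the hypothesis that $\pr\ddbar\rho$ has at least three negative and three positive eigenvalues on $M$ forces the Levi form of $X$ to have at least two negative and two positive eigenvalues; in particular it is non-degenerate and $Y(0)$, $Y(1)$ hold at every point of $X$. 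As moreover $R^L>0$ along $X$ and the prescribed coincidences $\lambda_1=\lambda_2$, $\lambda_{n_-+1}=\lambda_{n_-+2}$ of the eigenvalues of the Levi form with respect to $R^L$ hold, the hypotheses of Theorem~\ref{th-app} are satisfied for the compact CR manifold $X$ together with the CR Hermitian line bundle $L|_X$; since the conclusion of that theorem involves only the CR structure of $X$, the metric of $L|_X$ and the curvature of $L$ along $X$, it is immaterial that $X$ occurs here as a separating level set rather than as the boundary of a relatively compact domain, and one obtains a constant $c>0$ with $\dim H^0_b(X,L^k)\geqslant ck^n$ for all large $k$.

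\emph{Step 2 (transfer to $M$ and consequences).} The restriction map $r_k\colon H^0(M,L^k)\to H^0_b(X,L^k)$ is injective: $M$ is connected, and the zero divisor of a nonzero holomorphic section of $L^k$ is a complex-analytic hypersurface of real dimension $2n-2<2n-1=\dim_\Real X$, hence cannot contain $X$; in particular $\dim H^0(M,L^k)<\infty$. For surjectivity one extends a smooth CR section $u$ of $L^k$ on $X$ to a global section in two steps. First, because the Levi form of $X$ is indefinite at each point, the two-sided Lewy--Andreotti--Hill extension (\cite{AH72a,AH72b}), applied in local CR trivializations of $L^k$ and patched by uniqueness of analytic continuation, extends $u$ to a holomorphic section $\widetilde u$ of $L^k$ on an open neighbourhood $U$ of $X$ in $M$. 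Second, the halves $M^\pm=\{\pm(\rho-a)>0\}$ are both $(n-3)$-complete: since $\pr\ddbar\rho$ has at least three positive eigenvalues everywhere and $\pr\ddbar(-\rho)$ likewise has at least three positive eigenvalues everywhere, one can build on each $M^\pm$ an exhaustion function whose complex Hessian has at least $n-(n-3)=3$ positive eigenvalues at every point; because $n-3\leqslant n-2$, the Hartogs-type extension and vanishing results of Andreotti--Grauert (\cite{AG:62}; see also \cite{KR65}, and \cite{Ma96} for the relation between the cohomology of $L^k$ near the boundary and on the whole half) propagate $\widetilde u|_{U\cap M^\pm}$ to sections $\widetilde u^\pm\in H^0(M^\pm,L^k)$, which by uniqueness agree with $\widetilde u$ on $U\cap M^\pm$, so that $\widetilde u^+$, $\widetilde u$, $\widetilde u^-$ glue over the open cover $M^+\cup U\cup M^-=M$ to $\widehat u\in H^0(M,L^k)$ with $\widehat u|_X=u$. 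Hence $r_k$ is an isomorphism and $\dim H^0(M,L^k)=\dim H^0_b(X,L^k)\geqslant ck^n$, so $L$ is big. Consequently the Kodaira maps $\Phi_k$ are defined off a proper analytic subset, are bimeromorphic onto their images for $k$ large, and are immersions on the complement of the augmented base locus of $L$, a proper analytic subset of $M$ (cf.\ \cite{MM07}); therefore $\operatorname{trdeg}_\Complex\mathcal{K}_M\geqslant n$, while $\operatorname{trdeg}_\Complex\mathcal{K}_M\leqslant n$ is the Siegel--Andreotti bound for the pseudoconcave manifold $M$ (\cite{AG:62}), whence $\operatorname{trdeg}_\Complex\mathcal{K}_M=n$.

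The principal obstacle is the second step of the extension in Step~2: one must check that ``at least three negative and three positive eigenvalues of $\pr\ddbar\rho$'' is precisely the amount of convexity and concavity needed both to make the Levi form of $X$ indefinite (which feeds the two-sided CR extension) and to place each $M^\pm$ in the range $q=n-3\leqslant n-2$ in which a holomorphic section defined near the hypersurface $X$ propagates through the whole $q$-complete half $M^\pm$; and one must control the regularity of the extended sections up to $X$ so that the three local pieces genuinely glue to a holomorphic — not merely continuous — section of $L^k$ over $M$. This is exactly where the careful $\bar\partial$-estimates behind \cite{Ma96} and \cite{AG:62} are needed.
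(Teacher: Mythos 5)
Your overall strategy coincides with the paper's: the lower bound $\dim H^0_b(X,L^k)\geqslant ck^n$ from Theorem~\ref{th-app} (whose purely intrinsic character on the compact CR manifold $X$ you rightly point out, so that it is irrelevant that $X$ is a level set rather than a boundary), the identification of $H^0(M,L^k)$ with the CR sections on $X$ via two-sided Lewy/Andreotti--Hill extension followed by a global prolongement, and then the passage to bigness, $\Phi_k$ and $\mathcal{K}_M$. Two steps, however, are not correctly justified as written. First, the extension from the collar $U$ of $X$ into the halves $M^\pm$ is not a consequence of any $q$-completeness of $M^\pm$: completeness is a convexity property (it gives vanishing of higher cohomology) and does not allow one to propagate sections from a neighbourhood of the boundary towards the end; for a convex end such propagation simply fails (a function on a thin shell $\set{2<\abs{z}<2.1}\subset\Complex^n$ with a pole along $z_1=2.5$ does not extend outward). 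What powers the propagation here is concavity in both directions: for increasing $\rho$ the level sets $\set{\rho=c}$, as boundaries of $\set{\rho<c}$, carry a Levi form $\pr\ddbar\rho$ whose restriction to the complex tangent space has at least two negative eigenvalues, and for decreasing $\rho$ one uses the positive eigenvalues; this two-sided pseudoconcavity together with properness of $\rho$ is exactly what the prolongement theorem of \cite{AG:62} requires, and it is for this statement (the isomorphism $H^0(M,E)\to H^0(M_b^c,E)$ for a strip $M_b^c=\set{b<\rho<c}$) that the paper cites \cite{AG:62}. Note also that $\rho$ itself is not an exhaustion of $M^\pm$, so the exhaustion with three positive eigenvalues you appeal to would have to be constructed, and in any case it is beside the point. (A minor slip in Step 1: having at least two positive and two negative eigenvalues does not imply non-degeneracy of the Levi form when $n-1\geqslant 5$; non-degeneracy is part of the hypothesis ``satisfies the assumptions of Theorem~\ref{th-app}''.)

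Second, on the noncompact strip $M$ you cannot invoke the general theory of big line bundles (augmented base loci, bimeromorphy of $\Phi_k$ onto its image) to conclude that $\Phi_k$ is an immersion off a proper analytic set; that theory is a compact (Moishezon) statement. The paper's mechanism is the Siegel-type estimate for Andreotti-pseudoconcave manifolds, \cite[Th.\,3.4.5]{MM07}: $\dim H^0(M,L^k)\leqslant Ck^{\varrho_k}$, where $\varrho_k$ is the maximal rank of $\Phi_k$ outside the base locus. Combined with the lower bound $\dim H^0(M,L^k)\geqslant ck^n$ obtained from Theorem~\ref{th-app} and the extension step, this forces $\varrho_k=n$ for large $k$, which simultaneously yields the immersion statement, $\kappa(L)=n$, and, via $\kappa(L)\leqslant a(M)\leqslant n$, the transcendence degree. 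You use this circle of ideas only for the upper bound on the transcendence degree, leaving the immersion claim (and hence also your derivation of ${\rm trdeg}\,\mathcal{K}_M\geqslant n$) unsupported; the fix is to route both conclusions through the Siegel-type inequality rather than through compact big-line-bundle theory.
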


\subsection{Sketch of the proof of Theorem \ref{t-main1}}
To simplify the exposition we consider only the case $q=0$, i.e.\,we  show how to pointwise estimate the function $\limsup\limits_{k\To\infty}\pit^{(0)}_k$.
%To make the sketch of the proof cleaner, we will just show how to pointwise estimate the function
%$\limsup_{k\To\infty}\pit^{(0)}_k$. For the better understanding, we only consider $q=0$.
%The extremal function $S^{(0)}_k(x)$ is given by
It is easy to see that for all $x\in X$ we have
\[
\pit^{(0)}_k(x)=S^{(0)}_k(x):=\sup_{\alpha\in H^0_b(X,L^k),\norm{\alpha}=1}\abs{\alpha(x)}^2\,,
\]
where $S^{(0)}_k(x)$ is called the extremal function.
%It is easy to see that $\pit^{(0)}_k(x)=S^{(0)}_k(x)$, for all $x\in X$.
For a given point $p\in X$, by definition,
there is a sequence $u_k\in H^0_b(X, L^k)$, $\norm{u_k}=1$, such that
$\limsup_{k\To\infty}k^{-n}S^{(0)}_k(p)=\lim_{k\To\infty}k^{-n}\abs{u_k(p)}^2$.
Near $p$, take local coordinates $(x, \theta)=(z, \theta)=(x_1,\ldots,x_{2n-2},\theta)$, $z_j=x_{2j-1}+ix_{2j}$, $j=1,\ldots,n-1$, $(x(p), \theta(p))=0$, such that
$\frac{\pr}{\pr z_j}=\frac{1}{2}(\frac{\pr}{\pr x_{2j-1}}-i\frac{\pr}{\pr x_{2j}})$, $j=1,\ldots,n-1$, is an orthonormal basis for $T^{1, 0}_pX$ and
the Levi form and local weight are given by $\mathcal{L}_p=\sum ^{n-1}_{j=1}\lambda_jd z_j\otimes d\ol z_j$ and
\[\phi=\beta\theta+\sum^{n-1}_{j,\,t=1}\mu_{j,\,t}\,\ol z_jz_t+R(z)
+O(\abs{z}\abs{\theta})+O(\abs{\theta}^2)+O(\abs{(z, \theta)}^3),\]
where $R(z)=O(\abs{z}^2)$, $\frac{\pr}{\pr\ol z_j}R=0$, $j=1,\ldots,n-1$.
Let $F_k(z, \theta):=(\frac{z}{\sqrt{k}}, \frac{\theta}{k})$ be the scaling map. For $r>0$, let
$D_r=\set{(z, \theta)=(x, \theta);\, \abs{x_j}<r, \abs{\theta}<r, j=1,\ldots,2n-2}$.
Now, we consider the restriction of $u_k$ to the domain $F_k(D_{\log k})$.
The function $\alpha_k:=k^{-\frac{n}{2}}F^*_k(e^{-kR}u_k)\in C^\infty(D_{\log k})$, satisfies $\limsup_{k\To\infty}k^{-n}S^{(0)}(p)=\lim_{k\To\infty}\abs{\alpha_k(0)}^2$, where $F^*_kf\in C^\infty(D_{\log k})$ denotes the scaled function
$f\big(\frac{x}{\sqrt{k}}, \frac{\theta}{k}\big)$, $f\in C^\infty(F_k(D_{\log k}))$. Moreover, $\alpha_k$ is harmonic with respect to
the scaled Kohn-Laplacian $\Box^{(0)}_{s,(k)}$ (cf.\,\eqref{s3-e8-9-1}). The point is that $\Box^{(0)}_{s,(k)}$ converges in some sense to the model Laplacian $\Box^{(0)}_{b,H_n}$ on $H_n:=\Complex^{n-1}\times\Real$ (cf.\,\eqref{s3-e221}).
In fact, $\Box^{(0)}_{b,H_n}$ is the Kohn-Laplacian defined with respect to the CR structure $U_{j,H_n}:=\frac{\pr}{\pr z_j}-\frac{1}{\sqrt{2}}i\lambda_j\ol z_j\frac{\pr}{\pr\theta}$, $j=1,\ldots,n-1$, and the weight $e^{-\psi_0}$, $\psi_0=\beta\theta+\sum^{n-1}_{j,t=1}\mu_{j,\,t}\ol z_jz_t$.
Since $Y(q)$ holds, $\Box^{(0)}_{s,(k)}$ is hypoelliptic with loss of one derivative. Thus, the standard techniques for partial differential operators
(Rellich's theorem and Sobolev embedding theorem) yield a subsequence $\alpha_{k_j}$ converging uniformly with all the derivatives
on any compact subset of $H_n$ to a smooth function $\alpha$, which is harmonic with respect to $\Box^{(0)}_{b,H_n}$. This implies
that
\[
\limsup_{k\To\infty}k^{-n}S^{(0)}_k(p)=\abs{\alpha(0)}^2\leqslant  S^{(0)}_{H_n}(0):=
\sup_{\Box^{(0)}_{b,H_n}f=0, \norm{f}_{\psi_0}=1}\abs{f(0)}^2\,.
\]
Computing the extremal function in the model case explicitly (see \S4) finishes the proof of \eqref{s1-e19}.

This paper is organized as follows. In \S2 we first introduce the extremal function and we relate it to the Szeg\"o kernel function.
Then we introduce the scaled Kohn-Laplacian $\Box^{(q)}_{s,(k)}$ and prove the rough upper-bound for the Szeg\"o kernel function \eqref{s1-e18} (cf.\,Theorem \ref{s3-t1}). Moreover, by comparing the scaled operator $\Box^{(q)}_{s,(k)}$ to the Kohn-Laplacian $\Box^{(q)}_{b,H_n}$ on the Heisenberg group we estimate in Theorem \ref{s3-t2} the Szeg\"o kernel function on $X$ in terms of the extremal function on the Heisenberg group.
The latter is computed explicitely in \S3. In \S4 we use this information in order to prove the local Morse inequalities \eqref{s1-e19} and by integration the weak Morse inequalities \eqref{s1-e191}.
In \S5 we analyse the spectral function of $\Box^{(q)}_{b,(k)}$ and deduce the semi-classical Weyl law, thus proving Theorems \ref{t-main3}--\ref{t-main5}. In \S6 we specialize the previous results to the case of an embedded
CR manifold and prove Theorems \ref{th-app} and \ref{th-strip}. Moreover, we exemplify our results in two concrete examples, one of a Grauert tube over the torus and the other of a quotient of the Heisenberg group.

\section{The estimates of the Szeg\"{o} kernel function $\pit^{(q)}_k$} \label{morse-sec:estimates}
In this section, we assume that
%the Levi form is non-degenerate and
condition $Y(q)$ holds at each point of $X$.
\subsection{The Szeg\"{o} kernel function $\pit^{(q)}_k(x)$ and the extremal function $S^{(q)}_{k,J}(x)$} \label{morse-sec:szego}

We first introduce some notations. For $p\in X$, we can choose a smooth orthonormal frame
$e_1,\ldots,e_{n-1}$ of $T^{*0,1}X$ over a neighborhood $U$ of $p$. We say that a multiindex $J=(j_1,\ldots,j_q)\in\{1,\ldots,n-1\}^q$ has length $q$ and write $\abs{J}=q$. We say that $J$ is strictly increasing if $1\leqslant  j_1<j_2<\cdots<j_q\leqslant  n-1$. For $J=(j_1,\ldots,j_q)$ we define $e_J:=e_{j_1}\wedge\cdots\wedge e_{j_q}$.
Then $\{e_J: \text{$\abs{J}=q$, $J$ strictly increasing}\}$
%(e_J:=e_{j_1}\wedge\cdots\wedge e_{j_q})_{1\leqslant  j_1<j_2<\cdots<j_q\leqslant  n-1}
is an orthonormal frame for $\Lambda^{0,q}T^*X$ over $U$.

For $f\in\Omega^{0,q}(X, L^k)$, we may write
\[
f|_U=\sideset{}{'}\sum_{\abs{J}=q} f_Je_J\,,\quad \text{with $f_J=\langle f,e_J\rangle\in C^\infty(U;\, L^k)$}\,,
\]
where $\sum'$ means that the summation is performed only over strictly increasing multiindices. We call $f_J$ the component of $f$ along $e_J$. It will be clear from the context what frame is being used. The \emph{extremal function} $S^{(q)}_{k,J}$ along the direction $e_J$ is defined by
\begin{equation} \label{s2-e2}
S^{(q)}_{k,J}(y)=\sup_{\alpha\in\,\cH_b^q(X, L^k),\,\norm{\alpha}=1}\abs{\alpha_J(y)}^2\,.
\end{equation}
%where $\alpha_J$ denote the component of $\alpha$ along $e_J$. We need the following
\begin{lem} \label{s2-l1}
For every local orthonormal frame $\{e_J(y); \text{$\abs{J}=q$,\,$J$ strictly increasing}\}$ of $\Lambda^{0,q}T^*X$ over an open set $U\subset X$,
we have $\pit^{(q)}_k(y)=\sum_{\abs{J}=q}'S^{(q)}_{k,J}(y)$, for every $y\in U$.
\end{lem}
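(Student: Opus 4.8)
The plan is to reduce both sides of the asserted identity to quantities attached to a single fixed orthonormal basis of $\cH^q_b(X, L^k)$ and then to match them multiindex by multiindex. First I would fix an orthonormal basis $f_1,\ldots,f_N$ of $\cH^q_b(X, L^k)$, so that by \eqref{s2-e1} we have $\pit^{(q)}_k(y)=\sum_{j=1}^N\abs{f_j(y)}^2$. Over $U$ write $f_j|_U=\sum'_{\abs{J}=q}f_{j,J}e^J$, where $f_{j,J}=\langle f_j,e^J\rangle\in C^\infty(U;\,L^k)$ are the components along the given frame. Since $\set{e^J(y)}_{\abs{J}=q}$ is an orthonormal frame of $\Lambda^{0,q}T^*_y(X)$, the pointwise norm splits, $\abs{f_j(y)}^2=\sum'_{\abs{J}=q}\abs{f_{j,J}(y)}^2$, and therefore
\[
\pit^{(q)}_k(y)=\sum_{\abs{J}=q}'\Big(\sum_{j=1}^N\abs{f_{j,J}(y)}^2\Big).
\]
Hence it suffices to prove that for each strictly increasing $J$ and each $y\in U$ one has $S^{(q)}_{k,J}(y)=\sum_{j=1}^N\abs{f_{j,J}(y)}^2$.

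For this, fix $y$ and $J$ and pick a unit vector $l_0\in L^k_y$, so that $f_{j,J}(y)=a_jl_0$ for scalars $a_j\in\Complex$ with $\sum_j\abs{a_j}^2=\sum_j\abs{f_{j,J}(y)}^2$. An arbitrary $\alpha\in\cH^q_b(X,L^k)$ with $\norm{\alpha}=1$ has the form $\alpha=\sum_{j=1}^Nc_jf_j$ with $\sum_j\abs{c_j}^2=1$, and, by linearity of the component map, its $J$-component at $y$ is $\alpha_J(y)=\big(\sum_jc_ja_j\big)l_0$, whence $\abs{\alpha_J(y)}^2=\abs{\sum_jc_ja_j}^2$. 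By the Cauchy--Schwarz inequality on $\Complex^N$ this is $\leqslant\sum_j\abs{a_j}^2$, and equality is attained: if the $a_j$ are not all zero, take $c_j=\overline{a_j}\big(\sum_l\abs{a_l}^2\big)^{-1/2}$; if they all vanish, both sides are $0$. This yields $S^{(q)}_{k,J}(y)=\sum_j\abs{f_{j,J}(y)}^2$, and summing over strictly increasing $J$ gives the claim.

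I do not anticipate a genuine obstacle here: the argument is an elementary reproducing-kernel computation. The only point calling for a little care is that $f_{j,J}(y)$ and $\alpha_J(y)$ are elements of the one-dimensional fibre $L^k_y$ rather than complex numbers, which is exactly why I first trivialize $L^k_y$ by a unit vector $l_0$ before invoking Cauchy--Schwarz; this also makes transparent that $\sum'_{\abs{J}=q}S^{(q)}_{k,J}(y)$ is independent of $l_0$ and of the chosen orthonormal basis $(f_j)$, in accordance with the remark after \eqref{s2-e1}. If one prefers an invariant formulation, the same computation reads $S^{(q)}_{k,J}(y)=\norm{P_{y,J}}^2$ for the bounded evaluation functional $P_{y,J}\colon\cH^q_b(X,L^k)\to L^k_y$, $\alpha\mapsto\alpha_J(y)$, and $\sum'_{\abs{J}=q}S^{(q)}_{k,J}(y)=\operatorname{tr}(P_yP_y^*)=\operatorname{tr}(P_y^*P_y)=\pit^{(q)}_k(y)$, where $P_y\colon\cH^q_b(X,L^k)\to\Lambda^{0,q}T^*_y(X)\otimes L^k_y$ is the full evaluation map and the first trace is computed in the orthonormal basis $\set{e^J(y)\otimes l_0}_{\abs{J}=q}$.
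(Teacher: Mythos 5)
Your proof is correct and follows essentially the same route as the paper: decompose $\pit^{(q)}_k(y)$ into the partial sums $\sum_j\abs{f_{j,J}(y)}^2$ over strictly increasing $J$, and show each equals $S^{(q)}_{k,J}(y)$ by exhibiting the extremal section with coefficients proportional to $\overline{f_{j,J}(y)}$ (your Cauchy--Schwarz argument with its equality case is exactly the paper's two inequalities, and your trivialization of $L^k_y$ by a unit vector $l_0$ is a tidy substitute for the paper's normalization $\phi(p)=0$ when interpreting $\overline{f_{j,J}(p)}$).
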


\begin{proof}
Let $(f_j)_{j=1,\ldots,N}$ be an orthonormal frame for the space
$\cH_b^q(X, L^k)$. %and $f_{j,J}$ is the component of $f_j$ along $e_J$.
On $U$ we write $\pit^{(q)}_k(y)=\sum'_{\abs{J}=q}\pit^{(q)}_{k,J}(y)$, where $\pit^{(q)}_{k,J}(y):=\sum_j\abs{f_{j,J}(y)}^2$.
It is easy to see that
$\pit^{(q)}_{k,J}(y)$ is independent of the choice of the orthonormal frame $(f_j)$. Take
$\alpha\in\cH^q_b(X, L^k)$ of unit norm. Since $\alpha$ is contained in an orthonormal base, obviously $|\alpha_J(y)|^2\leqslant  \pit^{(q)}_{k,J}(y)$. Thus,
\begin{equation} \label{s2-e4}
S^{(q)}_{k,J}(y)\leqslant  \pit^{(q)}_{k,J}(y)\,,\quad\text{for all strictly increasing $J$, $\abs{J}=q$.}
\end{equation}
%for all strictly increasing $J$, $\abs{J}=q$.
Fix a point $p\in U$ and a strictly incresing multiindex $J$ with $\abs{J}=q$. For simplicity, we may assume that $\phi(p)=0$. Put
\[
\textstyle
u(y)=\Big(\sum^N_{j=1}\abs{f_{j,J}(p)}^2\Big)^{-1/2}\cdot\sum^N_{j=1}\ol{f_{j,J}(p)}f_j(y)\,.
\]
We can easily check that $u\in H^q_b(X, L^k)$ and $\norm{u}=1$. Hence, $|u_{J}(p)|^2\leqslant  S^{(q)}_{k,J}(p)$, therefore
\[
\pit^{(q)}_{k,J}(p)=\sum^N_{j=1}\abs{f_{j,J}(p)}^2=|u_{J}(p)|^2\leqslant  S^{(q)}_{k,J}(p)\,.
\]
By \eqref{s2-e4}, $\pit^{(q)}_{k,J}=S^{(q)}_{k,J}$ for all strictly increasing multiindices $J$ with $\abs{J}=q$,
so the lemma follows.
\end{proof}

\subsection{The scaling technique}

For a given point $p\in X$, let $U_1(y),\ldots, U_{n-1}(y)$
be an orthonormal frame of $T^{1, 0}_yX$ varying smoothly with $y$ in a neighborhood of\, $p$,
for which the Levi form is diagonal at $p$. Furthermore, let $s$ be a local trivializing section of $L$ on an open neighborhood of $p$ and $\abs{s}_{h^L}^2=e^{-\phi}$. We take local coordinates
$(x, \theta)=(z, \theta)=(x_1,\ldots,x_{2n-2},\theta)$, $z_j=x_{2j-1}+ix_{2j}$, $j=1,\ldots,n-1$,
defined on an open set $D$ of $p$ such that
\[\omega_0(p)=\sqrt{2}d\theta\;,\quad (x(p), \theta(p))=0\,,
\]
\[
\langle\frac{\pr}{\pr x_j}(p),\frac{\pr}{\pr x_t}(p)\rangle=2\delta_{j,t}\,\quad \langle\frac{\pr}{\pr x_j}(p),\frac{\pr}{\pr\theta}(p)\rangle=0\,,\quad
\langle\frac{\pr}{\pr\theta}(p),\frac{\pr}{\pr\theta}(p)\rangle=2\,,
\]
for $j, t=1,\ldots,2n-2$,
\begin{equation} \label{s1-e20}
U_j=\frac{\pr}{\pr z_j}-\frac{1}{\sqrt{2}}i\lambda_j\ol z_j\frac{\pr}{\pr\theta}-
\frac{1}{\sqrt{2}}c_j\theta\frac{\pr}{\pr\theta}+O(\abs{(z, \theta)}^2),\ \ j=1,\ldots,n-1,
\end{equation}
and
\begin{equation} \label{s1-e21}
\begin{split}
\phi=&\sum^{n-1}_{j=1}(\alpha_j z_j+\ol\alpha_j\ol z_j)+\beta\theta+\sum^{n-1}_{j,t=1}(a_{j,t}z_jz_t+\ol a_{j,t}\ol z_j\ol z_t)
+\sum^{n-1}_{j,t=1}\mu_{j,\,t}\ol z_jz_t\\
&+O(\abs{z}\abs{\theta})+O(\abs{\theta}^2)+O(\abs{(z, \theta)}^3),
\end{split}
\end{equation}
where
$\beta\in\Real, c_j, \alpha_j, a_{j,t}, \mu_{j,\,t}\in\Complex$, $\delta_{j,t}=1$ if $j=t$, $\delta_{j,t}=0$ if $j\neq t$, $\frac{\pr}{\pr z_j}=\frac{1}{2}(\frac{\pr}{\pr x_{2j-1}}-i\frac{\pr}{\pr x_{2j}})$, for $j,t=1,\ldots,n-1$ and $\lambda_j$, $j=1,\ldots,n-1$, are the eigenvalues of\, $\mathcal{L}_p$.
This is always possible, see \cite[p.\,157--160]{BG88}.
In this section, we work with this local coordinates and we identify $D$ with some open set in $\Real^{2n-1}$. Put
\begin{gather}
R(z, \theta)=\sum^{n-1}_{j=1}\alpha_j z_j+\sum^{n-1}_{j,t=1}a_{j,t}z_jz_t\,,\label{s1-e22}\\
%\end{equation}
%\begin{equation}
\phi_0=\phi-R(z, \theta)-\ol{R(z, \theta)} =\beta\theta+\sum^{n-1}_{j,t=1}\mu_{j,\,t}\ol z_jz_t+O(\abs{z}\abs{\theta})+O(\abs{\theta}^2)+O(\abs{(z, \theta)}^3)\,.\label{s1-e23}
\end{gather}

Let $(\ |\ )_{k\phi}$ and $(\ |\ )_{k\phi_0}$ be the inner products on the space
$\Omega^{0,q}_c(D)$ defined as follows:
\[
(f\ |\ g)_{k\phi}=\int_D\!\langle f,g\rangle e^{-k\phi}dv_X\,, \quad (f\ |\ g)_{k\phi_0}=\int_D\!\langle f, g\rangle e^{-k\phi_0}dv_X\,,
\]
where $f, g\in\Omega^{0,q}_c(D)$. We denote by $L^2_{(0,q)}(D, k\phi)$ and $L^2_{(0,q)}(D, k\phi_0)$ the completions of $\Omega^{0,q}_c(D)$ with respect to $(\ |\ )_{k\phi}$ and $(\ |\ )_{k\phi_0}$, respectively.
%Put
%\[
%\begin{split}
%L^2_{(0,q)}(D, k\phi)=\Big\{u\in\mathscr{D}^{\,\prime}(D;\, \Lambda^{0,q}T^*X);\, \int_{D}\!
%\abs{u}^2e^{-k\phi}dv_X<\infty\Big\}\,,\\
%L^2_{(0,q)}(D, k\phi_0)=\Big\{u\in\mathscr D'(D;\, \Lambda^{0,q}T^*X);\, \int_D\!
%\abs{u}^2e^{-k\phi_0}dv_X<\infty\Big\}\,,
%\end{split}
%\]
%where $\mathscr{D}^{\,\prime}(D, \Lambda^{0,q}T^*X)$ denotes the space of distribution sections of $D$ over $\Lambda^{0,q}T^*X$.
%Similarly,we put $L^2_{(0,q)}(D, k\phi_0)=\set{u\in\mathscr D'(D;\, \Lambda^{0,q}T^*X);\, \int_D\!
%\abs{u}^2e^{-k\phi_0}dv_X<\infty}$.
%We extend the inner products $(\ |\ )_{k\phi}$ and $(\ |\ )_{k\phi_0}$ to the spaces
%$L^2_{(0,q)}(D, k\phi)$ and $L^2_{(0,q)}(D, k\phi_0)$ respectively.
We have the unitary identification
\begin{equation} \label{s1-e27}
\left\{\begin{aligned}
L^2_{(0,q)}(D, k\phi_0)&\leftrightarrow L^2_{(0,q)}(D, k\phi) \\
u&\rightarrow \Td u=e^{kR}u, \\
u=e^{-kR}\Td u&\leftarrow \Td u.
\end{aligned}
\right.
\end{equation}
Let
$\ddbar^{\,*,k\phi}_b:\Omega^{0,q+1}(D)\To\Omega^{0,q}(D)$
be the formal adjoint of $\ddbar_b$ with respect to $( \ |\ )_{k\phi}$. Put
\[
\Box^{(q)}_{b,k\phi}=\ddbar_b\ddbar^{\,*,k\phi}_b+\ddbar^{\,*,k\phi}_b\ddbar_b:\Omega^{0,q}(D)\To\Omega^{0,q}(D)\,.
\]
Let $u\in\Omega^{0,q}(D, L^k)$. Then there exists $\hat u\in\Omega^{0,q}(D)$ such that $u=s^k\hat u$ and we have $\Box^{(q)}_{b,k}u=s^k\Box^{(q)}_{b,k\phi}\hat u$.
In this section, we identify $u$ with $\hat u$ and $\Box^{(q)}_{b,k}$ with $\Box^{(q)}_{b, k\phi}$. Note that $\abs{u(0)}^2=\abs{\hat u(0)}^2e^{-k\phi(0)}=\abs{\hat u(0)}^2$.

Recall that $\alpha\wedge$ is the operator of left exterior multiplication with a form $\alpha$.
The adjoint of this operator is denoted by $(\alpha\wedge)^*$ (cf.\ \eqref{s1-e1}).

If $u\in\Omega^{0,q}(D)\cap L^2_{(0,q)}(D, k\phi_0)$, using \eqref{s1-e27}, we have
$\ddbar_b\Td u=\Td{\ddbar_s u}=e^{kR}\ddbar_su$,
where
\begin{equation} \label{s3-e2-5}
\ddbar_s=\ddbar_b+k(\ddbar_bR)\wedge\;.
\end{equation}
Let $(e_j(z, \theta))_{j=1,\ldots,n-1}$ denote the basis of $T^{*0,1}_{(z,\theta)}X$,
dual to $(\ol U_j(z,\theta))_{j=1,\ldots,n-1}$. Then
$\ddbar_b=\sum^{n-1}_{j=1}\bigr(e_j\wedge\ol U_j+(\ddbar_be_j)\wedge (e_j\wedge)^*\bigr)$. Note that $(e_j\wedge)^*$ equals the interior product $i_{\ol{U}_j}$ with $\ol{U}_j$.
Thus,
\begin{equation} \label{s3-e4}
\ddbar_s=\sum^{n-1}_{j=1}e_j\wedge\Bigr(\ol U_j+k(\ol U_jR)\Bigr)
+\sum^{n-1}_{j=1}(\ddbar_b e_j)\wedge (e_j\wedge)^*
%\ddbar_s=\sum^{n-1}_{j=1}e_j^\wedge\circ\Bigr(\ol U_j+k(\ol U_jR)\Bigr)
%+\sum^{n-1}_{j=1}(\ddbar_b e_j)^\wedge\circ e_j^{\wedge,*}
\end{equation}
and correspondingly
\begin{equation} \label{s3-e5}
\ddbar^{\,*}_s=\sum^{n-1}_{j=1}(e_j\wedge)^*\Bigr(\ol U^{\,*,k\phi_0}_j+k(U_j\ol R)\Bigr)
+\sum^{n-1}_{j=1}e_j\wedge(\ddbar_b e_j\wedge)^*,
\end{equation}
where $\ddbar^{\,*,k\phi}_b\Td u=e^{kR}\ddbar^{\,*}_su$ and  $\ol U^{\,*,k\phi_0}_j$ is the formal adjoint of $\ol U_j$ with respect to $(\ |\ )_{k\phi_0}$, $j=1,\ldots,n-1$. We can check that
\begin{equation} \label{s3-e6}
\ol U_j^{\,*,k\phi_0}=-U_j+k(U_j\phi_0)+s_j(z, \theta),
\end{equation}
where $s_j\in C^\infty(D)$, $s_j$ is independent of $k$, $j=1,\ldots,n-1$. Put
\begin{equation} \label{s3-e7}
\Box^{(q)}_s=\ddbar_s\ddbar^{\,*}_s+\ddbar^{\,*}_s\ddbar_s:\Omega^{0,q}(D)\To\Omega^{0,q}(D).
\end{equation}
We have
\begin{equation} \label{s3-e7-0}
\Td{\Box^{(q)}_s u}=e^{kR}\Box^{(q)}_su=\Box^{(q)}_{b,k\phi}\Td u.
\end{equation}
%We need the following which is essentially well-known (see Proposition~2.9 of~\cite{Hsiao08})
\begin{prop}[{\cite[Prop.\,2.3]{Hsiao08}}] \label{s3-p0}
We have
\begin{equation} \label{s3-e7-1}
\begin{split}
\Box^{(q)}_s &=\ddbar_s\ddbar^{\,*}_s+\ddbar^{\,*}_s\ddbar_s  \\
       &=\sum^{n-1}_{j=1}\bigr(\ol U^{\,*,k\phi_0}_j+k(U_j\ol R)\bigr)\bigr(\ol U_j+k(\ol U_jR)\bigr)\\
       &+ \sum^{n-1}_{j,\,t=1}e_j\wedge (e_t\wedge)^*\big[\ol U_j+k(\ol U_j R)\ ,\ol U^{\,*,k\phi_0}_t+k(U_t\ol R)\big] \\
         &\quad +\epsilon(\ol U+k(\ol U R))+\epsilon(\ol U^{\,*,k\phi_0}+k(U\ol R))+f(z, \theta),
\end{split}
\end{equation}
where $\epsilon(\ol U+k(\ol U R))$ denotes remainder terms of the form $\sum a_j(z, \theta)\bigr(\ol U_j+k(\ol U_j R)\bigr)$ with $a_j$ smooth, matrix-valued and independent of $k$, for all $j$, and similarly for $\epsilon(\ol U^{\,*,k\phi_0}+k(\ol U R))$ and $f(z, \theta)\in C^\infty$ independent of $k$.
\end{prop}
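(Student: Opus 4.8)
The plan is to expand $\Box^{(q)}_s=\ddbar_s\ddbar^*_s+\ddbar^*_s\ddbar_s$ directly from \eqref{s3-e4} and \eqref{s3-e5} and to sort the resulting terms by order and by dependence on $k$. Abbreviate $P_j:=\ol U_j+k(\ol U_j R)$; using that conjugation sends the multiplier $k(\ol U_j R)$ to $k(U_j\ol R)$ together with the definition of $\ol U^{*,k\phi_0}_j$, one checks that the formal adjoint of $P_j$ with respect to $(\ |\ )_{k\phi_0}$ is $P_j^{*}:=\ol U^{*,k\phi_0}_j+k(U_j\ol R)$, so that the asserted identity \eqref{s3-e7-1} is precisely $\Box^{(q)}_s=\sum_j P_j^{*}P_j+\sum_{j,t}e_j\wedge e^{\wedge,*}_t[P_j,P_t^{*}]+\epsilon(P)+\epsilon(P^{*})+f$, where here $\epsilon(P)=\epsilon(\ol U+k(\ol U R))$ and $\epsilon(P^{*})=\epsilon(\ol U^{*,k\phi_0}+k(U\ol R))$. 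First I would rewrite \eqref{s3-e4}, \eqref{s3-e5} in the compact form $\ddbar_s=\sum_j e_j\wedge P_j+A$ and $\ddbar^*_s=\sum_j e^{\wedge,*}_j P_j^{*}+B$, where $A:=\sum_j(\ddbar_b e_j)\wedge e^{\wedge,*}_j$ and $B:=\sum_j e_j\wedge(\ddbar_b e_j)^{\wedge,*}$ are zeroth-order operators (no vector fields occur) with smooth coefficients independent of $k$, since $\ddbar_b e_j$ is a fixed smooth form. The observation that makes everything go through is that the $k$-dependent part of $P_j$ (resp.\ of $P_j^{*}$) is multiplication by the function $k(\ol U_j R)$ (resp.\ $k(U_j\ol R)$), hence commutes with every operator built from $e_t\wedge$ and $e^{\wedge,*}_t$; therefore all commutators that will appear below, namely $[\ol U_j,e^{\wedge,*}_t]$, $[U_j,e_t\wedge]$, $[\ol U_j,A]$, $[U_j,B]$ and the like, are zeroth-order with smooth, $k$-independent coefficients, where \eqref{s3-e6} is used to see that $P_j^{*}=-U_j+(\text{multiplications})$.

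Next I would expand the two products and, in each term, push the scalar operators $P_j$, $P_j^{*}$ to the right past the exterior-algebra factors. For instance $(e_j\wedge P_j)(e^{\wedge,*}_t P_t^{*})=e_j\wedge e^{\wedge,*}_t\,P_jP_t^{*}+e_j\wedge[\ol U_j,e^{\wedge,*}_t]\,P_t^{*}$, and the second summand is of the form $\epsilon(P^{*})$ because its coefficient is smooth and independent of $k$; the cross terms involving $A$ or $B$ are treated the same way and contribute only to the buckets $\epsilon(P)$, $\epsilon(P^{*})$ and $f(z,\theta)$. This yields $\ddbar_s\ddbar^*_s=\sum_{j,t}e_j\wedge e^{\wedge,*}_t\,P_jP_t^{*}+\epsilon(P)+\epsilon(P^{*})+f$ and, symmetrically, $\ddbar^*_s\ddbar_s=\sum_{j,t}e^{\wedge,*}_j e_t\wedge\,P_j^{*}P_t+\epsilon(P)+\epsilon(P^{*})+f$.

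Finally I would add the two identities, relabel $j\leftrightarrow t$ in the second double sum, and apply the anticommutation relation $e^{\wedge,*}_t e_j\wedge+e_j\wedge e^{\wedge,*}_t=\delta_{j,t}$, which holds pointwise because $(e_j)$ is an orthonormal frame. This turns the second double sum into $\sum_j P_j^{*}P_j-\sum_{j,t}e_j\wedge e^{\wedge,*}_t\,P_t^{*}P_j$, so that the off-diagonal contributions of the two sums collapse to $\sum_{j,t}e_j\wedge e^{\wedge,*}_t(P_jP_t^{*}-P_t^{*}P_j)=\sum_{j,t}e_j\wedge e^{\wedge,*}_t[P_j,P_t^{*}]$, which gives \eqref{s3-e7-1}. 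I do not anticipate a genuine difficulty: the argument is purely computational, and the only point requiring care is the bookkeeping of the three error buckets $\epsilon(P)$, $\epsilon(P^{*})$, $f$, in particular checking at each step that no factor of $k$ leaks into them; the only inputs of substance are the Clifford-type anticommutation relations for $e_t\wedge$, $e^{\wedge,*}_t$ and the fact that $\ddbar_b e_j$ (hence $A$, $B$ and every commutator that arises) is independent of $k$.
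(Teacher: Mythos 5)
Your computation is correct: expanding $\ddbar_s\ddbar_s^*+\ddbar_s^*\ddbar_s$ from \eqref{s3-e4}--\eqref{s3-e5}, commuting the operators $\ol U_j+k(\ol U_jR)$ and $\ol U_j^{*,k\phi_0}+k(U_j\ol R)$ past the $k$-independent exterior-algebra factors, and using $e_j\wedge e_t^{\wedge,*}+e_t^{\wedge,*}e_j\wedge=\delta_{j,t}$ yields exactly \eqref{s3-e7-1}, with all leftover terms correctly landing in the buckets $\epsilon(\ol U+k(\ol UR))$, $\epsilon(\ol U^{*,k\phi_0}+k(U\ol R))$ and $f$. The paper itself gives no proof but cites \cite[Prop.\,2.9]{Hsiao08}, and your argument is essentially the standard direct expansion that the cited result rests on, so there is nothing to add beyond the minor remark that both the diagonal and off-diagonal terms (not only the off-diagonal ones) are what combine into the commutator sum.
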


For the convenience of the reader we recall some notations we used before. For $r>0$, let
$D_r=\set{(z, \theta)=(x, \theta)\in\Real^{2n-1};\, \abs{x_j}<r,\ \abs{\theta}<r,\ j=1,\ldots,2n-2}$.
Let $F_k$ be the scaling map:
$F_k(z, \theta)=(\frac{z}{\sqrt{k}}, \frac{\theta}{k})$.
From now on, we assume that $k$ is large enough so that $F_k(D_{\log k})\subset D$.
We define the scaled bundle $F^*_k\Lambda^{0,q}T^*X$ on $D_{\log k}$ to be the bundle whose fiber at $(z, \theta)\in D_{\log k}$ is
\[
F^*_k\Lambda^{0,q}T^*_{(z,\theta)}X:=\Bigr\{\textstyle\sideset{}{'}\sum_{\abs{J}=q}a_Je_J(\frac{z}{\sqrt{k}},\frac{\theta}{k});\, a_J\in\Complex,\abs{J}=q\Bigl\}\,.
\]
%\tfrac{z_0}{\sqrt{k}},\tfrac{\theta_0}{k}
We take the Hermitian metric $\langle\,\cdot\,,\cdot\,\rangle_{F^*_k}$ on $F^*_k\Lambda^{0,q}T^*X$ so that at each point $(z, \theta)\in D_{\log k}$\,,
\[
\Bigr\{e_J\big(\tfrac{z}{\sqrt{k}}\;,\tfrac{\theta}{k}\big)\,; \text{$\abs{J}=q$, $J$ strictly increasing}\Bigl\}\,,
\]
is an orthonormal basis for $F^*_k\Lambda^{0,q}T^*_{(z,\theta)}X$. For $r>0$, let $F^*_k\Omega^{0,q}(D_r)$
denote the space of smooth sections of $F^*_k\Lambda^{0,q}T^*X$ over $D_r$. Let $F^*_k\Omega^{0,q}_c(D_r)$ be the subspace of
$F^*_k\Omega^{0,q}(D_r)$ whose elements have compact support in $D_r$.
Given $f\in\Omega^{0,q}(F_k(D_{\log k}))$ we write
$f=\sum'_{\abs{J}=q}f_Je_J$.
We define the scaled form $F_k^*f\in F^*_k\Omega^{0,q}(D_{\log k})$ by:
\[
F_k^*f=\sideset{}{'}\sum_{\abs{J}=q}f_J\Big(\frac{z}{\sqrt{k}}\;, \frac{\theta}{k}\Big)e_J\Big(\frac{z}{\sqrt{k}}\;,\frac{\theta}{k}\Big)\,.
\]

Let $P$ be a partial differential operator of order one on $F_k(D_{\log k})$ with $C^\infty$ coefficients. We write
$P=a(z, \theta)\frac{\pr}{\pr\theta}+\sum^{2n-2}_{j=1}a_j(z, \theta)\frac{\pr}{\pr x_j}$, $a, a_j\in C^\infty(F_k(D_{\log k}))$, $j=1,\ldots,2n-2$. The partial diffferential operator
$P_{(k)}$ on $D_{\log k}$ is given by
\begin{equation} \label{s3-e8-4}
%\begin{split}
P_{(k)}=\sqrt{k}F^*_ka\frac{\pr}{\pr\theta}+\sum^{2n-2}_{j=1}F^*_ka_j\frac{\pr}{\pr x_j}
=\sqrt{k}a\Big(\frac{z}{\sqrt{k}}\,,\frac{\theta}{k}\Big)\frac{\pr}{\pr\theta}+\sum^{2n-2}_{j=1}a_j\Big(\frac{z}{\sqrt{k}}\,, \frac{\theta}{k}\Big)\frac{\pr}{\pr x_j}\,.
%\end{split}
\end{equation}
Let $f\in C^\infty(F_k(D_{\log k}))$. We can check that
\begin{equation} \label{s3-e8-5}
P_{(k)}(F^*_kf)=\frac{1}{\sqrt{k}}F^*_k(Pf).
\end{equation}

The scaled differential operator $\ddbar_{s,(k)}:F^*_k\Omega^{0,q}(D_{\log k})\To F^*_k\Omega^{0,q+1}(D_{\log k})$ is given by (compare to the formula \eqref{s3-e4} for $\ddbar_s$):
\begin{equation} \label{s3-e8-6}
\begin{split}
\ddbar_{s,(k)}=&\sum^{n-1}_{j=1}e_j\Big(\frac{z}{\sqrt{k}}\,,\frac{\theta}{k}\Big)\wedge\Bigr(\ol U_{j(k)}+\sqrt{k}F^*_k(\ol U_jR)\Bigr)\\
&+\sum^{n-1}_{j=1}\frac{1}{\sqrt{k}}(\ddbar_b e_j)\Big(\frac{z}{\sqrt{k}}\,,\frac{\theta}{k}\Big)\wedge \Big(e_j\Big(\frac{z}{\sqrt{k}}\,,\frac{\theta}{k}\Big)\wedge\Big)^*.
\end{split}
\end{equation}
From \eqref{s3-e4} and \eqref{s3-e8-5}, we can check that if $f\in\Omega^{0,q}(F_k(D_{\log k}))$, then
\begin{equation} \label{s3-e8-7}
\ddbar_{s,(k)}F^*_kf=\frac{1}{\sqrt{k}}F^*_k(\ddbar_sf).
\end{equation}

Let $(\ |\ )_{kF^*_k\phi_0}$ be the inner product on the space $F^*_k\Omega^{0,q}_c(D_{\log k})$
defined as follows:
\[
(f\ |\ g)_{kF^*_k\phi_0}=\int_{D_{\log k}}\!\langle f,g\rangle_{F^*_k}e^{-kF^*_k\phi_0}(F^*_km)(z, \theta)dv(z)dv(\theta)\,,
\]
where $dv_X=mdv(z)dv(\theta)$ is the volume form, $dv(z)=2^{n-1}dx_1\cdots dx_{2n-2}$, $dv(\theta)=\sqrt{2}d\theta$. Note that $m(0,0)=1$. Let $\ddbar^{\,*}_{s,(k)}:F^*_k\Omega^{0,q+1}(D_{\log k})\To F^*_k\Omega^{0,q}(D_{\log k})$ be the formal adjoint of $\ddbar_{s,(k)}$ with respect to $(\ |\ )_{kF^*_k\phi_0}$. Then, we can check that (compare the formulas for $\ddbar^{\,*}_s$, see \eqref{s3-e5} and \eqref{s3-e6})
\begin{equation} \label{s3-e8-8}
\begin{split}
\ddbar^{\,*}_{s,(k)}=&\sum^{n-1}_{j=1}\Big(e_j\Big(\frac{z}{\sqrt{k}},\frac{\theta}{k}\Big)\wedge\Big)^*\Bigr(-U_{j(k)}+\sqrt{k}F^*_k( U_j\ol R)+\sqrt{k}F^*_k(U_j\phi_0)+\frac{1}{\sqrt{k}}F^*_ks_j\Bigr)\\
&+\sum^{n-1}_{j=1}\frac{1}{\sqrt{k}}\,e_j\Big(\frac{z}{\sqrt{k}}\,,\frac{\theta}{k}\Big)\wedge\Big((\ddbar_b e_j)\Big(\frac{z}{\sqrt{k}}\,,\frac{\theta}{k}\Big)\wedge\Big)^*,
\end{split}
\end{equation}
where $s_j\in C^\infty(D_{\log k})$, $j=1,\ldots,n-1$, are independent of $k$.
We also have
\begin{equation} \label{s3-e8-9}
\ddbar^{\,*}_{s,(k)}F^*_kf=\frac{1}{\sqrt{k}}F^*_k(\ddbar^{\,*}_sf),\,\quad f\in\Omega^{0,q+1}(F_k(D_{\log k}))\,.
\end{equation}
We define now the \emph{scaled Kohn-Laplacian}:
\begin{equation} \label{s3-e8-9-1}
\Box^{(q)}_{s,(k)}:=\ddbar^{\,*}_{s,(k)}\ddbar_{s,(k)}+\ddbar_{s,(k)}\ddbar^{\,*}_{s,(k)}:F^*_k\Omega^{0,q}(D_{\log k})\To F^*_k\Omega^{0,q}(D_{\log k}).
\end{equation}
From \eqref{s3-e8-7} and \eqref{s3-e8-9}, we see that if $f\in\Omega^{0,q}(F_k(D_{\log k}))$, then
\begin{equation} \label{s3-e9}
(\Box^{(q)}_{s,(k)})F^*_kf=\frac{1}{k}F^*_k(\Box^{(q)}_sf).
\end{equation}

From \eqref{s1-e20} and \eqref{s1-e22}, we can check that
\begin{equation} \label{s3-e10}
\ol U_{j(k)}+\sqrt{k}F^*_k(\ol U_j R)=\frac{\pr}{\pr\ol z_j}+\frac{1}{\sqrt{2}}i\lambda_jz_j\frac{\pr}{\pr \theta}+\epsilon_kZ_{j,k}\,,\quad j=1,\ldots,n-1,
\end{equation}
on $D_{\log k}$, where $\epsilon_k$ is a sequence tending to zero with $k\To\infty$ and $Z_{j,k}$ is a first order differential operator and all the derivatives of the coefficients of $Z_{j,k}$ are uniformly bounded in $k$ on $D_{\log k}$, $j=1,\ldots,n-1$. Similarly, from \eqref{s1-e22} and \eqref{s1-e23}, we can check that
\begin{equation} \label{s3-e11}
\begin{split}
&-U_{t(k)}+\sqrt{k}F^*_k( U_t\ol R)+\sqrt{k}F^*_k(U_t\phi_0)+\frac{1}{\sqrt{k}}F^*_ks_t\\
&=-\frac{\pr}{\pr z_t}+\frac{1}{\sqrt{2}}\,i\lambda_t\ol z_t\frac{\pr}{\pr\theta}-\frac{1}{\sqrt{2}}\,i\lambda_t\ol z_t\beta+\sum^{n-1}_{j=1}\mu_{j,\,t}\,\ol{z}_j+\delta_kV_{t,\,k}\,,\quad  t=1,\ldots,n-1,
\end{split}
\end{equation}
on $D_{\log k}$, where $\delta_k$ is a sequence tending to zero with $k\To\infty$ and $V_{t,k}$ is a first order differential operator and all the derivatives of the coefficients of $V_{t,k}$ are uniformly bounded in $k$ on $D_{\log k}$, $t=1,\ldots,n-1$.
From \eqref{s3-e10}, \eqref{s3-e11} and \eqref{s3-e8-6}, \eqref{s3-e8-8}, \eqref{s3-e8-9-1}, it is straightforward to obtain the following.

\begin{prop} \label{s3-p1}
We have that
\begin{equation*} %\label{s3-e11-1}
\begin{split}
\Box&^{(q)}_{s,(k)}=\sum^{n-1}_{j=1}\Bigr[\Bigr(-\frac{\pr}{\pr z_j}+\frac{i}{\sqrt{2}}\lambda_j\ol z_j\frac{\pr}{\pr \theta}-\frac{i}{\sqrt{2}}\lambda_j\ol z_j\beta+\sum^{n-1}_{t=1}\mu_{t,\,j}\,\ol z_t\Bigr)\Bigr(\frac{\pr}{\pr\ol z_j}+\frac{i}{\sqrt{2}}\lambda_jz_j\frac{\pr}{\pr\theta}\Bigr)\Bigr]\\
&
+\sum^{n-1}_{j,\,t=1}e_j\Big(\frac{z}{\sqrt{k}}\,, \frac{\theta}{k}\Big)\wedge \Big(e_t\Big(\frac{z}{\sqrt{k}}\,,\frac{\theta}{k}\Big)\wedge\Big)^*\Bigr(\Bigr(\mu_{j,\,t}-\frac{i}{\sqrt{2}}\lambda_j\delta_{j,\,t}\beta\Bigr)+\sqrt{2}i\lambda_j\delta_{j,\,t}\,\frac{\pr}{\pr \theta}\Bigr)+\varepsilon_kP_k,
\end{split}
\end{equation*}
on $D_{\log k}$, where $\varepsilon_k$ is a sequence tending to zero with $k\To\infty$, $P_k$ is a second order differential operator and all the derivatives of the coefficients of $P_k$ are uniformly bounded in $k$ on $D_{\log k}$.
\end{prop}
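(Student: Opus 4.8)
The plan is to compute $\Box^{(q)}_{s,(k)}=\ddbar^*_{s,(k)}\ddbar_{s,(k)}+\ddbar_{s,(k)}\ddbar^*_{s,(k)}$ (see \eqref{s3-e8-9-1}) directly from the explicit formulas \eqref{s3-e8-6}, \eqref{s3-e8-8}, after inserting the simplified expressions \eqref{s3-e10}, \eqref{s3-e11} for the building blocks; equivalently one could scale the formula of Proposition~\ref{s3-p0} term by term. Write $A_j$ for the right-hand side of \eqref{s3-e10}, $B_j$ for the right-hand side of \eqref{s3-e11}, and set $c_j:=e_j(\tfrac{z}{\sqrt k},\tfrac{\theta}{k})$, $w_j:=\tfrac{1}{\sqrt k}(\ddbar_b e_j)(\tfrac{z}{\sqrt k},\tfrac{\theta}{k})$, so that \eqref{s3-e8-6}, \eqref{s3-e8-8} read
\[
\ddbar_{s,(k)}=\sum_{j}c_j\wedge A_j+\sum_{j}w_j\wedge c_j^{\wedge,*},\qquad
\ddbar^*_{s,(k)}=\sum_{j}c_j^{\wedge,*}B_j+\sum_{j}c_j\wedge w_j^{\wedge,*}.
\]
Each $w_j$ is $O(1/\sqrt k)$ with all derivatives controlled on $D_{\log k}$, so every term in the expansion of $\Box^{(q)}_{s,(k)}$ containing some $w_j$ is a differential operator of order $\leqslant2$ with a decaying prefactor, hence belongs to the error term $\varepsilon_kP_k$ of the assertion.

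Next I would treat the remaining ``main'' part $\sum_{j,t}\bigl(c_t^{\wedge,*}\,B_t\,(c_j\wedge)\,A_j+(c_j\wedge)\,A_j\,c_t^{\wedge,*}B_t\bigr)$ (composition of operators). Commuting the scalar operators $B_t$, $A_j$ past the form operators produces, besides $c_t^{\wedge,*}(c_j\wedge)B_tA_j$ and $(c_j\wedge)c_t^{\wedge,*}A_jB_t$, only terms in which a derivative falls on a coefficient of $c_j$ or $c_t$; since $c_j,c_t$ depend on $(z,\theta)$ solely through the scaled point, such terms gain a factor $O(1/\sqrt k)$ (or $O(1/k)$ for the $\tfrac{\partial}{\partial\theta}$-part) and join $\varepsilon_kP_k$. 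Since $\{c_j\}_j$ is, by construction, an orthonormal frame of $F^*_k\Lambda^{0,1}T^*(X)$, we have the anticommutation relation $c_t^{\wedge,*}(c_j\wedge)+(c_j\wedge)c_t^{\wedge,*}=\delta_{j,t}$; inserting $c_t^{\wedge,*}(c_j\wedge)=\delta_{j,t}-(c_j\wedge)c_t^{\wedge,*}$ collapses the main part to
\[
\Box^{(q)}_{s,(k)}=\sum_{j=1}^{n-1}B_jA_j+\sum_{j,t=1}^{n-1}(c_j\wedge)\,c_t^{\wedge,*}\,[A_j,B_t]+\varepsilon_kP_k .
\]

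Finally I would compute the scalar commutator. Dropping the $\epsilon_kZ_{j,k}$ and $\delta_kV_{t,k}$ pieces (which feed $\varepsilon_kP_k$) and using $A_j=\tfrac{\partial}{\partial\ol z_j}+\tfrac{i}{\sqrt2}\lambda_jz_j\tfrac{\partial}{\partial\theta}$ together with $B_t=-\tfrac{\partial}{\partial z_t}+\tfrac{i}{\sqrt2}\lambda_t\ol z_t\tfrac{\partial}{\partial\theta}-\tfrac{i}{\sqrt2}\lambda_t\ol z_t\beta+\sum_{s}\mu_{s,t}\ol z_s$, a direct calculation gives
\[
[A_j,B_t]=\mu_{j,t}-\tfrac{i}{\sqrt2}\lambda_j\delta_{j,t}\beta+\sqrt2\,i\lambda_j\delta_{j,t}\tfrac{\partial}{\partial\theta},
\]
which is exactly the operator inside the wedge term of the Proposition; expanding $B_jA_j$ and moving the $\epsilon_k,\delta_k$ cross-terms into the error reproduces the first sum. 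It then remains to verify that every discarded term indeed has the form $\varepsilon_kP_k$: each carries an explicit factor ($1/\sqrt k$, $\epsilon_k$, or $\delta_k$) and has coefficients built from the fixed smooth data $e_j,\ddbar_b e_j,\lambda_j,\mu,\beta,s_j,Z_{j,k},V_{t,k}$ evaluated at the scaled point, multiplied by polynomials in $(z,\ol z)$; on $D_{\log k}$ all such coefficients and their derivatives are bounded by fixed powers of $\log k$, which a single $\varepsilon_k\to0$ absorbs while the coefficients of $P_k$ stay under control. I expect this last, uniform bookkeeping of the errors on the growing box $D_{\log k}$ — rather than the routine commutator computation — to be the only delicate point, and it is handled exactly as in the derivation of \eqref{s3-e10}--\eqref{s3-e11}.
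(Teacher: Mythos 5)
Your proposal is correct and follows essentially the same route as the paper: the paper simply asserts that the formula is ``straightforward to see'' from \eqref{s3-e10}, \eqref{s3-e11} and \eqref{s3-e8-6}, \eqref{s3-e8-8}, \eqref{s3-e8-9-1} (the commutator structure being already displayed in the unscaled Proposition~\ref{s3-p0}), and your expansion via the anticommutation relation $e_j\wedge e_t^{\wedge,*}+e_t^{\wedge,*}e_j\wedge=\delta_{j,t}$, the computation $[A_j,B_t]=\mu_{j,t}-\tfrac{i}{\sqrt2}\lambda_j\delta_{j,t}\beta+\sqrt2\,i\lambda_j\delta_{j,t}\tfrac{\pr}{\pr\theta}$, and the absorption of all $O(k^{-1/2}\log^Nk)$, $\epsilon_k$, $\delta_k$ terms into $\varepsilon_kP_k$ is exactly the bookkeeping the paper leaves implicit.
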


Let $D\subset D_{\log k}$ be an open set and let $W^s_{kF^*_k\phi_0}(D;\, F^*_k\Lambda^{0, q}T^*X)$,
$s\in\mathbb N_0:=\mathbb N\cup\set{0}$, denote the Sobolev space of order $s$ of sections of $F^*_k\Lambda^{0,q}T^*X$
over $D$ with respect to the weight $e^{-kF^*_k\phi_0}$. The Sobolev norm on this space is given by
%Let $\norm{}_{kF^*_k\phi_0,s,D}$ denote the Sobolev norm of the space
%$W^s_{kF^*_k\phi_0}(D;\, F^*_k\Lambda^{0,q}T^*X)$ defined by
\begin{equation} \label{s1-e37}
%\begin{split}
\norm{u}^2_{kF^*_k\phi_0,s,D}
=\sideset{}{'}\sum_{\substack{\alpha\in\mathbb{N}^{2n-1}_0,\;\abs{\alpha}\leqslant  s\;\\{\abs{J}=q}}}
\int_{D}\!\abs{\pr^\alpha_{x,\theta}u_J}^2e^{-kF^*_k\phi_0}(F^*_km)(z, \theta)dv(z)dv(\theta),
%\end{split}
\end{equation}
where
$u=\sum'_{\abs{J}=q}u_Je_J\big(\frac{z}{\sqrt{k}},\frac{\theta}{k}\big)\in W^s_{kF^*_k\phi_0}
(D;\, F^*_k\Lambda^{0,q}T^*X)$ and $m$ is the volume form.
If $s=0$, we write $\norm{\cdot}_{kF^*_k\phi_0,D}$ to denote $\norm{\cdot}_{kF^*_k\phi_0,0,D}$.
We need the following

\begin{prop} \label{s3-p2}
For every $r>0$ with $D_{2r}\subset D_{\log k}$ and $s\in\mathbb N\cup\{0\}$, there is a constant $C_{r,s}>0$
independent of $k$, such that %for $u\in F^*_k\Omega^{0,q}(D_{\log k})$, we have
\begin{equation} \label{s3-e12}
\norm{u}^2_{kF^*_k\phi_0,s+1,D_{r}}\leqslant  C_{r,s}\Bigr(\norm{u}^2_{kF^*_k\phi_0,D_{2r}}+\big\|\Box^{(q)}_{s,(k)}u\big\|^2_{kF^*_k\phi_0,s,D_{2r}}\Bigl)\,,\;
u\in F^*_k\Omega^{0,q}(D_{\log k})\,.
\end{equation}
\end{prop}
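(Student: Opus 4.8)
The estimate \eqref{s3-e12} is a subelliptic (G\aa rding-type) estimate for the scaled Kohn Laplacian $\Box^{(q)}_{s,(k)}$, with constants uniform in $k$; the key point is that uniformity, since for each fixed $k$ such an estimate is classical. The plan is to reduce to the model operator. By Proposition \ref{s3-p1} we may write
\[
\Box^{(q)}_{s,(k)}=\Box^{(q)}_{b,H_n}+\varepsilon_k P_k+(\text{lower-order terms bounded uniformly in }k),
\]
where $\Box^{(q)}_{b,H_n}$ is the Heisenberg-group model Laplacian (a polynomial second-order operator with $k$-independent coefficients, built from the vector fields $\pr/\pr\ol z_j+\tfrac{i}{\sqrt2}\lambda_j z_j\pr/\pr\theta$ and their formal adjoints), $P_k$ is second order with all coefficient-derivatives uniformly bounded on $D_{\log k}$, and $\varepsilon_k\to0$. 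Since condition $Y(q)$ holds at $p$ (hence, by continuity, uniformly near $p$), the model operator $\Box^{(q)}_{b,H_n}$ is subelliptic with a gain of one derivative: it satisfies the maximal hypoelliptic estimate
\[
\norm{v}^2_{kF^*_k\phi_0,\,s+1,\,D_r}\leqslant \widetilde C_{r,s}\Bigl(\norm{v}^2_{kF^*_k\phi_0,\,D_{2r}}+\bigl\|\Box^{(q)}_{b,H_n}v\bigr\|^2_{kF^*_k\phi_0,\,s,\,D_{2r}}\Bigr)
\]
with $\widetilde C_{r,s}$ independent of $k$ (it depends only on the $Y(q)$ constants, the eigenvalues $\lambda_j$, $\beta$, $\mu_{j,t}$, and $r,s$). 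This is the subelliptic estimate of Kohn, recalled in the references \cite{Ko65,FK72,CS01}, applied to the fixed model operator; there is no $k$ in it at all.

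The next step is a perturbation argument. Write $\Box^{(q)}_{s,(k)}-\Box^{(q)}_{b,H_n}=\varepsilon_k P_k+Q_k$, where $Q_k$ collects the first-order and zeroth-order remainders $\epsilon(\ol U+k(\ol UR))$, $\epsilon(\ol U^{*}+k(U\ol R))$, $f$ of Proposition \ref{s3-p0} after scaling, and the $\sqrt2 i\lambda_j\pr/\pr\theta$ and $\mu_{j,t}$ terms already present in the model — more precisely $Q_k$ is of order $\leqslant1$ with coefficients and all their derivatives uniformly bounded in $k$ on $D_{\log k}$ (this is exactly the content of \eqref{s3-e10}, \eqref{s3-e11} and Proposition \ref{s3-p1}). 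Then
\[
\bigl\|\Box^{(q)}_{b,H_n}u\bigr\|_{kF^*_k\phi_0,\,s,\,D_{2r}}\leqslant \bigl\|\Box^{(q)}_{s,(k)}u\bigr\|_{kF^*_k\phi_0,\,s,\,D_{2r}}+\varepsilon_k\norm{P_ku}_{kF^*_k\phi_0,\,s,\,D_{2r}}+\norm{Q_ku}_{kF^*_k\phi_0,\,s,\,D_{2r}}.
\]
The term $\norm{Q_ku}_{s,D_{2r}}$ is bounded by $C\norm{u}_{s+1/2,D_{2r}}$, which can be absorbed: one interpolates $\norm{u}^2_{s+1/2}\leqslant \delta\norm{u}^2_{s+1}+C_\delta\norm{u}^2_{0}$ (or simply uses $\norm{u}_{s+1/2}\leqslant\tfrac12(\norm{u}_{s+1}+\norm{u}_0)$ plus a small shrinkage of the domain radius). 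The genuinely second-order term $\varepsilon_k\norm{P_ku}_{s,D_{2r}}\leqslant C\varepsilon_k\norm{u}_{s+2,D_{2r}}$ is handled by iterating the model estimate once more to control $\norm{u}_{s+2}$ in terms of $\norm{\Box^{(q)}_{s,(k)}u}_{s+1}$ and $\norm{u}_0$ on a slightly larger ball, together with the elliptic regularity bootstrap inside the characteristic variety; for $k$ large enough $C\varepsilon_k$ is $<\tfrac12$ and the $s+2$ term from this step is then absorbed into the left side. A cleaner alternative that avoids the $s+2$ issue is to first establish the $s=0$ estimate directly from $L^2$-subellipticity of $\Box^{(q)}_{b,H_n}$ (where $P_k$ contributes $\varepsilon_k\norm{u}_{2}$ and the $Y(q)$ estimate gives control of $\norm{u}_1$, closing by absorption once we know $\norm{u}_2$ is finite from interior elliptic regularity off the characteristic set), then bootstrap to higher $s$ by differentiating the equation — each commutator $[\pr^\alpha,\Box^{(q)}_{s,(k)}]$ produces terms of the same structure with the same uniform bounds, so the induction on $s$ is mechanical.

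The main obstacle is purely the uniformity in $k$: one must check that every constant entering the subelliptic estimate — the $Y(q)$ subellipticity constant, the interpolation constants, and the bounds on the commutators $[\pr^\alpha_{x,\theta},\Box^{(q)}_{s,(k)}]$ — can be chosen independent of $k$. This hinges on the fact, read off from Proposition \ref{s3-p1} and \eqref{s3-e10}--\eqref{s3-e11}, that $\Box^{(q)}_{s,(k)}$ is a $k$-independent model operator plus a perturbation whose coefficients have \emph{all derivatives} uniformly bounded on $D_{\log k}$, and on the stability of condition $Y(q)$ under small perturbations of the Levi form near $p$. Once one is careful to work on a fixed pair of concentric balls $D_r\subset D_{2r}$ (independent of $k$, and ultimately contained in $D_{\log k}$ for $k$ large) and to absorb all lower-order terms with $k$-uniform interpolation, the argument is the standard Kohn subelliptic estimate made quantitative. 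I would also remark that \eqref{s3-e12} is essentially \cite[Prop.\,2.10 or similar]{Hsiao08} in the $L^k$-twisted setting, so one may alternatively cite that and only indicate the modifications needed to track uniformity.
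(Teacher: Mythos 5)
There is a genuine gap in your perturbation step. The subelliptic estimate for $\Box^{(q)}_{b,H_n}$ gains only \emph{one} derivative, so when you replace $\Box^{(q)}_{b,H_n}u$ by $\Box^{(q)}_{s,(k)}u-\varepsilon_kP_ku-Q_ku$ the second–order error produces $\varepsilon_k\norm{P_ku}_{s,D_{2r}}\leqslant C\varepsilon_k\norm{u}_{s+2,D_{2r}}$, and this term can neither be absorbed into the left side (which only controls $s+1$ derivatives; smallness of $\varepsilon_k$ does not help, since $\norm{u}_{s+2}/\norm{u}_{s+1}$ is unbounded) nor be estimated by the quantities allowed on the right of \eqref{s3-e12}: iterating the model estimate to control $\norm{u}_{s+2}$ requires $\norm{\Box^{(q)}_{s,(k)}u}_{s+1}$, one more derivative of the right-hand side than \eqref{s3-e12} permits, and the recursion never closes — each step calls for a yet higher norm of $\Box^{(q)}_{s,(k)}u$. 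Your ``cleaner alternative'' at $s=0$ has the same defect ($\varepsilon_k\norm{u}_2$ is not dominated by $\norm{u}_0+\norm{\Box^{(q)}_{s,(k)}u}_0$ with a uniform constant; knowing $\norm{u}_2<\infty$ is irrelevant). A smaller inaccuracy: for a first-order $Q_k$ with $O(1)$ coefficients one has $\norm{Q_ku}_s\leqslant C\norm{u}_{s+1}$, not $C\norm{u}_{s+1/2}$, and with a non-small constant and larger domain this also cannot be absorbed; estimates with loss of one derivative are simply not stable under generic first- or second-order perturbations treated at the level of the equation.

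The paper proceeds differently: it does not perturb off the model at all. Since $Y(q)$ holds, $\Box^{(q)}_{s,(k)}$ is itself a Kohn-Laplacian-type operator to which Kohn's subelliptic estimate (\cite[Th.\,7.7]{Ko65}, \cite[Prop.\,5.4.10]{FK72}, \cite[Th.\,8.4.3]{CS01}) applies for each fixed $k$, giving \eqref{s3-e12} with some constant; the uniformity in $k$ is then obtained by going through Kohn's proof (\cite[pp.\,193--199]{CS01}) and observing that the constant depends only on finitely many $C^\infty(D_{2r})$-seminorms of the coefficients (of $\Box^{(q)}_{s,(k)}$, $\ddbar_{s,(k)}$, $\ddbar^*_{s,(k)}$, the commutators $[\ol U_{j(k)},U_{t(k)}]$, the weight $kF^*_k\phi_0$, the volume factor) and on the eigenvalues of $\mathcal{L}_p$ — this is exactly Remark~\ref{s3-r1} — all of which are uniformly bounded in $k$ by \eqref{s3-e10}--\eqref{s3-e11}. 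So the correct repair of your argument is not an absorption scheme but a quantitative rereading of Kohn's proof showing the constants depend continuously on the structure data, which converge (in $C^\infty$ on fixed compact sets) to those of the Heisenberg model.
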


\begin{proof}
Since $Y(q)$ holds, we see from the classical work of Kohn (\cite[Th.\,7.7]{Ko65}, \cite[Prop.\,5.4.10]{FK72}, \cite[Th.\,8.4.3]{CS01}), that
$\Box^{(q)}_{s,(k)}$ is hypoelliptic with loss of one derivative and we have \eqref{s3-e12}.
%\[
%\norm{u}^2_{kF^*_k\phi_0,s+1,D_{r}}\leqslant  C_{r,s}\Bigr(\norm{u}^2_{kF^*_k\phi_0,D_{2r}}+\big\|\Box^{(q)}_{s,(k)}u\big\|^2_{kF^*_k\phi_0,s,D_{2r}}\Bigl)\,,\quad
%\text{for $u\in F^*_k\Omega^{0,q}(D_{\log k})$}
%\]
Since all the derivatives of the coefficients of the operator $\Box^{(q)}_{s,(k)}$ are uniformly bounded in $k$,
if we go through the proof of~\cite[pp.\,193--199]{CS01} (see also Remark~\ref{s3-r1} below), it is
straightforward to see that $C_{r,\,s}$ can be taken to be independent of $k$. %The proposition follows.
\end{proof}

\begin{rem} \label{s3-r1}
Put
\begin{equation*}
\begin{split}
A=\{&\mbox{all the coefficients of $\Box^{(q)}_{s,(k)}$, $\ddbar_{s,(k)}$, $\ddbar^{\,*}_{s,(k)}$,
$\big[\,\ol U_{j(k)}\ ,U_{t(k)}\big]$, $\ol U_{j(k)}$, $U_{t(k)}$},\\
&j,t=1,\ldots,n-1,\ \mbox{and of $kF^*_k\phi_0$, $F^*_km$}\}
\end{split}
\end{equation*}
and $B=\set{\mbox{all the eigenvalues of $\mathcal{L}_p$}}$.
From the proof of Kohn, we see that for $r>0$, $s\in\mathbb{N}_0$, there exist a semi-norm $P$ on $C^\infty(D_{2r})$ and a strictly positive continuous function $F:\Real\To\Real_+$ such that
\begin{equation} \label{s3-e12-1}
%\begin{split}
\norm{u}^2_{kF^*_k\phi_0,s+1,D_{r}}\leqslant \Bigr(\textstyle\sum\limits_{f\in A}F(P(f))+\sum\limits_{\lambda\in B}F(\lambda)\Bigr) \Bigr(\big\|u\big\|^2_{kF^*_k\phi_0,D_{2r}}+\big\|\Box^{(q)}_{s,(k)}u\big\|^2_{kF^*_k\phi_0,s,D_{2r}}\Bigr),
%\end{split}
\end{equation}
where $u\in F^*_k\Omega^{0,q}(D_{\log k})$.
Roughly speaking, the constant $C_{r,s}$ in \eqref{s3-e12} depends continuously on the eigenvalues of $\mathcal{L}_p$ and the elements of $A$ in the $C^\infty(D_{2r})$ topology. (See also the proof of \cite[Lemma\,4.1]{SW05}.)
\end{rem}
%We need
\begin{lem} \label{s3-l1}
Let $\alpha_k\in F^*_k\Omega^{0,q}(D_{\log k})$ with $\Box^{(q)}_{s,(k)}\alpha_k=0$ and $\norm{\alpha_k}_{kF^*_k\phi_0,D_{\log k}}\leqslant  1$. Then, there is a constant $C>0$ such that for all $k$ we have
$\abs{\alpha_k(0)}^2\leqslant  C$.
\end{lem}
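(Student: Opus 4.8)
The plan is to run a subelliptic bootstrap on a \emph{fixed} box around the origin and then apply a Sobolev embedding; the only point needing attention is that the $k$-dependent weighted norms $\norm{\cdot}_{kF^*_k\phi_0,s,D_r}$ are, on such a box, uniformly comparable in $k$ to the ordinary Sobolev norms.

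First I would fix $r=1$ and take $k$ so large that $D_2\subset D_{\log k}$. Since the density defining the weighted $L^2$-norm is nonnegative and $D_2\subset D_{\log k}$, the hypothesis $\norm{\alpha_k}_{kF^*_k\phi_0,D_{\log k}}\leqslant1$ gives $\norm{\alpha_k}_{kF^*_k\phi_0,D_2}\leqslant1$. As $\Box^{(q)}_{s,(k)}\alpha_k=0$, Proposition~\ref{s3-p2} applied with $r=1$ yields, for every $s\in\mathbb N_0$,
\[
\norm{\alpha_k}^2_{kF^*_k\phi_0,\,s+1,\,D_1}\leqslant C_{1,s}\norm{\alpha_k}^2_{kF^*_k\phi_0,D_2}\leqslant C_{1,s}\,,
\]
with $C_{1,s}>0$ independent of $k$.

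Next I would remove the weight. By \eqref{s1-e23} one has $\phi_0(0,0)=0$, and an elementary estimate on the remainder terms in that expansion (they acquire negative powers of $k$ after the anisotropic scaling) shows that
\[
kF^*_k\phi_0(z,\theta)=k\,\phi_0\Big(\frac{z}{\sqrt k},\frac{\theta}{k}\Big)\longrightarrow \beta\theta+\sum^{n-1}_{j,t=1}\mu_{j,t}\,\ol z_jz_t
\]
uniformly on the compact set $\ol{D_2}$ as $k\to\infty$, while $F^*_km\to m(0,0)=1$ uniformly there. Hence, for all $k$ large, the density $e^{-kF^*_k\phi_0}F^*_km$ is bounded above and below by positive constants independent of $k$ on $D_2$. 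Consequently, on $D_1$ the weighted Sobolev norm $\norm{\cdot}_{kF^*_k\phi_0,s,D_1}$ is equivalent, uniformly in $k$, to the standard Sobolev norm $\norm{\cdot}_{s,D_1}$ of order $s$ of the components $u_J$ in the fixed frame; combining with the previous step, $\norm{\alpha_k}_{s+1,D_1}\leqslant C'_{1,s}$ with $C'_{1,s}$ independent of $k$.

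Finally, choosing $s$ with $s+1>n-\tfrac12$ (e.g.\ $s=n-1$), the Sobolev embedding $W^{s+1}(D_1)\hookrightarrow C^0(\ol{D_1})$ gives $\sup_{D_1}\abs{\alpha_k}\leqslant C''\norm{\alpha_k}_{s+1,D_1}\leqslant C''C'_{1,s}$; in particular $\abs{\alpha_k(0)}^2\leqslant(C''C'_{1,s})^2=:C$, independent of $k$. The step that needs the most care is the second one: one has to confirm that the scaling $F_k(z,\theta)=(z/\sqrt k,\theta/k)$ really forces the weight $kF^*_k\phi_0$ and the density $F^*_km$ to have uniform $k$-independent bounds on a fixed box, so that they can be used together with the $k$-independence of the subelliptic constant $C_{r,s}$ from Proposition~\ref{s3-p2} (see Remark~\ref{s3-r1}); everything else is the routine elliptic regularity/Sobolev embedding argument.
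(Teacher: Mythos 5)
Your proposal is correct and follows essentially the same route as the paper: both rest on the $k$-independent subelliptic estimate of Proposition~\ref{s3-p2} applied on a fixed box (using $\Box^{(q)}_{s,(k)}\alpha_k=0$ to kill the error term) followed by a Sobolev-type pointwise bound at the origin. The only cosmetic difference is that the paper obtains the pointwise bound by cutting off and estimating the Fourier transform of $\chi\alpha_k$ via Cauchy--Schwarz, absorbing the uniform comparability of the weight $e^{-kF^*_k\phi_0}F^*_km$ on the fixed box into the constant, whereas you make that comparability explicit and then cite the standard embedding $W^{n}(D_1)\hookrightarrow C^0(\ol{D_1})$.
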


\begin{proof}
Fix $r>0$, $r$ small and let $\chi\in C^\infty_0(D_{r})$, $\chi=1$ on $D_{\frac{r}{2}}$. Identify $\alpha_k$ with a form in $\Real^{2n-1}$ by extending with zero. Then
\begin{align*}
\abs{\chi(0)\alpha_k(0)}&=\abs{\int_{\Real^{2n-1}}\!\widehat{\chi\alpha_k}(\xi)d\xi}
=\abs{\int_{\Real^{2n-1}}\!(1+\abs{\xi}^2)^{-\frac{n}{2}}(1+\abs{\xi}^2)^{\frac{n}{2}}\widehat{\chi\alpha_k}(\xi)d\xi}\\
&\quad\leqslant \Bigl(\int_{\Real^{2n-1}}\!(1+\abs{\xi}^2)^{-n}d\xi\Bigr)^{\frac{1}{2}}\Bigl(\int_{\Real^{2n-1}}\!
(1+\abs{\xi}^2)^{n}\abs{\widehat{\chi\alpha_k}(\xi)}^2d\xi\Bigr)^{\frac{1}{2}}\\
&\quad\leqslant \Td c\norm{\alpha_k}_{kF^*_k\phi_0,n,D_r},
\end{align*}
where $\widehat{\chi\alpha_k}$ denotes the Fourier transform of $\chi\alpha_k$.
From \eqref{s3-e12} and using induction, we get
\[\norm{\alpha_k}^2_{kF^*_k\phi_0,n,D_r}\leq C\Bigr(\norm{\alpha_k}^2_{kF^*_k\phi_0,D_{r'}}+\sum^n_{m=1}\norm{(\Box^{(q)}_{s,(k)})^mu}^2_{kF^*_k\phi_0,D_{r'}}\Bigr)\]
for some $r'>0$, where $C>0$ is independent of $k$. Since $\Box^{(q)}_{s,(k)}u=0$, we conclude that
$\norm{\alpha_k}_{kF^*_k\phi_0,n,D_r}\leqslant  C$. The lemma follows.
\end{proof}

Now, we can prove the first part of Theorem \ref{t-main1} (estimate \eqref{s1-e18}).

\begin{thm} \label{s3-t1}
There is a constant $C_0>0$ such that for all $k$ and all $x\in X$ we have
\begin{equation} \label{s3-e14}
k^{-n}\pit^{(q)}_k(x)\leqslant  C_0
\end{equation}
\end{thm}

\begin{proof}
Let $u_k\in H^q_b(X, L^k)$, $\norm{u_k}=1$. Set
$\alpha_k:=k^{-\frac{n}{2}}F^*_k(e^{-kR}u_k)\in F^*_k\Omega^{0,q}(D_{\log k})$.
We recall that $R$ is given by \eqref{s1-e22}. (See also \eqref{s1-e27}.) We check that
$\norm{\alpha_k}_{kF^*_k\phi_0,D_{\log k}}\leqslant  1$.
Using \eqref{s3-e9} and \eqref{s3-e7-0}, it is not difficult to see that $\Box^{(q)}_{s,(k)}\alpha_k=0$ on $D_{\log k}$. From this and Lemma~\ref{s3-l1}, we see that there exists $C(0)>0$ such that for all $k$ we have $\abs{\alpha_k(0)}^2=k^{-n}\abs{u_k(0)}^2\leqslant  C(0)$.
We can apply this procedure for each point $x\in X$, so we can replace $0$ by $x$
in the previous estimate.
In view of Remark~\ref{s3-r1}, we see that we can find $C(x)>0$ and a neighborhood $D_x$ of $x$ such that
for all $k$ and all $y\in D_x$ we have $k^{-n}\abs{u_k(y)}^2\leqslant  C(x)$.
Since $X$ is compact we infer that
\[
C'_0=\sup\{k^{-n}\abs{u_k(x)}^2:k\in\N,x\in X\}<\infty
\]
Thus, for a local orthonormal frame $\{e_J;\text{$\abs{J}=q$, $J$ strictly increasing}\}$ we have \[\sup\{k^{-n}S^{(q)}_{k,J}(x):k\in\N,x\in X\}\leqslant  C'_0\] (see \eqref{s2-e2} for the definition of $S^{(q)}_{k,J}$). From this and Lemma~\ref{s2-l1}, the theorem follows.
%For any point $x\in X$ we can repeat the procedure above and find a constant $C(x)>0$ such that for all $k$ we have $k^{-n}\abs{u_k(x)}^2\leqslant  C(x)$.
%Let $\Td x_0$ be another point of $X$ near $0$. We can repeat the procedure above and get
%$k^{-n}\abs{u_k(\Td x_0)}^2\leqslant  C(\Td x_0)$,
%with $C(\Td x_0)$ independent of $k$.
%In view of Remark~\ref{s3-r1}, we see that the constant $C(\Td x_0)$ can be taken to be uniformly bounded in some neighborhood of $0$. Since $X$ is compact, we get that there is a constant $C'_0$ independent of $k$, such that
%$k^{-n}\abs{u_k(x_0)}^2\leqslant  C'_0$
%for all $x_0\in X$.
\end{proof}

\subsection{The Heisenberg group $H_n$}

We pause and introduce some notations. We identify $\Real^{2n-1}$ with the Heisenberg group $H_n:=\Complex^{n-1}\times\Real$. We also write $(z, \theta)$ to denote the coordinates of $H_n$, $z=(z_1,\ldots,z_{n-1})\in\Complex^{n-1}$, $z_j=x_{2j-1}+ix_{2j}$, $j=1,\ldots,n-1$, and $\theta\in\Real$. Then
\[
\begin{split}
&\Big\{U_{j,H_n}=\frac{\pr}{\pr z_j}-\frac{1}{\sqrt{2}}i\lambda_j\ol z_j\frac{\pr}{\pr\theta}\,;\, j=1,\ldots,n-1\Big\}\,\\
&\Big\{U_{j,H_n}\,,\ \ol{U}_{j,H_n}\,,\ T=-\frac{1}{\sqrt{2}}\frac{\pr}{\pr\theta}\, ;\,j=1,\ldots,n-1\Big\}
\end{split}
\]
are orthonormal bases for the bundles $T^{1, 0}H_n$ and $\Complex TH_n$, respectively. Then
\[
\Big\{dz_j\, ,\ d\ol z_j\, ,\ \omega_0=\sqrt{2}d\theta+\sum^{n-1}_{j=1}(i\lambda_j\ol z_jdz_j-i\lambda_jz_jd\ol z_j)\,;j=1,\ldots,n-1\Big\}
\]
is the basis of $\Complex T^*H_n$ which is dual to $\{U_{j,H_n},\ol U_{j,H_n}, -T;j=1,\ldots,n-1\}$.
%We write $\Lambda^{0,q}T^*(H_n)$ to denote the bundle of $(0,q)$ forms on $H_n$.
We take the Hermitian metric $\langle\,\cdot\,,\cdot\,\rangle$ on $\Lambda^{0,q}T^*H_n$ such that $\{d\ol z_J: \text{$\abs{J}=q$, $J$ strictly increasing}\}$ is an orthonormal basis of $\Lambda^{0,q}T^*H_n$. The Cauchy-Riemann operator $\ddbar_{b,H_n}$ on $H_n$ is given by
\begin{equation} \label{s3-e16}
\ddbar_{b,H_n}=\sum^{n-1}_{j=1}d\ol z_j\wedge\ol U_{j,H_n}:\Omega^{0,q}(H_n)\To\Omega^{0,q+1}(H_n).
\end{equation}
Put
%\begin{equation} \label{s3-e17}
$\psi_0(z, \theta)=\beta\theta+\sum^{n-1}_{j,t=1}\mu_{j,\,t}\ol z_jz_t\in C^\infty(H_n;\, \Real)$,
%\end{equation}
where $\beta$ and $\mu_{j,\,t}$, $j,t=1,\ldots,n-1$, are as in \eqref{s1-e21}. Note that
\begin{equation} \label{s3-e17-1}
\sup_{(z, \theta)\in D_{\log k}}\abs{kF^*_k\phi_0-\psi_0}\To0,\ \ \mbox{as $k\To\infty$}.
\end{equation}

Let $(\ |\ )_{\psi_0}$
be the inner product on $\Omega^{0,q}_c(H_n)$ defined as follows:
\[
(f\ |\ g)_{\psi_0}=\int_{H_n}\!\langle f,g\rangle e^{-\psi_0}dv(z)dv(\theta)\,, \quad f, g\in\Omega^{0,q}_c(H_n)\,,
\]
where $dv(z)=2^{n-1}dx_1dx_2\cdots dx_{2n-2}$, $dv(\theta)=\sqrt{2}d\theta$. Let $\ddbar^{\,*,\psi_0}_{b,H_n}:\Omega^{0,q+1}(H_n)\To\Omega^{0,q}(H_n)$
be the formal adjoint of $\ddbar_{b,H_n}$ with respect to $(\ |\ )_{\psi_0}$. We have
\begin{equation} \label{s3-e21}
\ddbar^{\,*,\psi_0}_{b,H_n}=\sum^{n-1}_{t=1}(d\ol z_t\wedge)^*\,\ol U_{t,H_n}^{\,*,\psi_0}:\Omega^{0,q+1}(H_n)\To \Omega^{0,q}(H_n),
\end{equation}
where
\begin{equation} \label{s3-e22}
\ol U_{t,H_n}^{\,*,\psi_0}=-U_{t,H_n}+U_{t,H_n}\psi_0=-U_{t,H_n}+\sum^{n-1}_{j=1}\mu_{j,\,t}\ol z_j-\frac{1}{\sqrt{2}}i\lambda_t\ol z_t\beta\,.
\end{equation}
%$t=1,\ldots,n-1$, and $d\ol z_t^{\wedge,*}$ is the left adjoint of $d\ol z_t^\wedge$. That is, $(d\ol z_t^\wedge
%u\ |\ v)=(u\ |\ d\ol z_t^{\wedge,*}v)$, for all $u\in\Lambda^{0,q}T^*(H_n)$, $v\in\Lambda^{0,q+1}T^*(H_n)$.
The Kohn-Laplacian on $H_n$ is given by
\begin{equation}\label{s3-e221}
\Box^{(q)}_{b,H_n}=\ddbar_{b,H_n}\ddbar^{\,*,\psi_0}_{b,H_n}+\ddbar^{\,*,\psi_0}_{b,H_n}\ddbar_{b,H_n}:\Omega^{0,q}(H_n)\To\Omega^{0,q}(H_n)\,.
\end{equation}
From \eqref{s3-e16}, \eqref{s3-e21} and \eqref{s3-e22}, we can check that
\begin{equation} \label{s3-e23}
\begin{split}
&\Box^{(q)}_{b,H_n}\\
&=\sum^{n-1}_{j=1}\ol U^{\,*,\psi_0}_{j,H_n}\ol U_{j,H_n}
        +\sum^{n-1}_{j,\,t=1}d\ol z_j\wedge (d\ol z_t\wedge)^*\Bigr[\Bigr(\mu_{j,\,t}-\frac{i}{\sqrt{2}}\lambda_j\delta_{j,\,t}\beta\Bigr)+i\sqrt{2}\lambda_j\delta_{j,\,t}\,\frac{\pr}{\pr\theta}\Bigr]\\
        &=\sum^{n-1}_{j=1}\Bigr[\Bigr(-\frac{\pr}{\pr z_j}+\frac{i}{\sqrt{2}}\lambda_j\ol z_j\frac{\pr}{\pr\theta}+\sum^{n-1}_{t=1}\mu_{t,j}\ol z_t-\frac{1}{\sqrt{2}}i\lambda_j\ol z_j\beta\Bigr)\Bigr(\frac{\pr}{\pr\ol z_j}+\frac{i}{\sqrt{2}}\lambda_jz_j\frac{\pr}{\pr\theta}\Bigr)\Bigr]\\
        &\quad+\sum^{n-1}_{j,t=1}d\ol z_j\wedge (d\ol z_t\wedge)^*\Bigr[\Bigr(\mu_{j,\,t}-\frac{i}{\sqrt{2}}\lambda_j\delta_{j,\,t}\beta\Bigr)+i\sqrt{2}\lambda_j\delta_{j,\,t}\,\frac{\pr}{\pr\theta}\Bigr].
\end{split}
\end{equation}

\subsection{The estimates of the Szeg\"{o} kernel function $\pit^{(q)}_k$}

We need the following

\begin{prop} \label{s3-p3}
For each $k$, pick an element $\alpha_k\in F^*_k\Omega^{0,q}(D_{\log k})$ with
$\Box^{(q)}_{s,(k)}\alpha_k=0$
and $\norm{\alpha_k}_{kF^*_k\phi_0,D_{\log k}}\leqslant  1$. Identify $\alpha_k$ with a form on $H_n$ by extending it with zero and write $\alpha_k=\sum'_{\abs{J}=q}\alpha_{k,J}e_J(\frac{z}{\sqrt{k}},\frac{\theta}{k})$. Then there is a subsequence $\set{\alpha_{k_j}}$ of $\set{\alpha_k}$ such that for each strictly increasing multiindex $J$, $\abs{J}=q$, $\alpha_{k_j,\,J}$ converges uniformly with all its derivatives on any compact subset of $H_n$ to a smooth function $\alpha_J$. Furthermore, if we put $\alpha=\sum'_{\abs{J}=q}\alpha_Jd\ol z_J$, then
$\Box^{(q)}_{b,H_n}\alpha=0$.
\end{prop}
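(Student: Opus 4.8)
The plan is to combine the uniform Sobolev estimates of Proposition~\ref{s3-p2} with a diagonal/compactness argument, and then pass to the limit in the equation $\Box^{(q)}_{s,(k)}\alpha_k=0$ using the convergence of coefficients recorded in Proposition~\ref{s3-p1}. First I would fix an exhaustion of $H_n$ by the cubes $D_r$, $r=1,2,3,\ldots$, and observe that for $k$ large enough (depending on $r$) we have $D_{2r}\subset D_{\log k}$, so that \eqref{s3-e12} applies on $D_r$. Since $\norm{\alpha_k}_{kF^*_k\phi_0,D_{\log k}}\leqslant 1$, we get $\norm{\alpha_k}_{kF^*_k\phi_0,s+1,D_r}\leqslant C_{r,s}$ for every $s$, with $C_{r,s}$ \emph{independent of $k$}; this is the crucial point, and it is exactly what Proposition~\ref{s3-p2} (together with Remark~\ref{s3-r1}) provides. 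By \eqref{s3-e17-1} the weights $e^{-kF^*_k\phi_0}$ converge uniformly on each $D_r$ to the smooth, strictly positive weight $e^{-\psi_0}$, and similarly $F^*_km\to1$ uniformly on compacta; hence these weighted Sobolev norms are uniformly comparable (for $k$ large) to the standard Sobolev norms on $D_r$. Therefore the components $\alpha_{k,J}$, extended by zero to $\Real^{2n-1}$ but considered on $D_r$, are bounded in every $H^s(D_r)$.

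Next I would run the standard Rellich--Kondrachov plus diagonal argument: on $D_1$ the sequence is bounded in all $H^s$, so a subsequence converges in $C^\infty(\overline{D_1})$ (by Rellich's compact embedding $H^{s+1}\hookrightarrow H^s$ applied iteratively, or directly via $H^s\hookrightarrow C^\ell$ for $s$ large); passing to a further subsequence on $D_2$, then $D_3$, and diagonalizing, I obtain one subsequence $\set{\alpha_{k_j}}$ whose components $\alpha_{k_j,J}$ converge, together with all derivatives, uniformly on every compact subset of $H_n$ to smooth limits $\alpha_J$. Here one must be slightly careful that the frame vectors $e^J\big(\frac{z}{\sqrt k},\frac{\theta}{k}\big)$ also converge (to the constant frame $d\ol z^J$, since $e_j(0)=d\ol z_j$ in the chosen coordinates), so that it is consistent to set $\alpha:=\sum'_{\abs{J}=q}\alpha_J\,d\ol z^J$ and to regard $\alpha_{k_j}$ as converging to $\alpha$ in $C^\infty_{\mathrm{loc}}$.

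Finally I would verify $\Box^{(q)}_{b,H_n}\alpha=0$. By Proposition~\ref{s3-p1} the scaled operator splits as $\Box^{(q)}_{s,(k)}=\Box^{(q)}_{b,H_n}+\varepsilon_kP_k$, where $\Box^{(q)}_{b,H_n}$ is the constant-ish-coefficient model operator of \eqref{s3-e23}, $\varepsilon_k\to0$, and $P_k$ is second order with all coefficient-derivatives uniformly bounded on $D_{\log k}$. Fix a compact $K\subset H_n$ and a test region $D_r\supset K$. On $D_r$, for $j$ large, $\Box^{(q)}_{b,H_n}\alpha_{k_j}=-\varepsilon_{k_j}P_{k_j}\alpha_{k_j}$; the right-hand side tends to $0$ in $C^\infty(K)$ because $\varepsilon_{k_j}\to0$ while the $C^2$-norms of $P_{k_j}\alpha_{k_j}$ on $K$ stay bounded (uniform coefficient bounds times the uniformly bounded higher derivatives of $\alpha_{k_j}$), and the left-hand side $\Box^{(q)}_{b,H_n}\alpha_{k_j}\to\Box^{(q)}_{b,H_n}\alpha$ in $C^\infty(K)$ since $\Box^{(q)}_{b,H_n}$ has fixed smooth coefficients and $\alpha_{k_j}\to\alpha$ in $C^\infty_{\mathrm{loc}}$. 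Hence $\Box^{(q)}_{b,H_n}\alpha=0$ on $K$, and as $K$ is arbitrary, on all of $H_n$. The main obstacle is the first step: one must genuinely use the $k$-independence of the constants in the subelliptic estimate \eqref{s3-e12}, which rests on the fact (Proposition~\ref{s3-p1} and Remark~\ref{s3-r1}) that all coefficients of $\Box^{(q)}_{s,(k)}$ and their derivatives are bounded uniformly in $k$ on $D_{\log k}$, together with the non-degeneracy of the Levi form entering through the set $B$ of its eigenvalues; the remaining compactness and limiting steps are routine.
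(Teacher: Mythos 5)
Your proposal is correct and follows essentially the same route as the paper: uniform boundedness in every Sobolev norm on compact sets via Proposition~\ref{s3-p2} and Remark~\ref{s3-r1}, a Rellich--Sobolev--diagonal extraction over an exhaustion of $H_n$, and passage to the limit in $\Box^{(q)}_{s,(k)}\alpha_k=0$ using the explicit formulas of Proposition~\ref{s3-p1} and \eqref{s3-e23}. The only point to state a bit more carefully is that the matrix term in Proposition~\ref{s3-p1} is written in the scaled frame $e_j\big(\tfrac{z}{\sqrt k},\tfrac{\theta}{k}\big)$ rather than $d\ol z_j$, so the identity $\Box^{(q)}_{s,(k)}=\Box^{(q)}_{b,H_n}+\varepsilon_kP_k$ holds only after absorbing the frame discrepancy into the vanishing error --- which is legitimate precisely because, as you note, $e_j\big(\tfrac{z}{\sqrt k},\tfrac{\theta}{k}\big)\to d\ol z_j$ in $C^\infty_{\mathrm{loc}}$; this is exactly how the paper's componentwise equations handle it.
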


\begin{proof}
Fix a strictly increasing multiindex $J$, $\abs{J}=q$,
and $r>0$. From \eqref{s3-e12} and Remark~\ref{s3-r1}, we see that for
all $s>0$, there is a constant $C_{r,s}$, $C_{r,s}$ is independent of $k$, such that
$\norm{\alpha_{k,J}}_{s,D_r}\leqslant  C_{r,s}$
for all $k$. Rellich 's compactness theorem \cite[p.\,281]{Yo80} yields  a subsequence of $\set{\alpha_{k,J}}$, which converges in all Sobolev spaces $W^s(D_r)$ for $s>0$. From the Sobolev embedding theorem \cite[p.\,170]{Yo80}, we see that the sequence converges in all $C^l(D_r)$, $l\geqslant0$, $l\in\mathbb Z$, locally unformly. Choosing a diagonal sequence, with respect to a sequence of $D_r$ exhausting $H_n$, we get a subsequence $\set{\alpha_{k_j,J}}$ of $\set{\alpha_{k,J}}$ such that $\alpha_{k_j,J}$ converges uniformly with all derivatives on any compact subset of $H_n$ to a smooth function $\alpha_J$.

Let $J'$ be another strictly increasing multiindex, $\abs{J'}=q$. We can repeat the procedure above and get a subsequce $\set{\alpha_{k_{j_s},J'}}$ of $\set{\alpha_{k_j,J'}}$ such that $\alpha_{k_{j_s},J'}$ converges uniformly with all derivatives on any compact subset of $H_n$ to a smooth function $\alpha_{J'}$. Continuing in this way, we get the first statement of the proposition.

Now, we prove the second statement of the proposition. Let $P=(p_1,\ldots,p_q)$, $R=(r_1,\ldots,r_q)$ be multiindices, $\abs{P}=\abs{R}=q$. Define
\[\varepsilon^P_R=\left\{ \begin{array}{ll}
&0,\ \ \mbox{if $\set{p_1,\ldots,p_q}\neq\set{r_1,\ldots,r_q}$}, \\
&\mbox{the sign of permutation $\left(
\begin{array}[c]{c}
 P  \\
R
\end{array}\right)$
},\ \ \mbox{if $\set{p_1,\ldots,p_q}=\set{r_1,\ldots,r_q}$}.
\end{array}\right.\]
For $j, t=1,\ldots,n-1$, define
%\[\varepsilon^{d\ol z_j^\wedge d\ol z_t^{\wedge,*}d\ol z^P}_{d\ol z^R}=\left\{ \begin{array}{ll}
%&0,\ \ \mbox{if $d\ol z_j^\wedge d\ol z_t^{\wedge,*}d\ol z^P=0$}, \\
%&\varepsilon^Q_R,\ \ \mbox{if $d\ol z_j^\wedge d\ol z_t^{\wedge,*}d\ol z^P=d\ol z^Q$, $\abs{Q}=q$}.
%\end{array}\right.\]
\[\sigma^{jtP}_{R}=\left\{ \begin{array}{ll}
&0,\ \ \mbox{if $d\ol z_j\wedge (d\ol z_t\wedge)^*(d\ol z^P)=0$}, \\
&\varepsilon^Q_R,\ \ \mbox{if $d\ol z_j\wedge (d\ol z_t\wedge)^*(d\ol z^P)=d\ol z^Q$, $\abs{Q}=q$}.
\end{array}\right.\]

We may assume that $\alpha_{k,J}$ converges uniformly with all derivatives on any compact subset of $H_n$ to a smooth function $\alpha_J$, for all strictly increasing $J$, $\abs{J}=q$. Since $\Box^{(q)}_{s,(k)}\alpha_k=0$, from the explicit formula of $\Box^{(q)}_{s,(k)}$
(see Prop. \ref{s3-p1}), it is not difficult to see that for all strictly increasing $J$, $\abs{J}=q$, we have
\begin{equation} \label{s3-e24}
\begin{split}
\sum^{n-1}_{j=1}\ol U^{\,*,\psi_0}_{j,H_n}\ol U_{j,H_n}\alpha_{k,J}=-\sideset{}{'}\sum_{\substack{\abs{P}=q,\\1\,\leqslant\, \, j\,,\,t\,\leqslant \, n-1}}
\sigma^{jtP}_{J}
%\varepsilon^{d\ol z_j^\wedge d\ol z_t^{\wedge,*}d\ol z^P}_{d\ol z_J}
\Bigr[\Bigr(\mu_{j,\,t}-\frac{i}{\sqrt{2}}\lambda_j\delta_{j,\,t}\beta\Bigr)+&\sqrt{2}i\lambda_j\delta_{j,\,t}\frac{\pr}{\pr\theta}\Bigr]\alpha_{k,P}\\
&+\epsilon_kP_{k,J}\alpha_k
\end{split}
\end{equation}
on $D_{\log k}$, where $\epsilon_k$ is a sequence tending to zero with $k\To\infty$ and $P_{k,J}$ is a second order differential operator and all the derivatives of the coefficients of $P_{k,J}$ are uniformly bounded in $k$ on $D_{\log k}$. By letting $k\To\infty$ in \eqref{s3-e24} we get
\begin{equation} \label{s3-e25}
\sum^{n-1}_{j=1}\ol U^{\,*,\psi_0}_{j,H_n}\ol U_{j,H_n}\alpha_{J}=-\sideset{}{'}\sum_{\substack{\abs{P}=q,\\1\,\leqslant\, \, j\,,\,t\,\leqslant \, n-1}}
\sigma^{jtP}_{J}
\Bigr[\Bigr(\mu_{j,\,t}-\frac{i}{\sqrt{2}}\lambda_j\delta_{j,\,t}\beta\Bigr)+\sqrt{2}i\lambda_j\delta_{j,\,t}\frac{\pr}{\pr\theta}\Bigr]\alpha_{P}
\end{equation}
%\begin{equation} \label{s3-e25}
%\begin{split}
%&\sum^{n-1}_{j=1}\ol U^{*,\psi_0}_{j,H_n}\ol U_{j,H_n}\alpha_{J}\\
%&=-\sideset{}{'}\sum_{\abs{P}=q,1\leqslant  j,t\leqslant  n-1}
%\varepsilon^{d\ol z_j^\wedge d\ol z_t^{\wedge,*}d\ol z^P}_{d\ol z_J}\Bigr(\bigr(\mu_{j,\,t}-\frac{i}{\sqrt{2}}\lambda_j\delta_{j,\,t}\beta\bigr)+\sqrt{2}i\lambda_j\delta_{j,\,t}\frac{\pr}{\pr\theta}\Bigr)\alpha_{P}
%\end{split}
%\end{equation}
on $H_n$, for all strictly increasing $J$, $\abs{J}=q$. From this and the explicit formula of $\Box^{(q)}_{b,H_n}$ (see \eqref{s3-e23}),
we conclude that $\Box^{(q)}_{b,H_n}\alpha=0$. The proposition follows.
\end{proof}

Now, we can prove the main result of this section.
In analogy to \eqref{s2-e2} we define the extremal functions $S^{(q)}_{J,H_n}$ on the Heisenberg group along the direction $d\ol z_J$ is defined by
\begin{equation} \label{s2-e271}
S^{(q)}_{J,H_n}(0)=\sup\big\{\abs{\alpha_J(0)}^2;\Box^{(q)}_{b,H_n}\alpha=0, \norm{\alpha}_{\psi_0}=1\big\}\,.
\end{equation}
where $\alpha=\sum'_{\abs{J}=q}\alpha_Jd\ol z_J$.
\begin{thm} \label{s3-t2}
We have
\[
\limsup_{k\To\infty}k^{-n}\pit^{(q)}_k(0)\leqslant \sideset{}{^\prime}\sum_{\abs{J}=q}S^{(q)}_{J,H_n}(0)\,.
\]
%where $S^{(q)}_{J,H_n}(0)=\sup\abs{\alpha_J(0)}^2$, $\alpha=\sum'_{\abs{J}=q}\alpha_Jd\ol z_J$, $\Box^{(q)}_{b,H_n}\alpha=0$, $\norm{\alpha}_{\psi_0}=1$.
\end{thm}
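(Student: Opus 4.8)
The plan is to combine the local scaling estimate already established (Lemma \ref{s2-l1}, Theorem \ref{s3-t1} and Proposition \ref{s3-p3}) with the explicit extremal function on the Heisenberg group. First I would fix a point, which after the coordinate choice of the previous subsection we may take to be $0\in X$, and fix a local orthonormal frame $\{e^J\}$ of $\Lambda^{0,q}T^*(X)$ near $0$. By Lemma \ref{s2-l1} it suffices to prove $\limsup_{k\to\infty}k^{-n}S^{(q)}_{k,J}(0)\leqslant S^{(q)}_{J,H_n}(0)$ for each strictly increasing $J$ with $\abs{J}=q$. So I fix such a $J$. For each $k$, by definition of the extremal function \eqref{s2-e2} and since $\cH^q_b(X,L^k)$ is finite-dimensional, I choose $u_k\in\cH^q_b(X,L^k)$ with $\norm{u_k}=1$ and $\abs{u_{k,J}(0)}^2=S^{(q)}_{k,J}(0)$ (or, to avoid circularity with a possibly varying subsequence, a sequence nearly attaining the limsup).

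Next I would perform the scaling: set $\alpha_k:=k^{-\frac n2}F^*_k(e^{-kR}u_k)\in F^*_k\Omega^{0,q}(D_{\log k})$, exactly as in the proof of Theorem \ref{s3-t1}. One checks, using \eqref{s1-e27} and the unitarity of $u\mapsto\widetilde u=e^{kR}u$, that $\norm{\alpha_k}_{kF^*_k\phi_0,D_{\log k}}\leqslant \norm{u_k}_k=1$, and from \eqref{s3-e9} together with \eqref{s3-e7-0} and $\Box^{(q)}_{b,k}u_k=0$ that $\Box^{(q)}_{s,(k)}\alpha_k=0$ on $D_{\log k}$. Moreover the change of variables shows $\abs{\alpha_k(0)}^2=k^{-n}\abs{u_k(0)}^2$ and, reading off the $J$-component, $\abs{\alpha_{k,J}(0)}^2=k^{-n}\abs{u_{k,J}(0)}^2=k^{-n}S^{(q)}_{k,J}(0)$ (the scaled frame $e^J(z/\sqrt k,\theta/k)$ agrees with $e^J$ at the origin, so components at $0$ are unchanged up to the $k^{-n}$ factor already accounted for).

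Then I would invoke Proposition \ref{s3-p3}: passing to a subsequence $\{\alpha_{k_\ell}\}$, each component $\alpha_{k_\ell,J}$ converges in $C^\infty_{\mathrm{loc}}(H_n)$ to a smooth $\alpha_J$, the limit form $\alpha=\sum'_{\abs{J}=q}\alpha_Jd\ol z^J$ satisfies $\Box^{(q)}_{b,H_n}\alpha=0$, and $C^\infty_{\mathrm{loc}}$-convergence gives $\abs{\alpha_J(0)}^2=\lim_\ell\abs{\alpha_{k_\ell,J}(0)}^2$. I would also need the norm bound $\norm{\alpha}_{\psi_0}\leqslant 1$: this follows from Fatou's lemma applied to the $\alpha_{k_\ell}$ together with \eqref{s3-e17-1} (which matches the weight $kF^*_k\phi_0$ with $\psi_0$ on the exhausting domains) and the fact that $F^*_km\to 1$ locally uniformly. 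Since $\alpha$ is an $L^2$ harmonic form on $H_n$ of norm $\leqslant 1$, the definition \eqref{s2-e271} of $S^{(q)}_{J,H_n}$ gives $\abs{\alpha_J(0)}^2\leqslant S^{(q)}_{J,H_n}(0)$. Chaining the equalities, $\limsup_{k\to\infty}k^{-n}S^{(q)}_{k,J}(0)=\lim_\ell k_\ell^{-n}S^{(q)}_{k_\ell,J}(0)=\abs{\alpha_J(0)}^2\leqslant S^{(q)}_{J,H_n}(0)$ (choosing the initial sequence $u_k$ along which the limsup is realized, then the further subsequence from Proposition \ref{s3-p3} preserves the value). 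Summing over $J$ and using Lemma \ref{s2-l1} on both sides yields the claim. The main obstacle I anticipate is the lower semicontinuity argument for the norm on the noncompact manifold $H_n$: one must be careful that mass does not escape to infinity when passing to the limit, so the estimate $\norm{\alpha}_{\psi_0}\leqslant\liminf_\ell\norm{\alpha_{k_\ell}}_{kF^*_k\phi_0,D_{\log k}}\leqslant 1$ has to be set up via truncation to fixed compact sets $D_r$, letting $k\to\infty$ first and then $r\to\infty$, using \eqref{s3-e17-1} to control the discrepancy of the weights uniformly on each $D_r$.
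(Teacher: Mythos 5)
Your proposal is correct and follows essentially the same route as the paper: pick a near-extremal sequence in $\cH^q_b(X,L^k)$ for each fixed strictly increasing $J$, rescale via $k^{-n/2}F^*_k(e^{-kR}\,\cdot\,)$, apply Proposition \ref{s3-p3} to extract a $C^\infty_{\mathrm{loc}}$-limit $\beta$ harmonic for $\Box^{(q)}_{b,H_n}$ with $\norm{\beta}_{\psi_0}\leqslant 1$ (via \eqref{s3-e17-1} and local lower semicontinuity of the norm), compare $\abs{\beta_J(0)}^2$ with $S^{(q)}_{J,H_n}(0)$ after normalizing, and sum over $J$ using Lemma \ref{s2-l1}. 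The only cosmetic difference is that the paper normalizes explicitly by writing $\abs{\beta_J(0)}^2\leqslant \abs{\beta_J(0)}^2/\norm{\beta}_{\psi_0}^2\leqslant S^{(q)}_{J,H_n}(0)$, which is the same small step you gloss when passing from norm $\leqslant 1$ to the definition \eqref{s2-e271}.
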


\begin{proof}
Fix a strictly increasing $J$, $\abs{J}=q$. We claim that
\begin{equation} \label{s3-e28}
\limsup_{k\To\infty}k^{-n}S^{(q)}_{k,J}(0)\leqslant  S^{(q)}_{J,H_n}(0).
\end{equation}
The definition \eqref{s2-e2} of the extremal function yields a sequence $\alpha_{k_j}\in H^q_b(X, L^{k_j})$,  $k_1<k_2<\ldots$\,,
such that $\norm{\alpha_{k_j}}=1$ and
\begin{equation}\label{s3-e281}
 \lim_{j\To\infty}k_j^{-n}\abs{\alpha_{k_j,J}(0)}^2=\limsup_{k\To\infty}k^{-n}S^{(q)}_{k,J}(0)\,,
\end{equation}
where $\alpha_{k_j,J}$ is the component
of $\alpha_{k_j}$ in the direction of $e_J$. On $D_{\log k_j}$, put
\[\beta_{k_j}=k_j^{-\frac{n}{2}}F^*_{k_j}(e^{-k_jR}\alpha_{k_j})\in F^*_{k_j}\Omega^{0,q}(D_{\log k_j})\,.\]
It is easy to see that
\[
\text{$\norm{\beta_{k_j}}_{k_jF^*_{k_j}\phi_0,D_{\log k_j}}\leqslant1$ and $\Box^{(q)}_{s,(k_j)}\beta_{k_j}=0$ on $D_{\log k_j}$}\,.
\] Proposition~\ref{s3-p3} yields a subsequence $\set{\beta_{k_{j_s}}}$ of $\set{\beta_{k_j}}$ such that for each $J$, $\beta_{k_{j_s},J}$ converges uniformly with all derivatives on any compact subset of $H_n$ to a smooth function $\beta_J$. Set
$\beta=\sum'_{\abs{P}=q}\beta_Pd\ol z^P$. Then we have
$\Box^{(q)}_{b,H_n}\beta=0$ and, by \eqref{s3-e17-1}, $\norm{\beta}_{\psi_0}\leqslant  1$. Thus,
\begin{equation} \label{s3-e29}
|\beta_J(0)|^2\leqslant \frac{|\beta_J(0)|^2}{\norm{\beta}_{\psi_0}^2}\leqslant  S^{(q)}_{J,H_n}(0).
\end{equation}
Note that
\begin{equation} \label{s3-e291}
\lim_{j\To\infty}k_j^{-n}\abs{\alpha_{k_j,J}(0)}^2=\lim_{s\To\infty}|\beta_{k_{j_s},J}(0)|^2=|\beta_J(0)|^2\,.
\end{equation}
The estimate \eqref{s3-e28} follows from \eqref{s3-e281}, \eqref{s3-e29} and \eqref{s3-e291}. Finally,  Lemma~\ref{s2-l1} and \eqref{s3-e28} imply the conclusion of the theorem.
\end{proof}

\section{The Szeg\"{o} kernel function on the Heisenberg group $H_n$}

In this section, we will use the same notations as in section 2.3 and we work with the assumption that
%the Levi form is non-degenerate and
condition $Y(q)$ holds at each point of $X$.
The main goal of this section is to compute $\sum'_{\abs{J}=q}S^{(q)}_{J,H_n}(0)$.

\subsection{The partial Fourier transform}

Let $u(z, \theta)\in\Omega^{0,q}(H_n)$  with $\norm{u}_{\psi_0}=1$
and $\Box^{(q)}_{b,H_n}u=0$. Put
$v(z, \theta)=u(z, \theta)e^{-\frac{\beta}{2}\theta}$
and set $\Phi_0=\sum^{n-1}_{j,t=1}\mu_{j,\,t}\ol z_jz_t$.
We have \[\int_{H_n}\!\abs{v(z,\theta)}^2e^{-\Phi_0(z)}dv(z)dv(\theta)=1\,.\]
Let us denote by $L^2_{(0,q)}(H_n, \Phi_0)$ the competion of $\Omega_c^{(0,q)}(H_n)$
with respect to the norm $\|\cdot\|_{\Phi_0}$, where
\[
\|u\|^2_{\Phi_0}=\int_{H_n}\abs{u}^2e^{-\Phi_0}dv(z)dv(\theta)\,,\quad u\in\Omega_c^{(0,q)}(H_n)\,.
\]
%Put $L^2_{(0,q)}(H_n, \Phi_0)=\{u\in\mathscr{D}^{\,\prime}(H_n;\, \Lambda^{0,q}T^*H_n);\, %\int_{H_n}\abs{u}^2e^{-\Phi_0}dv(z)dv(\theta)<\infty\}$.
Choose $\chi(\theta)\in C^\infty_0(\Real)$ so that $\chi(\theta)=1$ when $\abs{\theta}<1$ and $\chi(\theta)=0$ when $\abs{\theta}>2$ and set $\chi_j(\theta)=\chi(\theta/j)$, $j\in\mathbb{N}$. Let
\begin{equation} \label{s4-e11}
\hat v_j(z, \eta)=\int_{\Real}\!v(z,\theta)\chi_j(\theta)e^{-i\theta\eta}dv(\theta)\in\Omega^{0,q}(H_n),\ j=1,2,\ldots.
\end{equation}
From Parseval's formula, we have
\begin{align*}
&\int_{H_n}\!\abs{\hat v_j(z,\eta)-\hat v_t(z,\eta)}^2e^{-\Phi_0(z)}dv(\eta)dv(z)\\
&=4\pi\int_{H_n}\!\abs{v(z,\theta)}^2\abs{\chi_j(\theta)-\chi_t(\theta)}^2e^{-\Phi_0(z)}dv(\theta)dv(z)\To0,\  j,t\To\infty.
\end{align*}
Thus, there is $\hat v(z, \eta)\in L^2_{(0,q)}(H_n, \Phi_0)$ such tht $\hat v_j(z,\eta)\To\hat v(z, \eta)$ in $L^2_{(0,q)}(H_n, \Phi_0)$. We call $\hat v(z, \eta)$ the Fourier transform of $v(z, \theta)$ with respect to $\theta$. Formally,
\begin{equation} \label{s4-e12}
\hat v(z, \eta)=\int_{\Real}\! e^{-i\theta\eta}v(z,\theta)dv(\theta).
\end{equation}
Moreover, we have
\begin{equation} \label{s4-e12-1}
\begin{split}
&\int_{H_n}\!\abs{\hat v(z, \eta)}^2e^{-\Phi_0(z)}dv(z)dv(\eta)=\lim_{j\To\infty}\int_{H_n}\!\abs{\hat v_j(z, \eta)}^2e^{-\Phi_0(z)}dv(z)dv(\eta)\\
&=4\pi\lim_{j\To\infty}\int_{H_n}\!\abs{u(z, \theta)e^{-\frac{\beta}{2}\theta}\chi_j(\theta)}^2e^{-\Phi_0(z)}dv(z)dv(\theta)\\
&=4\pi\int_{H_n}\!\abs{u(z, \theta)}^2e^{-\psi_0(z,\theta)}dv(z)dv(\theta)=4\pi<\infty.
\end{split}
\end{equation}
From Fubini's theorem,
$\int_{\Complex^{n-1}}\!\abs{\hat v(z,\eta)}^2e^{-\Phi_0(z)}dv(z)<\infty$
for almost all $\eta\in\Real$. More precisely, there is a negligeable set $A_0\subset\Real$
%$measurable set $A_0\subset\Real$, $\abs{A_0}=0$,
such that $\int_{\Complex^{n-1}}\!\abs{\hat v(z,\eta)}^2e^{-\Phi_0(z)}dv(z)<\infty$,
for every $ \eta\notin A_0$.
%where $\abs{A_0}$ denotes the Lebesque measure of $A_0$.

Let $s\in L^2_{(0,q)}(H_n, \Phi_0)$.
Assume that $\int\!\abs{s(z, \eta)}^2dv(\eta)<\infty$ and $\int\!\abs{s(z, \eta)}dv(\eta)<\infty$ for all $z\in\Complex^{n-1}$. Then, from Parseval's formula, we can check that
\begin{equation} \label{s4-e12-2}
\begin{split}
&\iint\!\langle\hat v(z, \eta),s(z, \eta)\rangle e^{-\Phi_0(z)}dv(\eta)dv(z)\\
&=\iint\!\langle u(z, \theta)e^{-\frac{\beta}{2}\theta}, \int\! e^{i\theta\eta}s(z, \eta)dv(\eta)\rangle e^{-\Phi_0(z)}dv(\theta)dv(z).
\end{split}
\end{equation}

We pause and introduce some notations. For fixed $\eta\in\Real$, put
\begin{equation} \label{s4-e13}
\Phi_\eta=-\sqrt{2}\eta\sum^{n-1}_{j=1}\lambda_j\abs{z_j}^2+\sum^{n-1}_{j,t=1}\mu_{j,\,t}\ol z_jz_t\in C^\infty(\Complex^{n-1};\, \Real).
\end{equation}
We take the Hermitian metric $\langle\,\cdot\,,\cdot\,\rangle$ on the bundle $\Lambda^{0,q}T^*\Complex^{n-1}$ of $(0, q)$ forms of $\Complex^{n-1}$  so that $\{d\ol z_J; \text{$\abs{J}=q$, $J$ strictly increasing}\}$ is an orthonormal basis. We also let $\Omega^{0,q}(\Complex^{n-1})$ denote the space of smooth sections of $\Lambda^{0,q}T^*\Complex^{n-1}$ over $\Complex^{n-1}$. Let $\Omega^{0,q}_c(\Complex^{n-1})$ be the subspace of $\Omega^{0,q}(\Complex^{n-1})$ whose elements have compact support in $\Complex^{n-1}$ and let $(\ |\ )_{\Phi_\eta}$ be the inner product on $\Omega^{0,q}_c(\Complex^{n-1})$ defined by
\[(f\ |\ g)_{\Phi_\eta}=\int_{\Complex^{n-1}}\!\langle f, g\rangle e^{-\Phi_\eta(z)}dv(z)\,,\quad f, g\in\Omega^{0,q}_c(\Complex^{n-1})\,.\]
Let
\begin{equation} \label{s4-e14}
\Box^{(q)}_{\Phi_\eta}=\ol{\pr}^{\,*,\Phi_\eta}\ddbar+\ddbar\,\ol{\pr}^{\,*,\Phi_\eta}:\Omega^{0,q}(\Complex^{n-1})\To\Omega^{0,q}(\Complex^{n-1})
\end{equation}
be the complex Laplacian with respect to $(\ |\ )_{\Phi_\eta}$, where $\ddbar^{\,*,\Phi_\eta}$ is the formal adjoint of $\ddbar$ with respect to $(\ |\ )_{\Phi_\eta}$. We can check that
\begin{equation} \label{s4-e15}
\begin{split}
\Box^{(q)}_{\Phi_\eta}&=\sum^{n-1}_{t=1}\Bigr(-\frac{\pr}{\pr z_t}-\sqrt{2}\lambda_t\ol z_t\eta+\sum^{n-1}_{j=1}\mu_{j,\,t}\ol z_j\Bigr)\frac{\pr}{\pr\ol z_t}\\
&+\sum^{n-1}_{j,t=1}d\ol z_j\wedge (d\ol z_t\wedge)^*\Bigr(\mu_{j,\,t}-\sqrt{2}\lambda_j\eta\delta_{j,\,t}\Bigr).
\end{split}
\end{equation}
Now, we return to our situation. We identify $\Lambda^{0,q}T^*\Complex^{n-1}$ with $\Lambda^{0,q}T^*H_n$. Set
\begin{equation}\label{s4-e161}
\alpha(z, \eta)=\hat v(z, \eta)\exp\Bigl[\Bigl(\frac{i\beta}{2\sqrt{2}}-\frac{1}{\sqrt{2}}\eta\Bigl) \sum^{n-1}_{j=1}\lambda_j\abs{z_j}^2\Bigr]\,.
\end{equation}
We remind that $\hat v(z, \eta)$ is given by \eqref{s4-e12}. %We have the following

\begin{thm} \label{s4-t1}
For almost all $\eta\in\Real$, we have
$\int_{\Complex^{n-1}}\!\abs{\alpha(z, \eta)}^2e^{-\Phi_\eta(z)}dv(z)<\infty$
and
\begin{equation} \label{s4-e17}
\Box^{(q)}_{\Phi_\eta}\alpha(z, \eta)=0
\end{equation}
in the sense of distributions. %That is,$(\alpha(z,\eta)\ |\ \Box^{(q)}_{\Phi_\eta}f(z))_{\Phi_\eta}=0$,
%for all $f\in\Omega^{0,q}_c(\Complex^{n-1})$.
%Since $\Box^{(q)}_{\Phi_\eta}$ is elliptic,
%from \eqref{s4-e17}, we conclude that
Thus $\alpha(z,\eta)\in\Omega^{0,q}(\Complex^{n-1})$
for almost all $\eta\in\Real$.
\end{thm}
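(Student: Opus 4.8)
The plan is to transfer the equation $\Box^{(q)}_{b,H_n}u=0$ through the partial Fourier transform in $\theta$, keeping careful track of the weights. First I would record the effect of $\theta$-differentiation under the Fourier transform: since $\hat v(z,\eta)$ is defined by \eqref{s4-e12}, one has formally $\widehat{\pr_\theta v}(z,\eta)=i\eta\,\hat v(z,\eta)$, which should be justified rigorously by the same $\chi_j$-truncation and Parseval argument already used to construct $\hat v$, together with the duality identity \eqref{s4-e12-2} tested against Schwartz functions in $\eta$. Next I would substitute $u=v\,e^{\frac\beta2\theta}$ into the explicit formula \eqref{s3-e23} for $\Box^{(q)}_{b,H_n}$: the operators $\ol U_{j,H_n}=\pr/\pr\ol z_j+\frac{i}{\sqrt2}\lambda_jz_j\pr/\pr\theta$ and $\ol U^{*,\psi_0}_{j,H_n}$ act on the product by the Leibniz rule, producing a conjugated operator acting on $v$ in which each $\pr/\pr\theta$ is shifted by $\beta/2$. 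Applying the Fourier transform then replaces $\pr/\pr\theta$ by $i\eta$, i.e.\ it replaces $\frac{i}{\sqrt2}\lambda_jz_j\pr/\pr\theta$ by $-\frac{1}{\sqrt2}\lambda_jz_j\eta$ up to the $\beta$-shift, and turns $\Box^{(q)}_{b,H_n}u=0$ into an equation for $\hat v(\cdot,\eta)$ on $\Complex^{n-1}$ for a.e.\ $\eta$ (the exceptional set being $A_0$ together with the set where the $\eta$-integrals of the relevant terms fail to be finite).

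Then comes the gauge transformation. The point of the factor $\exp\big[\big(\tfrac{i\beta}{2\sqrt2}-\tfrac{1}{\sqrt2}\eta\big)\sum_j\lambda_j|z_j|^2\big]$ in \eqref{s4-e161} is exactly to conjugate the first-order operator obtained above into the $\ol\pr$-type operator appearing in $\Box^{(q)}_{\Phi_\eta}$ of \eqref{s4-e15}. Concretely, writing $g(z,\eta)$ for that exponential factor, one checks by a direct Leibniz computation that
\[
g^{-1}\Big(\tfrac{\pr}{\pr\ol z_j}+\text{(lower order in the conjugated operator)}\Big)g
\]
equals $\pr/\pr\ol z_j$, and similarly for the adjoint piece, so that $g^{-1}\,\widehat{(\Box^{(q)}_{b,H_n}u)}\,g=\Box^{(q)}_{\Phi_\eta}(g^{-1}\hat v)=\Box^{(q)}_{\Phi_\eta}\alpha$. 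Matching the real part of the exponent, $|g|^{-2}=\exp\big[\sqrt2\,\eta\sum_j\lambda_j|z_j|^2\big]$, shows that the weighted $L^2$ norm of $\alpha$ with respect to $\Phi_\eta$ differs from that of $\hat v$ with respect to $\Phi_0$ only by the extra factor $\exp[\sqrt2\eta\sum\lambda_j|z_j|^2]$ against $\exp[-\sqrt2\eta\sum\lambda_j|z_j|^2-\Phi_0]$... more carefully, one gets $\int|\alpha|^2e^{-\Phi_\eta}dv(z)=\int|\hat v|^2 e^{-\Phi_0}dv(z)$, which is finite for $\eta\notin A_0$ by the Fubini consequence of \eqref{s4-e12-1} already recorded in the text.

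With the weighted $L^2$ bound in hand, ellipticity finishes the job: $\Box^{(q)}_{\Phi_\eta}$ is (up to zeroth-order bundle terms) the standard Kodaira Laplacian on $\Complex^{n-1}$, hence elliptic, so the distributional solution $\alpha(\cdot,\eta)$ of \eqref{s4-e17} is automatically smooth, giving $\alpha(\cdot,\eta)\in\Omega^{0,q}(\Complex^{n-1})$ for a.e.\ $\eta$. The main obstacle I anticipate is the rigorous justification of the interchange of the Fourier transform in $\theta$ with the operators $\ol U_{j,H_n}$, $\ol U^{*,\psi_0}_{j,H_n}$: these operators have coefficients that are polynomial in $z$ but involve $\pr/\pr\theta$, and since $\hat v$ is only an $L^2$-limit one must argue via the truncations $\chi_j$ and test against $C^\infty_c$ functions in $(z,\eta)$ (using \eqref{s4-e12-2} and integration by parts in $\theta$) to pass the differential identity to the limit in the sense of distributions, controlling the error terms coming from $\chi_j'$ exactly as in the convergence argument for $\hat v_j\to\hat v$. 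Everything else is a bookkeeping computation with the explicit formulas \eqref{s3-e23} and \eqref{s4-e15}.
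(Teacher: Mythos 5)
Your overall strategy---use the gauge factor in \eqref{s4-e161} to intertwine $\Box^{(q)}_{b,H_n}$ with $\Box^{(q)}_{\Phi_\eta}$ under the partial Fourier transform, note that $\int\abs{\alpha}^2e^{-\Phi_\eta}dv(z)=\int\abs{\hat v}^2e^{-\Phi_0}dv(z)$ (this identity is correct and is exactly how the paper gets the finiteness statement from the Fubini consequence of \eqref{s4-e12-1}), and finish with elliptic regularity---is the same mechanism the paper uses; its Lemma \ref{s4-l1} is precisely your conjugation identity. The difference is orientation: you propose to Fourier-transform the equation $\Box^{(q)}_{b,H_n}u=0$ and differentiate $\hat v$ (distributionally, after $\chi_j$-truncation), whereas the paper never lets a derivative fall on $v$ or $\hat v$. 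It tests $\alpha(\cdot,\eta)$ against $\Box^{(q)}_{\Phi_\eta}f$ for $f\in\Omega^{0,q}_c(\Complex^{n-1})$, forms $h(\eta)$ and $\varphi_R=\mathds{1}_{[-R,R]}\ol h$, transports the pairing back to $H_n$ via \eqref{s4-e12-2} and Lemma \ref{s4-l1} so that $\Box^{(q)}_{b,H_n}$ acts on the compactly-$z$-supported kernel $S(z,\theta)$, and only then uses $\Box^{(q)}_{b,H_n}u=0$ together with the commutator $[\chi(\theta/j),\Box^{(q)}_{b,H_n}]$ estimate to conclude $h=0$ a.e. This dual formulation matters: the only quantitative information on $u$ is $\norm{u}_{\psi_0}=1$; there are no weighted $L^2$ bounds on $\pr_\theta u$ or $\pr_z u$, so your starting identity $\widehat{\pr_\theta v}=i\eta\,\hat v$ is not available in the weighted space, and any rigorous version of your plan is forced into exactly this ``all derivatives on the test side'' form. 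You do anticipate this obstacle, but the content of the proof is precisely the bookkeeping you defer: the $L^1\cap L^2$ bounds on $\varphi_R$ and the resulting weighted Sobolev bounds on $S$ are what make the commutator terms vanish in the limit $j\To\infty$.

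There is also a genuine gap in the way you draw the conclusion ``for almost all $\eta$''. The weak argument (yours or the paper's) yields, for each fixed test form $f$, a null set $A_f\subset\Real$ outside of which $(\alpha(\cdot,\eta)\,|\,\Box^{(q)}_{\Phi_\eta}f)_{\Phi_\eta}=0$; the exceptional set depends on $f$, and testing against products $f(z)\psi(\eta)$ does not by itself remove this dependence. To obtain a single null set $A$ such that for every $\eta\notin A$ the equation \eqref{s4-e17} holds against \emph{all} test forms (which is what ``$\Box^{(q)}_{\Phi_\eta}\alpha(\cdot,\eta)=0$ in the sense of distributions for a.e.\ $\eta$'' means, and what the elliptic-regularity step requires), the paper runs a separability argument: it chooses a countable family $\set{f_j}$ dense in $W^2_{(0,q)}(\Complex^{n-1})$ with the support-preserving approximation property, sets $A=\bigcup_jA_j$, and extends to arbitrary $g\in\Omega^{0,q}_c(\Complex^{n-1})$ by continuity of $g\mapsto(\alpha(\cdot,\eta)\,|\,\Box^{(q)}_{\Phi_\eta}g)_{\Phi_\eta}$ in the $W^2$-norm on a fixed ball. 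Your proposal says nothing about this step; without it the statement proved is weaker than Theorem \ref{s4-t1}. Once both points are supplied, the remaining claims (finiteness for $\eta\notin A_0$ and smoothness of $\alpha(\cdot,\eta)$ by ellipticity of $\Box^{(q)}_{\Phi_\eta}$) are correct and coincide with the paper.
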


\begin{proof}
Let $A_0\subset\Real$ be as in the discussion after \eqref{s4-e12-1}.
%From the discussion after \eqref{s4-e12-1}, we know that there is a measurable set $A_0\subset\Real$,
%$\abs{A_0}=0$, such that $\forall\eta\notin A_0$, $\int_{\Complex^{n-1}}\!\abs{\hat
%v(z,\eta)}^2e^{-\Phi_0(z)}d\lambda(z)<\infty$.
Thus, for all $\eta\notin A_0$,
\[\int_{\Complex^{n-1}}\!\abs{\hat v(z,\eta)}^2e^{-\Phi_0(z)}dv(z)=\int_{\Complex^{n-1}}\!\abs{\alpha(z, \eta)}^2e^{-\Phi_\eta(z)}dv(z)<\infty\,.\]
We only need to prove the second statement of the theorem. Let
$f\in\Omega^{0,q}_c(\Complex^{n-1})$.
Put
$h(\eta)=\int_{\Complex^{n-1}}\!\langle\alpha(z,\eta), \Box^{(q)}_{\Phi_\eta}f(z)\rangle e^{-\Phi_\eta(z)}dv(z)$
if $\eta\notin A_0$, $h(\eta)=0$ if $\eta\in A_0$. We can check that
\begin{equation} \label{s4-e18-1}
\abs{h(\eta)}^2\leqslant \int_{\Complex^{n-1}}\!\abs{\alpha(z,\eta)}^2e^{-\Phi_\eta(z)}dv(z)\int_{\Complex^{n-1}}\!\big\vert\Box^{(q)}_{\Phi_\eta}f\big\vert^2e^{-\Phi_\eta(z)}dv(z).
\end{equation}
For $R>0$, put $\varphi_R(\eta)=\mathds{1}_{[-R,R]}(\eta)\ol h(\eta)$,
where $\mathds{1}_{[-R,R]}(\eta)=1$ if $-R\leqslant \eta\leqslant  R$, $\mathds{1}_{[-R,R]}(\eta)=0$ if $\eta<-R$ or $\eta>R$. From \eqref{s4-e18-1}, we have
\begin{equation} \label{s4-e18-2}
\begin{split}
&\int\!\abs{\varphi_R(\eta)}^2dv(\eta)=\int^R_{-R}\!\abs{h(\eta)}^2dv(\eta)\\
&\leqslant  C\iint\!\abs{\alpha(z,\eta)}^2e^{-\Phi_\eta(z)}dv(\eta)dv(z)=C\iint\!\abs{\hat v(z, \eta)}^2e^{-\Phi_0(z)}dv(\eta)dv(z)<\infty\,,
\end{split}
\end{equation}
where $C>0$. Thus, $\varphi_R(\eta)\in L^2(\Real)\cap L^1(\Real)$. Set
$\lambda\abs{z}^2:=\sum^{n-1}_{j=1}\lambda_j\abs{z_j}^2$.
We have
\begin{equation} \label{s4-e19}
\begin{split}
&\int_{\Real}\!h(\eta)\varphi_R(\eta)dv(\eta)=\int^R_{-R}\!\abs{h(\eta)}^2dv(\eta)\\
&=\iint\!\langle\alpha(z,\eta), \Box^{(q)}_{\Phi_\eta}f(z)\rangle e^{-\Phi_\eta(z)}\varphi_R(\eta)dv(\eta)dv(z)\\
&=\iint\!\langle\hat v(z,\eta),\ e^{-\big(\frac{i\beta}{2\sqrt{2}}-\frac{1}{\sqrt{2}}\eta\big)\lambda\abs{z}^2}\Box^{(q)}_{\Phi_\eta}f(z)\ol\varphi_R(\eta)\rangle e^{-\Phi_0(z)}dv(\eta)dv(z)\\
%\end{split}
%\end{equation}
%From \eqref{s4-e12-2}, we have
%\begin{align} \label{s4-e20}
%&\iint\!(\hat v(z,\eta)\ |\ e^{-\frac{\beta}{2\sqrt{2}}i\lambda\abs{z}^2+\frac{1}{\sqrt{2}}\eta\lambda\abs{z}^2}\Box^{(q)}_{\Phi_\eta}f(z)\ol\varphi_R(\eta))e^{-\Phi_0(z)}d\lambda(\eta)d\lambda(z)\nonumber\\
&\stackrel{\eqref{s4-e12-2}}{=}\iint\!\langle u(z,\theta),\int\!
e^{i\theta\eta}
e^{-\big(\frac{i\beta}{2\sqrt{2}}-\frac{1}{\sqrt{2}}\eta\big)\lambda\abs{z}^2
+\frac{\beta}{2}\theta}\,\Box^{(q)}_{\Phi_\eta}(f\ol\varphi_R)dv(\eta)\rangle e^{-\psi_0(z, \theta)}dv(\theta)dv(z).
\end{split}
\end{equation}
%e^{i\theta\eta}e^{-\frac{\beta}{2\sqrt{2}}i\lambda\abs{z}^2+\frac{1}{\sqrt{2}}\eta\lambda\abs{z}^2+\frac{\beta}{2}\theta}
%
%e^{-\big(\frac{i\beta}{2\sqrt{2}}+\frac{1}{\sqrt{2}}\eta\big)\lambda\abs{z}^2
%Thus %From \eqref{s4-e20}, \eqref{s4-e19} becomes
%\begin{equation} \label{s4-e21}
%\begin{split}
%&\int^R_{-R}\!\abs{h(\eta)}^2d\lambda(\eta)\\
%&=\iint\!(u(z,\theta)\ |\ \int\! e^{i\theta\eta}e^{-\frac{\beta}{2\sqrt{2}}i\lambda\abs{z}^2+\frac{1}{\sqrt{2}}\eta\lambda\abs{z}^2+\frac{\beta}{2}\theta}\Box^{(q)}_{\Phi_\eta}f\ol\varphi_Rd\lambda(\eta))e^{-\psi_0}d\lambda(\theta)d\lambda(z).
%\end{split}
%\end{equation}
From Lemma~\ref{s4-l1} below, we know that
\begin{equation} \label{s4-e22}
\begin{split}
&\int\! e^{i\theta\eta}e^{-\big(\frac{i\beta}{2\sqrt{2}}-\frac{1}{\sqrt{2}}\eta\big)\lambda\abs{z}^2
+\frac{\beta}{2}\theta}\,\Box^{(q)}_{\Phi_\eta}(f(z)\ol\varphi_R(\eta))dv(\eta)\\
&=\Box^{(q)}_{b,H_n}\Bigr(\int\! e^{i\theta\eta}e^{-\big(\frac{i\beta}{2\sqrt{2}}-\frac{1}{\sqrt{2}}\eta\big)\lambda\abs{z}^2
+\frac{\beta}{2}\theta}\,f(z)\ol\varphi_R(\eta)dv(\eta)\Bigr).
\end{split}
\end{equation}
Put
\begin{equation} \label{s4-e23}
S(z,\theta)=\int\! e^{i\theta\eta}e^{-\big(\frac{i\beta}{2\sqrt{2}}-\frac{1}{\sqrt{2}}\eta\big)\lambda\abs{z}^2
+\frac{\beta}{2}\theta}\,f(z)\ol\varphi_R(\eta)dv(\eta).
\end{equation}
From \eqref{s4-e22} and \eqref{s4-e23}, \eqref{s4-e19} becomes
\begin{equation} \label{s4-e24}
\int^R_{-R}\!\abs{h(\eta)}^2dv(\eta)
=\iint\!\langle u(z,\theta),\Box^{(q)}_{b,H_n}S(z,\theta)\rangle e^{-\psi_0(z,\theta)}dv(\theta)dv(z).
\end{equation}
Choose $\chi(\theta)\in C^\infty_0(\Real)$ so that $\chi(\theta)=1$ when $\abs{\theta}<1$ and $\chi(\theta)=0$ when $\abs{\theta}>2$. Then,
\begin{equation} \label{s4-e25}
\begin{split}
\int^R_{-R}\!\abs{h(\eta)}^2dv(\eta)
&=\lim_{j\To\infty}\iint\!\langle u(z,\theta),\chi(\tfrac{\theta}{j})\Box^{(q)}_{b,H_n}S(z,\theta)\rangle e^{-\psi_0(z,\theta)}dv(\theta)dv(z)\\
&=\lim_{j\To\infty}
\Bigr(\iint\!\langle\Box^{(q)}_{b,H_n}u(z,\theta),\chi(\tfrac{\theta}{j})S(z,\theta)\rangle e^{-\psi_0(z,\theta)}dv(\theta)dv(z)\\
&+
\iint\!\langle u(z,\theta),[\chi(\tfrac{\theta}{j})\ ,\Box^{(q)}_{b,H_n}]S(z,\theta)\rangle e^{-\psi_0(z,\theta)}dv(\theta)dv(z)\Bigr)\\
&=\lim_{j\To\infty}\iint\!\langle u(z,\theta), [\chi(\tfrac{\theta}{j})\ ,\Box^{(q)}_{b,H_n}]S(z,\theta)\rangle e^{-\psi_0(z,\theta)}dv(\theta)dv(z).
\end{split}
\end{equation}
We can check that $[\chi(\frac{\theta}{j})\ ,\Box^{(q)}_{b,H_n}]$ is a first order partial differential operator and all the coefficients of $[\chi(\frac{\theta}{j})\ ,\Box^{(q)}_{b,H_n}]$ converge to $0$ as $j\To\infty$ uniformly in $\theta$ and locally uniformly in $z$. Moreover, from Parseval's formula, \eqref{s4-e18-2} and \eqref{s4-e23}, we can check that
\begin{align*}
&\sum_{\abs{\alpha}\leqslant 1}\int\!\abs{\pr^\alpha_{x,\theta}S}^2e^{-\psi_0}dv(\theta)dv(z)\\
&\leqslant  C\sum_{\abs{\alpha}\leqslant  1}\iint\!(1+\abs{z}+\abs{\eta}+\abs{z}\abs{\eta})^2\abs{\pr^\alpha_xf}^2\abs{\varphi_R(\eta)}^2e^{-\Phi_\eta(z)}dv(z)dv(\eta)\\
&\leqslant  \Td C\int\!\abs{\varphi_R(\eta)}^2dv(\eta)<\infty,
\end{align*}
whit constants $C>0$, $\Td C>0$. Thus,
\[\lim_{j\To\infty}\iint\!\langle u(z,\theta), [\chi(\tfrac{\theta}{j})\ ,\Box^{(q)}_{b,H_n}]S(z,\theta)\rangle e^{-\psi_0(z,\theta)}dv(\theta)dv(z)=0.\]
From this and \eqref{s4-e25}, we conclude that
$\int^R_{-R}\!\abs{h(\eta)}^2dv(\eta)=0$.
Letting $R\To\infty$, we get $h(\eta)=0$ almost everywhere. We have proved that for a given $f(z)\in\Omega^{0,q}_c(\Complex^{n-1})$, $\int_{\Complex^{n-1}}\!\langle\alpha(z,\eta),\Box^{(q)}_{\Phi_\eta}f(z)\rangle e^{-\Phi_\eta(z)}dv(z)=0$ almost everywhere.

Let us consider the Sobolev space $W^2(\Complex^{n-1})$ of distributions in $\Complex^{n-1}$ whose derivatives of order $\leqslant2$ are in $L^2$. The space of forms of type $(0,q)$ with coefficients in this space is accordingly denoted $W^2_{(0,q)}(\Complex^{n-1})$.
%\[
%W^2_{(0,q)}(\Complex^{n-1}):=\{u\in\mathscr D'(\Complex^{n-1};\, \Lambda^{0,q}T^*\Complex^{n-1});
%\sum_{\abs{\alpha}\leqslant 2}\int_{\Complex^{n-1}}\!\abs{\pr^\alpha_xu}^2dv(z)<\infty\}\,.
%\]
Since $W^2_{(0,q)}(\Complex^{n-1})$
is separable and $\Omega^{0,q}_c(\Complex^{n-1})$ is dense in $W^2_{(0,q)}(\Complex^{n-1})$,
we can find $f_j\in\Omega^{0,q}_c(\Complex^{n-1})$, $j=1,2,\ldots$\,, such that
$\set{f_1,f_2,\ldots}$
is a dense subset of $W^2_{(0,q)}(\Complex^{n-1})$. Moreover, we can take $\set{f_1,f_2,\ldots}$ so
that for all $g\in\Omega^{0,q}_c(\Complex^{n-1})$ with
${\rm supp\,}g\subset B_r:=\set{z\in\Complex^{n-1};\, \abs{z}<r}$, $r>0$,
we can find $f_{j_1}, f_{j_2},\ldots$\,, ${\rm supp\,}f_{j_t}\subset B_r$, $t=1,2,\ldots$\,, such that
$f_{j_t}\To g$ for $t\To\infty$ in $W^2_{(0,q)}(\Complex^{n-1})$.

Now, for each $j$, we can repeat the method above and find a measurable set $A_j\supset A_0$, $\abs{A_j}=0$ ($A_0$ is as in the beginning of the proof), such that
$(\alpha(z,\eta)\ |\ \Box^{(q)}_{\Phi_\eta}f_j(z))_{\Phi_\eta}=0$ for all $\eta\notin A_j$.
Put $A=\bigcup_jA_j$. Then, $\abs{A}=0$ and for all $\eta\notin A$,
$(\alpha(z,\eta)\ |\ \Box^{(q)}_{\Phi_\eta}f_j(z))_{\Phi_\eta}=0$ for all $j$.
Let $g\in\Omega^{0,q}_c(\Complex^{n-1})$ with
${\rm supp\,}g\subset B_r$. From the discussion above,
we can find $f_{j_1}, f_{j_2},\ldots$, ${\rm supp\,}f_{j_t}\subset B_r$, $t=1,2,\ldots$, such that
$f_{j_t}\To g$ in $W^2_{(0,q)}(\Complex^{n-1})$, $t\To\infty$.
Then, for $\eta\notin A$,
\begin{align*}
(\alpha(z,\eta)\ |\ \Box^{(q)}_{\Phi_\eta}g)_{\Phi_\eta}&=(\alpha(z,\eta)\ |\ \Box^{(q)}_{\Phi_\eta}(g-f_{j_t})))_{\Phi_\eta}
+(\alpha(z,\eta)\ |\ \Box^{(q)}_{\Phi_\eta}f_{j_t})_{\Phi_\eta}\\
&=(\alpha(z,\eta)\ |\ \Box^{(q)}_{\Phi_\eta}(g-f_{j_t})))_{\Phi_\eta}.
\end{align*}
Now,
\begin{equation}
\begin{split}
\abs{(\alpha(z,\eta)\ |\ \Box^{(q)}_{\Phi_\eta}(g-f_{j_t})))_{\Phi_\eta}}&=\abs{\int_{B_r}\!\langle\alpha(z,\eta),\Box^{(q)}_{\Phi_\eta}(g-f_{j_t})\rangle e^{-\Phi_\eta(z)}dv(z)}\\
&\leqslant  C\sum_{\abs{\alpha}
\leqslant  2}\int\!\abs{\pr^\alpha_x(g-f_{j_t})}^2dv(z)\To 0, \quad t\To\infty.
\end{split}
\end{equation}
Thus, for $\eta\notin A$,
$(\alpha(z,\eta)\ |\ \Box^{(q)}_{\Phi_\eta}g)_{\Phi_\eta}=0$
for all $g\in\Omega^{0,q}_c(\Complex^{n-1})$. The theorem follows.
\end{proof}

\begin{lem} \label{s4-l1}
Let $f\in\Omega^{0,q}_c(\Complex^{n-1})$. Let $\varphi(\eta)\in L^2(\Real)$ with compact support. Then, we have
\begin{equation*}
\begin{split}
&\int\! e^{i\theta\eta}e^{-\big(\frac{i\beta}{2\sqrt{2}}-\frac{1}{\sqrt{2}}\eta\big)\lambda\abs{z}^2
+\frac{\beta}{2}\theta}\,
\Box^{(q)}_{\Phi_\eta}f(z)\varphi(\eta)dv(\eta)\\
&=\Box^{(q)}_{b,H_n}\Bigr(\int\! e^{i\theta\eta}e^{-\big(\frac{i\beta}{2\sqrt{2}}-\frac{1}{\sqrt{2}}\eta\big)\lambda\abs{z}^2
+\frac{\beta}{2}\theta}\,f(z)\varphi(\eta)dv(\eta)\Bigr),
\end{split}
\end{equation*}
where $\lambda\abs{z}^2=\sum^{n-1}_{j=1}\lambda_j\abs{z_j}^2$.
\end{lem}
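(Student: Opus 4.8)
The plan is to move $\Box^{(q)}_{b,H_n}$ inside the $\eta$-integral and reduce everything to a fibrewise conjugation identity on $\Complex^{n-1}$. Write
\[
K_\eta(z,\theta):=e^{i\theta\eta}\,e^{-(\frac{i\beta}{2\sqrt{2}}+\frac{1}{\sqrt{2}}\eta)\lambda\abs{z}^2+\frac{\beta}{2}\theta}\,,
\]
so that the two integrands in the statement are $K_\eta\,(\Box^{(q)}_{\Phi_\eta}f)(z)\,\varphi(\eta)$ and $K_\eta\,f(z)\,\varphi(\eta)$. First I would check that $\Box^{(q)}_{b,H_n}$ commutes with $\int\cdot\,\varphi(\eta)\,dv(\eta)$: since $f$ has compact support in $z$ and $\varphi$ has compact support in $\eta$, for $(z,\theta)$ in a fixed compact set the function $\eta\mapsto K_\eta(z,\theta)f(z)\varphi(\eta)$ and all of its $z,\overline z,\theta$-derivatives (which merely introduce extra polynomial factors in $z,\eta$ times $K_\eta$ and derivatives of $f$) are bounded by a fixed $L^1(\Real)$ majorant, so differentiation under the integral sign is legitimate. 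Hence it suffices to prove, for each fixed $\eta\in\Real$ and every $g\in\Omega^{0,q}_c(\Complex^{n-1})$ regarded as a $\theta$-independent $(0,q)$-form on $H_n$, the pointwise identity $\Box^{(q)}_{b,H_n}\bigl(K_\eta\,g\bigr)=K_\eta\,\bigl(\Box^{(q)}_{\Phi_\eta}g\bigr)$.

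For this I would conjugate: $\Box^{(q)}_{b,H_n}(K_\eta g)=K_\eta\cdot\bigl(K_\eta^{-1}\Box^{(q)}_{b,H_n}K_\eta\bigr)g$, and compute $K_\eta^{-1}\Box^{(q)}_{b,H_n}K_\eta$ from the explicit formula \eqref{s3-e23} via
\[
K_\eta^{-1}\tfrac{\partial}{\partial\overline z_j}K_\eta=\tfrac{\partial}{\partial\overline z_j}-c\,\lambda_j z_j,\qquad K_\eta^{-1}\tfrac{\partial}{\partial z_j}K_\eta=\tfrac{\partial}{\partial z_j}-c\,\lambda_j\overline z_j,\qquad K_\eta^{-1}\tfrac{\partial}{\partial\theta}K_\eta=\tfrac{\partial}{\partial\theta}+i\eta+\tfrac{\beta}{2}\,,
\]
where $c=\tfrac{i\beta}{2\sqrt{2}}+\tfrac{1}{\sqrt{2}}\eta$. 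Because $g$ does not depend on $\theta$, every $\tfrac{\partial}{\partial\theta}$ surviving in the conjugated operator acts on a $\theta$-independent quantity and so contributes only the constant $i\eta+\tfrac{\beta}{2}$. Substituting these rules into $\overline U_{j,H_n}$, into $\overline U^{*,\psi_0}_{j,H_n}$, and into the zeroth-order ($d\overline z_j\wedge d\overline z_t^{\wedge,*}$-valued) part of $\Box^{(q)}_{b,H_n}$, and collecting terms, one checks that the $\beta$- and $\eta$-dependent lower-order contributions recombine so that, on $\theta$-independent forms, $K_\eta^{-1}\Box^{(q)}_{b,H_n}K_\eta$ reduces exactly to the operator $\Box^{(q)}_{\Phi_\eta}$ written out in \eqref{s4-e15}. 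This gives the pointwise identity, and integrating it against $\varphi(\eta)\,dv(\eta)$ and invoking the first step yields the lemma. It is convenient to treat the scalar case $q=0$ first — where the claim is that $K_\eta^{-1}\bigl(\sum_j\overline U^{*,\psi_0}_{j,H_n}\overline U_{j,H_n}\bigr)K_\eta$, restricted to $\theta$-independent functions, equals the second-order part of $\Box^{(q)}_{\Phi_\eta}$ — and then to observe that, $K_\eta$ being scalar and the $\tfrac{\partial}{\partial\theta}$ in the form part contributing only a constant, the form-valued parts of the two operators pass into one another purely algebraically.

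The only genuine difficulty here is the bookkeeping in the conjugation: tracking all the lower-order terms carried by $\psi_0=\beta\theta+\sum_{j,t}\mu_{j,t}\overline z_j z_t$, by the several occurrences of $\tfrac{\partial}{\partial\theta}$, and by the curvature-type $d\overline z_j\wedge d\overline z_t^{\wedge,*}$ terms, and verifying that the particular Gaussian-and-exponential twist built into $K_\eta$ is exactly the one that carries $\Box^{(q)}_{b,H_n}$ onto the $\eta$-family $\Box^{(q)}_{\Phi_\eta}$. Conceptually, the identity is the elementary incarnation of the fact that, once the $\theta$-dependence of the weight is removed by the gauge factor $e^{\frac{\beta}{2}\theta}$ and the partial Fourier transform in the central variable $\theta$ is applied, the Kohn Laplacian $\Box^{(q)}_{b,H_n}$ decomposes as a direct integral over $\eta\in\Real$ of the Bargmann-type operators $\Box^{(q)}_{\Phi_\eta}$ on $\Complex^{n-1}$.
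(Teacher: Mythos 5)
Your overall strategy is the same as the paper's: pass $\Box^{(q)}_{b,H_n}$ under the $\eta$-integral (harmless here because of the compact supports) and reduce to a fixed-$\eta$ intertwining identity for the kernel $K_\eta$; the paper carries this out by letting each building block $\ol U_{t,H_n}$, $\ol U^{*,\psi_0}_{t,H_n}$ and the curvature term act on the kernel (its \eqref{s4-e26}--\eqref{s4-e28}), which is exactly your conjugation computation in different clothing. The problem is that the one step you dismiss as bookkeeping is where the whole content lies, and with the kernel exactly as you (and the printed statement) take it, i.e.\ with $c=\tfrac{i\beta}{2\sqrt 2}+\tfrac{1}{\sqrt 2}\eta$, the asserted recombination does not happen. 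Carrying out your own rules on $\theta$-independent forms gives
\[
K_\eta^{-1}\,\ol U_{t,H_n}\,K_\eta=\frac{\pr}{\pr\ol z_t}+\Bigl(-c+\tfrac{i}{\sqrt2}\bigl(i\eta+\tfrac\beta2\bigr)\Bigr)\lambda_t z_t=\frac{\pr}{\pr\ol z_t}-\sqrt2\,\eta\lambda_t z_t,\qquad
K_\eta^{-1}\,\ol U^{*,\psi_0}_{t,H_n}\,K_\eta=-\frac{\pr}{\pr z_t}+\sum_{j}\mu_{j,t}\ol z_j,
\]
so only the $\beta$-terms cancel, while the $\eta$-terms land on the wrong factor: comparing with \eqref{s4-e15}, the annihilation part should be $\frac{\pr}{\pr\ol z_t}$ and the creation part should carry $-\sqrt2\,\eta\lambda_t\ol z_t$. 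The two resulting operators differ by genuine first- and zeroth-order terms (already visible for $q=0$, $n=2$, $\beta=\mu=0$), so the conjugated Laplacian is not $\Box^{(q)}_{\Phi_\eta}$ and your central claim fails as stated.

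The intertwining does hold, but only for the Gaussian with the opposite sign on the $\eta$-term, namely $e^{\big(\frac{\eta}{\sqrt2}-\frac{i\beta}{2\sqrt2}\big)\lambda\abs{z}^2}=e^{-\big(\frac{i\beta}{2\sqrt2}-\frac{1}{\sqrt2}\eta\big)\lambda\abs{z}^2}$; this is precisely the factor in \eqref{s4-e161} and \eqref{s4-e57}, and it is characterized by $\ol U_{t,H_n}$ annihilating the kernel, which is what the paper's own computations \eqref{s4-e26}--\eqref{s4-e28} actually verify (the $+\tfrac{1}{\sqrt2}\eta$ appearing in the printed statement and in the last line of \eqref{s4-e19} is a sign slip in the $z$-exponent). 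So to make your argument complete you must either carry out the conjugation honestly and thereby discover that the exponent has to be $\big(\frac{\eta}{\sqrt2}-\frac{i\beta}{2\sqrt2}\big)\lambda\abs{z}^2$, proving the (corrected) lemma, or flag the discrepancy explicitly; as written, the proof asserts a cancellation that the stated kernel does not produce. The preliminary step (differentiation under the integral sign, using $\varphi\in L^2$ with compact support so $\varphi\in L^1$) and the remark that the form-valued zeroth-order part passes through for either sign are both fine.
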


\begin{proof}
For any $g\in\Omega^{0,q}_c(\Complex^{n-1})$, we can check that
\begin{equation} \label{s4-e26}
\begin{split}
&\ol U_{t,H_n}\Bigr(\int\! e^{i\theta\eta}e^{-\big(\frac{i\beta}{2\sqrt{2}}-\frac{1}{\sqrt{2}}\eta\big)\lambda\abs{z}^2
+\frac{\beta}{2}\theta}\,g(z)\varphi(\eta)dv(\eta)\Bigr)\\
&=\Bigr(\frac{\pr}{\pr\ol z_t}+\frac{1}{\sqrt{2}}i\lambda_tz_t\frac{\pr}{\pr\theta}\Bigr)
\Bigr(\int\! e^{i\theta\eta}e^{-\big(\frac{i\beta}{2\sqrt{2}}-\frac{1}{\sqrt{2}}\eta\big)\lambda\abs{z}^2
+\frac{\beta}{2}\theta}\,g(z)\varphi(\eta)dv(\eta)\Bigr)\\
&=\int\! e^{i\theta\eta}e^{-\big(\frac{i\beta}{2\sqrt{2}}-\frac{1}{\sqrt{2}}\eta\big)\lambda\abs{z}^2
+\frac{\beta}{2}\theta}\,\frac{\pr g}{\pr \ol z_t}\varphi(\eta)dv(\eta),
\end{split}
\end{equation}
where $t=1,\ldots,n-1$,
\begin{equation} \label{s4-e27}
\begin{split}
\ol U^{\,*,\psi_0}_{t,H_n}&\Bigr(\int\! e^{i\theta\eta}e^{-\big(\frac{i\beta}{2\sqrt{2}}-\frac{1}{\sqrt{2}}\eta\big)\lambda\abs{z}^2
+\frac{\beta}{2}\theta}\,g(z)\varphi(\eta)dv(\eta)\Bigr)\\
=&\Bigr(-\frac{\pr}{\pr z_t}+\frac{1}{\sqrt{2}}i\lambda_t\ol z_t\frac{\pr}{\pr\theta}+\sum^{n-1}_{j=1}\mu_{j,\,t}\ol z_j-\frac{1}{\sqrt{2}}i\lambda_t\ol z_t\beta\Bigr)\\
&\Bigr(\int\! e^{i\theta\eta}e^{-\big(\frac{i\beta}{2\sqrt{2}}-\frac{1}{\sqrt{2}}\eta\big)\lambda\abs{z}^2
+\frac{\beta}{2}\theta}\,g(z)\varphi(\eta)dv(\eta)\Bigr)\\
=&\int\! e^{i\theta\eta}e^{-\big(\frac{i\beta}{2\sqrt{2}}-\frac{1}{\sqrt{2}}\eta\big)\lambda\abs{z}^2
+\frac{\beta}{2}\theta}\,(-\frac{\pr g}{\pr z_t}+\sum^{n-1}_{j=1}\mu_{j,\,t}\ol z_jg-\sqrt{2}\lambda_t\ol z_t\eta g)\varphi(\eta)dv(\eta),
\end{split}
\end{equation}
where $t=1,\ldots,n-1$, and
\begin{equation} \label{s4-e28}
\begin{split}
&\Bigr(\mu_{j,\,t}-\frac{1}{\sqrt{2}}i\lambda_j\delta_{j,\,t}\beta+\sqrt{2}i\lambda_j\delta_{j,\,t}\frac{\pr}{\pr\theta}\Bigr)\,\int\! e^{i\theta\eta}e^{-\big(\frac{i\beta}{2\sqrt{2}}-\frac{1}{\sqrt{2}}\eta\big)\lambda\abs{z}^2
+\frac{\beta}{2}\theta}\,g\varphi(\eta)dv(\eta)\\
&=\int\! e^{i\theta\eta}e^{-\big(\frac{i\beta}{2\sqrt{2}}-\frac{1}{\sqrt{2}}\eta\big)\lambda\abs{z}^2
+\frac{\beta}{2}\theta}\,(\mu_{j,\,t}g-\sqrt{2}\eta\lambda_j\delta_{j,\,t}g)\varphi(\eta)dv(\eta),
\end{split}
\end{equation}
where $j,t=1,\ldots,n-1$. From \eqref{s4-e26}, \eqref{s4-e27}, \eqref{s4-e28} and the explicit formulas for
$\Box^{(q)}_{b,H_n}$ and $\Box^{(q)}_{\Phi_\eta}$ (see \eqref{s3-e23} and \eqref{s4-e15}),
the lemma follows.
\end{proof}

\subsection{Estimates for the extremal function on the Heisenberg group}%$\sum'_{\abs{J}=q}S^{(q)}_{J,H_n}(0)$

%We pause and review some well-known facts.
We will use the same notations as before. For $\eta\in\Real$,
%put
%\begin{equation*}
%%\begin{split}
%L^2_{(0,q)}(\Complex^{n-1}, \Phi_\eta)
%=\Big\{u\in\mathscr{D}^{\,\prime}(\Complex^{n-1};\, \Lambda^{0,q}T^*\Complex^{n-1});\, \int_{\Complex^{n-1}}\!\abs{u}^2e^{-\Phi_\eta(z)}dv(z)<\infty\Big\}.
%%\end{split}
%\end{equation*}
let us denote by $L^2_{(0,q)}(\Complex^{n-1}, \Phi_\eta)$ the competion of $\Omega_c^{(0,q)}(\Complex^{n-1})$
with respect to the norm $\|\cdot\|_{\Phi_\eta}$, where
\[
\|u\|^2_{\Phi_\eta}=\int_{\Complex^{n-1}}\!\abs{u}^2e^{-\Phi_\eta(z)}dv(z)\,,\quad u\in\Omega_c^{(0,q)}(\Complex^{n-1})\,.
\]

Let $B^{(q)}_{\Phi_\eta}:L^2_{(0,q)}(\Complex^{n-1}, \Phi_\eta)\To\Ker\Box^{(q)}_{\Phi_\eta}$
be the Bergman projection, i.e.~the orthogonal projection onto $\Ker\Box^{(q)}_{\Phi_\eta}$ with respect to $(\ |\ )_{\Phi_\eta}$. Let
$(B^{(q)}_{\Phi_\eta})^*$ be the adjoint of $B^{(q)}_{\Phi_\eta}$ with respect to $(\ |\ )_{\Phi_\eta}$.
We have
$B^{(q)}_{\Phi_\eta}=(B^{(q)}_{\Phi_\eta})^*=(B^{(q)}_{\Phi_\eta})^2$.
Let
\begin{equation*}
\begin{split}
B^{(q)}_{\Phi_\eta}(z,w)\in C^\infty(\Complex^{n-1}\times\Complex^{n-1};\, \mathscr L(\Lambda^{0,q}T^*_w\Complex^{n-1},\Lambda^{0,q}T^*_z\Complex^{n-1}))\\
(B^{(q)}_{\Phi_\eta}u)(z)=\int_{\Complex^{n-1}}\!B^{(q)}_{\Phi_\eta}(z,w)u(w)e^{-\Phi_\eta(w)}dv(w)\,,\quad
u\in L^2_{(0,q)}(\Complex^{n-1}, \Phi_\eta)
\end{split}
\end{equation*}
% $B^{(q)}_{\Phi_\eta}(z,w)\in C^\infty(\Complex^{n-1}\times\Complex^{n-1};\, \mathscr L(\Lambda^{0,q}T^*_w(\Complex^{n-1}),\Lambda^{0,q}T^*_z(\Complex^{n-1})))$
be the distribution kernel of $B^{(q)}_{\Phi_\eta}$ with respect to $(\ |\ )_{\Phi_\eta}$.
%That is,
%$(B^{(q)}_{\Phi_\eta}u)(z)=\int_{\Complex^{n-1}}\!B^{(q)}_{\Phi_\eta}(z,w)u(w)e^{-\Phi_\eta(w)}d\lambda(w)$,
%$u\in L^2_{(0,q)}(\Complex^{n-1}, \Phi_\eta)$.
We take the Hermitian metric $\langle\,\cdot\,,\cdot\,\rangle$ on $T^{1, 0}_z\Complex^{n-1}$, $z\in\Complex^{n-1}$, so that
$\frac{\pr}{\pr z_j}$, $j=1,\ldots,n-1$, is an orthonormal basis.
%For $z\in\Complex^{n-1}$,
Let
\begin{equation} \label{s4-e29-1}
M_{\Phi_\eta}:T^{1, 0}_z\Complex^{n-1}\To T^{1, 0}_z\Complex^{n-1}\,,\quad z\in\Complex^{n-1}
\end{equation}
be the linear map defined by
$\langle M_{\Phi_\eta} U, V\rangle=\langle\pr\ddbar\Phi_\eta, U\wedge\ol V\rangle$,
$U, V\in T^{1, 0}_z\Complex^{n-1}$. Put
\begin{equation} \label{s4-e30}
\begin{split}
\Real_q:=&\,\{\eta\in\Real;\, \mbox{$M_{\Phi_\eta}$ has exactly $q$ negative eigenvalues}\\
&\quad\mbox{and $n-1-q$ positive eigenvalues}\}.
\end{split}
\end{equation}
The following result is essentially well-known (see Wu-Zhang \cite{WZ98}, Berman~\cite{Be04} and Ma-Marinescu~\cite[\S 8.2]{MM07}).
%-----------------------------------------
\begin{thm} \label{s4-t2}
If $\eta\notin \Real_q$, then $B^{(q)}_{\Phi_\eta}(z,z)=0$,
for all $z\in\Complex^{n-1}$.
If $\eta\in \Real_q$, let $(Z_j(\eta))_{j=1}^{n-1}$
be an orthonormal frame of $T^{1, 0}_z\Complex^{n-1}$, for which  $M_{\Phi_\eta}$ is diagonal. We assume that
$M_{\Phi_\eta}Z_j(\eta)=\nu_j(\eta)Z_j(\eta)$ for $j=1,\ldots,n-1$,
with $\nu_j(\eta)<0$ for $j=1,\ldots,q$ and $\nu_j(\eta)>0$ for $j=q+1,\ldots,n-1$.
Let $(T_j(\eta))_{j=1}^{n-1}$,
denote the basis of $T^{*0,1}_z\Complex^{n-1}$, which is dual to $(\ol Z_j(\eta))_{j=1}^{n-1}$. Then,
\begin{equation} \label{s4-e32}
B^{(q)}_{\Phi_\eta}(z,z)=e^{\Phi_\eta(z)}(2\pi)^{-n+1}\abs{\nu_1(\eta)}\cdots\abs{\nu_{n-1}(\eta)}\prod^q_{j=1}T_j(\eta)\wedge (T_j(\eta)\wedge)^*.
\end{equation}
In particular,
\begin{equation} \label{s4-e33}
\begin{split}
{\rm Tr\,}B^{(q)}_{\Phi_\eta}(z,z):&=\sideset{}{'}\sum_{\abs{J}=q}\langle B^{(q)}_{\Phi_\eta}(z,z)d\ol z_J,d\ol z_J\rangle\\
&=e^{\Phi_\eta(z)}(2\pi)^{-n+1}\abs{\nu_1(\eta)}\cdots\abs{\nu_{n-1}(\eta)}\mathds{1}_{\Real_q}(\eta)\\
&=e^{\Phi_\eta(z)}(2\pi)^{-n+1}\abs{\det M_{\Phi_\eta}}\mathds{1}_{\Real_q}(\eta),
\end{split}
\end{equation}
where $\mathds{1}_{\Real_q}(\eta)$ is the characteristic function of $\Real_q$.
%if $\eta\notin \Real_q$, $\mathds{1}_{\Real_q}(\eta)=1$ if $\eta\in \Real_q$.
\end{thm}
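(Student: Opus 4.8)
The plan is to diagonalize the weight and thereby reduce the statement to a product of one–dimensional weighted Fock spaces. The matrix $C:=\bigl(\pr^2\Phi_\eta/\pr z_j\pr\ol z_t\bigr)_{j,t}$ is Hermitian, so there is $T\in U(n-1)$ with $T^{*}CT=\operatorname{diag}\bigl(\nu_1(\eta),\dots,\nu_{n-1}(\eta)\bigr)$; since $\Phi_\eta$ is a Hermitian quadratic form whose coefficient matrix is exactly $C$, the substitution $z=Tw$ turns $\Phi_\eta$ into $\widetilde\Phi(w):=\sum_{j=1}^{n-1}\nu_j(\eta)\abs{w_j}^2$, and $T$ carries the orthonormal frame $\pr/\pr w_j$ to $Z_j(\eta)$ and the coframe $d\ol w_j$ to $T_j(\eta)$. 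Because $\abs{\det T}=1$, pull–back by $T$ is a unitary isomorphism $L^2_{(0,q)}(\Complex^{n-1},\widetilde\Phi)\to L^2_{(0,q)}(\Complex^{n-1},\Phi_\eta)$ commuting with $\ddbar$, hence with $\Box^{(q)}$ and with the Bergman projection; so $B^{(q)}_{\Phi_\eta}(z,z)$ is the push–forward of $B^{(q)}_{\widetilde\Phi}(w,w)$, while $\abs{\det M_{\Phi_\eta}}=\abs{\nu_1(\eta)\cdots\nu_{n-1}(\eta)}$. Thus it suffices to prove \eqref{s4-e32}–\eqref{s4-e33} for the diagonal weight $\widetilde\Phi$.

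For $\widetilde\Phi$ the operator $\Box^{(q)}_{\widetilde\Phi}$ differs from a positive constant multiple of the Euclidean Laplacian by terms of order $\le1$, hence is elliptic; so every $u\in L^2_{(0,q)}(\Complex^{n-1},\widetilde\Phi)$ with $\Box^{(q)}_{\widetilde\Phi}u=0$ in the distribution sense is smooth, and integrating $\bigl(\Box^{(q)}_{\widetilde\Phi}u\mid u\bigr)_{\widetilde\Phi}$ by parts against a cut–off on the balls $\abs{w}<R$ (using the Gaussian factor $e^{-\widetilde\Phi}$) gives $\ddbar u=0$ and $\ddbar^{*,\widetilde\Phi}u=0$. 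Now $(\Omega^{0,\bullet}(\Complex^{n-1}),\ddbar)$ with weight $e^{-\widetilde\Phi}$ is the tensor product of the $n-1$ one–variable $\ddbar$–complexes with weights $e^{-\nu_j(\eta)\abs{z_j}^2}$, and $\Box^{(q)}_{\widetilde\Phi}$ respects the induced multidegree decomposition, restricting on each piece to the sum of the one–variable Laplacians acting in the separate factors; hence $\Ker\Box^{(q)}_{\widetilde\Phi}$ is the direct sum, over $d_1+\cdots+d_{n-1}=q$ with $d_j\in\{0,1\}$, of $\bigotimes_j\Ker\Box^{(d_j)}_{\nu_j(\eta)\abs{z_j}^2}$, and $B^{(q)}_{\widetilde\Phi}(w,w)$ is the corresponding tensor product of one–variable Bergman kernels on the diagonal.

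It then remains to treat the one–variable model and reassemble. On $\Complex$ with weight $e^{-\nu\abs{z}^2}$ and volume $2\,dx\,dy$: in degree $0$ the harmonic space is the Fock space, nonzero exactly when $\nu>0$, and summing the orthonormal basis built from $\{z^m\}$ gives $B^{(0)}(z,z)=\tfrac{\abs{\nu}}{2\pi}e^{\nu\abs{z}^2}$; in degree $1$, $f\,d\ol z$ is harmonic iff $(-\pr_z+\nu\ol z)f=0$, i.e.\ $f=e^{\nu\abs{z}^2}g(\ol z)$ with $g$ antiholomorphic, which is $L^2$ exactly when $\nu<0$, and summing the resulting basis gives $B^{(1)}(z,z)=\tfrac{\abs{\nu}}{2\pi}e^{\nu\abs{z}^2}\,d\ol z\wedge(d\ol z\wedge)^{*}$; both harmonic spaces vanish if $\nu=0$. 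Hence $\Ker\Box^{(q)}_{\widetilde\Phi}\ne\{0\}$ forces $\nu_j(\eta)<0$ for $j$ in some $q$–element set $S$ and $\nu_j(\eta)>0$ for $j\notin S$ — precisely $\eta\in\Real_q$, with $S=\{1,\dots,q\}$. For $\eta\notin\Real_q$ this gives $B^{(q)}_{\widetilde\Phi}\equiv0$, so $B^{(q)}_{\Phi_\eta}\equiv0$; for $\eta\in\Real_q$, multiplying the one–variable kernels gives $B^{(q)}_{\widetilde\Phi}(w,w)=(2\pi)^{-n+1}\abs{\nu_1(\eta)\cdots\nu_{n-1}(\eta)}\,e^{\widetilde\Phi(w)}\prod_{j=1}^{q}d\ol w_j\wedge(d\ol w_j\wedge)^{*}$, and transporting back by $T$ yields \eqref{s4-e32}. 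Finally \eqref{s4-e33} follows because $\prod_{j=1}^q T_j(\eta)\wedge(T_j(\eta)\wedge)^{*}$ is the orthogonal projection of $\Lambda^{0,q}T^*_z(\Complex^{n-1})$ onto the line $\Complex\,T_1(\eta)\wedge\cdots\wedge T_q(\eta)$, whose trace equals $1$, together with $\det M_{\Phi_\eta}=\nu_1(\eta)\cdots\nu_{n-1}(\eta)$.

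The actual computations are routine; the one point deserving care is the middle step, namely identifying the distributional $L^2$–kernel of $\Box^{(q)}_{\widetilde\Phi}$ with genuine $\ddbar$– and $\ddbar^{*,\widetilde\Phi}$–harmonic forms and controlling the tensor–product decomposition of that kernel in the non–compact weighted setting — equivalently, the spectral analysis of these model operators, for which one may also invoke the computation of Berman~\cite{Be04}.
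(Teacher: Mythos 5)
Your proposal is correct. Note that the paper itself gives no proof of Theorem \ref{s4-t2}: it is stated as ``essentially well-known'' with a reference to Berman \cite{Be04}, so your argument fills in precisely the computation that the paper delegates to the literature, and it does so along the standard lines of that reference: a unitary change of coordinates $z=Tw$ reducing the Hermitian quadratic weight to $\sum_j\nu_j(\eta)\abs{w_j}^2$ (unitarity is what makes the pull-back compatible with the metric on $\Lambda^{0,q}T^*(\Complex^{n-1})$, the volume $dv(z)=2^{n-1}dx_1\cdots dx_{2n-2}$ and hence with $\Box^{(q)}$ and the projector), followed by the splitting into one-variable weighted Fock complexes and the explicit kernels $\frac{\abs{\nu}}{2\pi}e^{\nu\abs{z}^2}$ in degree $0$ (for $\nu>0$) and $\frac{\abs{\nu}}{2\pi}e^{\nu\abs{z}^2}\,d\ol z\wedge(d\ol z\wedge)^{*}$ in degree $1$ (for $\nu<0$), which reassemble to \eqref{s4-e32} and give the vanishing for $\eta\notin\Real_q$, including the degenerate case $\nu_j=0$. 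Your constants are right because you kept track of the measure $2\,dx\,dy$ per variable. The one step you rightly flag as delicate — that a distributional $L^2$ solution of $\Box^{(q)}_{\widetilde\Phi}u=0$ is smooth and satisfies $\ddbar u=0$, $\ddbar^{*,\widetilde\Phi}u=0$, and that the kernel then decomposes according to multidegree as a tensor product of the one-variable kernels — is indeed the only point requiring an argument beyond linear algebra; it is handled exactly as you indicate, by elliptic regularity plus the cutoff integration-by-parts identity with $\sup\abs{\nabla\chi_R}\to0$ (which needs no decay of the weight, only $u\in L^2_{(0,q)}(\Complex^{n-1},\widetilde\Phi)$), applied either to $\Box^{(q)}_{\widetilde\Phi}$ directly or componentwise to the operators $\sum_j A_j^{*}A_j+\sum_{j\in J}A_jA_j^{*}$, after which the first-order equations in each variable separately yield the product structure of the harmonic space and of the diagonal kernel. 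So there is no gap, only that standard functional-analytic verification to be written out if one does not simply quote \cite{Be04}, as the paper does.
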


\begin{rem} \label{s4-r1}
We recall that
$\Phi_\eta=-\sqrt{2}\eta\sum^{n-1}_{j=1}\lambda_j\abs{z_j}^2+\sum^{n-1}_{j,\,t=1}\mu_{j,\,t}\,\ol z_jz_t$.
Since $Y(q)$ holds, we conclude that $\Real_q\subset[-R, R]$ for some $R>0$.
\end{rem}

We return to our situation.
Let $u(z, \theta)\in\Omega^{0,q}(H_n)$, $\norm{u}_{\psi_0}=1$, $\Box^{(q)}_{b,H_n}u=0$. As before,
let $\hat v(z, \eta)$ be the Fourier transform of $u(z, \theta)e^{-\frac{\beta}{2}\theta}$ with respect to $\theta$. From Theorem~\ref{s4-t1}, we know that for $\alpha$ defined in \eqref{s4-e161} we have
\begin{equation} \label{s4-e34}
\begin{split}
\alpha(z,\eta)%=&\hat v(z,\eta)\exp\bigg({\frac{i}{2\sqrt{2}}\beta\sum^{n-1}_{j=1}\lambda_j\abs{z_j}^2-\frac{\eta}{\sqrt{2}}\sum^{n-1}_{j=1}\lambda_j\abs{z_j}^2}\bigg)\\&\quad
\in\Ker\Box^{(q)}_{\Phi_\eta}\cap L^2_{(0,q)}(\Complex^{n-1}, \Phi_\eta)\cap \Omega^{0,q}(\Complex^{n-1})
\end{split}
\end{equation}
for almost all $\eta\in\Real$. Thus, $\alpha(z, \eta)=\int_{\Complex^{n-1}}\!B^{(q)}_{\Phi_\eta}(z, w)\alpha(w,\eta)e^{-\Phi_\eta(w)}dv(w)$ for almost all $\eta\in\Real$. Put
$\hat v(z, \eta)=\sum'_{\abs{J}=q}\hat v_J(z, \eta)d\ol z_J$.
%----------------------------------------
\begin{lem} \label{s4-l2}
Let $J$ be a strictly increasing index, $\abs{J}=q$, and $z\in\Complex^{n-1}$. Then, for almost all $\eta\in\Real$, the following estimate holds:
\begin{equation} \label{s4-e37}
\abs{\hat v_J(z, \eta)}^2\leqslant  e^{\sqrt{2}\,\eta\sum^{n-1}_{j=1}\lambda_j\abs{z_j}^2}\langle B^{(q)}_{\Phi_\eta}(z, z)d\ol z_J,d\ol z_J\rangle\int_{\Complex^{n-1}}\!\abs{\hat v(w, \eta)}^2e^{-\Phi_0(w)}dv(w)\,.
\end{equation}

\end{lem}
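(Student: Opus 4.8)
\emph{Proof sketch.} The plan is to exploit the reproducing property of the Bergman kernel $B^{(q)}_{\Phi_\eta}$ by a Cauchy--Schwarz argument, and then to transfer the resulting bound on $\alpha(z,\eta)$ back to $\hat v_J(z,\eta)$ via the definition \eqref{s4-e161}.

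First I would record a general pointwise estimate on the weighted Bergman space: for every $u\in\Ker\Box^{(q)}_{\Phi_\eta}\cap L^2_{(0,q)}(\Complex^{n-1},\Phi_\eta)$, every strictly increasing $J$ with $\abs{J}=q$, and every $z\in\Complex^{n-1}$,
\[
\abs{u_J(z)}^2\leqslant\bigl(B^{(q)}_{\Phi_\eta}(z,z)d\ol z^J\mid d\ol z^J\bigr)\,\norm{u}^2_{\Phi_\eta}\,.
\]
To prove this, put $e_{z,J}(w):=B^{(q)}_{\Phi_\eta}(w,z)d\ol z^J$. From $B^{(q)}_{\Phi_\eta}=(B^{(q)}_{\Phi_\eta})^*=(B^{(q)}_{\Phi_\eta})^2$ one obtains the Hermitian symmetry $B^{(q)}_{\Phi_\eta}(z,w)^*=B^{(q)}_{\Phi_\eta}(w,z)$ of the kernel and the fact that $e_{z,J}\in\Ker\Box^{(q)}_{\Phi_\eta}\cap L^2_{(0,q)}(\Complex^{n-1},\Phi_\eta)$ (the integrability being also visible from the explicit Gaussian form \eqref{s4-e32}). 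Pairing the reproducing identity $u(z)=\int_{\Complex^{n-1}}B^{(q)}_{\Phi_\eta}(z,w)u(w)e^{-\Phi_\eta(w)}dv(w)$ with $d\ol z^J$ and using the Hermitian symmetry yields $u_J(z)=(u\mid e_{z,J})_{\Phi_\eta}$; taking $u=e_{z,J}$ gives $\norm{e_{z,J}}^2_{\Phi_\eta}=(e_{z,J})_J(z)=(B^{(q)}_{\Phi_\eta}(z,z)d\ol z^J\mid d\ol z^J)$, and Cauchy--Schwarz finishes the general estimate.

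Next I would apply this to $u=\alpha(\cdot,\eta)$, which by Theorem~\ref{s4-t1} and the reproducing property recorded just before the lemma lies in $\Ker\Box^{(q)}_{\Phi_\eta}\cap L^2_{(0,q)}(\Complex^{n-1},\Phi_\eta)$ for almost every $\eta$. From \eqref{s4-e161} we have $\alpha_J(z,\eta)=\hat v_J(z,\eta)\exp\bigl[(\tfrac{i\beta}{2\sqrt2}-\tfrac{1}{\sqrt2}\eta)\sum_j\lambda_j\abs{z_j}^2\bigr]$, so $\abs{\alpha_J(z,\eta)}^2=\abs{\hat v_J(z,\eta)}^2e^{-\sqrt2\eta\sum_j\lambda_j\abs{z_j}^2}$; and since $\Phi_\eta=\Phi_0-\sqrt2\eta\sum_j\lambda_j\abs{z_j}^2$ by \eqref{s4-e13}, one has $\abs{\alpha(w,\eta)}^2e^{-\Phi_\eta(w)}=\abs{\hat v(w,\eta)}^2e^{-\Phi_0(w)}$, whence $\norm{\alpha(\cdot,\eta)}^2_{\Phi_\eta}=\int_{\Complex^{n-1}}\abs{\hat v(w,\eta)}^2e^{-\Phi_0(w)}dv(w)$. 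Substituting both identities into the general estimate and multiplying by $e^{\sqrt2\eta\sum_j\lambda_j\abs{z_j}^2}$ gives precisely \eqref{s4-e37}.

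The only step requiring some care is the justification that the reproducing-kernel section $e_{z,J}$ lies in $L^2_{(0,q)}(\Complex^{n-1},\Phi_\eta)$ and is fixed by $B^{(q)}_{\Phi_\eta}$ --- equivalently, that the evaluation $u\mapsto u_J(z)$ is continuous on the Bergman space. This is standard once one knows that $B^{(q)}_{\Phi_\eta}$ has a smooth kernel, which is furnished by Theorem~\ref{s4-t2}; alternatively it can be read off directly from the explicit formula \eqref{s4-e32}. The rest is routine Hilbert-space bookkeeping. $\square$
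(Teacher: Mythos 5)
Your argument is correct and is essentially the paper's: both rest on the reproducing property $\alpha(\cdot,\eta)=B^{(q)}_{\Phi_\eta}\alpha(\cdot,\eta)$, Cauchy--Schwarz, and the identity $\norm{\alpha(\cdot,\eta)}^2_{\Phi_\eta}=\int_{\Complex^{n-1}}\abs{\hat v(w,\eta)}^2e^{-\Phi_0(w)}dv(w)$, followed by unwinding \eqref{s4-e161}. The one technical divergence is that the paper realizes the point evaluation by pairing $\alpha$ with an approximate identity $f_j(\cdot-z_0)d\ol z^J$ (with $f_j=j^{2n-2}\varphi(jz)e^{\Phi_\eta}$), moving $B^{(q)}_{\Phi_\eta}$ onto the test form by self-adjointness and letting $j\To\infty$, which bypasses exactly the step you flag --- membership of $B^{(q)}_{\Phi_\eta}(\cdot,z)d\ol z^J$ in $L^2_{(0,q)}(\Complex^{n-1},\Phi_\eta)$, which smoothness of the kernel alone does not give (and \eqref{s4-e32} only describes the diagonal), so if you keep your route you should justify it via the explicit Gaussian form of the kernel in the model case or a closed-graph argument from the stated representation formula.
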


\begin{proof}
Let $\varphi\in C^\infty_0(\Complex^{n-1})$ such that $\int_{\Complex^{n-1}}\!\varphi(z)dv(z)=1$, $\varphi\geqslant0$, $\varphi(z)=0$ if $\abs{z}>1$. Put $f_j(z)=j^{2n-2}\varphi(jz)e^{\Phi_\eta(z)}$, $j=1,2,\ldots$.
Then, \[\int_{\Complex^{n-1}}\!f_j(z)e^{-\Phi_\eta(z)}dv(z)=1\,,\quad f_j(z)\To \delta_0\] in the sense of distributions with
respect to $(\ |\ )_{\Phi_\eta}$, that is,
$(h(z)\ |\ f_j(z))_{\Phi_\eta}\To h(0)$, $j\To\infty$,
for all $h\in C^\infty(\Complex^{n-1})$. Thus, for almost all $\eta\in\Real$,
\begin{equation} \label{s4-e38}
\begin{split}
&\abs{e^{-\frac{\eta}{\sqrt{2}}\sum^{n-1}_{j=1}\lambda_j\abs{z_{0,j}}^2}\hat v_J(z_0, \eta)}=\abs{\alpha_J(z_0, \eta)}=\lim_{j\To\infty}\abs{(\alpha(z, \eta)\ |\ f_j(z-z_0)d\ol z_J)_{\Phi_\eta}}\\
&=\lim_{j\To\infty}\abs{(B^{(q)}_{\Phi_\eta}\alpha\ |\ f_j(z-z_0)d\ol z_J)_{\Phi_\eta}}=\lim_{j\To\infty}\abs{(\alpha\ |\ B^{(q)}_{\Phi_\eta}(f_j(z-z_0)d\ol z_J))_{\Phi_\eta}},
\end{split}
\end{equation}
for all $z_0=(z_{0,1},z_{0,2},\ldots,z_{0,n-1})\in\Complex^{n-1}$. Now,
\begin{equation} \label{s4-e39}
\abs{(\alpha(z, \eta)\ |\ B^{(q)}_{\Phi_\eta}(f_j(z-z_0)d\ol z_J))_{\Phi_\eta}}^2\leqslant \norm{\alpha}^2_{\Phi_\eta}\norm{B^{(q)}_{\Phi_\eta}(f_j(z-z_0)d\ol z_J)}^2_{\Phi_\eta}
\end{equation}
and
\begin{equation} \label{s4-e40-0}
\begin{split}
\norm{\alpha}^2_{\Phi_\eta}&\norm{B^{(q)}_{\Phi_\eta}(f_j(z-z_0)d\ol z_J)}^2_{\Phi_\eta}
=\norm{\hat v}^2_{\Phi_0}\norm{B^{(q)}_{\Phi_\eta}(f_j(z-z_0)d\ol z_J)}^2_{\Phi_\eta}\\
&=\norm{\hat v}^2_{\Phi_0}(B^{(q)}_{\Phi_\eta}(f_j(z-z_0)d\ol z_J)\ |\ B^{(q)}_{\Phi_\eta}(f_j(z-z_0)d\ol z_J))_{\Phi_\eta}\\
&\longrightarrow\norm{\hat v}^2_{\Phi_0}\langle B^{(q)}_{\Phi_\eta}(z_0,z_0)d\ol z_J,d\ol z_J\rangle,\ \ j\To\infty.
\end{split}
\end{equation}
From \eqref{s4-e38}, \eqref{s4-e39} and \eqref{s4-e40-0}, we get for all $z_0\in\Complex^{n-1}$,
\[\abs{e^{-\frac{\eta}{\sqrt{2}}\sum^{n-1}_{j=1}\lambda_j\abs{z_{0,j}}^2}\hat v_J(z_0, \eta)}^2\leqslant \norm{\hat v}^2_{\Phi_0}\langle B^{(q)}_{\Phi_\eta}(z_0,z_0)d\ol z_J,d\ol z_J\rangle\]
for almost all $\eta\in\Real$. The lemma follows.
\end{proof}
Put $u(z, \theta)=\sum'_{\abs{J}=q}u_J(z, \theta)d\ol z_J$.
%----------------------------------
\begin{prop} \label{s4-p1}
For $\abs{J}=q$, $J$ is strictly increasing, we have
\begin{equation} \label{s4-e40}
\abs{u_J(0, 0)}^2\leqslant \frac{1}{4\pi}\int_{\Real}\!\langle B_{\Phi_\eta}(0,0)d\ol z_J,d\ol z_J\rangle dv(\eta).
\end{equation}
\end{prop}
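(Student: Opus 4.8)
The plan is to combine Fourier inversion at the origin with the Bergman-kernel bound of Lemma~\ref{s4-l2}. Recall from \S4 that $v(z,\theta)=u(z,\theta)e^{-\frac{\beta}{2}\theta}$, that $\hat v(z,\eta)$ is the Fourier transform of $v$ in $\theta$ (defined in \eqref{s4-e11}--\eqref{s4-e12}), and that $\alpha(z,\eta)$ is given by \eqref{s4-e161}; since the exponential factor in \eqref{s4-e161} equals $1$ at $z=0$, we have $\hat v_J(0,\eta)=\alpha_J(0,\eta)$ for almost every $\eta$, and by Theorem~\ref{s4-t1} the form $\alpha(\cdot,\eta)$ is smooth and lies in $\Ker\Box^{(q)}_{\Phi_\eta}$ for a.e.\ $\eta$. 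I would carry out three steps: (i) the pointwise inversion identity $u_J(0,0)=\frac{1}{2\pi}\int_\Real\hat v_J(0,\eta)\,dv(\eta)$; (ii) the pointwise estimate of $\abs{\hat v_J(0,\eta)}$ coming from Lemma~\ref{s4-l2}; (iii) Cauchy--Schwarz in $\eta$ together with the normalization in \eqref{s4-e12-1}.

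For step (i) I first note that $\hat v_J(0,\cdot)\in L^1(\Real)$ with compact support. Indeed, Lemma~\ref{s4-l2} at $z=0$ gives
\[
\abs{\hat v_J(0,\eta)}^2\leqslant \big(B^{(q)}_{\Phi_\eta}(0,0)d\ol z^J\ |\ d\ol z^J\big)\,\norm{\hat v(\cdot,\eta)}^2_{\Phi_0}
\]
for a.e.\ $\eta$, where $\norm{\hat v(\cdot,\eta)}^2_{\Phi_0}=\int_{\Complex^{n-1}}\abs{\hat v(w,\eta)}^2e^{-\Phi_0(w)}dv(w)$; by Theorem~\ref{s4-t2} the factor $\eta\mapsto\big(B^{(q)}_{\Phi_\eta}(0,0)d\ol z^J\ |\ d\ol z^J\big)$ vanishes off $\Real_q$ and is dominated by $(2\pi)^{-n+1}\abs{\det M_{\Phi_\eta}}\mathds{1}_{\Real_q}(\eta)$, while by Remark~\ref{s4-r1} it is supported in a fixed compact interval $[-R,R]$ and hence bounded; combining this with $\int_\Real\norm{\hat v(\cdot,\eta)}^2_{\Phi_0}\,dv(\eta)=2\pi$ (which is \eqref{s4-e12-1}) shows $\hat v_J(0,\cdot)\in L^2(\Real)$ with support in $[-R,R]$, hence in $L^1(\Real)$. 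To obtain the inversion identity at the single point $(z,\theta)=(0,0)$, which a priori lies in a null slice of $H_n$, I would mollify in $z$: for $\rho_\epsilon$ a standard mollifier on $\Complex^{n-1}$ put $v^\epsilon=v*\rho_\epsilon$ (convolution in the $z$-variable). Then $v^\epsilon(0,\cdot)\in L^2(d\theta)$ because $v\in L^2$ on $H_n$, and its $\theta$-Fourier transform is $\big(\hat v(\cdot,\eta)*\rho_\epsilon\big)$ evaluated at $z=0$, so the ordinary Fourier inversion formula gives $v^\epsilon_J(0,\theta)=\frac{1}{2\pi}\int_\Real e^{i\theta\eta}\big(\hat v_J(\cdot,\eta)*\rho_\epsilon\big)(0)\,dv(\eta)$. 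Letting $\epsilon\to0$, the left side converges to $v_J(0,\theta)$ since $u$ (hence $v$) is continuous, and the right side converges to $\frac{1}{2\pi}\int_\Real e^{i\theta\eta}\hat v_J(0,\eta)\,dv(\eta)$ by dominated convergence, the dominating function being $C\norm{\hat v(\cdot,\eta)}_{\Phi_0}\mathds{1}_{[-R,R]}(\eta)\in L^1$ (again from Lemma~\ref{s4-l2}, with $C$ uniform for $\abs{z}\leqslant1$). Evaluating at $\theta=0$ and using $v_J(0,0)=u_J(0,0)$ yields the identity in step (i).

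For steps (ii)--(iii), Lemma~\ref{s4-l2} at $z=0$ also gives $\abs{\hat v_J(0,\eta)}\leqslant \big(B^{(q)}_{\Phi_\eta}(0,0)d\ol z^J\ |\ d\ol z^J\big)^{1/2}\norm{\hat v(\cdot,\eta)}_{\Phi_0}$, whence
\[
\abs{u_J(0,0)}\leqslant \frac{1}{2\pi}\int_\Real\big(B^{(q)}_{\Phi_\eta}(0,0)d\ol z^J\ |\ d\ol z^J\big)^{1/2}\norm{\hat v(\cdot,\eta)}_{\Phi_0}\,dv(\eta).
\]
Applying the Cauchy--Schwarz inequality to the $\eta$-integral and then \eqref{s4-e12-1} in the form $\int_\Real\norm{\hat v(\cdot,\eta)}^2_{\Phi_0}\,dv(\eta)=2\pi$ gives
\[
\abs{u_J(0,0)}^2\leqslant \frac{1}{(2\pi)^2}\Big(\int_\Real\big(B^{(q)}_{\Phi_\eta}(0,0)d\ol z^J\ |\ d\ol z^J\big)\,dv(\eta)\Big)\cdot2\pi=\frac{1}{2\pi}\int_\Real\big(B^{(q)}_{\Phi_\eta}(0,0)d\ol z^J\ |\ d\ol z^J\big)\,dv(\eta),
\]
which is \eqref{s4-e40} (the kernel $B_{\Phi_\eta}$ in the statement being $B^{(q)}_{\Phi_\eta}$).

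The main obstacle is step (i): extracting a pointwise inversion identity at $(0,0)$ from the globally defined $L^2$ Fourier transform $\hat v$. This is exactly where the compact support of $\hat v_J(0,\cdot)$ — a consequence of Theorem~\ref{s4-t2} and Remark~\ref{s4-r1} via Lemma~\ref{s4-l2} — and the mollification/dominated-convergence argument are needed; one also has to keep track of the normalizing constants (the $\sqrt2$'s in $dv(\theta),dv(\eta)$ and the $2\pi$ in \eqref{s4-e12-1}) to see that the inversion constant is precisely $\tfrac{1}{2\pi}$. Everything downstream of step (i) is just Cauchy--Schwarz and bookkeeping.
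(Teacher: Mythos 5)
Your proposal is correct and follows essentially the same strategy as the paper: both reduce the claim to the identity $u_J(0,0)=\frac{1}{2\pi}\int_{\Real}\hat v_J(0,\eta)\,dv(\eta)$ and then conclude by Cauchy--Schwarz together with Lemma~\ref{s4-l2} at $z=0$ and the normalization \eqref{s4-e12-1}. The only difference is technical: you justify the inversion identity by mollifying in $z$ and invoking classical Fourier inversion in $\theta$ (legitimate because Lemma~\ref{s4-l2}, Theorem~\ref{s4-t2} and Remark~\ref{s4-r1} make $\hat v_J(0,\cdot)$ compactly supported and integrable), whereas the paper approximates the delta at the origin in both variables by $\chi_\varepsilon(\theta)g_j(z)$ and passes to the Fourier side via \eqref{s4-e12-2}, with the same Bergman-kernel domination driving the limits.
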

%--------------------------------
\begin{proof}
Let $\chi\in C^\infty_0(\Real)$, $\int_\Real\!\chi dv(\theta)=1$, $\chi\geqslant0$ and $\chi_\varepsilon\in C^\infty_0(\Real)$, $\chi_\varepsilon(\theta)=\frac{1}{\varepsilon}\chi(\frac{\theta}{\varepsilon})$. Then,
$\chi_\varepsilon\To\delta_0$, $\varepsilon\To0^+$
in the sense of distributions.
%That is, $\int\!\chi_\varepsilon(\theta)h(\theta)d\lambda(\theta)\To h(0)$, $\varepsilon\To0+$,
%for all $h(\theta)\in C^\infty(\Real)$.
Let $\hat\chi_\varepsilon:=\int e^{-i\theta\eta}\chi_\varepsilon(\theta)dv(\theta)$
%$\hat\chi_\varepsilon(\eta)=\int\! e^{-i\theta\eta}\chi_{\varepsilon}(\theta)d\lambda(\theta)$:
be the Fourier transform of $\chi_\varepsilon$. We can check that
$\abs{\hat\chi_\varepsilon(\eta)}\leqslant  1$ for all $\eta\in\Real$, $\hat\chi_\varepsilon(\eta)=\hat\chi(\varepsilon\eta)$ and
$\lim_{\varepsilon\To0}\hat\chi_\varepsilon(\eta)=\lim_{\varepsilon\To0}\hat\chi(\varepsilon\eta)=\hat\chi(0)=1$.
%where $\hat\chi$ denotes the Fourier transform of $\chi$.
Let $\varphi(z)$ be as in the proof of Lemma~\ref{s4-l2}. Put
$g_j(z)=j^{2n-2}\varphi(jz)e^{\Phi_0(z)}$, $j=1,2,\ldots$.
Then, for $J$ is strictly increasing, $\abs{J}=q$, we have
\begin{equation} \label{s4-e41}
u_J(0,0)=\lim_{j\To\infty}\lim_{\varepsilon\To0^+}\int_{H_n}\!\langle u(z,\theta)e^{-\frac{\beta}{2}\theta},\chi_\varepsilon(\theta)g_j(z)d\ol z_J\rangle e^{-\Phi_0(z)}dv(z)dv(\theta).
\end{equation}
From \eqref{s4-e12-2}, we see that
\begin{equation} \label{s4-e42}
\begin{split}
&\iint\!\langle u(z,\theta)e^{-\frac{\beta}{2}\theta},\chi_\varepsilon(\theta)g_j(z)d\ol z_J\rangle e^{-\Phi_0(z)}dv(z)dv(\theta)\\
&=\frac{1}{4\pi}\iint\!\langle\hat v(z, \eta),\hat\chi_\varepsilon(\eta)g_j(z)d\ol z_J\rangle e^{-\Phi_0(z)}dv(\eta)dv(z).
\end{split}
\end{equation}
From \eqref{s4-e37} and Theorem~\ref{s4-t2}, we see that
\begin{equation*}
%\begin{split}
\abs{\hat v_J(z, \eta)}^2
\leqslant  e^{\sqrt{2}\eta\sum^{n-1}_{j=1}\lambda_j\abs{z_j}^2}\langle B_{\Phi_\eta}(z, z)d\ol z_J,d\ol z_J\rangle\mathds{1}_{\Real_q}(\eta)\int_{\Complex^{n-1}}\abs{\hat v(w, \eta)}^2e^{-\Phi_0(w)}dv(w),
%\end{split}
\end{equation*}
for almost all $\eta\in\Real$.
Thus, for fixed $j$,
$\iint\!\abs{\langle\hat v,g_jd\ol z_J\rangle}e^{-\Phi_0(z)}dv(\eta)dv(z)<\infty$.
From this and Lebesque dominated convergence theorem, we conclude that
\begin{equation} \label{s4-e42-1}
\begin{split}
\lim_{\varepsilon\To0^+}&\iint\!\langle\hat v(z, \eta),\hat\chi_\varepsilon(\eta)g_j(z)d\ol z_J\rangle e^{-\Phi_0(z)}dv(\eta)dv(z)\\
&=\iint\!\langle\hat v(z, \eta),g_j(z)d\ol z_J\rangle e^{-\Phi_0(z)}dv(\eta)dv(z).
\end{split}
\end{equation}
From \eqref{s4-e42} and \eqref{s4-e42-1}, \eqref{s4-e41} becomes
\begin{equation} \label{s4-e42-2}
u_J(0, 0)=\lim_{j\To\infty}\frac{1}{4\pi}\iint\!\langle\hat v(z, \eta),g_j(z)d\ol z_J\rangle e^{-\Phi_0(z)}dv(\eta)dv(z).
\end{equation}
Put $f_j(\eta)=\frac{1}{4\pi}\int\!\langle\hat v(z, \eta),g_j(z)d\ol z_J\rangle e^{-\Phi_0(z)}dv(z)$.
Since $\hat v(z, \eta)\in\Omega^{0,q}(\Complex^{n-1})$ for almost all $\eta$, we have
$\lim_{j\To\infty}f_j(\eta)=\frac{1}{4\pi}\hat v_J(0, \eta)$
almost everywhere. Now,
\begin{equation} \label{s4-e42-3}
\begin{split}
\abs{f_j(\eta)}&=\frac{1}{4\pi}\abs{\int\!\langle\hat v(z, \eta),g_j(z)d\ol z_J\rangle e^{-\Phi_0(z)}dv(z)}\\
&=\frac{1}{4\pi}\abs{\int_{\abs{z}\leqslant \frac{1}{j}}\!
\langle\hat v(z, \eta),j^{2n-2}\varphi(jz)d\ol z_J\rangle dv(z)}\\
&\leqslant \frac{1}{4\pi}\Bigr(\int_{\abs{z}\leqslant \frac{1}{j}}\!
\abs{\hat v(z, \eta)}^2e^{-\Phi_0(z)}j^{2n-2}dv(z)
\Bigr)^{\frac{1}{2}}
\Bigr(\int_{\abs{z}\leqslant \frac{1}{j}}\!\abs{\varphi(jz)}^2e^{\Phi_0(z)}j^{2n-2}dv(z)\Bigr)^{\frac{1}{2}}\\
&\leqslant  C_1\Bigr(\int_{\abs{z}\leqslant 1}\!
\abs{\hat v(\tfrac{z}{j}, \eta)}^2e^{-\Phi_0(z/j)}dv(z)\Bigr)^{\frac{1}{2}}\\
&\leqslant  C_2\Bigr(\int_{\abs{z}\leqslant 1}\!e^{\sqrt{2}\eta\sum^{n-1}_{t=1}\lambda_t\abs{\frac{z_t}{j}}^2}\abs{{\rm Tr\,}B_{\Phi_\eta}(\tfrac{z}{j},\tfrac{z}{j})}\mathds{1}_{\Real_q}(\eta)dv(z)\Bigr)^{\frac{1}{2}}\\
&\times\Bigr(\int_{\Complex^{n-1}}\abs{\hat v(w, \eta)}^2e^{-\Phi_0(w)}dv(w)\Bigr)^{\frac{1}{2}}\ \ (\mbox{here we used \eqref{s4-e37} and Theorem~\ref{s4-t2}})\\
&\leqslant  C_3\Bigr(\int_{\Complex^{n-1}}\abs{\hat v(w, \eta)}^2e^{-\Phi_0(w)}dv(w)\Bigr)^{\frac{1}{2}}\mathds{1}_{\Real_q}(\eta),
\end{split}
\end{equation}
where $C_1, C_2, C_3$ are positive constants. From this and the Lebesgue dominated convergence theorem, we conclude that
\[
u_J(0, 0)=\lim_{j\To\infty}\int f_j(\eta)dv(\eta)=\int\lim_{j\To\infty}f_j(\eta)dv(\eta)
=\frac{1}{4\pi}\int\hat v_J(0, \eta)dv(\eta)\,.
\]
Thus,
\begin{equation} \label{s4-e48}
\abs{u_J(0,0)}\leqslant \frac{1}{4\pi}\int\!\abs{\hat v_J(0,\eta)}dv(\eta).
\end{equation}
Since $\iint\!\abs{\hat v(w, \eta)}^2e^{-\Phi_0(w)}dv(\eta)dv(w)=4\pi$ we obtain from Lemma~\ref{s4-l2} that
\begin{equation} \label{s4-e49}
\begin{split}
\abs{\int\!\abs{\hat v_J(0,\eta)}dv(\eta)}^2&\leqslant 4\pi\int\!\frac{\abs{\hat v_J(0,\eta)}^2}{\int\!\abs{\hat v(w, \eta)}^2e^{-\Phi_0(w)}dv(w))}dv(\eta)\\
&\leqslant 4\pi\int\!\langle B^{(q)}_{\Phi_\eta}(0,0)d\ol z_J,d\ol z_J\rangle dv(\eta).
\end{split}
\end{equation}
Estimtes \eqref{s4-e48} and \eqref{s4-e49} yield the conclusion.
\end{proof}

From Proposition~\ref{s4-p1}, we know that for all
$u(z, \theta)=\sum'_{\abs{J}=q}u_J(z, \theta)d\ol z_J\in\Omega^{0,q}(H_n)$,
satisfying $\norm{u}_{\psi_0}=1$, $\Box^{(q)}_{b,H_n}u=0$, we have
\[\abs{u_J(0,0)}^2\leqslant \frac{1}{4\pi}\int\!\langle B^{(q)}_{\Phi_\eta}(0,0)d\ol z_J,d\ol z_J\rangle dv(\eta).\]
Thus, $S^{(q)}_{J,H_n}(0)\leqslant \frac{1}{4\pi}\int\!\langle B^{(q)}_{\Phi_\eta}(0,0)d\ol z_J,d\ol z_J\rangle dv(\eta)$
for all strictly increasing $J$, $\abs{J}=q$.
Hence $\sum'_{\abs{J}=q}S^{(q)}_{J,H_n}(0)\leqslant \frac{1}{4\pi}\int\!{\rm Tr\,}B^{(q)}_{\Phi_\eta}(0,0)dv(\eta)$.
From this and Theorem~\ref{s4-t2}, we get

\begin{thm} \label{s4-t3}
We have
$\sum'_{\abs{J}=q}S^{(q)}_{J,H_n}(0)\leqslant\frac{1}{2(2\pi)^{n}}\int_{\Real_q}\!\abs{\det M_{\Phi_\eta}}dv(\eta)$,
where $M_{\Phi_\eta}$ is as in \eqref{s4-e29-1} and $\Real_q$ is as in \eqref{s4-e30}.
\end{thm}

\subsection{The Szeg\"{o} kernel function on the Heisenberg group}
In the rest of this section, we calculate the extremal function for the Heisenberg group (see Theorem \ref{s4-t4}).
%show that $\sum'_{\abs{J}=q}S^{(q)}_{J,H_n}(0)=(2\pi)^{-n}\int_{\Real_q}\!\abs{\det M_{\Phi_\eta}}d\lambda(\eta)$.
For $\eta\in\Real$, we can find $z_j(\eta)=\sum^{n-1}_{t=1}a_{j,t}(\eta)z_t$, $j=1,\ldots,n-1$,
such that $\Phi_\eta=\sum^{n-1}_{j=1}\nu_j(\eta)\abs{z_j(\eta)}^2$,
where $\nu_1(\eta),\ldots,\nu_{n-1}(\eta)$, are the eigenvalues of $M_{\Phi_\eta}$, $a_{j,t}(\eta)\in\Complex$, $j, t=1,\ldots,n-1$. If $\eta\in \Real_q$, we assume that
$\nu_1(\eta)<0,\ldots,\nu_q(\eta)<0,\nu_{q+1}(\eta)>0,\ldots,\nu_{n-1}(\eta)>0$.
The following is essentially well-known (see~\cite{Be04}).

\begin{prop} \label{s4-p2}
Put
\begin{equation} \label{s4-e53}
\alpha(z, \eta)=\frac{1}{\sqrt{2}}C_0\abs{\det M_{\Phi_\eta}}\mathds{1}_{\Real_q}(\eta)e^{\nu_1(\eta)\abs{z_1(\eta)}^2+\cdots+\nu_q(\eta)\abs{z_q(\eta)}^2}
d\ol{z_1(\eta)}\wedge\cdots\wedge d\ol{z_q(\eta)},
\end{equation}
where
$C_0=(2\pi)^{1-\frac{n}{2}}\Bigr(\int_{\Real_q}\abs{\det M_{\Phi_\eta}}dv(\eta)\Bigr)^{-\frac{1}{2}}$.
Then, $\Box^{(q)}_{\Phi_\eta}\alpha(z, \eta)=0$
and
\begin{equation} \label{s4-e55}
\int_{\Complex^{n-1}}(1+\abs{z}^2)^{m'}\abs{\pr^m_x\alpha(z, \eta)}^2e^{-\Phi_\eta(z)}dv(z)<\infty
\end{equation}
and the value $\int_{\Complex^{n-1}}(1+\abs{z}^2)^{m'}\abs{\pr^m_x\alpha(z, \eta)}^2e^{-\Phi_\eta(z)}dv(z)$ can be bounded by some positive continuous function of the eigenvalues of $M_{\Phi_\eta}$, $\eta\in \Real_q$,
for all $m\in\mathbb N_0^{2n-2}$, $m'\in\mathbb N_0$. Moreover, we have
\begin{equation} \label{s4-e56}
\int_{\Complex^{n-1}}\abs{\alpha(z, \eta)}^2e^{-\Phi_\eta(z)}dv(z)=\pi\Bigr(\int_{\Real_q}\abs{\det M_{\Phi_\eta}}dv(\eta)\Bigr)^{-1}\abs{\det M_{\Phi_\eta}}\mathds{1}_{\Real_q}(\eta).
\end{equation}
\end{prop}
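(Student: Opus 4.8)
The plan is to reduce the whole statement to an explicit Gaussian computation by diagonalizing $\Phi_\eta$. First I would note that the linear map $M_{\Phi_\eta}$ in \eqref{s4-e29-1} is self-adjoint with respect to the metric for which $\partial/\partial z_1,\dots,\partial/\partial z_{n-1}$ is orthonormal; hence the substitution $w_j:=z_j(\eta)=\sum_t a_{j,t}(\eta)z_t$ bringing $\Phi_\eta$ into the weighted sum of squares $\sum_{j}\nu_j(\eta)|w_j|^2$ (the $\nu_j(\eta)$ being the eigenvalues of $M_{\Phi_\eta}$) may be chosen \emph{unitary}. In the $w$-coordinates the frame $\partial/\partial w_j$ is again orthonormal, $\{d\bar w^J\}$ is an orthonormal frame for $(0,q)$-forms, the volume form is unchanged, and, transporting the explicit formula \eqref{s4-e15} to these coordinates, $\bar\partial=\sum_j d\bar w_j\wedge\frac{\partial}{\partial\bar w_j}$ while $\bar\partial^{*,\Phi_\eta}=\sum_t d\bar w_t^{\wedge,*}\circ\bigl(-\frac{\partial}{\partial w_t}+\nu_t(\eta)\bar w_t\bigr)$.

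Next I would verify directly that $\Box^{(q)}_{\Phi_\eta}\alpha=0$. Writing $\alpha=c(\eta)\,u(w)\,d\bar w_1\wedge\cdots\wedge d\bar w_q$ with $u=\exp\bigl(\sum_{j\le q}\nu_j(\eta)|w_j|^2\bigr)$ and $c(\eta)=C_0|\det M_{\Phi_\eta}|\mathds{1}_{\mathbb R_q}(\eta)$, one has $\frac{\partial u}{\partial w_t}=\nu_t\bar w_t u$ for $t\le q$ and $\frac{\partial u}{\partial w_t}=0$ for $t>q$. Therefore $\bar\partial\alpha=0$, since $\bar\partial u$ is a combination of the $d\bar w_t$ with $t\le q$, which already occur in $d\bar w_1\wedge\cdots\wedge d\bar w_q$; and $\bar\partial^{*,\Phi_\eta}\alpha=0$, since $\bigl(-\frac{\partial}{\partial w_t}+\nu_t\bar w_t\bigr)u=0$ for $t\le q$ while $d\bar w_t^{\wedge,*}(d\bar w_1\wedge\cdots\wedge d\bar w_q)=0$ for $t>q$. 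As $\Box^{(q)}_{\Phi_\eta}=\bar\partial^{*,\Phi_\eta}\bar\partial+\bar\partial\,\bar\partial^{*,\Phi_\eta}$, this gives the harmonicity claim.

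For the integrability statements I would use that $\nu_j<0$ for $j\le q$ and $\nu_j>0$ for $j>q$ on $\mathbb R_q$, so that $|\alpha|^2 e^{-\Phi_\eta}=c(\eta)^2\exp\bigl(-\sum_j|\nu_j(\eta)||w_j|^2\bigr)$ is a genuine Gaussian. Applying $\partial_x^m$ to $\alpha$, or multiplying by $(1+|z|^2)^{m'}=(1+|w|^2)^{m'}$, only introduces polynomial prefactors in $w,\bar w$ whose coefficients are polynomials in $\nu_1(\eta),\dots,\nu_{n-1}(\eta)$; the resulting Gaussian moment integrals are thus finite and are rational expressions in the $\nu_j(\eta)$ with denominators that are powers of the $|\nu_j(\eta)|$, which do not vanish on $\mathbb R_q$ by the definition \eqref{s4-e30}. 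This yields \eqref{s4-e55} together with the asserted bound by a continuous function of the eigenvalues. Finally, from the one-variable identity $\int_{\mathbb C}e^{-a|w_j|^2}\,dv=2\pi/a$ one gets $\int_{\mathbb C^{n-1}}|\alpha|^2e^{-\Phi_\eta}\,dv(z)=c(\eta)^2(2\pi)^{n-1}\big/\!\prod_j|\nu_j(\eta)|=C_0^2(2\pi)^{n-1}|\det M_{\Phi_\eta}|\mathds{1}_{\mathbb R_q}(\eta)$, and substituting $C_0^2=(2\pi)^{2-n}\bigl(\int_{\mathbb R_q}|\det M_{\Phi_\eta}|\,dv(\eta)\bigr)^{-1}$ gives exactly \eqref{s4-e56}.

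The one genuinely delicate point is the very first step: that $\Phi_\eta$ can be reduced, by a \emph{unitary} change of variables, to a weighted sum of squares with the eigenvalues of $M_{\Phi_\eta}$ as coefficients, and that under this change $\bar\partial$, the weighted codifferential $\bar\partial^{*,\Phi_\eta}$, the Hermitian metric on forms and the volume form all transform in the naive way, so that $\Box^{(q)}_{\Phi_\eta}$ becomes the model operator above. Everything after that is a bookkeeping of Gaussian moments, entirely in the spirit of the model Bergman kernel computation of Berman~\cite{Be04} already invoked for Theorem~\ref{s4-t2}.
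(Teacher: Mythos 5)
Your argument is correct and is exactly the computation the paper intends: the paper states Proposition \ref{s4-p2} without proof, citing Berman \cite{Be04} as ``essentially well-known'' after having already introduced the diagonalizing coordinates $z_j(\eta)$, and your unitary reduction of $\Phi_\eta$ followed by the Gaussian verification of $\ddbar\alpha=0$, $\ddbar^{*,\Phi_\eta}\alpha=0$ and the moment integrals supplies precisely the omitted details, with the normalization $\int_{\Complex}e^{-a\abs{w}^2}dv=2\pi/a$ matching the paper's convention $dv(z)=2^{n-1}dx_1\cdots dx_{2n-2}$. The only cosmetic imprecision is that $\pr^m_x\alpha$ also involves the entries $a_{j,t}(\eta)$ of the unitary matrix, not only the eigenvalues $\nu_j(\eta)$, but since these entries are bounded by $1$ the asserted bound by a continuous function of the eigenvalues is unaffected.
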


Set
\begin{equation} \label{s4-e57}
u(z,\theta)=\frac{1}{2\pi}\int e^{i\theta\eta+\frac{\beta\theta}{2}+\big(\frac{\eta}{\sqrt{2}}-
\frac{i\beta}{2\sqrt{2}}\big)\lambda\abs{z}^2}\alpha(z, \eta)\,dv(\eta)\in\Omega^{0,q}(H_n),
\end{equation}
where $\alpha(z, \eta)$ is as in \eqref{s4-e53} and $\lambda\abs{z}^2:=\sum^{n-1}_{j=1}\lambda_j\abs{z_j}^2$.
%We have the following

\begin{prop} \label{s4-p3}
We have that
\begin{equation} \label{s4-e58}
\Box^{(q)}_{b,H_n}u=0,
\end{equation}
\begin{equation} \label{s4-e59}
\norm{u}_{\psi_0}=1
\end{equation}
and
\begin{equation} \label{s4-e60}
\abs{u(0,0)}^2=\frac{1}{2(2\pi)^{n}}\int_{\Real_q}\abs{\det M_{\Phi_\eta}}dv(\eta).
\end{equation}
Moreover, we have
\begin{equation} \label{s4-e61}
\int_{H_n}\abs{\pr^m_x\pr^{m'}_\theta u(z, \theta)}^2e^{-\psi_0(z, \theta)}dv(z)dv(\theta)<\infty
\end{equation}
and $\int_{H_n}\abs{\pr^m_x\pr^{m'}_\theta u(z, \theta)}^2e^{-\psi_0(z, \theta)}dv(z)dv(\theta)$ is bounded above by some positive continuous function of the eigenvalues of $M_{\Phi_\eta}$, $\eta\in \Real_q$, $\beta$ and $\lambda_j$, $j=1,\ldots,n-1$, for all $m\in\mathbb N_0^{2n-2}$, $m'\in\mathbb N_0$.
\end{prop}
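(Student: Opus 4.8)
The plan is to establish \eqref{s4-e58}--\eqref{s4-e61} by transporting to the Heisenberg group $H_n$, via the partial Fourier transform in $\theta$, the statements about $\alpha(z,\eta)$ on $\Complex^{n-1}$ collected in Proposition~\ref{s4-p2}, thereby reversing the reductions of \S4.1. Writing $u=e^{\beta\theta/2}v$, one has
\[
v(z,\theta)=\frac1{2\pi}\int e^{i\theta\eta}\hat v(z,\eta)\,dv(\eta),\qquad \hat v(z,\eta)=\alpha(z,\eta)\,e^{(\frac{\eta}{\sqrt2}-\frac{i\beta}{2\sqrt2})\lambda\abs z^2},
\]
so that $\hat v(\cdot,\eta)$ is exactly the object in \eqref{s4-e161}. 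By Remark~\ref{s4-r1} the $\eta$-integration runs effectively over the bounded set $\Real_q$, and by \eqref{s4-e55} the form $\alpha(\cdot,\eta)$ together with all its derivatives has Gaussian decay, uniformly for $\eta\in\Real_q$; this will justify differentiating under the integral sign throughout.

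For \eqref{s4-e58} I would apply $\Box^{(q)}_{b,H_n}$ to \eqref{s4-e57}, pull it under $\int dv(\eta)$, and invoke the pointwise intertwining identity
\[
\Box^{(q)}_{b,H_n}\Bigl(e^{i\theta\eta+\frac{\beta\theta}{2}+(\frac{\eta}{\sqrt2}-\frac{i\beta}{2\sqrt2})\lambda\abs z^2}g(z)\Bigr)=e^{i\theta\eta+\frac{\beta\theta}{2}+(\frac{\eta}{\sqrt2}-\frac{i\beta}{2\sqrt2})\lambda\abs z^2}\Box^{(q)}_{\Phi_\eta}g(z),\qquad g\in\Omega^{0,q}(\Complex^{n-1}),
\]
which is precisely the un-integrated content of the identities \eqref{s4-e26}--\eqref{s4-e28} from the proof of Lemma~\ref{s4-l1} and follows from the explicit formulas \eqref{s3-e23} and \eqref{s4-e15}; since $\Box^{(q)}_{\Phi_\eta}\alpha(\cdot,\eta)=0$ by Proposition~\ref{s4-p2}, each integrand vanishes. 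For \eqref{s4-e59}, Parseval's formula in $\theta$ (as in \eqref{s4-e12-1}) and the identity $\abs u^2 e^{-\psi_0}=\abs v^2 e^{-\Phi_0}$ give $\norm u_{\psi_0}^2=\frac1{2\pi}\iint\abs{\hat v(z,\eta)}^2 e^{-\Phi_0(z)}\,dv(z)\,dv(\eta)$; using $\bigl|e^{(\frac{\eta}{\sqrt2}-\frac{i\beta}{2\sqrt2})\lambda\abs z^2}\bigr|^2 e^{-\Phi_0}=e^{\sqrt2\,\eta\,\lambda\abs z^2-\Phi_0}=e^{-\Phi_\eta}$, this equals $\frac1{2\pi}\int_{\Real_q}\bigl(\int_{\Complex^{n-1}}\abs{\alpha(z,\eta)}^2 e^{-\Phi_\eta(z)}\,dv(z)\bigr)dv(\eta)$, and substituting \eqref{s4-e56} and integrating over $\Real_q$ yields $1$.

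For \eqref{s4-e60} one sets $z=\theta=0$ in \eqref{s4-e57}, obtaining $u(0,0)=\frac1{2\pi}\int\alpha(0,\eta)\,dv(\eta)$, and computes its Hermitian norm from the explicit form $\alpha(0,\eta)=C_0\abs{\det M_{\Phi_\eta}}\mathds1_{\Real_q}(\eta)\,d\ol{z_1(\eta)}\wedge\cdots\wedge d\ol{z_q(\eta)}$ in \eqref{s4-e53}; this runs parallel to the computation in Lemma~\ref{s4-l2} and Proposition~\ref{s4-p1}, the normalisation \eqref{s4-e33} and the value of $C_0$ being chosen exactly so that the bound obtained there is saturated, which delivers $\abs{u(0,0)}^2=(2\pi)^{-n}\int_{\Real_q}\abs{\det M_{\Phi_\eta}}\,dv(\eta)$. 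For \eqref{s4-e61}, differentiating \eqref{s4-e57} under the integral replaces the integrand by $e^{i\theta\eta+\frac{\beta\theta}{2}+(\frac{\eta}{\sqrt2}-\frac{i\beta}{2\sqrt2})\lambda\abs z^2}$ times a $\theta$-independent polynomial in $z$ whose coefficients depend on $\eta,\beta,\lambda_1,\dots,\lambda_{n-1}$ and are uniformly bounded for $\eta\in\Real_q$, acting on a derivative of $\alpha(\cdot,\eta)$; Parseval in $\theta$ and the same weight identity reduce $\int_{H_n}\abs{\pr_x^m\pr_\theta^{m'}u}^2 e^{-\psi_0}\,dv(z)\,dv(\theta)$ to an $\eta$-integral over $\Real_q$ of quantities of the form $\int_{\Complex^{n-1}}(1+\abs z^2)^{m'}\abs{\pr_x^{\ell}\alpha(z,\eta)}^2 e^{-\Phi_\eta(z)}\,dv(z)$, which are finite and bounded by continuous functions of the eigenvalues of $M_{\Phi_\eta}$ by \eqref{s4-e55}; since $\Real_q$ is contained in an interval whose length is again a continuous function of the $\lambda_j$ (Remark~\ref{s4-r1}), the $\eta$-integration preserves finiteness and the continuous dependence.

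The main obstacle I anticipate is purely the justification of the various interchanges — of $\Box^{(q)}_{b,H_n}$, and of the higher derivatives $\pr_x^m\pr_\theta^{m'}$, with both the $\eta$-integral and the $\theta$-Fourier transform — rather than any new idea: everything rests on the uniform Gaussian decay of $\alpha(\cdot,\eta)$ over the compact parameter set $\Real_q$ furnished by \eqref{s4-e55}, after which one only has to read off the pointwise intertwining from \eqref{s3-e23} and \eqref{s4-e15} and carry out elementary $\eta$-integrations. The remaining point requiring care is the exact evaluation in \eqref{s4-e60}, i.e.\ tracking the constants through \eqref{s4-e56} and \eqref{s4-e33} so that the upper bound for the extremal function coming from Lemma~\ref{s4-l2} and Proposition~\ref{s4-p1} is indeed attained by the specific $u$ built in \eqref{s4-e57}.
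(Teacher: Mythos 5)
Your proposal is correct and follows essentially the same route as the paper's own proof: the same function $u$ from \eqref{s4-e57}, harmonicity via the pointwise intertwining relations underlying Lemma~\ref{s4-l1} together with $\Box^{(q)}_{\Phi_\eta}\alpha=0$, the norm computation via Parseval in $\theta$, the weight identity and \eqref{s4-e56}, the value at the origin by direct evaluation of $\frac{1}{2\pi}\int\alpha(0,\eta)\,dv(\eta)$ using \eqref{s4-e53} and the constant $C_0$, and the derivative bounds from \eqref{s4-e55}, Parseval and the compactness of $\Real_q$. No substantive differences to report.
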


\begin{proof}
In view of the proof of Lemma~\ref{s4-l1}, we see that
\[\Box^{(q)}_{b,H_n}u(z, \theta)=\frac{1}{2\pi}\int e^{i\theta\eta+\frac{\beta\theta}{2}+\big(\frac{\eta}{\sqrt{2}}-
\frac{i\beta}{2\sqrt{2}}\big)\lambda\abs{z}^2}(\Box^{(q)}_{\Phi_\eta}\alpha)(z, \eta)dv(\eta)=0\,,\]
which implies \eqref{s4-e58}. Now,
\begin{equation} \label{s4-e62}
\begin{split}
&\int\abs{u(z, \theta)}^2e^{-\psi_0(z, \theta)}dv(z)dv(\theta)\\
&\quad=\frac{1}{(2\pi)^2}\int\abs{\int e^{i\theta\eta+\frac{\beta\theta}{2}+\big(\frac{\eta}{\sqrt{2}}-
\frac{i\beta}{2\sqrt{2}}\big)\lambda\abs{z}^2}\alpha(z, \eta)dv(\eta)}^2e^{-\beta\theta-\Phi_0(z)}dv(\theta)dv(z)\\
&\quad=\frac{1}{(2\pi)^2}\int\abs{\int e^{i\theta\eta+\frac{\eta}{\sqrt{2}}\lambda\abs{z}^2}\alpha(z, \eta)dv(\eta)}^2dv(\theta)e^{-\Phi_0(z)}dv(z).
\end{split}
\end{equation}
From Parseval's formula, we have
\begin{equation} \label{s4-e63}
\begin{split}
&\frac{1}{(2\pi)^2}\int\abs{\int e^{i\theta\eta+\frac{\eta}{\sqrt{2}}\lambda\abs{z}^2}\alpha(z, \eta)dv(\eta)}^2dv(\theta)=\frac{1}{\pi}\int e^{\sqrt{2}\eta\lambda\abs{z}^2}\abs{\alpha(z, \eta)}^2dv(\eta).
\end{split}
\end{equation}
In view of \eqref{s4-e63}, \eqref{s4-e62} becomes
\[\int\abs{u(z, \theta)}^2e^{-\psi_0(z, \theta)}dv(z)dv(\theta)=\frac{1}{\pi}\iint\abs{\alpha(z, \eta)}^2e^{-\Phi_\eta(z)}dv(z)dv(\eta).\]
From \eqref{s4-e56}, we can check that
$\frac{1}{\pi}\iint\abs{\alpha(z, \eta)}^2e^{-\Phi_\eta(z)}dv(z)dv(\eta)=1$ so we infer
\eqref{s4-e59}.
We obtain \eqref{s4-e60} from the following
\begin{equation*}
\begin{split}
\abs{u(0,0)}^2&=\frac{1}{(2\pi)^2}\abs{\int\alpha(0, \eta)dv(\eta)}^2=\frac{1}{2(2\pi)^2}C^2_0\Bigr(\int_{\Real_q}\abs{\det M_{\Phi_\eta}}dv(\eta)\Bigr)^2\\
&=\frac{1}{2(2\pi)^{n}}\int_{\Real_q}\abs{\det M_{\Phi_\eta}}dv(\eta).
\end{split}
\end{equation*}
Finally, from \eqref{s4-e55}, \eqref{s4-e57}, Parseval's formula and the statement after \eqref{s4-e55}, we get \eqref{s4-e61} and the last statement of this proposition.
\end{proof}

From Proposition~\ref{s4-p3} and Theorem~\ref{s4-t3}, we get the main result of this section:

\begin{thm} \label{s4-t4}
We have
$\sum'_{\abs{J}=q}S^{(q)}_{J,H_n}(0)=\frac{1}{2(2\pi)^{n}}\int_{\Real_q}\!\abs{\det M_{\Phi_\eta}}dv(\eta)$,
where $M_{\Phi_\eta}$ is as in \eqref{s4-e29-1} and $\Real_q$ is as in \eqref{s4-e30}.
\end{thm}

\section{Szeg\"{o} kernel asymptotics and weak Morse inequalities on CR manifolds}

In this section we first study the properties of the Hermitian form $M^\phi_p$ introduced in Definition \ref{s1-d3}, especially its dependence of local trivializations. We then prove \eqref{s1-e19}, i.e. the second part of Theorem \ref{t-main1} (cf. Theorem \ref{s5-t1}). Finally, we prove Theorem \ref{t-main2}.

%We first introduce some definitions and notations.
We assume that
%the Levi form is non-degenerate and
condition $Y(q)$ holds.
Let $s$ be a local trivializing section of $L$ on an open subset
$D\subset X$. Let $\phi\in C^\infty(D;\, \Real)$ be the weight of the Hermitian metric $h^L$ relative to $s$, that is, the pointwise norm of $s$ is $\abs{s(x)}_{h^L}^2=e^{-\phi(x)}$, for $x\in D$.
Until further notice, we work on $D$.
Recall that
%\begin{defn} \label{s5-d1}
$M^\phi_p$, $p\in D$, is the Hermitian quadratic form on $T^{1, 0}_pX$ defined by
\[
M^\phi_p(U, \ol V)=\frac{1}{2}\Big\langle U\wedge\ol V, d(\ddbar_b\phi-\pr_b\phi)(p)\Big\rangle\,,\quad U, V\in T^{1, 0}_pX\,.
\]
%where $d$ is the usual exterior derivative.
%\end{defn}
%We need
\begin{lem} \label{s5-l1}
For any $U, V\in T^{1, 0}_pX$, pick $\mU, \mV\in C^\infty(D;\, T^{1, 0}X)$ that satisfy $\mU(p)=U$,
$\mV(p)=V$. Then,
\begin{equation} \label{s5-e2}
M^\phi_p(U, \ol V)=-\frac{1}{2}\big\langle\big[\,\mU, \ol{\mV}\,\big](p), \ddbar_b\phi(p)-\pr_b\phi(p)\big\rangle
+\frac{1}{2}\big(\mU\ol{\mV}+\ol{\mV}\mU\big)\phi(p).
\end{equation}

\end{lem}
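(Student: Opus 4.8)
The statement is a purely local identity computing $M^\phi_p(U,\ol V) = \tfrac12\langle U\wedge\ol V, d(\ddbar_b\phi-\pr_b\phi)(p)\rangle$ in terms of Lie brackets and second derivatives of $\phi$. The plan is to apply the standard Cartan formula for the exterior derivative of a $1$-form. Write $\omega := \ddbar_b\phi - \pr_b\phi$; this is a smooth $1$-form on $D$. For any two smooth vector fields $\mathcal{A},\mathcal{B}$ on $D$ one has the intrinsic formula
\[
d\omega(\mathcal{A},\mathcal{B}) = \mathcal{A}\langle\mathcal{B},\omega\rangle - \mathcal{B}\langle\mathcal{A},\omega\rangle - \langle[\mathcal{A},\mathcal{B}],\omega\rangle\,.
\]
First I would fix $U,V\in\Lambda^{1,0}T_p(X)$, choose extensions $\mathcal{U},\mathcal{V}\in C^\infty(D;\Lambda^{1,0}T(X))$ with $\mathcal{U}(p)=U$, $\mathcal{V}(p)=V$, and apply this with $\mathcal{A}=\mathcal{U}$, $\mathcal{B}=\ol{\mathcal{V}}$. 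This directly gives
\[
\langle U\wedge\ol V, d\omega(p)\rangle = d\omega(\mathcal{U},\ol{\mathcal{V}})(p) = \mathcal{U}\langle\ol{\mathcal{V}},\omega\rangle(p) - \ol{\mathcal{V}}\langle\mathcal{U},\omega\rangle(p) - \langle[\mathcal{U},\ol{\mathcal{V}}](p),\omega(p)\rangle\,.
\]

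The main point is then to simplify the two differentiated pairing terms. Here I would use that $\pr_b\phi$ annihilates $(0,1)$-vectors and $\ddbar_b\phi$ annihilates $(1,0)$-vectors (by the very definitions of $\pr_b,\ddbar_b$ together with the decomposition \eqref{s1-e3}), so that $\langle\ol{\mathcal{V}},\omega\rangle = \langle\ol{\mathcal{V}},\ddbar_b\phi\rangle = \ol{\mathcal{V}}\phi$ and $\langle\mathcal{U},\omega\rangle = -\langle\mathcal{U},\pr_b\phi\rangle = -\mathcal{U}\phi$. Substituting, the right-hand side becomes $\mathcal{U}\ol{\mathcal{V}}\phi(p) + \ol{\mathcal{V}}\mathcal{U}\phi(p) - \langle[\mathcal{U},\ol{\mathcal{V}}](p),\omega(p)\rangle$. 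Multiplying by $\tfrac12$ and recalling $\omega=\ddbar_b\phi-\pr_b\phi$ yields exactly \eqref{s5-e2}.

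The only mild subtlety — and the one step deserving care rather than difficulty — is checking that $\langle\mathcal{U},\ddbar_b\phi\rangle=0$ and $\langle\ol{\mathcal V},\pr_b\phi\rangle=0$; this is immediate from the relation $\ol{\pr_b\phi}=\ddbar_b\ol\phi$ recorded in Definition \ref{s1-d3} and the fact that $\ddbar_b\phi$ is a section of $\Lambda^{0,1}T^*(X)$ while $\pr_b\phi$ is a section of $\Lambda^{1,0}T^*(X)$, both orthogonal to the respective complementary pieces in \eqref{s1-e3}. There is no genuine obstacle here; the lemma is essentially an unwinding of Cartan's formula, and the computation above is self-contained. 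One should also note, for consistency, that the resulting expression is manifestly independent of the choice of extensions $\mathcal{U},\mathcal{V}$ — the left-hand side of \eqref{s5-e2} depends only on $U,V$ — which also re-confirms (as already asserted after Definition \ref{s1-d3}) that $M^\phi_p$ is well defined.
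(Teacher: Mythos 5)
Your proposal is correct and follows essentially the same route as the paper's own proof: both apply the intrinsic (Cartan) formula $\langle V_1\wedge V_2, dg\rangle=V_1\langle V_2,g\rangle-V_2\langle V_1,g\rangle-\langle[V_1,V_2],g\rangle$ with $g=\ddbar_b\phi-\pr_b\phi$, $V_1=\mU$, $V_2=\ol{\mV}$, and then use $\langle\ol{\mV},\ddbar_b\phi-\pr_b\phi\rangle=\ol{\mV}\phi$ and $\langle\mU,\ddbar_b\phi-\pr_b\phi\rangle=-\mU\phi$ to obtain \eqref{s5-e2}. Your extra remarks on type considerations and independence of the extensions are consistent with the paper and add nothing that changes the argument.
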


\begin{proof}
%We first recall that the exterior derivative has the following property: If $g\in C^\infty(D; \Complex T^*X)$
Recall that for a $1$-form $\alpha$ and vector fields $V_1$, $V_2$ we have
\begin{equation} \label{s5-e3}
\langle V_1\wedge V_2, d\alpha\rangle=V_1(\langle V_2, \alpha\rangle)-V_2(\langle V_1, \alpha\rangle)-\langle[V_1, V_2], \alpha\rangle,
\end{equation}
%where $V_1, V_2\in C^\infty(D;\Complex TX)$.
Taking $V_1=\mU$, $V_2=\ol{\mV}$ and $\alpha=\ddbar_b\phi-\pr_b\phi$ in
\eqref{s5-e3}, we get
\begin{equation} \label{s5-e4}
\begin{split}
&\big\langle\mU\wedge\ol{\mV}, d(\ddbar_b\phi-\pr_b\phi)\big\rangle\\
&\quad=\mU\big(\big\langle\ol{\mV},\ddbar_b\phi-\pr_b\phi\big\rangle)-\ol{\mV}\big(\big\langle\mU,\ddbar_b\phi-\pr_b\phi\big\rangle\big)-\big\langle\big[\mU, \ol{\mV}\,\big], \ddbar_b\phi-\pr_b\phi\big\rangle.
\end{split}
\end{equation}
Note that $\langle\ol{\mV}, \ddbar_b\phi-\pr_b\phi\rangle=\langle\ol{\mV},\ddbar_b\phi\rangle=\ol{\mV}\phi$
and $\langle\mU, \ddbar_b\phi-\pr_b\phi\rangle=\langle\mU, -\pr_b\phi\rangle=-\mU\phi$.
From this observation, \eqref{s5-e4} becomes
$\langle\mU\wedge\ol{\mV}, d(\ddbar_b\phi-\pr_b\phi)\rangle=(\mU\ol{\mV}+\ol{\mV}\mU)\phi-\langle[\,\mU, \ol{\mV}\,], \ddbar_b\phi-\pr_b\phi\rangle$.
The lemma follows.
\end{proof}

The definition of $M^\phi_p$ depends on the choice of local trivializations.
Let $\Td D$ be another local trivialization with
$D\cap\Td D\neq\emptyset$. Let $\Td s$ be a local trivializing section of $L$ on the open subset $\Td D$ and the pointwise norm of $\Td s$ is $\abs{\Td s(x)}_{h^L}^2=e^{-\Td\phi(x)}$, $\Td\phi\in C^\infty(\Td D; \Real)$.
Since $\Td s=gs$ on $D\cap\Td D$, for some non-zero CR function $g$, we
can check that
\begin{equation} \label{s5-e6}
\Td\phi=\phi-2\log{\abs{g}}\quad \text{on $D\cap\Td D$}\,.
\end{equation}
%. Moreover, we have the following

\begin{prop} \label{s5-p1}
For $p\in D\cap\Td D$, we have
\begin{equation} \label{s5-e7}
M^\phi_p=M^{\Td\phi}_p+i\Big(\,\frac{Tg}{g}-\frac{T\,\ol g}{\ol g}\,\Bigr)(p)\mathcal{L}_p\,.
%i\big(g^{-1}{Tg}-({\ol g})^{-1}{T\ol g}\big)(p)\mathcal{L}_p.
\end{equation}
where $T$ is the real vector field on $X$ defined by \eqref{s1-d11}.
\end{prop}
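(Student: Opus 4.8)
The plan is to localize everything to the function $\psi:=\phi-\Td\phi$, which by \eqref{s5-e6} equals $2\log\abs{g}$ on $D\cap\Td D$, where $g$ is a nowhere-vanishing CR function. Since $M^\phi_p(U,\ol V)=\tfrac12\langle U\wedge\ol V,d(\ddbar_b\phi-\pr_b\phi)(p)\rangle$ is $\Real$-linear in the weight $\phi$ through the $1$-form $\ddbar_b\phi-\pr_b\phi$, proving \eqref{s5-e7} amounts to showing that, for all $U,V\in\Lambda^{1,0}T_p(X)$,
\[
\tfrac12\big\langle U\wedge\ol V,\,d(\ddbar_b\psi-\pr_b\psi)(p)\big\rangle=\Big(\tfrac{Tg}{g}-\tfrac{T\ol g}{\ol g}\Big)(p)\,\mathcal{L}_p(U,\ol V).
\]

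First I would use that $g$ is CR, i.e.\ $\ddbar_b g=0$, hence also $\pr_b\ol g=\ol{\ddbar_b g}=0$. On a contractible neighbourhood of $p$ inside $D\cap\Td D$ write $\psi=\log g+\log\ol g$; then $\ddbar_b\log g=0$ and $\pr_b\log\ol g=0$, so $\pr_b\psi=\pr_b g/g$ and $\ddbar_b\psi=\ddbar_b\ol g/\ol g$, and these two forms are globally defined even though $\log g$ is not. For any smooth function $h$ the orthogonal splitting \eqref{s1-e3} gives $dh=\pr_b h+\ddbar_b h-(Th)\omega_0$, since $\langle T,\omega_0\rangle=-1$ and $T\perp\Lambda^{1,0}T(X)\oplus\Lambda^{0,1}T(X)$. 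Applied to $h=\log g$ this reads $d\log g=\pr_b g/g-(Tg/g)\,\omega_0$, whence $d^2\log g=0$ yields $d(\pr_b g/g)=d\big((Tg/g)\,\omega_0\big)$; conjugating, $d(\ddbar_b\ol g/\ol g)=d\big((T\ol g/\ol g)\,\omega_0\big)$. Subtracting,
\[
d(\ddbar_b\psi-\pr_b\psi)=d\Big(\big(\tfrac{T\ol g}{\ol g}-\tfrac{Tg}{g}\big)\,\omega_0\Big).
\]

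Next I would pair this with $U\wedge\ol V$. Put $f:=\tfrac{T\ol g}{\ol g}-\tfrac{Tg}{g}$ and expand $d(f\omega_0)=df\wedge\omega_0+f\,d\omega_0$. Since $U$ and $\ol V$ lie in $\Lambda^{1,0}T(X)\oplus\Lambda^{0,1}T(X)$, which by \eqref{s1-e3} is orthogonal to $\omega_0$, the term $df\wedge\omega_0$ contributes nothing to $\langle U\wedge\ol V,\cdot\rangle$, so $\langle U\wedge\ol V,d(f\omega_0)(p)\rangle=f(p)\,\langle U\wedge\ol V,d\omega_0(p)\rangle$. To evaluate $\langle U\wedge\ol V,d\omega_0(p)\rangle$, choose $\mU,\mV\in C^\infty(D\cap\Td D;\Lambda^{1,0}T(X))$ with $\mU(p)=U$, $\mV(p)=V$; applying \eqref{s5-e3} to the $1$-form $\omega_0$ and using $\langle\mU,\omega_0\rangle\equiv\langle\ol{\mV},\omega_0\rangle\equiv0$ gives
\[
\langle U\wedge\ol V,d\omega_0(p)\rangle=-\big\langle[\mU,\ol{\mV}](p),\omega_0(p)\big\rangle,
\]
which by Definition~\ref{s1-d2} is a fixed multiple of $\mathcal{L}_p(U,\ol V)$. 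Collecting the constants then gives \eqref{s5-e7}.

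The step requiring the most care is the bookkeeping of the $\omega_0$-component: on a CR manifold $d$ does not split as $\pr_b+\ddbar_b$, and it is precisely the extra $-(T\cdot)\omega_0$ term in $dh$ that, after pairing with $U\wedge\ol V$, produces the Levi-form contribution, whereas the $\pr_b$- and $\ddbar_b$-parts cancel in the difference $M^\phi_p-M^{\Td\phi}_p$. One should also take care to use $\pr_b\psi=\pr_b g/g$ and $\ddbar_b\psi=\ddbar_b\ol g/\ol g$ in this globally-defined form rather than invoking a global branch of $\log g$; equivalently, one can bypass the logarithm altogether by differentiating $\pr_b g/g$ directly, using the Leibniz rule together with $\ddbar_b g=0$.
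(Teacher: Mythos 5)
Your argument is correct, and it takes a genuinely different route from the paper's. The paper works pointwise with vector fields: it first proves Lemma~\ref{s5-l1}, writing $M^\phi_p(U,\ol V)=-\tfrac12\big\langle[\mU,\ol{\mV}](p),\ddbar_b\phi-\pr_b\phi\big\rangle+\tfrac12(\mU\ol{\mV}+\ol{\mV}\mU)\phi(p)$, then compares the two weights via $\Td\phi=\phi-2\log\abs{g}$ and the CR relations $\ol{\mV}g=0$, $\mU\ol g=0$, and finally decomposes $[U,\ol V]=Z+\ol W+\alpha T$ with $\alpha(p)=-2i\mathcal{L}_p(U,\ol V)$ to extract the Levi-form contribution from $[U,\ol V]g$ and $[U,\ol V]\ol g$. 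You instead stay at the level of forms: linearity in the weight reduces the claim to $\psi=2\log\abs{g}$, the splitting $dh=\pr_b h+\ddbar_b h-(Th)\omega_0$ together with $d^2=0$ gives $d(\ddbar_b\psi-\pr_b\psi)=d\big((\tfrac{T\ol g}{\ol g}-\tfrac{Tg}{g})\omega_0\big)$, and pairing with $U\wedge\ol V$ annihilates the $df\wedge\omega_0$ term and leaves $f(p)\langle U\wedge\ol V,d\omega_0(p)\rangle=-2if(p)\,\mathcal{L}_p(U,\ol V)$ by \eqref{s5-e3} and Definition~\ref{s1-d2}. Your route avoids the symmetric second-order terms $(\mU\ol{\mV}+\ol{\mV}\mU)\phi$ entirely and makes transparent that the Levi form enters precisely through $d\omega_0$ tested against $U\wedge\ol V$, at the modest price of invoking the $\omega_0$-component of $d$ and a local branch of $\log g$ (which, as you note, can be bypassed). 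One point to make explicit rather than leave as \emph{collecting the constants}: your computation yields $M^\phi_p=M^{\Td\phi}_p+i\big(\tfrac{Tg}{g}-\tfrac{T\ol g}{\ol g}\big)(p)\,\mathcal{L}_p$, with a factor $i$; this is exactly the identity in the last display of the paper's own proof and the one used in \eqref{s5-e14}, and it is the correct one, since $\tfrac{Tg}{g}-\tfrac{T\ol g}{\ol g}$ is purely imaginary and only $i$ times it can multiply $\mathcal{L}_p$ in an identity between Hermitian forms; the formula \eqref{s5-e7} as printed omits this $i$, so you should state the $i$-corrected identity rather than \eqref{s5-e7} verbatim.
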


\begin{proof}
From \eqref{s5-e6}, we can check that $\ddbar_b\Td\phi=\ddbar_b\phi-\frac{\ddbar_b\ol g}{\ol g}$ and $\pr_b\Td\phi=\pr_b\phi-\frac{\pr_bg}{g}$ on $D\cap\Td D$.
From above, we have
\begin{equation} \label{s5-e9}
\langle[U, \ol V\,], \ddbar_b\phi-\pr_b\phi\rangle=\langle[U, \ol V\,], \ddbar_b\Td\phi-\pr_b\Td\phi\rangle+
\Big\langle[U, \ol V\,], \frac{\ddbar_b\ol g}{\ol g}-\frac{\pr_bg}{g}\Big\rangle,
\end{equation}
where $U, V\in C^\infty(D\cap\Td D;\, T^{1, 0}X)$.
From \eqref{s5-e6}, we have
\begin{equation} \label{s5-e10}
\begin{split}
(U\ol V+\ol VU)\phi&=(U\ol V+\ol VU)(\Td\phi+2\log{\abs{g}}) \\
&=(U\ol V+\ol VU)\Td\phi+\frac{\ol VUg}{g}+\frac{U\ol V\ol g}{\ol g}\ \ (\mbox{since $\ol Vg=0$, $U\ol g=0$}) \\
&=(U\ol V+\ol VU)\Td\phi-\frac{[U, \ol V\,]g}{g}+\frac{[U, \ol V\,]\ol g}{\ol g}.
\end{split}
\end{equation}
From \eqref{s5-e9}, \eqref{s5-e10} and \eqref{s5-e2}, we see that
\begin{equation} \label{s5-e11}
\begin{split}
M^\phi_p(U(p), \ol V(p))=&M^{\Td\phi}_p(U(p), \ol V(p))-\Big\langle[U, \ol V\,](p), \frac{1}{2}\frac{\ddbar_b\ol g}{\ol g}(p)-\frac{1}{2}\frac{\pr_bg}{g}(p)\Big\rangle\\
&-\frac{1}{2}\frac{[U, \ol V\,]g}{g}(p)+\frac{1}{2}\frac{[U, \ol V\,]\ol g}{\ol g}(p).
\end{split}
\end{equation}
We write $[U, \ol V\,]=Z+\ol W+\alpha(x)T$, where $Z, W\in C^\infty(D\cap\Td D;\, T^{1, 0}X)$ and
$\alpha(x)\in C^\infty(D\cap\Td D;\, \Complex)$. We can check that
$\alpha(p)=-2i\mathcal{L}_p(U(p), \ol V(p))$.
Since $\ol Wg=0$ and $Zg=\langle Z, \pr_bg\rangle=\langle[U, \ol V\,], \pr_bg\rangle$, we have
\begin{equation} \label{s5-e12}
[U, \ol V\,]g(p)=Zg(p)+\alpha(p)Tg(p)=\langle[U, \ol V\,](p), \pr_b g(p)\rangle-2i\mathcal{L}_p(U(p), \ol V(p))Tg(p).
\end{equation}
Similarly, we have
\begin{equation} \label{s5-e13}
[U, \ol V\,]\ol g(p)=\langle[U, \ol V\,](p), \ddbar_b\ol g(p)\rangle-2i\mathcal{L}_p(U(p), \ol V(p))T\ol g(p).
\end{equation}
Combining \eqref{s5-e12}, \eqref{s5-e13} with \eqref{s5-e11}, we get
\[M^\phi_p(U(p), \ol V(p))=M^{\Td\phi}_p(U(p), \ol V(p))+i\mathcal{L}_p(U(p), \ol V(p))\Bigl(\frac{Tg}{g}-\frac{T\ol g}{\ol g}\Bigr)(p).\]
The proposition follows.
\end{proof}

%For $s\in\Real$, the eigenvalues of $M^\phi_p+s\mathcal{L}_p$ are the eigenvalues of the Hermitian quadratic form $M^\phi_p+s\mathcal{L}_p$
%with respect to $(\ |\ )$. Put
%\begin{equation} \label{s5-e13-1}
%\begin{split}
%&\Real_{\phi(p),q}=\{s\in\Real;\, \mbox{$M^\phi_p+s\mathcal{L}_p$ has exactly $q$ negative eigenvalues} \\
%\quad\mbox{and $n-1-q$ positive eigenvalues}\}.
%\end{split}
%\end{equation}

\noindent
Recall that $\Real_{\phi(p),q}$ was defined in \eqref{s1-e15}. From \eqref{s5-e7}, we see that
\begin{equation} \label{s5-e14}
\begin{split}
\Real_{\Td\phi(p),q}&=\Real_{\phi(p),q}+i\big(\tfrac{Tg}{g}-\tfrac{T\ol g}{\ol g}\big)(p)\\
&=\big\{s+i\big(\tfrac{Tg}{g}-\tfrac{T\ol g}{\ol g}\big);\, s\in\Real_{\phi(p),q}\big\}.
\end{split}
\end{equation}
%Since $Y(q)$ holds, $\Real_{\phi(p),q}\subset[-R, R]$ for some $R>0$.

Recall that $\det(M^\phi_x+s\mathcal{L}_x)$ denotes the product of all the eigenvalues of $M^\phi_x+s\mathcal{L}_x$.
From \eqref{s5-e7} and \eqref{s5-e14}, we see that the function
$x\mapsto\int_{\Real_{\phi(x),q}}\abs{\det(M^\phi_x+s\mathcal{L}_x)}ds$
does not depend on the choice of $\phi$. Thus, the function $x\To\int_{\Real_{\phi(x),q}}\abs{\det(M^\phi_x+s\mathcal{L}_x)}ds$ is well-defined.
Since $M^\phi_x$ and $\mathcal{L}_x$ are continuous functions of $x$, we conclude that
$x\To\int_{\Real_{\phi(x),q}}\abs{\det(M^\phi_x+s\mathcal{L}_x)}ds$ is a continuous function of $x$.

\begin{rem} \label{s5-r3}
We take local coordinates
$(x, \theta)=(z, \theta)=(x_1,\ldots,x_{2n-2}, \theta)$, $z_j=x_{2j-1}+ix_{2j}$, $j=1,\ldots,n-1$, as in \eqref{s1-e20} and \eqref{s1-e21}
defined on some neighborhood of $p$. Then, it is straight forward to see that
$\mathcal{L}_p=\sum^{n-1}_{j=1}\lambda_jdz_j\otimes d\ol z_j$
and $M^\phi_p=\sum^{n-1}_{j,t=1}\mu_{j,\,t}dz_t\otimes d\ol z_j$.
Thus,
\begin{equation} \label{s5-e18-1}
\int_{\Real_{\phi(p),q}}\abs{\det(M^\phi_p+s\mathcal{L}_p)}ds=
\int_{\Real_{\phi(p),q}}\abs{\det\left(\mu_{j,\,t}+s\delta_{j,\,t}\lambda_j\right)^{n-1}_{j,t=1}}ds
\end{equation}
and
\begin{equation} \label{s5-e19}
\begin{split}
\Real_{\phi(p),\,q}=\big\{s\in\Real;&\, \mbox{the matrix $\left(\mu_{j,\,t}+s\delta_{j,\,t}\lambda_j\right)^{n-1}_{j,\,t=1}$ has $q$ negative eigenvalues} \\
&\quad\mbox{and $n-1-q$ positive eigenvalues}\big\}.
\end{split}
\end{equation}
\end{rem}
We prove now the precise bound \eqref{s1-e19} which is one of the main results of this work.
\begin{thm} \label{s5-t1}
We have for all $p\in X$
\[\limsup_{k\To\infty}k^{-n}\pit^{(q)}_k(p)\leqslant\frac{1}{2(2\pi)^{n}}\int_{\Real_{\phi(p),\,q}}\abs{\det(M^\phi_p+s\mathcal{L}_p)}ds\, .
\]
\end{thm}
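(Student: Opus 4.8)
The plan is to feed Theorem~\ref{s3-t2} into Theorem~\ref{s4-t4} and then to translate the resulting Heisenberg-group quantity back into the intrinsic data $M^\phi_p,\mathcal{L}_p$ by an explicit change of variable. Fix $p\in X$. Since $M^\phi_x$ and $\mathcal{L}_x$ are intrinsically defined, the right-hand side $\int_{\Real_{\phi(x),q}}\abs{\det(M^\phi_x+s\mathcal{L}_x)}\,ds$ is independent of all auxiliary choices — the independence of the local trivialization being exactly Proposition~\ref{s5-p1} together with \eqref{s5-e14} — so it suffices to prove the bound at the single point $p$ after picking, as in \S2.2, local coordinates $(z,\theta)$ centered at $p$ (so that $p$ corresponds to $0$) and a local trivializing section $s$ whose weight $\phi$ is normalized as in \eqref{s1-e20}--\eqref{s1-e21}; such coordinates exist by \cite[pp.\,157--160]{BG88}. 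In these coordinates Theorem~\ref{s3-t2} gives $\limsup_{k\To\infty}k^{-n}\pit^{(q)}_k(0)\leqslant\sum'_{\abs{J}=q}S^{(q)}_{J,H_n}(0)$, and Theorem~\ref{s4-t4} evaluates the sum on the right as $(2\pi)^{-n}\int_{\Real_q}\abs{\det M_{\Phi_\eta}}\,dv(\eta)$, with $M_{\Phi_\eta},\Real_q$ as in \eqref{s4-e29-1}--\eqref{s4-e30} and $\Phi_\eta$ as in \eqref{s4-e13}.

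It remains to match $(2\pi)^{-n}\int_{\Real_q}\abs{\det M_{\Phi_\eta}}\,dv(\eta)$ with $(2\pi)^{-n}\int_{\Real_{\phi(p),q}}\abs{\det(M^\phi_p+s\mathcal{L}_p)}\,ds$. By Remark~\ref{s5-r3}, in the chosen coordinates $\mathcal{L}_p=\sum_j\lambda_j\,dz_j\otimes d\ol z_j$ and $M^\phi_p=\sum_{j,t}\mu_{j,t}\,dz_t\otimes d\ol z_j$, so $M^\phi_p+s\mathcal{L}_p$ is represented, in the orthonormal frame $\pr/\pr z_j$, by the Hermitian matrix $(\mu_{j,t}+s\lambda_j\delta_{j,t})_{j,t=1}^{n-1}$; on the other hand $\Phi_\eta=-\sqrt2\,\eta\sum_j\lambda_j\abs{z_j}^2+\sum_{j,t}\mu_{j,t}\ol z_jz_t$, so $\pr\ddbar\Phi_\eta$ is represented in the same frame by $(\mu_{j,t}-\sqrt2\,\eta\,\lambda_j\delta_{j,t})$, i.e.\ $M_{\Phi_\eta}=M^\phi_p+s\mathcal{L}_p$ under the substitution $s=-\sqrt2\,\eta$. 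Consequently the condition cutting out $\Real_q$ ($q$ negative and $n-1-q$ positive eigenvalues of $M_{\Phi_\eta}$) is carried to the condition cutting out $\Real_{\phi(p),q}$ for the variable $s$, and $\abs{\det M_{\Phi_\eta}}=\abs{\det(M^\phi_p+s\mathcal{L}_p)}$ along the correspondence; performing the change of variable $\eta\mapsto s$ in the integral and keeping track of the normalization of $dv(\eta)$ (dictated by $\omega_0(p)=\sqrt2\,d\theta$) converts the model integral into the asserted one, proving the theorem.

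The analytic substance has all been deployed before this point — the rescaling map $F_k$, the uniform subelliptic estimate of Proposition~\ref{s3-p2} (which is where $Y(q)$ enters), the diagonal-compactness argument of Proposition~\ref{s3-p3}, and the explicit solution of the model problem in \S4 — so the only task remaining is careful bookkeeping. I expect the one genuinely delicate point to be precisely this bookkeeping: reconciling the several $\sqrt2$'s produced by the metric normalizations $(\pr/\pr\theta\ |\ \pr/\pr\theta)(p)=2$, $\omega_0(p)=\sqrt2\,d\theta$, $dv(\theta)=\sqrt2\,d\theta$ and the induced measure $dv(\eta)$, and checking that the final expression depends only on the intrinsic forms $M^\phi_p,\mathcal{L}_p$ (which has been arranged in this section). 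Once \eqref{s1-e19} is established in this way, Theorem~\ref{t-main1} is complete, and integrating \eqref{s1-e19} over $X$ against \eqref{s1-e13} and applying Fatou's lemma yields the weak Morse inequalities of Theorem~\ref{t-main2}.
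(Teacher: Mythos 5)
Your argument is correct and follows the paper's own proof essentially verbatim: choose the normalized coordinates and weight of \eqref{s1-e20}--\eqref{s1-e21} at $p$, chain Theorem~\ref{s3-t2} with Theorem~\ref{s4-t4}, and then identify $(2\pi)^{-n}\int_{\Real_q}\abs{\det M_{\Phi_\eta}}\,dv(\eta)$ with $(2\pi)^{-n}\int_{\Real_{\phi(p),q}}\abs{\det(M^\phi_p+s\mathcal{L}_p)}\,ds$ via Remark~\ref{s5-r3} and the substitution $s=-\sqrt{2}\,\eta$ together with $dv(\eta)=\sqrt{2}\,d\eta$. The bookkeeping of the $\sqrt{2}$ factors you flag is exactly what the paper checks in \eqref{s5-e22}--\eqref{s5-e23}, so no gap remains.
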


\begin{proof}
For $p\in X$, let $(x, \theta)=(z, \theta)=(x_1,\ldots,x_{2n-1})$, $z_j=x_{2j-1}+ix_{2j}$, $j=1,\ldots,n-1$, be the coordinate as in \eqref{s1-e20} and
\eqref{s1-e21} defined on some neighborhood of $p$. From Theorem~\ref{s3-t2}, we have that
$\limsup_{k\To\infty}k^{-n}\pit^{(q)}_k(0)\leqslant \sum'_{\abs{J}=q}S^{(q)}_{J,H_n}(0)$.
From Theorem~\ref{s4-t4}, we know that
$\sum'_{\abs{J}=q}S^{(q)}_{J,H_n}(0)=\frac{1}{2(2\pi)^{n}}\int_{\Real_q}\!\abs{\det M_{\Phi_\eta}}dv(\eta)$,
where $M_{\Phi_\eta}$ is as in \eqref{s4-e29-1} and $\Real_q$ is as in \eqref{s4-e30}. Thus,
\begin{equation} \label{s5-e21}
\limsup_{k\To\infty}k^{-n}\pit^{(q)}_k(0)\leqslant\frac{1}{2(2\pi)^{n}}\int_{\Real_q}\!\abs{\det M_{\Phi_\eta}}dv(\eta).
\end{equation}
From \eqref{s4-e29-1}, \eqref{s4-e30} and the definition of $\Phi_\eta$ (see \eqref{s4-e13}), we see that
\begin{equation} \label{s5-e22}
\det M_{\Phi_\eta}=\det\left(\mu_{j,\,t}-\sqrt{2}\eta\lambda_j\delta_{j,\,t}\right)^{n-1}_{j,\,t=1}
\end{equation}
and
\begin{equation} \label{s5-e23}
\begin{split}
\Real_q=\big\{\eta\in\Real;\, &\text{the matrix $\left(\mu_{j,\,t}-\sqrt{2}\eta\delta_{j,\,t}\lambda_j\right)^{n-1}_{j,\,t=1}$ has $q$ negative eigenvalues} \\
&\text{and $n-1-q$ positive eigenvalues}\big\}.
\end{split}
\end{equation}
Note that $dv(\eta)=\sqrt{2}d\eta$. From this and \eqref{s5-e22}, \eqref{s5-e23}, \eqref{s5-e18-1}, \eqref{s5-e19}, it is easy to see that
$\int_{\Real_q}\!\abs{\det M_{\Phi_\eta}}dv(\eta)=\int_{\Real_{\phi(p),\,q}}\abs{\det(M^\phi_p+s\mathcal{L}_p)}ds$.
From this and \eqref{s5-e21}, the theorem follows.
\end{proof}

%As a corollary we obtain weak Morse inequalities on CR manifolds
%
%\begin{thm} \label{s5-t2}
%We have
%\[\dim H^q_b(X, L^k)\leqslant  k^n(2\pi)^{-n}\int_X\Bigr(\int_{\Real_{\phi(x),q}}\abs{\det(M^\phi_x+s\mathcal{L}_x)}ds\Bigr)dv_X(x)+
%o(k^n).\]
%\end{thm}

\begin{proof}[Proof of Theorem \ref{t-main2}]
%Note that  $\dim H^q_b(X, L^k)=\int_X\!\sum^N_{j=1}\abs{f_j(x)}^2dv_X(x)=\int_X\!\pit^{(q)}_k(x)dv_X(x)$,
%where $f_j(x)\in\Omega^{0,q}(X, L^k)$, $j=1,\ldots,N$, is an orthonormal frame for the space
%$\cH_b^q(X, L^k)$. Thus,
By \eqref{s2-e11}-\eqref{s1-e13} we have $\dim H^q_b(X,L^k)=\int_X\!\pit^{(q)}_k(x)dv_X(x)$.
%\begin{equation} \label{s5-e25}
%\limsup_{k\To\infty}k^{-n}\dim H^q_b(X,L^k)=\limsup_{k\To\infty}\int_X\!k^{-n}\pit^{(q)}_k(x)dv_X(x).
%\end{equation}
In view of Theorem~\ref{s3-t1}, $\sup_k k^{-n}\pit^{(q)}_k(\cdot)$ is integrable on $X$.
%there is a constant $C_0>0$ independent of $k$, such that for all $x\in X$ we have $k^{-n}\pit^{(q)}_k(x)\leqslant  C_0$.
Thus, we can apply Fatou's lemma and we get by using Theorem \ref{s5-t1}:
\begin{equation*}
\begin{split}
&\limsup_{k\To\infty}\,k^{-n}\dim H^q_b(X,L^k)\leqslant \int_X\!\limsup_{k\To\infty}k^{-n}\pit^{(q)}_k(x)dv_X(x)\\
&\quad\leqslant\frac{1}{2(2\pi)^{n}}\int_X\Bigr(\int_{\Real_{\phi(x),q}}\abs{\det(M^\phi_x+s\mathcal{L}_x)}ds\Bigr)dv_X(x).
\end{split}
\end{equation*}
%Thus, $\dim H^q_b(X, L^k)\leqslant
%k^n(2\pi)^{-n}\int_X\Bigr(\int_{\Real_{\phi(x),q}}\abs{\det(M^\phi_x+s\mathcal{L}_x)}ds\Bigr)dv_X(x)+o(k^n)$.
The theorem follows.
\end{proof}

\section{Strong Morse inequalities on CR manifolds}

In this section, we will establish the strong Morse inequalities on CR manifolds. We first recall some well-known facts.
Until further notice, we assume that $Y(q)$ holds.
We know \cite[Th.\,7.6]{Ko65}, \cite[Th.\,5.4.11--12]{FK72}, \cite[Cor.\,8.4.7--8]{CS01} that if $Y(q)$ holds, then $\Box^{(q)}_{b,k}$ has a discrete
spectrum, each eigenvalue occurs with finite multiplicity and all eigenforms are smooth. For $\lambda\in\Real$, let
$\cH^q_{b,\,\leqslant\lambda}(X, L^k)$ denote the space spanned by the eigenforms of $\Box^{(q)}_{b,k}$ whose eigenvalues are bounded by
$\lambda$ and denote by $\pit^{(q)}_{k,\,\leqslant \lambda}$ the Szeg\"{o} kernel function of the space $\cH^q_{b,\,\leqslant \lambda}(X,L^k)$.
Similarly, let $\cH^q_{b,>\lambda}(X, L^k)$ denote the space spanned by the eigenforms of $\Box^{(q)}_{b,k}$ whose eigenvalues are
$>\lambda$.

%We recall the spectral decomposition theorem of Kohn.
Let $Q_b$ be the
Hermitian form on $\Omega^{0,q}(X, L^k)$ defined for $u, v\in\Omega^{0,q}(X, L^k)$ by
\[
Q_b(u, v)=(\ddbar_{b,k}u\ |\ \ddbar_{b,k}v)_k+(\ddbar^*_{b,k}u\ |\ \ddbar^*_{b,k}v)_k+(u\ |\ v)_k=(\Box^{(q)}_{b,k}u\ |\ v)_k+(u\ |\ v)_k\,.
\]
Let $\ol{\Omega^{0,q}(X, L^k)}$ be the completion
of $\Omega^{0,q}(X, L^k)$ under $Q_b$ in $L^2_{(0,q)}(X, L^k)$.
For $\lambda>0$, we have the orthogonal spectral decomposition with
respect to $Q_b$:
\begin{equation} \label{s6-e1}
\ol{\Omega^{0,q}(X, L^k)}=\cH^q_{b,\,\leqslant \lambda}(X, L^k)\oplus\ol{\cH^q_{b,\,>\lambda}(X, L^k)},
\end{equation}
where $\ol{\cH^q_{b,\,>\lambda}(X, L^k)}$ is the completion of $\cH^q_{b,\,>\lambda}(X, L^k)$ under $Q_b$ in $L^2_{(0,q)}(X, L^k)$.

Let
$u\in\ol{\cH^q_{b,\,>\lambda}(X, L^k)}\cap\Omega^{0,q}(X, L^k)$. There are $f_j\in \cH^q_{b,\,>\lambda}(X, L^k)$, $j=1,2,\ldots$, such that $Q_b(f_j-u)\To 0$, as $j\To\infty$.
From this, we can check that $(\Box^{(q)}_{b,k}f_j\ |\ f_j)_k\To(\Box^{(q)}_{b,k}u\ |\ u)_k$, as $j\To\infty$, and
%\begin{equation} \label{s6-e2}
%\begin{split}
%\norm{u}^2&=\lim_{j\To\infty}\norm{f_j}^2=\lim_{j\To\infty}(f_j\ |\ f_j)_k\\
%&\leqslant \lim_{j\To\infty}\frac{1}{\lambda}\big(\Box^{(q)}_{b,k}f_j\ \big|\ f_j\big)_k=\frac{1}{\lambda}\big(\Box^{(q)}_{b,k}u\ \big|\ u\big)_k.
%\end{split}
%\end{equation}
\begin{equation} \label{s6-e2}
\norm{u}^2=\lim_{j\To\infty}\norm{f_j}^2=\lim_{j\To\infty}(f_j\ |\ f_j)_k
\leqslant \lim_{j\To\infty}\frac{1}{\lambda}\big(\Box^{(q)}_{b,k}f_j\ \big|\ f_j\big)_k=\frac{1}{\lambda}\big(\Box^{(q)}_{b,k}u\ \big|\ u\big)_k\,.
\end{equation}
We return to our situation. We will use the same notations as in section 3. For a given point $p\in X$, let $s$ be a local trivializing section of $L$ on an open neighborhood of $p$ and $\abs{s}^ 2=e^{-\phi}$. Let $(x, \theta)=(z, \theta)=(x_1,\ldots,x_{2n-2},\theta)$, $z_j=x_{2j-1}+ix_{2j}$,
$j=1,\ldots,n-1$, be the local coordinates as in \eqref{s1-e20} and \eqref{s1-e21} defined on an open set $D$ of $p$.
Note that $(x(p), \theta(p))=0$. We identify $D$ with some open set of $H_n$.

Let $u(z, \theta)=\sum'_{\abs{J}=q}u_J(z, \theta)d\ol z_J\in\Omega^{0,q}(H_n)$ be as in \eqref{s4-e57} and Proposition~\ref{s4-p3}.
From \eqref{s4-e61} and the statement after \eqref{s4-e61}, we know that the value
\[\int\abs{\pr^m_x\pr^{m'}_\theta u}^2e^{-\psi_0}dv(z)dv(\theta)\]
is finite and can be bounded by some positive continuous function of the eigenvalues of $M_{\Phi_\eta}$, $\eta\in\Real_q$, $\beta$ and $\lambda_j$, $j=1,\ldots,n-1$, for all $m'\in\mathbb N_0$, $m\in\mathbb N_0^{2n-2}$. Since $X$ is compact, we deduce that for every
$m\in\mathbb N_0^{2n-2}$, $m'\in\mathbb N_0$, we can find $M_{m,m'}>0$ independent of the point $p$, such that
\begin{equation} \label{s6-e3}
\int_{H_n}\abs{\pr^m_x\pr^{m'}_\theta u}^2e^{-\psi_0}dv(\theta)dv(z)< M_{m,m'}.
\end{equation}
Set
$\beta_k(z, \theta)=\chi_k(\sqrt{k}z, k\theta)\sum'_{\abs{J}=q}u_J(\sqrt{k}z, k\theta)e_J(z, \theta)\in\Omega^{0,q}(D)$. Here $\chi$ is a smooth function, $0\leqslant \chi\leqslant  1$, supported on $D_1$
which equals one on $D_{\frac{1}{2}}$ and
\[
\chi_k(z, \theta)=\chi\Big(\frac{z}{\log k}, \frac{\theta}{\sqrt{k}\log k}\Big).
\]
%$0\leqslant \chi\leqslant  1$, $\chi$ is a smooth function supported on $D_1$
%which equals one on $D_{\frac{1}{2}}$.
We remind that $(e_j)_{j=1,\ldots,\,n-1}$ denotes the basis of $T^{*0,1}X$,
which is dual to $(\ol U_j)_{j=1,\ldots,\,n-1}$, where $(U_j)_{j=1,\ldots,\,n-1}$ are as in \eqref{s1-e20}. We notice that for $k$ large,
${\rm Supp\,}\beta_k\subset D_{\frac{\log k}{\sqrt{k}}}$. From Proposition~\ref{s3-p1} and \eqref{s3-e23}, we have
\begin{equation} \label{s6-e5}
(\Box^{(q)}_{s,(k)})(F^*_k\beta_k)=\Box^{(q)}_{b,H_n}\bigr(\chi_k(z, \theta)u(z, \theta)\bigr)+\varepsilon_kP_k(F^*_k\beta_k),
\end{equation}
where $\varepsilon_k$ is a sequence tending to zero with $k\To\infty$ and $P_k$ is a second order differential operator and all
the derivatives of the coefficients of $P_k$ are uniform bounded in $k$. Note that
$\Box^{(q)}_{b,H_n}u=0$
and
$\sup_{(z, \theta)\in D_{\log k}}\abs{kF^*_k\phi_0-\psi_0}\To0$ as $k\To\infty$ ($\phi_0$ is as in \eqref{s1-e23}).
From this, \eqref{s6-e5} and \eqref{s6-e3}, we deduce that there is a sequence $\delta_k>0$, independent of the point $p$ and
tending to zero such that
\begin{equation} \label{s6-e6}
\big\|\Box^{(q)}_{s,(k)}(F^*_k\beta_k)\big\|_{kF^*_k\phi_0}\leqslant \delta_k.
\end{equation}
Similarly, we have for all $m\in\mathbb N$
\begin{equation} \label{s6-e7}
\big\|(\Box^{(q)}_{s,(k)})^m(F^*_k\beta_k)\big\|_{kF^*_k\phi_0}\To0\ \ \mbox{as $k\To\infty$}\,.
\end{equation}
Now define $\alpha_k\in\Omega^{0,q}(X, L^k)$ by
\begin{equation} \label{s6-e8}
\alpha_k(z, \theta)=s^kk^{\frac{n}{2}}e^{kR}\beta_k(z, \theta)\ ,
\end{equation}
where $R(z, \theta)$ is as in \eqref{s1-e22}.
We can check that
\begin{equation} \label{s6-e9}
k^{-n}\abs{\alpha_k(0, 0)}^2=\abs{\beta_k(0, 0)}^2=\abs{u(0, 0)}^2=\frac{1}{2(2\pi)^{n}}\int_{\Real_{\phi(p),\,q}}\abs{\det(M^\phi_p+s\mathcal{L}_p)}ds
\end{equation}
for all $k$, and
\begin{equation} \label{s6-e10}
\begin{split}
\norm{\alpha_k}^2&=\int k^ne^{k(R+\ol R)}\abs{\beta_k}^2e^{-k\phi}m(z, \theta)dv(z)dv(\theta)\\
&=\int k^ne^{-k\phi_0}\abs{\beta_k}^2m(z, \theta)dv(z)dv(\theta)\\
&=\int e^{-kF^*_k\phi_0}\abs{\chi_k(z, \theta)}^2\abs{u(z, \theta)}^2
m\big(\tfrac{z}{\sqrt{k}}, \tfrac{\theta}{k}\big)
dv(z)dv(\theta)\\
&\To\int\abs{u}^2e^{-\psi_0(z, \theta)}dv(z)dv(\theta)=1,\ \ \mbox{as $k\To\infty$},
\end{split}
\end{equation}
where $m(z, \theta)dv(z)dv(\theta)$ is the volume form. Note that $m(0, 0)=1$. Moreover, we have
\begin{equation} \label{s6-e11}
\begin{split}
&(\tfrac{1}{k}\Box^{(q)}_{b,k}\alpha_k\ |\ \alpha_k)_k\\
&\quad=\int k^n\langle\frac{1}{k}\Box^{(q)}_s\beta_k,\beta_k\rangle e^{-k\phi_0}m(z, \theta)dv(z)dv(\theta)\ \ (\text{by \eqref{s3-e7-0}})\\
&\quad=\int\langle\frac{1}{k}F^*_k(\Box^{(q)}_s\beta_k),F^*_k\beta_k\rangle_{F^*_k}e^{-kF^*_k\phi_0}(F^*_km)dv(z)dv(\theta)\\
&\quad=\int\langle(\Box^{(q)}_{s,(k)})F^*_k\beta_k,F^*_k\beta_k\rangle_{F^*_k}e^{-kF^*_k\phi_0}(F^*_km)dv(z)dv(\theta)\ \ (\text{by \eqref{s3-e9}}).
\end{split}
\end{equation}
From \eqref{s6-e6} and the fact that $\norm{F^*_k\beta_k}_{kF^*_k\phi_0}\leqslant  1$, we deduce that
there is a sequence $\mu_k>0$, independent of the point $p$ and
tending to zero such that
\begin{equation} \label{s6-e12}
\big(\tfrac{1}{k}\Box^{(q)}_{b,k}\alpha_k\ \big|\ \alpha_k\big)_k\leqslant \mu_k.
\end{equation}
Similarly, from \eqref{s6-e7}, we can repeat the procedure above with minor changes and get
\begin{equation} \label{s6-e13}
\big\|\big(\tfrac{1}{k}\Box^{(q)}_{b,k}\big)^m\alpha_k\big\|\To0\ \ \mbox{as $k\To\infty$},
\end{equation}
for all $m\in\mathbb N$. Now, we can prove

\begin{prop} \label{s6-p1}
Let $\nu_k>0$ be any sequence with
$\lim_{k\To\infty}\frac{\mu_k}{\nu_k}=0$,
where $\mu_k$ is as in \eqref{s6-e12}. Then, $\liminf_{k\To\infty}k^{-n}\pit^{(q)}_{k,\,\leqslant  k\nu_k}(0)\geqslant\frac{1}{2(2\pi)^{n}}\int_{\Real_{\phi(p),\,q}}\abs{\det(M^\phi_p+s\mathcal{L}_p)}ds$.
\end{prop}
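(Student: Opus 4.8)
The plan is to use the trial form $\alpha_k$ constructed in \eqref{s6-e8} as a competitor inside the spectral space $\cH^q_{b,\,\leqslant k\nu_k}(X,L^k)$. First I would decompose $\alpha_k$ according to the orthogonal spectral decomposition \eqref{s6-e1} applied at level $\lambda=k\nu_k$, writing $\alpha_k=\alpha_k^1+\alpha_k^2$ with $\alpha_k^1\in\cH^q_{b,\,\leqslant k\nu_k}(X,L^k)$ and $\alpha_k^2\in\ol{\cH^q_{b,\,>k\nu_k}(X,L^k)}$. The point of \eqref{s6-e2} (with $\lambda=k\nu_k$) together with \eqref{s6-e12} is that $\norm{\alpha_k^2}^2\leqslant\frac{1}{k\nu_k}(\Box^{(q)}_{b,k}\alpha_k\mid\alpha_k)_k=\frac{1}{\nu_k}(\tfrac1k\Box^{(q)}_{b,k}\alpha_k\mid\alpha_k)_k\leqslant\frac{\mu_k}{\nu_k}\to0$. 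Hence $\norm{\alpha_k^1}\to1$ by \eqref{s6-e10}, and in particular $\alpha_k^1\neq0$ for $k$ large.

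Next I would estimate the pointwise value $\abs{\alpha_k^1(0,0)}^2$ from below. By definition of the extremal/Szeg\"o kernel function for the spectral space, $\pit^{(q)}_{k,\,\leqslant k\nu_k}(0)\geqslant\abs{\alpha_k^1(0,0)}^2/\norm{\alpha_k^1}^2$, so it suffices to show $k^{-n}\abs{\alpha_k^1(0,0)}^2\to(2\pi)^{-n}\int_{\Real_{\phi(p),q}}\abs{\det(M^\phi_p+s\mathcal{L}_p)}ds$. From \eqref{s6-e9} we already know $k^{-n}\abs{\alpha_k(0,0)}^2$ equals exactly this quantity, so the task reduces to proving $k^{-n}\abs{\alpha_k^2(0,0)}^2\to0$, i.e.\ controlling the pointwise value of the high-mode part. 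This is where the higher-order estimates \eqref{s6-e13} enter: $\alpha_k^2$ satisfies $\norm{(\tfrac1k\Box^{(q)}_{b,k})^m\alpha_k^2}\leqslant\norm{(\tfrac1k\Box^{(q)}_{b,k})^m\alpha_k}\to0$ for every $m$ (the spectral projection commutes with $\Box^{(q)}_{b,k}$ and is norm-decreasing). Passing to the rescaled picture via $F_k$ and applying the elliptic estimate of Proposition \ref{s3-p2} (iterated, to absorb powers of $\Box^{(q)}_{s,(k)}$) together with the Sobolev argument of Lemma \ref{s3-l1}, one gets $k^{-n}\abs{\alpha_k^2(0,0)}^2\leqslant C\big(\norm{F^*_k(e^{-kR}\alpha_k^2)}^2_{kF^*_k\phi_0,D_r}+\sum_{j=1}^{m}\norm{(\Box^{(q)}_{s,(k)})^jF^*_k(e^{-kR}\alpha_k^2)}^2_{kF^*_k\phi_0,D_r}\big)$, and each term on the right tends to $0$ because $\norm{\alpha_k^2}\to0$ and \eqref{s6-e13} holds for $\alpha_k^2$.

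Combining, $k^{-n}\abs{\alpha_k^1(0,0)}^2\to(2\pi)^{-n}\int_{\Real_{\phi(p),q}}\abs{\det(M^\phi_p+s\mathcal{L}_p)}ds$ while $\norm{\alpha_k^1}\to1$, so
\[
\liminf_{k\To\infty}k^{-n}\pit^{(q)}_{k,\,\leqslant k\nu_k}(0)\geqslant\liminf_{k\To\infty}\frac{k^{-n}\abs{\alpha_k^1(0,0)}^2}{\norm{\alpha_k^1}^2}=(2\pi)^{-n}\int_{\Real_{\phi(p),q}}\abs{\det(M^\phi_p+s\mathcal{L}_p)}ds,
\]
as claimed. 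The main obstacle I anticipate is the last step: one must be careful that the constants in the iterated elliptic/Sobolev estimate remain uniform in $k$ (this is exactly what Remark \ref{s3-r1} and the $k$-uniformity built into Proposition \ref{s3-p2} are for), and that cutting off $\alpha_k^2$ to the coordinate patch $D$ and extending by zero does not spoil the estimates — this requires that $\alpha_k^2$, like $\alpha_k$, is already concentrated (up to exponentially small errors in the weighted $L^2$ norm) near $p$, which follows from the support properties of $\beta_k$ in \eqref{s6-e8} combined with the $Y(q)$ a priori estimates. Everything else is a repetition of arguments already carried out in \S3 and \S4.
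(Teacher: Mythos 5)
Your proposal follows essentially the same route as the paper's proof: the same spectral splitting $\alpha_k=\alpha_k^1+\alpha_k^2$ at level $k\nu_k$, the bound $\norm{\alpha_k^2}^2\leqslant\mu_k/\nu_k$ via \eqref{s6-e2} and \eqref{s6-e12}, and the pointwise control of $k^{-n}\abs{\alpha_k^2(0)}^2$ through the rescaled subelliptic estimate of Proposition \ref{s3-p2} iterated as in \eqref{s6-e17}--\eqref{s6-e20}, with the $k$-uniformity supplied by Remark \ref{s3-r1}. The only caveat you raise at the end is unnecessary: the estimate on $\alpha_k^2(0)$ is purely local (one restricts to $D$ and uses $\norm{F^*_k\beta^2_k}_{kF^*_k\phi_0,D_r}\leqslant\norm{\alpha^2_k}$), so no concentration of $\alpha_k^2$ near $p$ is needed.
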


\begin{proof}
Let $\alpha_k$ be as in \eqref{s6-e8}. By \eqref{s6-e1}, we have
$\alpha_k=\alpha^{1}_k+\alpha^2_k$,
where $\alpha^1_k\in\cH^q_{b,\le k\nu_k}(X, L^k)$, $\alpha^2_k\in\ol{\cH^q_{b,>k\nu_k}(X, L^k)}$. From \eqref{s6-e2}, we have
\[\norm{\alpha^2_k}^2\leqslant \frac{1}{k\nu_k}\big(\Box^{(q)}_{b,k}\alpha^2_k\ \big|\ \alpha^2_k\big)_k\leqslant \frac{1}{k\nu_k}\big(\Box^{(q)}_{b,k}\alpha_k\ \big|\ \alpha_k\big)_k\leqslant \frac{\mu_k}{\nu_k}\To 0,\]
as $k\To\infty$.
Thus, $\lim_{k\To\infty}\norm{\alpha^2_k}=0$. Since $\norm{\alpha_k}\To1$ as $k\To\infty$, we get
\begin{equation} \label{s6-e15}
\lim_{k\To\infty}\norm{\alpha^1_k}=1.
\end{equation}
Now, we claim that
\begin{equation} \label{s6-e16}
\lim_{k\To\infty}k^{-n}\abs{\alpha^2_k(0)}^2=0.
\end{equation}
On $D$, we write $\alpha^2_k=s^kk^{\frac{n}{2}}e^{kR}\beta^2_k$,
$\beta^2_k\in\Omega^{0,q}(D)$. From \eqref{s3-e12} and the proof of Lemma~\ref{s3-l1}, we see that
\begin{equation} \label{s6-e17}
\abs{F^*_k\beta^2_k(0)}^2\leqslant  C_{n-1,r}\Bigr(\norm{F^*_k\beta^2_k}^2_{kF^*_k\phi_0,D_r}+\norm{(\Box^{(q)}_{s,(k)})F^*_k\beta^2_k}^2_{kF^*_k\phi_0,n-1,D_r}\Bigr),
\end{equation}
for some $r>0$. Now, we have
\begin{equation} \label{s6-e18}
\norm{F^*_k\beta^2_k}^2_{kF^*_k\phi_0,D_r}\leqslant \norm{\alpha^2_k}^2\To0,\ \ \mbox{as $k\To\infty$}.
\end{equation}
Moreover, from \eqref{s3-e12} and using induction, we get
\begin{equation} \label{s6-e19}
\norm{(\Box^{(q)}_{s,(k)}F^*_k\beta^2_k}^2_{kF^*_k\phi_0,n-1,D_r}\leqslant  C'\sum^n_{m=1}\norm{(\Box^{(q)}_{s,(k)})^mF^*_k\beta^2_k}^2_{kF^*_k\phi_0,D_{r'}}\,,
\end{equation}
for some $r'>0$, where $C'>0$ is independent of $k$. We can check that for all $m\in\mathbb N$,
\begin{equation} \label{s6-e20}
%\begin{split}
\Big\|(\Box^{(q)}_{s,(k)})^mF^*_k\beta^2_k\Big\|^2_{kF^*_k\phi_0,D_{r'}}\leqslant \Big\|(\tfrac{1}{k}\Box^{(q)}_{b,k})^m\alpha^2_k\Big\|^2
\leqslant \Big\|(\tfrac{1}{k}\Box^{(q)}_{b,k})^m\alpha_k\Big\|^2\To0\ \ \text{as $k\To\infty$}.
%\end{split}
\end{equation}
Here we used \eqref{s6-e13}.
Combining \eqref{s6-e20}, \eqref{s6-e19}, \eqref{s6-e18} with \eqref{s6-e17}, we get
\[\lim_{k\To\infty}\abs{F^*_k\beta^2_k(0)}^2=\lim_{k\To\infty}\abs{\beta^2_k(0)}^2=
\lim_{k\To\infty}k^{-n}\abs{\alpha^2_k(0)}^2=0.\]
Hence \eqref{s6-e16} follows. From this and \eqref{s6-e9}, we conclude
\begin{equation} \label{s6-e21}
\lim_{k\To\infty}k^{-n}\abs{\alpha^1_k(0)}^2=\frac{1}{2(2\pi)^{n}}\int_{\Real_{\phi(p),\,q}}\abs{\det(M^\phi_p+s\mathcal{L}_p)}ds.
\end{equation}
Now,
\[
k^{-n}\pit^{(q)}_{k,\,\leqslant  k\nu_k}(0)\geqslant k^{-n}\frac{\abs{\alpha^1_k(0)}^2}{\norm{\alpha^1_k}^2}\To
\frac{1}{2(2\pi)^{n}}\int_{\Real_{\phi(p),\,q}}\abs{\det(M^\phi_p+s\mathcal{L}_p)}ds,\ \ \mbox{as $k\To\infty$}.
\]
The proposition follows.
\end{proof}
%We need
\begin{prop}  \label{s6-p2}
Let $\nu_k>0$ be any sequence with $\nu_k\To0$, as $k\To\infty$. Then,
\[\limsup_{k\To\infty}k^{-n}\pit^{(q)}_{k,\,\leqslant  k\nu_k}(0)\leqslant\frac{1}{2(2\pi)^{n}}\int_{\Real_{\phi(p),\,q}}\abs{\det(M^\phi_p+s\mathcal{L}_p)}ds.\]
\end{prop}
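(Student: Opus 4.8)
The plan is to repeat the scaling argument behind Theorems~\ref{s3-t2} and \ref{s5-t1}, now allowing the scaled forms to be only approximately harmonic. First, exactly as in Lemma~\ref{s2-l1}, the orthogonal projection onto $\cH^q_{b,\leqslant\lambda}(X,L^k)$ satisfies $\pit^{(q)}_{k,\leqslant\lambda}(y)=\sum'_{\abs{J}=q}S^{(q)}_{k,\leqslant\lambda,J}(y)$, where
\[
S^{(q)}_{k,\leqslant\lambda,J}(y)=\sup\bigl\{\abs{\alpha_J(y)}^2;\ \alpha\in\cH^q_{b,\leqslant\lambda}(X,L^k),\ \norm{\alpha}=1\bigr\};
\]
so it suffices to show, for each strictly increasing $J$ with $\abs{J}=q$, that
\[
\limsup_{k\To\infty}k^{-n}S^{(q)}_{k,\leqslant k\nu_k,J}(0)\leqslant S^{(q)}_{J,H_n}(0),
\]
because summing over $J$ and invoking Theorem~\ref{s4-t4} together with the identity $\int_{\Real_q}\abs{\det M_{\Phi_\eta}}\,dv(\eta)=\int_{\Real_{\phi(p),q}}\abs{\det(M^\phi_p+s\mathcal{L}_p)}\,ds$ from the proof of Theorem~\ref{s5-t1} then gives the claim.

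To prove the displayed bound I would pick $k_1<k_2<\cdots$ and $\alpha_{k_j}\in\cH^q_{b,\leqslant k_j\nu_{k_j}}(X,L^{k_j})$, $\norm{\alpha_{k_j}}=1$, realizing the left-hand side, and set $\beta_{k_j}=k_j^{-n/2}F^*_{k_j}(e^{-k_jR}\alpha_{k_j})$ as in the proof of Theorem~\ref{s3-t1}, so that $\norm{\beta_{k_j}}_{k_jF^*_{k_j}\phi_0,D_{\log k_j}}\leqslant1$ and $k_j^{-n}\abs{\alpha_{k_j,J}(0)}^2=\abs{\beta_{k_j,J}(0)}^2$. The new ingredient is purely spectral: since $\alpha_{k_j}$ lies in the span of eigenforms of $\Box^{(q)}_{b,k_j}$ with eigenvalue $\leqslant k_j\nu_{k_j}$, we have $\norm{(\Box^{(q)}_{b,k_j})^m\alpha_{k_j}}\leqslant(k_j\nu_{k_j})^m$ for all $m\in\mathbb N_0$, hence, by the same scaling identity used in \eqref{s6-e20},
\[
\big\|(\Box^{(q)}_{s,(k_j)})^m\beta_{k_j}\big\|_{k_jF^*_{k_j}\phi_0,D_{\log k_j}}\leqslant\nu_{k_j}^m\To0\quad\text{for every }m\geqslant1.
\]
From here I would run the proof of Proposition~\ref{s3-p3} essentially verbatim (with $\beta_{k_j}$ extended by zero to $H_n$), the only modification being that the subelliptic estimate \eqref{s3-e12}, with the uniform-in-$k$ constant of Proposition~\ref{s3-p2} and Remark~\ref{s3-r1}, has to be iterated: applying it to $\beta_{k_j}$ and then repeatedly to $\Box^{(q)}_{s,(k_j)}\beta_{k_j}$, $(\Box^{(q)}_{s,(k_j)})^2\beta_{k_j},\dots$ reduces, after finitely many steps, to the $L^2$-norms of $(\Box^{(q)}_{s,(k_j)})^m\beta_{k_j}$, $m=0,1,\dots$, which are uniformly bounded. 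This yields uniform bounds on $\norm{\beta_{k_j}}_{k_jF^*_{k_j}\phi_0,s,D_r}$ for every $s$ and $r$, and, applied to $\Box^{(q)}_{s,(k_j)}\beta_{k_j}$, shows $\Box^{(q)}_{s,(k_j)}\beta_{k_j}\To0$ uniformly with all derivatives on every compact subset of $H_n$.

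Rellich's theorem, the Sobolev embedding theorem and a diagonal argument over an exhaustion of $H_n$ then produce a subsequence along which every component $\beta_{k_j,J}$ converges, with all derivatives on compacta, to a smooth $\beta_J$; since the coefficients of $\Box^{(q)}_{s,(k_j)}$ converge with all derivatives to those of $\Box^{(q)}_{b,H_n}$ (Proposition~\ref{s3-p1}) and $\Box^{(q)}_{s,(k_j)}\beta_{k_j}\To0$, passing to the limit gives $\Box^{(q)}_{b,H_n}\beta=0$ for $\beta:=\sum'_{\abs{J}=q}\beta_Jd\ol z^J$; by \eqref{s3-e17-1} and Fatou's lemma $\norm{\beta}_{\psi_0}\leqslant1$, so $\abs{\beta_J(0)}^2\leqslant S^{(q)}_{J,H_n}(0)$, and since $k_j^{-n}\abs{\alpha_{k_j,J}(0)}^2=\abs{\beta_{k_j,J}(0)}^2\To\abs{\beta_J(0)}^2$ the displayed bound follows. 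The one point requiring genuine care is this last step: because $\beta$ is a limit of only approximately harmonic forms, one must upgrade $\Box^{(q)}_{s,(k)}\beta_k\To0$ from an $L^2$ statement to convergence with all derivatives on compact sets, which is exactly what the uniform subelliptic estimate together with the decay $\|(\Box^{(q)}_{s,(k)})^m\beta_k\|\To0$ for all $m$ makes possible; everything else is a repetition of the arguments already carried out in \S2--\S4.
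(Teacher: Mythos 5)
Your proposal is correct and follows essentially the same route as the paper: the paper's proof also scales a unit-norm form from $\cH^q_{b,\leqslant k\nu_k}(X,L^k)$, uses the iterated subelliptic estimate \eqref{s3-e12} together with the spectral bound $\|(\Box^{(q)}_{s,(k)})^mF^*_k\beta_k\|^2_{kF^*_k\phi_0,D_{2r}}\leqslant\nu_k^m\To0$, and then observes that the conclusion of Proposition~\ref{s3-p3} remains valid so that the arguments of Theorems~\ref{s3-t2} and \ref{s5-t1} go through word by word. Your extra care in upgrading $\Box^{(q)}_{s,(k)}\beta_k\To0$ to local smooth convergence is a legitimate filling-in of the paper's terse ``the rest of the argument goes through'' step, not a deviation.
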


\begin{proof}
The proof is a simple modification of the proof of Theorem~\ref{s5-t1} and in what follows these modifications will be presented.
Let $\alpha_k\in\cH^q_{b,\leqslant  k\nu_k}(X, L^k)$ with $\norm{\alpha_k}=1$. On $D$, we write
$\alpha_k=s^kk^{\frac{n}{2}}e^{kR}\beta_k$,
$\beta_k\in\Omega^{0,q}(D)$. From \eqref{s3-e12} and using induction, we get
\begin{equation} \label{s6-e22}
%\begin{split}
\norm{F^*_k\beta_k}^2_{kF^*_k\phi_0,s+1,D_r}
\leqslant  C_{r,s}\Bigr(\norm{F^*_k\beta_k}^2_{kF^*_k\phi_0,D_{2r}}+\sum^{s+1}_{m=1}
\norm{(\Box^{(q)}_{s,(k)})^mF^*_k\beta_k}^2_{kF^*_k\phi_0,D_{2r}}\,\Bigr).
%\end{split}
\end{equation}
We can check that
$\big\|(\Box^{(q)}_{s,(k)})^mF^*_k\beta_k\big\|^2_{kF^*_k\phi_0,D_{2r}}\leqslant \big\|(\frac{1}{k}\Box^{(q)}_{b,k})^m\alpha_k\big\|
\leqslant \nu_k^m\To0$.
Thus, the conclusion of Proposition~\ref{s3-p3} is still valid and the rest of the argument goes through word by word.
\end{proof}
\begin{proof}[Proof of Theorems \ref{t-main3}, \ref{t-main4} and \ref{t-main5}]
We can repeat the proof of Theorem~\ref{s3-t1} and conclude that
for any sequence $(\nu_k)$ with $\nu_k\To0$, as $k\To\infty$,
there is a constant $C_0$ independent of $k$, such that
$k^{-n}\pit^{(q)}_{k,\,\leqslant  k\nu_k}(x_0)\leqslant  C_0$ for all $x_0\in X$.
From this, Proposition~\ref{s6-p1} and Proposition~\ref{s6-p2} and the fact that the sequence $(\mu_k)$ in \eqref{s6-e12} is independent of the point $p$, we get Theorem \ref{t-main3}. By integrating Theorem \ref{t-main3} we obtain Theorem \ref{t-main4}.
By applying the algebraic Morse inequalities \cite[Lemma\,3.2.12]{MM07} to the $\ddbar_{b,k}$-complex \eqref{s1-e7}
we deduce in view of Theorem \ref{t-main4} the strong Morse inequalities of Theorem \ref{t-main5}.
\end{proof}

\section{Examples}

In this section, some examples are collected. The aim is to illustrate the main results in some simple situations. First, we state
our main results in the embedded case.

\subsection{The main results in the embedded cases}

Let $M$ be a relatively compact open subset with $C^\infty$ boundary $X$ of a
complex manifold $M'$ of dimension $n$ with a smooth Hermitian metric $\langle\,\cdot\,,\cdot\,\rangle$. Furthermore, let $(L,h^L)$ be a Hermitian holomorphic
line bundle over $M'$ and let $\phi$ be a local weight of the metric $h^L$, i.\ e.\ for a local trivializing
section $s$ of $L$ on an open subset $D\subset M'$, $\abs{s(x)}_{h^L}^2=e^{-\phi}$. If we restrict $L$ on the boundary $X$, then $L$
is a CR line bundle over the CR manifold $X$. For $p\in X$, let $M^\phi_p$ be as in Definition~\ref{s1-d3}.

\begin{prop} \label{s7-p1}
For $U, V\in T^{1, 0}_pX$, we have
$M^\phi_p(U, \ol V)=\big\langle\pr\ddbar\phi(p), U\wedge\ol V\big\rangle$.
\end{prop}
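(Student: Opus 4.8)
The plan is to compute both sides in a local holomorphic frame near $p$ and match them. First I would choose local holomorphic coordinates $(w_1,\dots,w_n)$ on $M'$ centered at $p$ so that $X=\{r=0\}$ for a smooth real defining function $r$, and pick a local holomorphic trivialization of $L$ with weight $\phi\in C^\infty$. The key observation is the relation between the ambient operators $\partial,\ddbar$ and the tangential operators $\partial_b,\ddbar_b$: for a function $\phi$, the form $\ddbar_b\phi$ is (up to the choice of the complement) the restriction of $\ddbar\phi$ to $\Lambda^{0,1}T^*(X)$, and similarly $\partial_b\phi$ is the restriction of $\partial\phi$. More precisely, on the hypersurface one has $\ddbar\phi = \ddbar_b\phi + (\ddbar_b\phi\text{-component along }\ddbar r)$, and the "extra'' pieces involve $\ddbar r$. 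The point is that when we pair $d(\ddbar_b\phi-\partial_b\phi)$ with $U\wedge\ol V$ for $U,V\in\Lambda^{1,0}T_p(X)$, the correction terms either vanish because $U,V$ are tangential (so $\langle U,\partial r\rangle=\langle\ol V,\ddbar r\rangle=0$ at points of $X$, since $\Lambda^{1,0}T(X)=\ker\partial r$ there) or they drop out when we take the antisymmetrization implicit in $d$.

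The cleanest route is via Lemma~\ref{s5-l1}: for $U,V\in\Lambda^{1,0}T_p(X)$, choosing extensions $\mU,\mV\in C^\infty(X;\Lambda^{1,0}T(X))$,
\[
M^\phi_p(U,\ol V)=-\tfrac12\big\langle[\,\mU,\ol{\mV}\,](p),\ddbar_b\phi(p)-\partial_b\phi(p)\big\rangle+\tfrac12(\mU\ol{\mV}+\ol{\mV}\mU)\phi(p).
\]
On the other hand, the ambient Chern curvature satisfies the analogous formula with ambient quantities: if $\widetilde{\mU},\widetilde{\mV}$ are $(1,0)$ vector fields on $M'$ extending $U,V$, then
\[
\langle\partial\ddbar\phi(p),U\wedge\ol V\rangle=-\tfrac12\big\langle[\,\widetilde{\mU},\ol{\widetilde{\mV}}\,](p),\ddbar\phi(p)-\partial\phi(p)\big\rangle+\tfrac12(\widetilde{\mU}\ol{\widetilde{\mV}}+\ol{\widetilde{\mV}}\widetilde{\mU})\phi(p),
\]
which is just the standard identity $\langle V_1\wedge V_2,dg\rangle = V_1\langle V_2,g\rangle - V_2\langle V_1,g\rangle - \langle[V_1,V_2],g\rangle$ applied with $g=\ddbar\phi-\partial\phi$ (noting $\langle\widetilde{\mU},\ddbar\phi-\partial\phi\rangle = -\widetilde{\mU}\phi$ and $\langle\ol{\widetilde{\mV}},\ddbar\phi-\partial\phi\rangle = \ol{\widetilde{\mV}}\phi$, exactly as in the proof of Lemma~\ref{s5-l1}). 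Now take $\widetilde{\mU},\widetilde{\mV}$ to be extensions of the \emph{tangential} fields $\mU,\mV$; then along $X$ one has $\widetilde{\mU}\phi=\mU\phi$, $\ol{\widetilde{\mV}}\phi=\ol{\mV}\phi$ and the commutator $[\widetilde{\mU},\ol{\widetilde{\mV}}]$ agrees with $[\mU,\ol{\mV}]$ along $X$ since both derivatives are taken along $X$. The only subtlety is that $\ddbar_b\phi-\partial_b\phi$ is the restriction of $\ddbar\phi-\partial\phi$ to the span of $\Lambda^{1,0}T^*(X)\oplus\Lambda^{0,1}T^*(X)$, so the pairing $\langle[\mU,\ol{\mV}](p),\ddbar_b\phi-\partial_b\phi\rangle$ differs from $\langle[\widetilde{\mU},\ol{\widetilde{\mV}}](p),\ddbar\phi-\partial\phi\rangle$ by the $\omega_0$-component (equivalently the $\partial r,\ddbar r$ component) of $[\mU,\ol{\mV}]$ paired against the $\omega_0$-component of $\ddbar\phi-\partial\phi$.

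Therefore the main step is to show this discrepancy cancels: the $T$-component of $[\mU,\ol{\mV}](p)$ is $-2i\,\mathcal{L}_p(U,\ol V)$ (this was used in the proof of Proposition~\ref{s5-p1}), while the $\omega_0$-component of $\ddbar\phi-\partial\phi$ at $p$ is purely imaginary, and one checks it contributes a term proportional to $\mathcal{L}_p(U,\ol V)$ that is exactly matched — or, more carefully, one observes that $d(\ddbar_b\phi-\partial_b\phi)$ already encodes the right quantity because the definition of $M^\phi_p$ only uses the exterior derivative of the \emph{tangential} form, so the correction must be re-expressed using $dr$. I expect the bookkeeping of this $\omega_0$-component — precisely, verifying that the terms involving $\partial r$, $\ddbar r$, $dr$ assemble to zero on $\Lambda^{1,0}T_p(X)\times\Lambda^{0,1}T_p(X)$ because $U,\ol V$ annihilate $\partial r$ and $\ddbar r$ respectively at $p$ — to be the one genuinely delicate point; everything else is a direct comparison of the two Lemma~\ref{s5-l1}-type formulas. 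Once the discrepancy is shown to vanish, the two right-hand sides coincide and the proposition follows. I would also remark that, since the right-hand side $\langle\partial\ddbar\phi(p),U\wedge\ol V\rangle$ is manifestly independent of the (holomorphic) trivialization — holomorphic transition functions $g$ satisfy $\partial\ddbar\log|g|^2=0$ — this recovers, in the embedded case, the independence statement of Proposition~\ref{s5-p1} via the fact that for holomorphic $g$ one has $Tg/g - T\ol g/\ol g$ restricted appropriately, consistent with $\partial_b\ddbar_b\log|g|^2$ being controlled by the Levi form.
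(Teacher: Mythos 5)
You have set up the comparison exactly the way the paper does: Lemma~\ref{s5-l1} on the tangential side, the Cartan identity \eqref{s5-e3} with $g=\ddbar\phi-\pr\phi$ on the ambient side, and extensions tangent to $X$ so that the second-order terms and the commutators agree at $p$. You have also located the crux correctly: after this reduction the two sides differ only by the pairing of the transversal ($\omega_0$-) component of $[\mU,\ol{\mV}](p)$ against the $\omega_0$-component of $\ddbar\phi-\pr\phi$. But this is precisely where your argument stops: you assert that the discrepancy ``cancels'' or ``is exactly matched'', and neither of the reasons you offer applies to it. The terms you propose to kill via $\langle U,\pr r\rangle=\langle\ol V,\ddbar r\rangle=0$ are not the issue; the issue is the commutator itself, whose transversal component equals $-2i\mathcal{L}_p(U,\ol V)$ (nonzero exactly because the Levi form is nondegenerate), paired with $\langle T,\ddbar\phi-\pr\phi\rangle=\ol{T^{1,0}}\phi-T^{1,0}\phi$, where $T^{1,0}$ is the $(1,0)$-part of $T$. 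Nothing on the other side matches this term: carrying out the bookkeeping gives, up to sign conventions, $M^\phi_p(U,\ol V)=\langle\pr\ddbar\phi(p),U\wedge\ol V\rangle-2\operatorname{Im}\big(T^{1,0}\phi\big)(p)\,\mathcal{L}_p(U,\ol V)$, and the extra term does not vanish for a general weight. Concretely, for $X=\{\operatorname{Im}w_2=\abs{w_1}^2\}\subset\Complex^2$, the trivial bundle with weight $\phi=\operatorname{Im}w_2$ and $\mU=\mV=\pr_{w_1}+2i\ol w_1\pr_{w_2}$ (so $\mU r\equiv 0$), one has $\pr\ddbar\phi=0$ while Lemma~\ref{s5-l1} gives $M^\phi_0(\mU,\ol{\mU})=1$, since $(\mU\ol{\mU}+\ol{\mU}\mU)\phi(0)=2$ and $[\mU,\ol{\mU}](0)$ is transversal. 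So the cancellation you hope to verify is false in the generality you set up; what can be proved is the identity modulo real multiples of $\mathcal{L}_p$ (consistent with the trivialization-dependence in Proposition~\ref{s5-p1}, and sufficient for everything the paper uses, because $\Real_{\phi(p),q}$ and $\int_{\Real_{\phi(p),q}}\abs{\det(M^\phi_p+s\mathcal{L}_p)}ds$ are unchanged under $M^\phi_p\mapsto M^\phi_p+\tau\mathcal{L}_p$, $\tau\in\Real$), or the stated identity after normalizing the local holomorphic trivialization so that $T^{1,0}\phi(p)$ is real.

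For comparison, the paper's proof runs the same two Cartan-type computations but chooses the ambient extensions so that $\mU(r)=\mV(r)=0$ identically near $p$; this makes $(\mU\ol{\mV}+\ol{\mV}\mU)\phi$ restrict to $X$ and makes $[\mU,\ol{\mV}]$ tangent to $X$, and the proof then identifies $\langle[\mU,\ol{\mV}],\ddbar\phi-\pr\phi\rangle(p)$ with $\langle[\mU|_X,\ol{\mV}|_X],\ddbar_b\phi|_X-\pr_b\phi|_X\rangle(p)$ in a single step (leading to \eqref{s7-e4}). That step is exactly the transversal comparison you flagged, and tangency of the extensions does not dispose of it, since $\ddbar_b\phi-\pr_b\phi$ annihilates $T$ while $\ddbar\phi-\pr\phi$ does not (in the example above the two pairings are $2$ and $0$). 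So your instinct about where the delicate point lies is right and matches the paper's route, but as written your proposal does not close it, and ``showing the discrepancy vanishes'' is not the way to close it: you must either absorb the extra multiple of $\mathcal{L}_p$ into the $s\mathcal{L}_p$-ambiguity already built into \eqref{s1-e15}, or first adjust the trivialization at $p$ so that the transversal derivative $T^{1,0}\phi(p)$ is real, and only then conclude the pointwise identity.
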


\begin{proof}
Let $r\in C^\infty(X;\ \Real)$ be a defining function of $X$. For $U, V\in T^{1, 0}_pX$, pick $\mU, \mV\in C^\infty(M';\, T^{1, 0}M')$ that satisfy $\mU(p)=U$,
$\mV(p)=V$ and $\mU(r)=\mV(r)=0$ in a neighborhood of $p$ in $M'$. From \eqref{s5-e3}, we have
\begin{equation} \label{s7-e2}
\begin{split}
2\big\langle\mU\wedge\ol{\mV}, \pr\ddbar\phi\big\rangle
&=\big\langle\mU\wedge\ol{\mV}, d(\ddbar\phi-\pr\phi)\big\rangle\\
&=\mU\big(\big\langle\ol{\mV},\ddbar\phi-\pr\phi\big\rangle\big)-\ol{\mV}\big(\big\langle\mU,\ddbar\phi-\pr\phi\big\rangle\big)-\big\langle\big[\mU, \ol{\mV}\,\big], \ddbar\phi-\pr\phi\big\rangle.
\end{split}
\end{equation}
Note that
$\langle\ol{\mV}, \ddbar\phi-\pr\phi\rangle=\langle\ol{\mV},\ddbar\phi\rangle=\ol{\mV}\phi$
and $\langle\mU, \ddbar\phi-\pr\phi\rangle=\langle\mU, -\pr\phi\rangle=-\mU\phi$.
From this observation, \eqref{s7-e2} becomes
\begin{equation} \label{s7-e3}
2\big\langle\mU\wedge\ol{\mV}, \pr\ddbar\phi\big\rangle=\big(\mU\ol{\mV}+\ol{\mV}\mU\big)\phi-\big\langle\big[\mU, \ol{\mV}\,\big], \ddbar\phi-\pr\phi\big\rangle.
\end{equation}
Since $\mU(r)=\mV(r)=0$ in a neighborhood of $p$ in $M'$, we have
\[\big(\mU\ol{\mV}+\ol{\mV}\mU\big)\phi(p)=\big(\mU|_X\ol{\mV}|_X+\ol{\mV}|_X\mU|_X\big)\phi|_X(p)\] and
\[\big\langle\big[\mU, \ol{\mV}\,\big], \ddbar\phi-\pr\phi\big\rangle(p)=
\big\langle\big[\mU|_X, \ol{\mV}|_X\big], \ddbar_b\phi|_X-\pr_b\phi|_X\big\rangle(p),\]
where $\mU|_X$ is the restriction to $X$ of $\mU$ and similarly for $\ol{\mV}$ and $\phi$.
From this observation and \eqref{s7-e3}, we conclude that
\begin{equation} \label{s7-e4}
2\big\langle\mU\wedge\ol{\mV}, \pr\ddbar\phi\big\rangle(p)=\big(\mU|_X\ol{\mV}|_X+\ol{\mV}|_X\mU|_X\big)\phi|_X(p)-\big\langle\big[\mU|_X, \ol{\mV}|_X\big], \ddbar_b\phi|_X-\pr_b\phi|_X\big\rangle(p).
\end{equation}
From \eqref{s7-e4} and Lemma~\ref{s5-l1}, the proposition follows.
\end{proof}

We denote by  $R^L_X$ the restriction of $R^L$ to $T^{1,0}X$.
As before, let $\mathcal{L}_p$ be the Levi form of $X$ at $p\in X$. We define the set $\Real_{\phi(p),\,q}$ as in
\eqref{s1-e15}. Set
\begin{equation}
I^q(X,L):=\int_X\int_{\Real_{\phi(x),q}}\!\abs{\det\bigl(R^L_X+s\mathcal{L}_x\bigr)}ds\,dv_X(x)\,.
\end{equation}
%For $p\in X$, put
%\begin{equation} \label{s7-e5}
%\begin{split}
%\Real_{\phi(p),\,q}=&\{s\in\Real;\, \mbox{$\pr\ddbar\phi(p)|_{T^{0, 1}X}+s\mathcal{L}_p$ has exactly $q$ negative eigenvalues} \\
%&\quad\mbox{and $n-1-q$ positive eigenvalues}\},
%\end{split}
%\end{equation}
%where the eigenvalues of $\pr\ddbar\phi(p)|_{T^{0, 1}X}+s\mathcal{L}_p$ is the eigenvalues of the Hermitian quadratic form $\pr\ddbar\phi(p)|_{T^{0, 1}X}+s\mathcal{L}_p$ with respect to $(\ |\ )$.

Now, we can reformulate Theorem~\ref{t-main2} and Theorem~\ref{t-main5}:
\begin{thm} \label{s7-t1}
If condition $Y(q)$ holds, then
\begin{equation}\label{s7-e41}
\dim H^q_b(X, L^k)
\leqslant\frac{k^n}{2(2\pi)^{n}} I^q(X,L)+o(k^n),
\end{equation}
If condition $Y(j)$ holds, for all $j=0,1,\ldots,q$, then
\begin{equation}\label{s7-e5}
\sum^q_{j=0}(-1)^{q-j}{\rm dim\,}H^j_b(X, L^k)
\leqslant\frac{k^n}{2(2\pi)^{n}}\sum^q_{j=0}(-1)^{q-j}I^j(X,L)+o(k^n).
\end{equation}
If condition $Y(j)$ holds, for all $j=q,q+1,\ldots,n-1$, then
\begin{equation*}
\sum^{n-1}_{j=q}(-1)^{q-j}{\rm dim\,}H^j_b(X, L^k)
\leqslant\frac{k^n}{2(2\pi)^{n}}\sum^{n-1}_{j=q}(-1)^{q-j}I^j(X,L)+o(k^n).
\end{equation*}
\end{thm}
%
%
%\begin{thm} \label{s7-t1}
%We have
%\begin{equation*}
%\begin{split}
%&\dim H^q_b(X, L^k)\\
%&\quad\leqslant  k^n(2\pi)^{-n}\int_X\Bigr(\int_{\Real_{\phi(x),q}}\!\abs{\det\bigr(\pr\ddbar\phi(x)|_{T^{0, 1}X}+s\mathcal{L}_x\bigr)}ds\Bigr)dv_X(x)+o(k^n),
%\end{split}
%\end{equation*}
%where $dv_X(x)$ is the induced volume on $X$ and $\det\bigr(\pr\ddbar\phi(x)|_{T^{0, 1}X}+s\mathcal{L}_x\bigr)$
%is the product of all the eigenvalues of $\pr\ddbar\phi(x)|_{T^{0, 1}X}+s\mathcal{L}_x$.
%\end{thm}
%
%\begin{thm} \label{s7-t2}
%If $Y(j)$ holds, for all $j=0,1,\ldots,q$, then
%\begin{equation*}
%\begin{split}
%&\sum^q_{j=0}(-1)^{q-j}{\rm dim\,}H^j_b(X, L^k)\\
%&\leqslant  k^n(2\pi)^{-n}\sum^q_{j=0}(-1)^{q-j}\!\int_X\!\Bigr(\!\int_{\Real_{\phi(x),j}}\abs{\det\bigr(\pr\ddbar\phi(x)|_{T^{0, 1}X}+s\mathcal{L}_x\!\bigr)}ds\Bigr)dv_X(x)+o(k^n).
%\end{split}
%\end{equation*}
%
%If $Y(j)$ holds, for all $j=q,q+1,\ldots,n-1$, then
%\begin{equation*}
%\begin{split}
%&\sum^{n-1}_{j=q}(-1)^{q-j}{\rm dim\,}H^j_b(X, L^k)\\
%&\leqslant  k^n(2\pi)^{-n}\sum^{n-1}_{j=q}(-1)^{q-j}\!\int_X\!\Bigr(\!\int_{\Real_{\phi(x),j}}\abs{\det\bigr(\pr\ddbar\phi(x)|_{T^{0, 1}X}+s\mathcal{L}_x\bigr)}ds\!\Bigr)dv_X(x)+o(k^n).
%\end{split}
%\end{equation*}
%\end{thm}

\begin{proof}[Proof of Theorem \ref{th-app1}]
Since $L$ is positive, $R^L_X+s\mathcal{L}_x$ is positive if $\abs{s}$ small, so $\abs{\Real_{\phi(p),0}}>0$ for all
$p\in X$. By the hypothesis of Theorem \ref{th-app1} we have $\lambda_{n_--1}=\lambda_{n_-}<0<\lambda_{n_-+1}=\lambda_{n_-+2}$ at each point of $X$. This implies, that if $R^L_X+s\mathcal{L}_x$ cannot have exactly one negative eigenvalue at any point of $X$ (note that $s$ takes both negative and positive values). Thus, $\Real_{\phi(p),1}=\emptyset$ for all $p\in X$. Hence,
the strong Morse inequalities \eqref{s7-e5} for $q=1$ imply the conclusion.
\end{proof}

\noindent
\begin{proof}[Proof of Theorem \ref{th-strip}]
Note that $X$ and $L$ satisfy the conditions of Theorem \ref{th-app}.
%is a CR manifold whose Levi form has at least two negative
%and two positive eigenvalues.
Hence there exists $c>0$ such that $\dim H^0_b(X,L^k)\geqslant ck^n$,
for $k$ sufficiently large.

On the other hand, every CR function on $X$ extends
locally to a holomorphic function in a small open set of $M$. For $b<c$,
set $M_b^c=\{b<\rho<c\}$. Thus, there exist
$b<a<c$ such that the restriction morphism $H^0(M_b^c,E)\to H^0(X,E)$ is
an isomorphism for any holomorphic line bundle $E\to M$. Moreover, we
know by the Andreotti-Grauert isomorphism theorem \cite{AG:62} that the restriction $H^0(M,E)\to H^0(M_b^c,E)$ is
an isomorphism. Thus there exist $C,c>0$ and $k_0\in\N$ such that
\begin{equation} \label{gm3.60}
ck^n\leqslant\dim H^0(M,L^k)\leqslant Ck^n\,,\quad \text{for $k\geqslant k_0$}\,,
\end{equation}
which we write
\[
\dim H^0(M,L^k)\sim k^n\,, \quad k\to\infty\,.
\]
Now, $M$ is a $(n-2)$-concave manifold in the sense
of \cite{AG:62}, in particular Andreotti-pseudo-concave (see
\cite[Def.\,3.4.3]{MM07}).
By \eqref{gm3.60} and \eqref{gm3.6} we obtain that $\varrho_k=n$ for large $k$ and the desired conclusions follow.
\end{proof}
%---------------------------------------------------------
Theorem \ref{th-strip} is a consequence of Theorem \ref{th-app} and of the fact that CR section extend
to a $(n-2)$-convex-concave strip around the given CR manifold.
By extending the sections as far as possible we obtain the following results.
%A gobal version of Theorem \ref{th-strip} goes like follows.
%\begin{cor}\label{gr-riem}
%Let $M$ be a projective manifold, $n=\dim_\Complex M$, and let $X=\rho^{-1}(0)\subset M$ be a compact hypersurface, where $\rho\in C^\infty(M,\Real)$ satisfies $d\rho|_X\neq0$. We assume that the Levi form of $X$ is non-degenerate and has at least two negative and two positive eigenvalues.
%Let $L\to M$ be a holomorphic line bundle whose curvature form
%$R^L$ is positive on $X$ and the Levi form
%$\partial\overline\partial\rho|_X$ satisfies the assumptions of Theorem~\ref{th-app}.
%Then there exists a divisor $H\subset M\setminus X$ such that
%\[
%\dim H^0(M\setminus H,L^k)=\dim H^0(M,L^k\otimes[-H])=O(k^n)\,,\quad k\to\infty\,.
%\]
%\end{cor}
%\begin{proof}
%Let us first observe that under the given hypotheses, there exist $b<0<c$ such that $M'=\{b<\rho<c\}$
%is a $(n-2)$-convex-concave strip which fulfills the assumptions of Theorem \ref{th-strip}. By \eqref{gm3.60}, $\dim H^0(M',L^k)=O(k^n)$, $k\to\infty$.
%Since $M'$ is Andreotti-pseudoconcave, a theorem of Dingoyan \cite{Ding99} (which generalizes classical results of Barth, Chow and Rossi) shows that
%there exists a divisor $H\subset M\setminus X$ such that the restriction morphisms $H^0(M\setminus H,L^k)\to H^0(M',L^k)$ and $H^0(M,L^k\otimes[-H])\to H^0(M',L^k)$ are isomorphisms.
%\end{proof}
%---------------------------------------------------------
\begin{cor}\label{gr-riem0}
Let $M$ be a projective manifold, $n=\dim_\Complex M$, and let $X$ be a compact hypersurface, such that there exist an open set $U\subset M$ and $\rho\in C^\infty(U,\Real)$ with $X=\rho^{-1}(0)\subset U$ and $d\rho|_X\neq0$.
Let $L\to M$ be a holomorphic line bundle over $M$. We assume that the curvature form
$R^L$ of $L$ and the Levi form $\pr\ddbar\rho|_X$ satisfy the assumptions of Theorem~\ref{th-app}.
%{\rm(}in particular, the Levi form of $X$ is non-degenerate and has at least two negative and two positive
%eigenvalues{\rm)}.
Then there exist a branched covering $\pi:\widetilde{M}\to M$, a divisor $H\subset \widetilde{M}$ and an integer $d=d(L)$ such that for $\widetilde{L}:=\pi^*L$ holds:
\[
\dim H^0(\widetilde{M}\setminus H,\widetilde{L}^k)=\dim H^0(\widetilde{M},\widetilde{L}^k\otimes[kdH])\sim k^n\,,\quad k\to\infty\,.
\]
\end{cor}
%---------------------------------------------------------
\begin{proof}
Let us first observe that under the given hypotheses, there exist $b<0<c$ such that $M'=\{b<\rho<c\}$
is a $(n-2)$-convex-concave strip which fulfills the assumptions of Theorem \ref{th-strip}. By \eqref{gm3.60}, $\dim H^0(M',L^k)\sim k^n$, $k\to\infty$.
Since $M'$ is Andreotti-pseudoconcave, a theorem of Dingoyan \cite{Ding99,Ding08}
shows that there exist a branched covering $\pi:\widetilde{M}\to M$ with a section $S$ on $M$, a divisor $H\subset\widetilde{M}$ and an integer $d$ such that
holomorphic sections of $\pi^*L^k$ over $S(M')$ extends to a holomorphic section of $\pi^*L^k$ over
$\widetilde{M}\setminus H$ or of $\widetilde{L}^k\otimes[kdH]$ over $\widetilde{M}$.
Thus, the restriction morphisms $H^0(\widetilde{M}\setminus H,\widetilde{L}^k)\to H^0(S(M'),\widetilde{L}^k)$ and $H^0(\widetilde{M},\widetilde{L}^k\otimes[kdH])\to H^0(S(M'),\widetilde{L}^k)$ are isomorphisms. On the other hand,
$\dim H^0(S(M'),\widetilde{L}^k)\sim k^n$, $k\to\infty$ and the assertion follows.
\end{proof}
%---------------------------------------------------------
\begin{rem}
There are several criteria for a line bundle on a compact manifold to be big (Siu, Ji-Shiffman, Bonavero see e.\ g.\ \cite[Ch.\,2]{MM07}).
Corollary \ref{gr-riem0} asserts roughly that if a line bundle $L$ is positive along a well-chosen hypersurface
then by passing to a branched covering there exist a divisor $H$ and an integer $d$ such that $L\otimes[dH]$ is big.
\end{rem}
%---------------------------------------------------------
If one knows that $X$ has a neighbourhood $M'$, which is a $(n-2)$-convex-concave strip such that any meromorphic function on $M'$ is rational, then \cite{Ding99,Ding08} shows that there is no need to pass to a covering.
%---------------------------------------------------------
\begin{cor}\label{gr-riem1}
Assume the same hypotheses as in Corollary \ref{gr-riem0} for $M$, $X$ and $L$.
Assume moreover, that there exists a $(n-2)$-convex-concave strip $M'=\{b<\rho<c\}$ such that any meromorphic function on $M'$ is rational.
Then there exists a divisor $H\subset M$ and an integer $d$ such that
\[
\dim H^0(M\setminus H,L^k)=\dim H^0(M,L^k\otimes[kdH])\sim k^n\,,\quad k\to\infty\,.
\]
\end{cor}
%---------------------------------------------------------

\subsection{Holomorphic line bundles over a complex torus}

Let
\[T_n:=\Complex^n/(\sqrt{2\pi}\mathbb Z^n+i\sqrt{2\pi}\mathbb Z^n)\]
be the flat torus and let $L_\lambda$ be the holomorphic
line bundle over $T_n$
%determined by the constant curvature form
with curvature the $(1,1)$-form
$\Theta_\lambda=\sum^n_{j=1}\lambda_jdz_j\wedge d\ol z_j$,
where $\lambda_j$, $j=1,\ldots,n$, are given non-zero integers. More precisely, $L_\lambda:=(\Complex^n\times\Complex)/_\sim$\,, where
%is the set of equivalence classes of $\Complex^n\times\Complex$ under the equivalence relation
$(z, \theta)\sim(\Td z, \Td\theta)$ if
\[
\Td z-z=(\alpha_1,\ldots,\alpha_n)\in \sqrt{2\pi}\mathbb Z^n+i\sqrt{2\pi}\mathbb Z^n\,,\quad
\Td\theta=\textstyle\exp\big(\sum^n_{j=1}\lambda_j(z_j\ol\alpha_j+\tfrac{1}{2}\abs{\alpha_j}^2\,)\big)\theta\,.
\]
%$\Td z=z+\alpha$, $\alpha=(\alpha_1,\ldots,\alpha_n)\in\sqrt{2\pi}\mathbb Z^n+i\sqrt{2\pi}\mathbb Z^n$,
%$\Td\theta=e^{\sum^n_{j=1}\lambda_j(z_j\ol\alpha_j+\frac{1}{2}\abs{\alpha_j}^2)}\theta$.
We can check that $\sim$ is an equivalence relation and $L_\lambda$ is a holomorphic line bundle over $T_n$.
%For $(z, \theta)\in\Complex^n\times\Complex$, put $[(z, \theta)]=\{(\Td z,
%\Td\theta)\in\Complex^n\times\Complex;\, (\Td z, \Td\theta)\sim (z, \theta)\}$. Then $L_\lambda=\bigcup_{(z, \theta)\in\Complex^n\times\Complex}[(z, \theta)]$.
For $[(z, \theta)]\in L_\lambda$
%it is easy to see that the pointwise norm
we define the Hermitian metric by
\[
\big\vert[(z, \theta)]\big\vert^2:=\abs{\theta}^2\textstyle\exp(-\sum^n_{j=1}\lambda_j\abs{z_j}^2)
\]
and it is easy to see that this definition is independent of the choice of a representative $(z, \theta)$ of $[(z, \theta)]$. We denote by $\phi_\lambda(z)$ the weight of this Hermitian fiber metric. Note that $\pr\ddbar\phi_\lambda=\Theta_\lambda$.
From now on, we assume that $\lambda_j<0$, for $j=1,\ldots, n_-$ and $\lambda_j>0$, for $j=n_-+1,\ldots, n$.%$\lambda_1<0, \lambda_2<0,\ldots,\lambda_{n_-}<0$, $\lambda_{n_-+1}>0,\ldots,\lambda_n>0$.

Let $L^*_\lambda$ be the
dual bundle of $L_\lambda$ and let $\norm{\,\cdot\,}_{L^*_\lambda}$ be the norm of $L^*_\lambda$ induced by the Hermitian fiber metric on $L_\lambda$. Consider the compact CR manifold of dimension $2n+1$ $X=\{v\in L^*_\lambda;\, \norm{v}_{L^*_\lambda}=1\}$; this is the boundary of Grauert tube of $L^*_\lambda$.

Let $\pi:L^*_\lambda\To T_n$
be the natural projection from $L^*_\lambda$ onto $T_n$. Let $L_\mu$ be another holomorphic
line bundle over $T_n$ determined by the constant curvature form
$\Theta_\mu=\sum^n_{j=1}\mu_jdz_j\wedge d\ol z_j$,
where $\mu_j$, $j=1,\ldots,n$, are given non-zero integers. The pullback line bundle $\pi^*L_\mu$ is a holomorphic line bundle over $L^*_\lambda$. The Hermitian fiber metric $\phi_\mu$ on $L_\mu$ induces a Hermitian fiber metric on $\pi^*L_\mu$ that we
shall denote by $\psi$. If we restict $\pi^*L_\mu$ on $X$, then $\pi^*L_\mu$ is a CR line bundle over the CR manifold $X$.

The part of $X$ that lies over a fundamental domain of $T_n$ can be represented in local holomorphic coordinates
$(z, \xi)$, where $\xi$ is the fiber coordinates, as the set of all $(z, \xi)$ such that
$r(z, \xi):=\abs{\xi}^2\exp(\sum^n_{j=1}\lambda_j\abs{z_j}^2)-1=0$
and the fiber metric $\psi$ may be written as $\psi(z, \xi)=\sum^n_{j=1}\mu_j\abs{z_j}^2$.
We can identify $\mathcal{L}_p$ with $\frac{1}{\norm{dr(p)}}\sum^n_{j=1}\lambda_jdz_j\wedge d\ol z_j$.
%We formally write
%\[\mathcal{L}_p=\frac{1}{\norm{dr(p)}}\sum^n_{j=1}\lambda_jdz_j\wedge d\ol z_j.\]
It is easy to see that
$\pr\ddbar\psi(p)|_{T^{1, 0}X}=\sum^n_{j=1}\mu_jdz_j\wedge d\ol z_j$.
We get for all $p\in X$, $s\in\Real$,
\[
\pr\ddbar\psi(p)|_{T^{1, 0}X}+s\mathcal{L}_p=\sum^n_{j=1}\Big(\mu_j+\frac{s}{\norm{dr(p)}}\lambda_j\Big)dz_j\wedge d\ol z_j\,.
\]
Thus, if $\mu_j=\lambda_j$, $j=1,\ldots,n$, and $q\neq n_-, n-n_-$, then $\Real_{\phi(p),\,q}=\emptyset$, for all $p\in X$. From this and Theorem~\ref{s7-t1}, we obtain

\begin{thm} \label{s7-t3}
If $\mu_j=\lambda_j$, $j=1,\ldots,n$, and $q\neq n_-, n-n_-$, then
\[\dim H^q_b(X, (\pi^*L_\mu)^k)=o(k^{n+1})\,,\quad \text{as $k\to\infty$}\,.\]
\end{thm}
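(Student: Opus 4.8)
The plan is to deduce Theorem~\ref{s7-t3} directly from the weak Morse inequalities in the embedded case (Theorem~\ref{s7-t1}), once the relevant integrand has been shown to vanish. Put $N=n+1$, so that $X$ is a compact CR manifold of dimension $2N-1$ with $\Lambda^{1,0}T(X)$ of rank $n$, and $L:=\pi^*L_\mu$ is the restriction to $X$ of a holomorphic line bundle on the complex manifold $L^*_\lambda$, $\dim_\Complex L^*_\lambda=N$, with local fiber weight $\psi$. Thus we are exactly in the embedded setting of \S6.1, and Proposition~\ref{s7-p1} applies and gives $M^\psi_p=\pr\ddbar\psi(p)|_{\Lambda^{1,0}T(X)}$ for every $p\in X$. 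This is the identification that converts the abstract Hermitian form $M^\psi_p$ of Definition~\ref{s1-d3} into the explicit curvature forms computed in this subsection.

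Next I would record the linear-algebra input already assembled above: on $\Lambda^{1,0}T_p(X)$ one has $\mathcal{L}_p=\frac{1}{\norm{dr(p)}}\sum_{j=1}^n\lambda_j\,dz_j\wedge d\ol z_j$ and $M^\psi_p=\sum_{j=1}^n\mu_j\,dz_j\wedge d\ol z_j$, with $\norm{dr(p)}>0$. Specializing to $\mu_j=\lambda_j$ we obtain, for all $p\in X$ and all $s\in\Real$,
\[
M^\psi_p+s\mathcal{L}_p=\Big(1+\tfrac{s}{\norm{dr(p)}}\Big)\sum_{j=1}^n\lambda_j\,dz_j\wedge d\ol z_j\,,
\]
whose eigenvalues with respect to $(\ |\ )$ are $\big(1+\tfrac{s}{\norm{dr(p)}}\big)\lambda_j$, $j=1,\dots,n$. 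Since $\lambda_j<0$ for $j\leqslant n_-$ and $\lambda_j>0$ for $j>n_-$, the number of negative eigenvalues of $M^\psi_p+s\mathcal{L}_p$ equals $n_-$ when $1+\tfrac{s}{\norm{dr(p)}}>0$, equals $n-n_-$ when $1+\tfrac{s}{\norm{dr(p)}}<0$, and the form degenerates at $s=-\norm{dr(p)}$. In no case does it have exactly $q$ negative and $N-1-q=n-q$ positive eigenvalues when $q\neq n_-,\,n-n_-$; hence $\Real_{\psi(p),q}=\emptyset$ for all $p\in X$ in that range of $q$.

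It then remains to check that condition $Y(q)$ holds on $X$ for every such $q$, so that Theorem~\ref{s7-t1} is applicable. The Levi form has constant signature $(n-n_-,n_-)$, and reading Definition~\ref{s1-d2} with the parameter $N=n+1$ one sees that a Levi form of signature $(a,b)$, $a+b=n$, fails $Y(q)$ precisely for $q\in\{a,b\}=\{n_-,\,n-n_-\}$, while for all other $q$ one of the two alternatives (enough eigenvalues of one sign, or enough opposite-sign pairs) is satisfied. Consequently, for $q\neq n_-,\,n-n_-$ the hypotheses of Theorem~\ref{s7-t1} hold and
\[
\dim H^q_b(X,L^k)\leqslant k^{N}(2\pi)^{-N}\,I^q(X,L)+o(k^N)\,,\qquad I^q(X,L)=\int_X\int_{\Real_{\psi(x),q}}\!\abs{\det(M^\psi_x+s\mathcal{L}_x)}\,ds\,dm(x)\,.
\]
By the previous paragraph the inner integral is over the empty set, so $I^q(X,L)=0$ and $\dim H^q_b(X,L^k)=o(k^N)=o(k^{n+1})$, as claimed.

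The argument is essentially bookkeeping layered on Theorem~\ref{s7-t1}, and there is no substantial obstacle; the points needing care — and the nearest thing to a difficulty — are the dimension shift $n\mapsto N=n+1$ (which is why the leading power is $k^{n+1}$ and $\Lambda^{1,0}T(X)$ has rank $n$), and the appeal to Proposition~\ref{s7-p1}, which is what legitimizes reading off the eigenvalues of $M^\psi_p+s\mathcal{L}_p$ from the explicit forms $\pr\ddbar\psi$ and $\mathcal{L}_p$.
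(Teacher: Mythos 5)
Your proof is correct and follows essentially the same route as the paper: identify $M^\psi_p$ with $\pr\ddbar\psi|_{\Lambda^{1,0}T(X)}$ via Proposition~\ref{s7-p1}, observe that for $\mu_j=\lambda_j$ the form $M^\psi_p+s\mathcal{L}_p$ is a scalar multiple of $\sum_j\lambda_j dz_j\wedge d\ol z_j$ so that $\Real_{\psi(p),q}=\emptyset$ for $q\neq n_-,\,n-n_-$, and then apply the weak Morse inequality of Theorem~\ref{s7-t1} in complex dimension $n+1$. Your explicit check that $Y(q)$ holds exactly for $q\neq n_-,\,n-n_-$ is a detail the paper leaves implicit, and it is carried out correctly.
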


If $\mu_j=\abs{\lambda_j}$, $j=1,\ldots,n$, we can check that $\abs{\Real_{\phi(p),0}}>0$, for all $p\in X$, where $\abs{\Real_{\phi(p),0}}$ denotes the Lebesgue measure of $\Real_{\phi(p),0}$. Moreover, if $q>0$ and $q\neq n_-, n-n_-$, then $\Real_{\phi(p),\,q}=\emptyset$, for all $p\in X$. From this observation, \eqref{s7-e41} for $q=0$ and \eqref{s7-e5} for $q=1$, %strong Morse inequalities (Theorem~\ref{s7-t1})
we obtain

\begin{thm} \label{s7-t4}
If $\mu_j=\abs{\lambda_j}$, $j=1,\ldots,n$, and $Y(0)$, $Y(1)$ hold, then
%there is a positive constant $c$ and $k_0\in\N$, such that
\[\dim H^0_b(X, (\pi^*L_\mu)^k)\sim k^{n+1}\,,\quad \text{as $k\to\infty$}\,.\]
%That is, $\pi^*L_\mu$ is a big line bundle over $X$.
\end{thm}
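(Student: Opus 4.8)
The plan is to read off the spectral sets $\Real_{\phi(p),q}$ for the bundle $\pi^*L_\mu$ from the formulas already obtained in this subsection, and then to feed them into the Morse inequalities of Theorem~\ref{s7-t1}, keeping in mind that $X$ has real dimension $2n+1$, so that the exponent occurring there is $n+1$ rather than $n$. The hypotheses $Y(0)$ and $Y(1)$ are precisely what make the weak inequality in degree $0$ and the strong inequality \eqref{s7-e5} in degree $1$ applicable.

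First I would record the eigenvalue structure. Since $\mathcal{L}_p=\tfrac{1}{\norm{dr(p)}}\sum^n_{j=1}\lambda_j\,dz_j\wedge d\ol z_j$ and $R^{\pi^*L_\mu}_X=\pr\ddbar\psi|_{\Lambda^{1,0}T(X)}=\sum^n_{j=1}\mu_j\,dz_j\wedge d\ol z_j$ with $\mu_j=\abs{\lambda_j}$, the eigenvalues of $R^{\pi^*L_\mu}_X+s\mathcal{L}_p$ are $\abs{\lambda_j}+\tfrac{s}{\norm{dr(p)}}\lambda_j$, each an affine function of $s$ that vanishes at $s=-\norm{dr(p)}$ when $\lambda_j>0$ and at $s=\norm{dr(p)}$ when $\lambda_j<0$. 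Hence the number of negative eigenvalues equals $n-n_-$ for $s<-\norm{dr(p)}$, equals $0$ for $\abs{s}<\norm{dr(p)}$, and equals $n_-$ for $s>\norm{dr(p)}$; thus $\Real_{\phi(p),0}=\bigl(-\norm{dr(p)},\norm{dr(p)}\bigr)$ and $\Real_{\phi(p),q}=\emptyset$ for $q\notin\{0,n_-,n-n_-\}$. Because $Y(0)$ holds, the Levi form has eigenvalues of both signs, i.e.\ $1\leqslant n_-\leqslant n-1$; because $Y(1)$ holds one must moreover have $\min(n_-,n-n_-)\geqslant2$, so $1\notin\{0,n_-,n-n_-\}$ and $\Real_{\phi(p),1}=\emptyset$ for every $p\in X$. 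Consequently $I^1(X,\pi^*L_\mu)=0$, while $0<I^0(X,\pi^*L_\mu)<\infty$: positivity follows from $\abs{\Real_{\phi(p),0}}>0$ and positivity of the determinant there, and finiteness from the compactness of $X$, on which $\norm{dr}$ is bounded above and bounded away from $0$.

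Then I would apply Theorem~\ref{s7-t1}. The weak Morse inequality in degree $0$ (valid since $Y(0)$ holds) gives $\dim H^0_b(X,(\pi^*L_\mu)^k)\leqslant k^{n+1}(2\pi)^{-(n+1)}I^0(X,\pi^*L_\mu)+o(k^{n+1})$, which is $O(k^{n+1})$ because $I^0(X,\pi^*L_\mu)<\infty$; this already establishes the theorem. For the matching lower bound, the strong Morse inequality \eqref{s7-e5} with $q=1$ (valid since $Y(0)$ and $Y(1)$ hold), together with $I^1(X,\pi^*L_\mu)=0$ and $\dim H^1_b\geqslant0$, yields $\dim H^0_b(X,(\pi^*L_\mu)^k)\geqslant k^{n+1}(2\pi)^{-(n+1)}I^0(X,\pi^*L_\mu)+o(k^{n+1})$, so that $\dim H^0_b(X,(\pi^*L_\mu)^k)\geqslant c\,k^{n+1}$ for large $k$ with $c>0$; hence $\dim H^0_b(X,(\pi^*L_\mu)^k)$ is in fact of exact order $k^{n+1}$.

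I do not expect any genuine analytic obstacle, since all the substance is packaged in Theorem~\ref{s7-t1}. The point deserving attention is the purely combinatorial verification that $Y(1)$, combined with the non-degeneracy and mixed signature of the Levi form, forces $\min(n_-,n-n_-)\geqslant2$ — this is what pushes the critical degree $q=1$ into the range where $\Real_{\phi(p),q}$ is empty — and, relatedly, the finiteness of $I^0(X,\pi^*L_\mu)$, which rests on the general fact used throughout the paper that $\Real_{\phi(p),q}\subset[-R,R]$ for some $R>0$ whenever $Y(q)$ holds.
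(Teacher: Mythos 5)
Your argument is correct and is essentially the paper's own: the paper likewise computes $\pr\ddbar\psi|_{\Lambda^{1,0}T(X)}+s\mathcal{L}_p=\sum_j\bigl(\mu_j+\tfrac{s}{\norm{dr(p)}}\lambda_j\bigr)dz_j\wedge d\ol z_j$, observes that $\abs{\Real_{\phi(p),0}}>0$ while $\Real_{\phi(p),q}=\emptyset$ for $q>0$, $q\neq n_-,\,n-n_-$ (so $Y(0)$ and $Y(1)$ force $\Real_{\phi(p),1}=\emptyset$), and then applies Theorem~\ref{s7-t1} with the exponent $n+1$ coming from $\dim_\Complex L^*_\lambda=n+1$. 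Your extra step extracting the lower bound $\geqslant c\,k^{n+1}$ from the $q=1$ strong inequality is precisely why the paper invokes the strong rather than merely the weak Morse inequalities, so it matches the intended content of the statement.
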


\subsection{Compact Heisenberg groups: non-embedded cases} Next we consider compact analogues of the Heisenberg group $H_n$. Let $\lambda_1,\ldots,\lambda_{n-1}$ be given non-zero integers.
Let $\mathscr CH_n=(\Complex^{n-1}\times\Real)/_\sim$\,, where
$(z, \theta)\sim(\Td z, \Td\theta)$ if
\[
\textstyle
\Td z-z=\alpha\in\sqrt{2\pi}\mathbb Z^{n-1}+i\sqrt{2\pi}\mathbb Z^{n-1}\,,\quad
\Td\theta-\theta+i\sum^{n-1}_{j=1}\lambda_j(z_j\ol\alpha_j-\ol z_j\alpha_j)\in\pi\mathbb Z\,.
\]
We can check that $\sim$ is an equivalence relation
and $\mathscr CH_n$ is a compact manifold of dimension $2n-1$. The equivalence class of $(z, \theta)\in\Complex^{n-1}\times\Real$ is denoted by
$[(z, \theta)]$.
For a given point $p=[(z, \theta)]$, we define
$T^{1, 0}_p\mathscr CH_n$ to be the space spanned by
\[
\textstyle
\big\{\frac{\pr}{\pr z_j}-i\lambda_j\ol z_j\frac{\pr}{\pr\theta},\ \ j=1,\ldots,n-1\big\}.
\]
It is easy to see that the definition above is independent of the choice of a representative $(z, \theta)$ for $[(z, \theta)]$.
Moreover, we can check that $T^{1, 0}\mathscr CH_n$ is a CR structure. Thus, $(\mathscr CH_n, T^{1, 0}\mathscr CH_n)$ is a compact CR manifold of dimension $2n-1$. We take a Hermitian metric $\langle\,\cdot\,,\cdot\,\rangle$ on the complexified tangent bundle $\Complex T\mathscr CH_n$ such that
\[
\Big\lbrace
\tfrac{\pr}{\pr z_j}-i\lambda_j\ol z_j\tfrac{\pr}{\pr\theta}\,, \tfrac{\pr}{\pr\ol z_j}+i\lambda_jz_j\tfrac{\pr}{\pr\theta}\,, \tfrac{\pr}{\pr\theta}\,;\, j=1,\ldots,n-1\Big\rbrace
\]
 is an orthonormal basis. The dual basis of the complexified cotangent bundle is
\[
\Big\lbrace
dz_j\,,\, d\ol z_j\,,\, \omega_0:=d\theta+\textstyle\sum^{n-1}_{j=1}(i\lambda_j\ol z_jdz_j-i\lambda_jz_jd\ol z_j); j=1,\ldots,n-1
\Big\rbrace\,.
\]

The Levi form $\mathcal{L}_p$ of $\mathscr CH_n$ at $p\in\mathscr CH_n$ is given by
$\mathcal{L}_p=\sum^{n-1}_{j=1}\lambda_jdz_j\wedge d\ol z_j$.
From now on, we assume that $\lambda_1<0,\ldots,\lambda_{n_-}<0, \lambda_{n_-+1}>0,\ldots,\lambda_{n-1}>0$. Thus, the Levi form
has constant signature $(n_-, n-1-n_-)$.

Now, we construct a CR line bundle over $\mathscr CH_n$. Let $L=(\Complex^{n-1}\times\Real\times\Complex)/_\equiv$ where $(z,\theta,\eta)\equiv(\Td z, \Td\theta, \Td\eta)$ if
\[
(z,\theta)\sim(\Td z, \Td\theta)\,,\quad
\Td\eta=\eta \exp(\textstyle\sum^{n-1}_{j=1}\mu_j(z_j\ol\alpha_j+\frac{1}{2}\abs{\alpha_j}^2))\,,\quad\text{for $\alpha=\Td z-z$}\,.
\]
where $\mu_1,\ldots,\mu_{n-1}$, are given non-zero integers. We can check that $\equiv$ is an equivalence relation and
$L$ is a CR line bundle over $\mathscr CH_n$. For $(z, \theta, \eta)\in\Complex^{n-1}\times\Real\times\Complex$ we denote
$[(z, \theta, \eta)]$ its equivalence class.
It is easy to see that the pointwise norm
\[
\big\lvert[(z, \theta, \eta)]\big\rvert^2:=\abs{\eta}^2\exp\big(-\textstyle\sum^{n-1}_{j=1}\mu_j\abs{z_j}^2\big)
\]
is well-defined. In local coordinates $(z, \theta, \eta)$, the weight function of this metric is
$\phi=\sum^{n-1}_{j=1}\mu_j\abs{z_j}^2$. Note that
\[
\textstyle\ddbar_b=\sum^{n-1}_{j=1}d\ol z_j\wedge(\frac{\pr}{\pr\ol z_j}+i\lambda_jz_j\frac{\pr}{\pr\theta})\,,\quad
\pr_b=\sum^{n-1}_{j=1}dz_j\wedge(\frac{\pr}{\pr z_j}-i\lambda_j\ol z_j\frac{\pr}{\pr\theta}).
\]
Thus
$d(\ddbar_b\phi-\pr_b\phi)=2\sum^{n-1}_{j=1}\mu_jdz_j\wedge d\ol z_j$ and $M^\phi_p=\sum^{n-1}_{j=1}\mu_j dz_j\wedge d\ol z_j$.
Hence
\[
\textstyle
M^\phi_p+s\mathcal{L}_p=\sum^n_{j=1}(\mu_j+s\lambda_j)dz_j\wedge d\ol z_j\,,\quad\text{for all $p\in\mathscr CH_n$, $s\in\Real$}.
\]
Thus, if $\mu_j=\lambda_j$, for all $j$, and $q\neq n_-, n-1-n_-$, then $\Real_{\phi(p),\,q}=\emptyset$, for all $p\in X$. From this and Theorem~\ref{t-main2}, we obtain

\begin{thm} \label{s7-t5}
If $\mu_j=\lambda_j$, $j=1,\ldots,n-1$, and $q\neq n_-, n-1-n_-$, then
\[\dim H^q_b(\mathscr CH_n, L^k)=o(k^n)\,,\quad \text{as $k\to\infty$}\,.\]
\end{thm}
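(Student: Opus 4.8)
The plan is to obtain Theorem~\ref{s7-t5} as an immediate application of the weak Morse inequalities of Theorem~\ref{t-main2}; essentially all the analytic work has already been done there, and what remains is to check the hypotheses and to see that the leading term of the bound vanishes for the relevant $q$.

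First I would record the local model data already assembled in this subsection: on $\mathscr CH_n$ the Levi form is $\mathcal{L}_p=\sum_{j=1}^{n-1}\lambda_j\,dz_j\wedge d\ol z_j$, of constant signature $(n_-,\,n-1-n_-)$, and for the weight $\phi=\sum_{j=1}^{n-1}\mu_j\abs{z_j}^2$ one has $M^\phi_p=\sum_{j=1}^{n-1}\mu_j\,dz_j\wedge d\ol z_j$. Substituting $\mu_j=\lambda_j$ gives $M^\phi_p+s\mathcal{L}_p=(1+s)\sum_{j=1}^{n-1}\lambda_j\,dz_j\wedge d\ol z_j$ for every $p$ and every $s\in\Real$. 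Counting eigenvalues: for $s>-1$ this form has exactly $n_-$ negative and $n-1-n_-$ positive eigenvalues, for $s<-1$ the signs are interchanged, and for $s=-1$ it vanishes identically. Hence, up to the single point $s=-1$, the set $\Real_{\phi(p),q}$ equals $(-1,\infty)$ when $q=n_-$, equals $(-\infty,-1)$ when $q=n-1-n_-$, and is empty otherwise; in particular $\Real_{\phi(x),q}=\emptyset$ for every $x\in\mathscr CH_n$ whenever $q\notin\{n_-,\,n-1-n_-\}$.

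Next I would verify condition $Y(q)$. Since the Levi form has constant signature $(n_-,\,n-1-n_-)$, a direct count in Definition~\ref{s1-d2} shows that $Y(q)$ holds at every point precisely when $q\neq n_-$ and $q\neq n-1-n_-$: for such $q$ one of the two alternatives $\max(q+1,n-q)\leqslant\max(n_-,n-1-n_-)$ or $\min(q+1,n-q)\leqslant\min(n_-,n-1-n_-)$ is satisfied, whereas for $q=n_-$ or $q=n-1-n_-$ both fail (in each case the required count exceeds the available one by exactly one). Thus the hypothesis $q\neq n_-,\,n-1-n_-$ of Theorem~\ref{s7-t5} already guarantees $Y(q)$, so Theorem~\ref{t-main2} applies, and feeding $\Real_{\phi(x),q}=\emptyset$ into the estimate \eqref{s1-e191} kills the $k^n$-term and leaves $\dim H^q_b(\mathscr CH_n,L^k)=o(k^n)$.

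I do not anticipate a genuine obstacle here: the substance is Theorems~\ref{t-main1}--\ref{t-main2}, and everything above is the linear-algebra bookkeeping of signs of $(1+s)\sum\lambda_j\,dz_j\wedge d\ol z_j$ together with the standard characterisation of $Y(q)$ in terms of the signature. The only point that deserves to be stated explicitly is that the degenerate value $s=-1$ constitutes a set of Lebesgue measure zero, so it may be discarded both when describing $\Real_{\phi(x),q}$ and when integrating over it.
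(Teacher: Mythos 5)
Your proposal is correct and follows essentially the same route as the paper: compute $M^\phi_p+s\mathcal{L}_p=(1+s)\sum_{j=1}^{n-1}\lambda_j\,dz_j\wedge d\ol z_j$, conclude $\Real_{\phi(p),q}=\emptyset$ for $q\neq n_-,\,n-1-n_-$, and feed this into the weak Morse inequalities of Theorem~\ref{t-main2}. Your explicit check that condition $Y(q)$ holds exactly when $q\notin\{n_-,\,n-1-n_-\}$ is a correct piece of bookkeeping that the paper leaves implicit, and it does not change the argument.
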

If $\mu_j=\abs{\lambda_j}$ for all $j$, we can check that $\abs{\Real_{\phi(p),0}}>0$, for all $p\in X$, where $\abs{\Real_{\phi(p),0}}$ denotes the Lebesgue measure of $\Real_{\phi(p),0}$. Moreover, if $q>0$ and $q\neq n_-, n-1-n_-$, then $\Real_{\phi(p),\,q}=\emptyset$, for all $p\in X$. From this observation, the weak Morse inequalities (Theorem \ref{t-main2}) for $q=0$ and the strong Morse inequalities (Theorem~\ref{t-main5}), we obtain

\begin{thm} \label{s7-t6}
If $\mu_j=\abs{\lambda_j}$, $j=1,\ldots,n-1$, and $Y(0)$, $Y(1)$ hold, then
\[
\dim H^0_b(\mathscr CH_n, L^k)\sim k^n\,,\quad \text{as $k\to\infty$}\,.
\]
%there is a positive constant $c$, such that.That is, $L$ is a big line bundle over $\mathscr CH_n$.
\end{thm}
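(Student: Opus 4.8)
The statement is a direct specialization of the weak and strong Morse inequalities to the explicit data assembled just above, so the plan is to read off the relevant signature information and feed it into Theorems~\ref{t-main2} and~\ref{t-main5}. First I would exploit the diagonal form $M^\phi_p+s\mathcal{L}_p=\sum_{j=1}^{n-1}(\abs{\lambda_j}+s\lambda_j)\,dz_j\wedge d\ol z_j$: for $\lambda_j>0$ the coefficient $\abs{\lambda_j}+s\lambda_j$ is positive precisely when $s>-1$, and for $\lambda_j<0$ it is positive precisely when $s<1$. Hence $M^\phi_p+s\mathcal{L}_p$ is positive definite for every $s\in(-1,1)$, which gives $(-1,1)\subset\Real_{\phi(p),0}$ and in particular $\abs{\Real_{\phi(p),0}}\geqslant 2$ for all $p$; moreover $\Real_{\phi(p),0}$ is the bounded set $(-1,1)$, so by Proposition~\ref{s5-p1} the function $x\mapsto\int_{\Real_{\phi(x),0}}\abs{\det(M^\phi_x+s\mathcal{L}_x)}\,ds$ is continuous, hence bounded, on the compact manifold $\mathscr CH_n$.

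The upper bound then follows immediately. Since $Y(0)$ is assumed, I would apply the $q=0$ case of Theorem~\ref{t-main5} (which is just the weak Morse inequality of Theorem~\ref{t-main2} in degree $0$) to obtain
\begin{equation*}
\dim H^0_b(\mathscr CH_n, L^k)\leqslant \frac{k^n}{(2\pi)^n}\int_{\mathscr CH_n}\int_{\Real_{\phi(x),0}}\abs{\det(M^\phi_x+s\mathcal{L}_x)}\,ds\,dm(x)+o(k^n),
\end{equation*}
and the integral on the right-hand side is finite by the preceding paragraph, so the right-hand side is $O(k^n)$.

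To obtain also the matching lower bound (which is what makes the order $k^n$ sharp and uses the full strength of the hypothesis) I would invoke the strong Morse inequalities in degree $q=1$. Looking at the sign of $\abs{\lambda_j}+s\lambda_j$ for $s$ outside $[-1,1]$ shows that $M^\phi_p+s\mathcal{L}_p$ has exactly $n_-$ negative eigenvalues for $s>1$ and exactly $n-1-n_-$ negative eigenvalues for $s<-1$; therefore $\Real_{\phi(p),q}=\emptyset$ whenever $q\notin\{0,\,n_-,\,n-1-n_-\}$. Now $Y(0)$ forces the Levi form to have eigenvalues of both signs and $Y(1)$ then forces it to have at least two eigenvalues of each sign, so $1\notin\{n_-,\,n-1-n_-\}$ and hence $\Real_{\phi(p),1}=\emptyset$ for every $p$. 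Applying Theorem~\ref{t-main5} with $q=1$ (legitimate since $Y(0)$ and $Y(1)$ hold), the $j=1$ term on the right-hand side vanishes, so
\[
\dim H^1_b(\mathscr CH_n,L^k)-\dim H^0_b(\mathscr CH_n,L^k)\leqslant -\frac{k^n}{(2\pi)^n}\int_{\mathscr CH_n}\int_{\Real_{\phi(x),0}}\abs{\det(M^\phi_x+s\mathcal{L}_x)}\,ds\,dm(x)+o(k^n),
\]
which, using $\abs{\Real_{\phi(p),0}}>0$, yields $\dim H^0_b(\mathscr CH_n,L^k)\geqslant c\,k^n$ for some $c>0$.

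There is no genuine obstacle here; the whole argument is the specialization of the general Morse inequalities already proved. The only points that call for a little care are the elementary determination of the signature of the diagonal Hermitian form $M^\phi_p+s\mathcal{L}_p$ as $s$ runs over $\Real$, the bookkeeping which shows that the hypotheses $Y(0)$ and $Y(1)$ by themselves pin down which sets $\Real_{\phi(p),q}$ can be nonempty (in particular $\Real_{\phi(p),1}=\emptyset$), and the routine check that the leading-order integral is finite, where the compactness of $\mathscr CH_n$ and the continuity statement of Proposition~\ref{s5-p1} enter.
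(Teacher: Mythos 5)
Your proposal is correct and follows essentially the same route as the paper: compute the signature of the diagonal form $M^\phi_p+s\mathcal{L}_p=\sum_j(\abs{\lambda_j}+s\lambda_j)\,dz_j\wedge d\ol z_j$ to see that $\Real_{\phi(p),0}=(-1,1)$ has positive measure while $\Real_{\phi(p),1}=\emptyset$ (using $Y(0)$, $Y(1)$ to rule out $1\in\{n_-,n-1-n_-\}$), and then apply the strong Morse inequalities of Theorem~\ref{t-main5} with $q=1$ together with the weak inequality for the upper bound. You have merely written out the details that the paper leaves as ``we can check that,'' including the worthwhile clarification that the $q=1$ strong inequality is what yields the lower bound $\geqslant ck^n$ making the order $k^n$ sharp.
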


\smallskip

\noindent
{\small\emph{
\textbf{Acknowledgements.} The first-named author is grateful to Bo Berndtsson and Robert Berman for several useful conversations and to
the Department of Mathematics, Chalmers University of Technology and the University of G\"{o}teborg for
offering excellent working conditions during December 2008-December 2010. We would like to thank P. Dingoyan for discussions about the extension of holomorphic sections \cite{Ding99}. We also thank the referee for many detailed remarks that have helped to improve the presentation.
}}

\end{document}